\def\hbar{\bar{h}}
\def\mapsfrom{\leftarrow\!\shortmid}
\def\iso{\buildrel \sim\over\to}
\def\GS{{\mathfrak{S}}}
\def\Gg{{\mathfrak{g}}}
\def\Gsl{{\mathfrak{sl}}}
\def\CA{{\mathcal{A}}}
\def\CB{{\mathcal{B}}}
\def\CC{{\mathcal{C}}}
\def\CD{{\mathcal{D}}}
\def\CE{{\mathcal{E}}}
\def\CH{{\mathcal{H}}}
\def\CL{{\mathcal{L}}}
\def\CM{{\mathcal{M}}}
\def\CO{{\mathcal{O}}}
\def\CP{{\mathcal{P}}}
\def\CS{{\mathcal{S}}}
\def\CT{{\mathcal{T}}}
\def\CV{{\mathcal{V}}}
\def\CW{{\mathcal{W}}}
\def\BA{{\mathbf{A}}}
\def\BC{{\mathbf{C}}}
\def\BG{{\mathbf{G}}}
\def\BQ{{\mathbf{Q}}}
\def\BU{{\mathbf{U}}}
\def\BZ{{\mathbf{Z}}}
\def\Bk{{\mathbf{k}}}
\def\eps{\varepsilon}
\def\Ab{\operatorname{Ab}\nolimits}
\def\can{{\mathrm{can}}}
\def\Comp{\operatorname{Comp}\nolimits}
\def\End{\operatorname{End}\nolimits}
\def\Ext{\operatorname{Ext}\nolimits}
\def\mfree{\operatorname{\!-free}\nolimits}
\def\GL{\operatorname{GL}\nolimits}
\def\gr{{\operatorname{gr}\nolimits}}
\def\Ho{\operatorname{Ho}\nolimits}
\def\Hom{\operatorname{Hom}\nolimits}
\def\Id{\operatorname{Id}\nolimits}
\def\id{\operatorname{id}\nolimits}
\def\Ind{\operatorname{Ind}\nolimits}
\def\lcm{\operatorname{lcm}\nolimits}
\def\mMod{\operatorname{\!-mod}\nolimits}
\def\mMOD{\operatorname{\!-Mod}\nolimits}
\def\Ob{{\operatorname{Ob}\nolimits}}
\def\opp{{\operatorname{opp}\nolimits}}
\def\mproj{\operatorname{\!-proj}\nolimits}
\def\Res{\operatorname{Res}\nolimits}
\def\ie{{\em i.e.}}
\def\ts{{\tilde{s}}}
\def\tt{{\tilde{t}}}
\def\tI{{\tilde{I}}}
\def\tS{{\tilde{S}}}
\def\CHom{{{\mathcal H}om}}
\def\idun{\mathbf{1}}
\newtheorem{thm}{Theorem}[section]
\newtheorem{lemma}[thm]{Lemma}
\newtheorem{cor}[thm]{Corollary}
\newtheorem{prop}[thm]{Proposition}
\newtheorem{defi}[thm]{Definition}
\newtheorem{conj}[thm]{Conjecture}
\theoremstyle{definition}
\newtheorem{rem}[thm]{Remark}
\def\FA{{\textgoth{A}}}
\def\FB{{\textgoth{B}}}
\def\FC{{\textgoth{C}}}
\def\FH{{\textgoth{H}}}
\def\FL{{\textgoth{L}}}
\def\FO{{\textgoth{O}}}
\def\FT{{\textgoth{T}}}
\begin{document}
\title{$2$-Kac-Moody algebras}
\author{Rapha\"el Rouquier}
\address{Mathematical Institute,
University of Oxford, 24-29 St Giles', Oxford, OX1 3LB, UK}
\email{rouquier@maths.ox.ac.uk}

\begin{abstract}
We construct a $2$-category associated with a
Kac-Moody algebra and we study its $2$-representations. This generalizes
earlier work \cite{ChRou} for $\Gsl_2$. We relate
categorifications relying on $K_0$ properties as
in the approach of \cite{ChRou} and $2$-representations.
\end{abstract}

\date{29 December 2008}
\maketitle
\setcounter{tocdepth}{3}
\tableofcontents

\section{Introduction}

Over the past ten years, we have advocated the idea that there should exist
monoidal categories (or $2$-categories)
with an interesting ``representation theory'': we propose
to call ``$2$-representation theory''
this higher version of representation theory and to call ``$2$-algebras''
those ``interesting'' monoidal additive categories. The difficulty in
pinning down what is a $2$-algebra (or a Hopf version)
should be compared with the difficulty in defining
precisely the meaning of quantum groups (or quantum algebras). The analogy
is actually expected to be meaningful: while quantization turns certain
algebras into quantum algebras, ``categorification'' should turn those
algebras into $2$-algebras. Dequantization is specialization $q\to 1$,
while ``decategorification'' is the Grothendieck group construction ---
in the presence of gradings, it leads to a quantum object.
A large part of geometric representation theory should, and can, be 
viewed as a construction of ``irreducible'' $2$-representations as
categories of sheaves.

\smallskip
The starting point of the study of $2$-representation theory of Lie
algebras was the
definition in 2003 of $\Gsl$-categorifications with
Joseph Chuang and its study for $\Gsl_2$ in \cite{ChRou}.

\medskip
A crucial feature of $2$-representation theory is the construction
of a machinery that produces new categories out of some given categories
(with extra structure). We believe this should be viewed as an algebraic
counterpart of the construction of moduli spaces as families of
sheaves or other objects on a variety. The following oversimplified
diagram explains how our algebraic constructions would reproduce the
various counting invariants based on moduli spaces, bypassing the moduli
spaces and the difficulties of their construction and the construction
of their invariants
$$\xymatrix{
\text{Variety} X \ar@{~>}[r] \ar[d]&
 \text{Moduli space }\CM \text{ of objects on }X \ar[d]\\
\text{Category of sheaves on }X \ar@{-->}[r] &
\text{Category of sheaves on }\CM
}$$

\smallskip
While our focus here is on classical algebraic objects (related in some
way to $2$-dimensional geometry), it is our belief that
there should be $2$-algebras associated with $3$-dimensional
geometry, possibly non-commutative, and that their higher representation
theory would provide the proper algebraic framework for the various
couting invariants (Gromov-Witten, Donaldson-Thomas,...). 

\smallskip
In this paper, we define a $2$-category $\FA(\Gg)$ associated with a
Kac-Moody algebra $\Gg$. Modulo some
Hecke algebra isomorphisms, the generalization from type $A$ (finite or
affine) defined in joint work with Joseph Chuang is quite natural.

In  \cite{Rou1}, we define and study tensor structures on the
$2$-category of $2$-representations of $\FA(\Gg)$ on dg-categories, with
aim the construction of $4$-dimensional topological quantum field theories.
Our $2$-categories associated with Kac-Moody algebras
provide a solution to the question raised by Crane and Frenkel \cite{CrFr}
of the construction of ``Hopf categories''.

\medskip
The $2$-category $\FA(\Gg)$ ``categorifies'' (a completion of)
the $\BZ$-form $U_\BZ(\Gg)$
of the enveloping algebra of $\Gg$.
 Consequently, a $2$-representation of
$\FA(\Gg)$ on an exact or a triangulated category $\CV$ gives rise to
an action of $U_\BZ(\Gg)$ on $K_0(\CV)$. This gives a hint at the
very non-semi-simplicity of the theory of $2$-representations of
$\FA(\Gg)$. The presence of gradings actually gives rise to a
``categorification'' of the associated quantum group.

The Hecke algebras used in \cite{ChRou} are replaced by nil Hecke algebras
associated with Cartan matrices.
Some of their specializations
occur naturally as endomorphisms
of correspondences for quiver varieties \cite{Rou3}.
In type $A$, they 
occur when decomposing representations of (degenerate) affine Hecke algebras
according to the spectrum of the polynomial subalgebra,
and not just the center. These nil Hecke algebras can be defined by generators
and relations
and they also
have a simple construction as a subalgebra of a wreath product algebra.

We construct more generally a flat family of ``Hecke'' algebras over the space
of matrices over $k[u,v]$ which are hermitian with respect to
$u\leftrightarrow v$. They are filtered with associated graded
algebra a wreath product of a polynomial algebra by a nil Hecke algebra. They
satisfy the PBW property.

\smallskip
Consider a monoidal category or a $2$-category
defined by generators and relations.
A difficulty in $2$-representation theory is to check the 
defining relations in examples. The philosophy of \cite{ChRou} was,
instead of defining first the monoidal category, to describe directly
what a $2$-representation should be, using the action on the Grothendieck
group. A key result of this paper is to provide a similar approach
for Kac-Moody algebras.
We show, under certain finiteness assumptions, that it is enough to
check the relations $[e_i,f_j]=\delta_{ij}h_i$ on $K_0$.
This is needed to show that the earlier
definition of Chuang and the author of type $A$-categorifications coincides
with the more general notion defined here. It is also a crucial ingredient
for the construction of algebraic and geometric $2$-representations
in \cite{Rou3}.

\medskip
Let us describe in more detail the constructions and results of the paper.
\smallskip

We set up some of the formalism to deal with $2$-categories, presentations
by generators and relations and $2$-representations in
\S \ref{se:2cat}. An important role is played by biadjoint pairs and
\S \ref{se:symmalg} develops the theory of symmetric algebras over
non-commutative rings. In \S \ref{se:classicalHecke}, we gather classical
results on Hecke algebras of type $A$: affine, degenerate affine and
nil affine. We introduce Hecke algebras associated with hermitian matrices
in \S \ref{se:nilmatrix} and show they satisfy a PBW Theorem. We provide 
specializations associated with Cartan matrices and further specializations
associated with quivers (with automorphisms).

Given a Cartan datum, we construct in \S \ref{se:def2km} a $2$-category $\FA$ 
with set of objects the weight lattice $X$ and with $1$-arrows generated by
$E_s:\lambda\to\lambda+\alpha_s$ and 
$F_s:\lambda\to\lambda-\alpha_s$. The $2$-arrows are generated by
units and counits of dual pairs $(E_s,F_s)$ and by $x_s\in\End(E_s)$ and
$\tau_{st}\in\Hom(E_sE_t,E_tE_s)$. We impose relations so that 
there is an action of the nil Hecke algebras associated with the Cartan
matrix on products $E_{s_1}\cdots E_{s_n}$ induced by $x_s$ and
$\tau_{st}$. Finally, we invert certain maps relating
$E_sF_t$, $F_tE_s$ and a multiple of $\idun$ --- this accounts
for the decomposition of $[e_s,f_t]$ in the corresponding
Kac-Moody algebra $\Gg$. There is a morphism of algebras from a completion
of the $\BZ$-form of the
enveloping algebra of $\Gg$ to the Grothendieck group of $\FA$. 
The category $\FA$ is defined over a base ring with indeterminates and
a specialization of these leads to a graded category. There is
a morphism from the completed quantized enveloping algebra of $\Gg$ to
the graded Grothendieck group.

We introduce integrable $2$-representations of $\FA$ in
\S \ref{se:defintegrable}.
We show that for integrable $2$-representations of $\FA$,
there is a canonical adjunction $(F_s,E_s)$, giving rise to an action of
a $2$-category $\FA'$ (\S \ref{se:otherversions} and
Theorem \ref{th:critsigmast}). We provide a construction of
a $2$-representation $\CV(\lambda)$ with lowest weight $\lambda\in -X^+$ 
(\S \ref{se:simple2rep}) and
show that lowest weight integrable $2$-representations admit
Jordan-H\"older filtrations (Theorem \ref{th:JordanHolder}). The case
of $\Gsl_2$ is crucial for several proofs and \S \ref{se:sl2} is a study
of its $2$-representations $\CV(\lambda)$. We also introduce
three involutions $I$, $D$ and $\iota$ that allow to swap $E_s$ and $F_s$
in particular (\S \ref{se:symmetries} and \S \ref{se:iota}).

In \S \ref{se:defsl2cat}, we show that in the case of abelian
categories over a field with finite composition series, the notion of
$\Gsl_2$-categorifications of \cite{ChRou} coincides with that of
a $2$-representation of $\FA(\Gsl_2)$. We generalize this to type
$A$ (finite or affine) in \S \ref{se:slcat}. This builds on the
isomorphisms between (degenerate) affine Hecke algebras and Hecke algebras
associated with Cartan matrices of type $A$ constructed in
\S \ref{se:idempotents}.
This provides a 
powerful way to construct $2$-representations. We extend to general Kac-Moody
algebras two key facts: the relations of type ``$[e_s,f_t]=0$ when
$s\not=t$'' are a consequence of the other axioms (\S \ref{se:rank2}) and
for abelian categories as above, the relations of type
``$[e_s,f_s]=h_s$'' follow from their $K_0$ version (\S \ref{se:controlK0}).

\medskip
The main results of this paper have been announced at
seminars in Orsay, Paris and Kyoto in the Spring 2007. Certain
specializations of the nil Hecke algebras associated with quivers and
the resulting monoidal categories associated with ``half'' Kac-Moody
algebras have been introduced independently by Khovanov and Lauda
\cite{KhoLau1,KhoLau2}.
The relations between Hecke algebras associated with affine
type $A$ Cartan matrices and representations of finite Hecke algebras of type
$A$ have been studied independently by Brundan and Kleshchev \cite{BrKl}.

\section{Preliminaries}
\subsection{Notations and conventions}
Given $n\in\BZ$, we put $[n]=\frac{v^n-v^{-n}}{v-v^{-1}}$,
$[n]!=\prod_{i=1}^n [i]$ for $n\in\BZ_{\ge 0}$. We put also
$$\begin{bmatrix} i\\n\end{bmatrix}=\frac{\prod_{a=0}^{n-1} (v^{i-a}-
v^{a-i})}{\prod_{a=1}^n (v^a-v^{-a})} \text{ for }i\in\BZ.$$

\smallskip
Given $\Omega$ a finite interval of $\BZ$, we denote by
$\GS(\Omega)$ the symmetric group on
$\Omega$, viewed as a Coxeter group with generating
set $\{s_i=(i,i+1)\}$ where $i$ runs over the non-maximal elements of
$\Omega$.
We denote by $w(\Omega)$ the longest element of $\GS(\Omega)$. Given
$E$ a family of disjoint intervals of $\Omega$, we put
$\GS(E)=\prod_{\Omega'\in E}\GS(\Omega')$ and we denote by
$\GS(\Omega)^E$ (resp. ${^E\GS}(\Omega)$)
the set of minimal length representatives
of $\GS(\Omega)/\GS(E)$ (resp. $\GS(E)\setminus\GS(\Omega)$).
We put $\GS_n=\GS[1,n]$. Given $w\in\GS_n$, we put
$\delta_w=\delta_{1,w}$.

\smallskip
Let $k$ be a commutative ring. We write $\otimes$ for $\otimes_k$. Given
$M$ a graded $k$-module and $i$ an integer, we denote by $M(i)$
the graded $k$-module given by $M(i)_n=M_{n+i}$.

Given $P=\sum_{i\in\BZ}p_iv^i \in \BZ_{\ge 0}[v^{\pm 1}]$ a Laurent polynomial
with non-negative
coefficients, we put $Pk=\bigoplus_{i\in\BZ}k^{p_i}(-i)$.
Given $k'$ a $k$-algebra and $M$ a $k$-module, we put
$k'M=k'\otimes M$. We also put $PM=Pk\otimes M$.

Given $A$ a $k$-algebra, $\gamma$ an automorphism of $A$ and $M$ a
right $A$-module, we denote by $M_\gamma$ the right $A$-module
$\gamma^*M$: it is equal to $M$ as a $k$-module and the action of
$a\in A$ on $M_\gamma$ is given by $M_\gamma\ni m\mapsto m\cdot \gamma(a)$.
Given $M$ an $(A,A)$-bimodule, we put
$M^A=\{m\in M~|~am=ma,\ \forall a\in A\}$.

An $A$-algebra is an algebra $B$ endowed with a morphism of algebras
$A\to B$.
Given $B$ an $A$-algebra, we say that a $B$-module is {\em relatively
$A$-projective} if it is a direct summand of
$B\otimes_A M$ for some $A$-module $M$.

\smallskip
Categories are denoted by calligraphic letters $\CA,\CB,\CC$, etc. and
$2$-categories are denoted by gothic letters $\FA,\FB,\FC$, etc.

We denote by $\Ob(\CA)$ or by $\CA$ the set of objects of a category
(or of a $2$-category) $\CA$.
Given $a$ an object, we will denote by $a$ or $\idun_a$ or $1_a$ the
identity of $a$.

Given $F,G:\CA\to\CB$ two functors, a morphism $F\to G$ is the
data of a compatible
collection of arrows $F(a)\to G(a)$ for $a\in\CA$ and we call
these {\em natural morphisms}.

We say that an endofunctor $F$ of an additive category $\CC$
is {\em locally nilpotent} if for every $M\in\CC$, there is $n>0$ such that
$F^n(M)=0$.

\smallskip
We denote by $\CS ets$ (resp. $\CA b$)
 the category of sets (resp. of abelian groups).
We denote by $A\mMOD$ the category of $A$-modules, by
 $A\mMod$ the category of finitely generated $A$-modules and
by $A\mfree$ is full subcategory of free $A$-modules of finite rank.
Here, module means left module. Given $\CA$ an additive category, we
denote by $\Comp^b(\CA)$ 
the category of bounded complexes of objects of $\CA$ and by
$\Ho^b(\CA)$ the associated homotopy category.

\smallskip

 We denote by
$\FC at$ (resp. $\FA dd$, $\FL in_k$,
$\FA b$, $\FT ri$) the strict $2$-category of
categories (resp. of additive categories, of $k$-linear categories,
of abelian categories
with exact functors, of triangulated categories). When $k$ is a field,
we denote by $\FA b_k^f$ the $2$-category of $k$-linear
abelian categories all of whose objects have finite
composition series and such that $k=\End(V)$ for any simple object
$V$ ($1$-arrows are $k$-linear exact functors).

\subsection{$2$-Categories}
\label{se:2cat}
We set up in this section the appropriate formalism for $2$-representation
theory. At first, we recall the more classical setting of representation
theory as a study of functors.

\subsubsection{Categories}
Let $\CA$ and $\CB$ be two categories.
We denote by $\CHom(\CA,\CB)$ the category of functors $\CA\to\CB$:
we think of these as representations of $\CA$ in $\CB$. For example,
if $\CA$ has a unique object $\ast$ and $\CB=\CS ets$,
the category $\CHom(\CA,\CB)$ is equivalent to the category of
sets acted on by the monoid $\End(\ast)$.

Given $a\in\CA$, we have a functor $\Hom(a,-):\rho_a:\CA\to\CS ets$
(the regular representation when $\CA$ has a unique object).

We put $\CA^\vee=\CHom(\CA^\opp,\CS ets^\opp)$. The functor
$$\CA\to\CA^\vee,\ M\mapsto \Hom(M,-)$$
 is fully faithful (Yoneda's Lemma) and we
identify $\CA$ with a full subcategory of $\CA^\vee$ through this embedding.

Assume $\CA$ is enriched in abelian groups.
The {\em additive closure} of $\CA$ is the full additive subcategory $\CA^a$ of
the category of functors $\CA^\opp\to\CA b^\opp$ generated by objects
of $\CA$. Given $\CA'$ an additive category, the restriction functor
gives an equivalence from the category of additive functors $\CA^a\to\CA'$
to the category of functors enriched in abelian groups $\CA\to\CA'$.

\smallskip
Assume $\CA$ is an additive category.
We denote by $\CA^i$ the idempotent completion of $\CA$. Given $\CA'$ an
idempotent-complete additive category, restriction gives an equivalence
from the category of additive functors $\CA^i\to\CA'$ to the category
of additive functors $\CA\to\CA'$.

 Let $M\in\CA$ and let
$L$ be a right $\End(M)$-module. We denote by $L\otimes_{\End(M)}M$ the
object of $\CA^\vee$ defined by
$\Hom_{\End(M)^\opp}(L,\Hom(M,-))$.

Given $A$ a ring, the category of $A$-modules in $\CA$ is the category
of additive functors $A\to \CA$,
where $A$ is the category with one object $\ast$ and with $\End(\ast)=A$.
An object of that category
is an object $M$ of $\CA$ endowed with a morphism of rings $A\to \End(M)$.

Given an $A$-module $M$ in $\CA$
and $L$ a right $A$-module, we put
$L\otimes_A M=(L\otimes_A \End(M))\otimes_{\End(M)}M$. For example,
there is a canonical isomorphism $\BZ^n\otimes_\BZ M\iso M^n$.

Let $B$ be a commutative
ring endowed with a morphism $B\to Z(\CA)$ and let $A$ be a
$B$-algebra. We denote by $\CA\otimes_B A$ the additive category
with same objects as $\CA$ and
$\Hom_{\CA\otimes_B A}(M,N)=\Hom_\CA(M,N)\otimes_B A$, where
$B$ acts via $Z(\CA)$. Let $\CA'$ be 
$B$-linear category. We denote by
$\CA\otimes_B \CA'$ the additive closure of the category with set
of objects $\Ob(\CA)\times\Ob(\CA')$ and with
$\Hom((M,M'),(N,N'))=\Hom_{\CA}(M,N)\otimes_B \Hom_{\CA'}(M',N')$.
Given $\CA''$ a $B$-linear category, there is an equivalence between
$\CHom_{\FL in_B}(\CA\otimes_B \CA',\CA'')$ and the category of
$B$-bilinear functors $\CA\times\CA'\to \CA''$.

\medskip
An equivalence relation $\sim$ on a category is a relation on arrows
such that $f\sim f'$ implies $fg\sim f'g$ and $gf\sim gf'$ (whenever this
makes sense).
Given $\CA$ a category and $\sim$ a relation on arrows of $\CA$, 
we have a quotient category $\CA/\!\!\sim$ with same objects as $\CA$.
The quotient functor $\CA\to\CA/\!\!\sim$ induces a fully faithful functor
$\CHom(\CA/\!\!\sim,\CB)\to\CHom(\CA,\CB)$ for any category $\CB$.
A functor is in the image if and
only if two equivalent arrows have the same image under the functor. The
construction depends only on the equivalence relation on $\CA$
generated by $\sim$.

Let $k$ be a commutative ring and $\CA$ a $k$-linear category.
Given $S$ a set of arrows of $\CA$, let $\sim=\sim_S$ be the coarsest
equivalence
relation on $\CA$ such that $f\sim 0$ for every $f\in S$ and
$\{(f,g)\ |\ f\sim g\}$ is a $k$-submodule
of $\Hom(a,a')\oplus\Hom(a,a')$.
We denote by $\CA/S=\CA/\!\!\sim$ the quotient
$k$-linear category: a $k$-linear functor $\CA\to\CB$ factors
through $\CA/S$, and then the factorization is unique up to unique
isomorphism, if and only if it sends arrows in $S$ to $0$.

\smallskip
Given $\CA$ a category,
we denote by $k\CA$ the $k$-linear
category associated with $\CA$: there is a canonical functor
$\CA\to k\CA$ and given a $k$-linear category $\CB$ and a functor
$F:\CA\to\CB$, there is a $k$-linear functor $G:k\CA\to\CB$ unique up to
unique isomorphism such that $F=G\cdot\can$.

\smallskip
Let $I=(I_0,I_1,s,t)$ be a quiver: this is the data of
\begin{itemize}
\item a set $I_0$ (vertices) and a set $I_1$ (arrows)
\item maps $s,t:I_1\to I_0$ (source and target).
\end{itemize}

We denote by $\CP=\CP(I)$ the set of paths in $I$, \ie, sequences
$(b_1,\ldots,b_n)$ of elements of $I_1$ such that $t(b_i)=s(b_{i-1})$ for
$1<i\le n$. It comes with maps
$s:\CP\to I_0,\ (b_1,\ldots,b_n)\mapsto s(b_n)$ (source) and
$t:\CP\to I_0,\ (b_1,\ldots,b_n)\mapsto t(b_1)$ (target). We write
$b_1\cdots b_n$ for the element $(b_1,\ldots,b_n)$ of $\CP$.

We denote by $\CC(I)$ the category generated
by $I$. Its set of objects is $I_0$ and $\Hom(i,j)=(s,t)^{-1}(i,j)$.
Composition is concatenation
of paths.

Let $\CA$ be a category. The category of diagrams of type $I$ in
$\CA$ is canonically isomorphic to the category of functors $\CC(I)\to\CA$
(the isomorphism is given by restricting the functor).

\medskip
A {\em graded category} is a category endowed with a self-equivalence $T$.
Given $M$ an object with isomorphism class $[M]$, we put
$v [M]=[T^{-1}(M)]$.

The $2$-category of graded $k$-linear categories is
equivalent to the $2$-category of $k$-linear categories enriched in
graded $k$-modules:
\begin{itemize}
\item Let $\CC$ be a graded $k$-linear category. We define $\CD$ as the category
with objects those of $\CC$ and with $\Hom_\CD(V,W)=\bigoplus_i
\Hom_\CC(V,T^iW)$. The composition of the maps of $\CD$ coming from maps
$f:V\to T^i W$ and $g:W\to T^j X$ of $\CC$ is the map coming from
$T^i(g)\circ f:V\to T^{i+j}X$.

\item Let $\CD$ be a $k$-linear category enriched in graded $k$-modules.
Define $\CC$ as the category with objects families $\{V_i\}_{i\in \BZ}$ with
$V_i$ an object of $\CD$ and $V_i=0$ for almost all $i$. We put
$\Hom_\CC(\{V_i\},\{W_i\})=\bigoplus_{i,j}\Hom_\CD(V_i,W_j)_{j-i}$. We define
$T(\{V_i\})_n=V_{n+1}$.
\end{itemize}

\subsubsection{Definitions}
\label{se:def2cat}
Our main reference for basic definitions and results on $2$-categories
is \cite{Gra} (cf also \cite{Le} for the basic definitions).

\begin{defi}
A $2$-category $\FA$ is the data of
\begin{itemize}
\item a set $\FA_0$ of objects
\item categories $\CHom(a,a')$ for $a,a'\in\FA_0$
\item functors $\CHom(a_1,a_2)\times\CHom(a_2,a_3)\to
\CHom(a_1,a_3),\ (b_1,b_2)\mapsto b_2b_1$ for $a_1,a_2,a_3\in\FA$
\item functors $I_a\in\CE nd(a)$ for $a\in\FA$
\item natural isomorphisms $(b_3b_2)b_1\iso b_3(b_2b_1)$
for $b_i\in\CHom(a_i,a_{i+1})$ and $a_1,\ldots,a_4\in\FA$.
\item natural isomorphisms $b I_a\iso b$ for $b\in\CHom(a,a')$ and
$a,a'\in\FA$
\item natural isomorphisms $I_a b\iso b$ for $b\in\CHom(a',a)$ and
$a,a'\in\FA$
\end{itemize}
such that the following diagrams commute
$$\xymatrix{
& \bigl((b_4b_3)b_2\bigr)b_1 \ar[rr]^{\can(b_4,b_3,b_2)\cdot b_1}
  \ar[dl]_{\can(b_4b_3,b_2,b_1)} &&
 \bigl(b_4(b_3b_2)\bigr) b_1\ar[dr]^{\can(b_4,b_3b_2,b_1)} \\
(b_4b_3)(b_2b_1) \ar[drr]_{\can(b_4,b_3,b_2b_1)} &&&& b_4\bigl( (b_3b_2)b_1\bigr) 
  \ar[dll]^{b_4\cdot\can(b_3,b_2,b_1)}\\
&& b_4\bigl(b_3(b_2b_1)\bigr)
}$$
$$\xymatrix{
(b_2 I_a)b_1 \ar[rr]^{\can(b_2,I_a,b_1)} \ar[dr]_{\can(b_2)\cdot b_1} &&
 b_2(I_ab_1) \ar[dl]^{b_2\cdot\can(b_1)} \\
& b_2b_1
}$$
\end{defi}

Note that $2$-categories are called bicategories in \cite{Gra}.
A {\em strict $2$-category}
is a $2$-category where
the associativity and unit isomorphisms are identity maps:
$(b_3b_2)b_1=b_3(b_2b_1)$ and $bI_a=b$, $I_ab=b$ (called
$2$-category in \cite{Gra}).

\smallskip
Let $\FA$ be a $2$-category. Its $1$-arrows (resp. $2$-arrows)
are the objects (resp. arrows) of the categories $\CHom(a,a')$ 

\smallskip
Given $b:a\to a'$ and $b':a'\to a''$ two $1$-arrows, we denote by
$b'b:a\to a''$ their composition.
The composition of $2$-arrows $c$ and $c'$ (viewed as arrows in a category
$\CHom(a,a')$)
is denoted by $c'\circ c$.
Given $a$, $a'$ and $a''$ three objects of $\FA$, $b_1,b_2:a\to a'$, 
$c:b_1\to b_2$ and $b'_1,b'_2:a'\to a''$, $c':b'_1\to b'_2$, we denote
by $c'c:b'_1b_1\to b'_2b_2$ the ``juxtaposition''.

\smallskip
We say that a $1$-arrow $b:a_1\to a_2$ is
\begin{itemize}
\item an {\em equivalence} if there is a
$1$-arrow $b':a_2\to a_1$ and isomorphisms $I_{a_1}\iso b'b$ and
$bb'\iso I_{a_2}$
\item {\em fully faithful} if
given any object $a''$, the functor
$\CHom(a'',b):\CHom(a'',a_1)\to\CHom(a'',a_2)$ is fully faithful.
\end{itemize}
Note that these notions coincide with the usual notions for
$\FA=\FC at$, $\FA=\FA dd$, $\FA=\FA b$ or $\FA=\FT ri$.

\smallskip
Given a $2$-category $\FA$, we denote by $\FA_{\le 1}$ the 
category with objects those of $\FA$ and with arrows the isomorphism
classes of $1$-arrows of $\FA$.

\smallskip
The {\em opposite} $2$-category $\FA^\opp$ of $\FA$ has same set of objects
as $\FA$ and $\CHom_{\FA^\opp}(a,a')=\CHom_{\FA}(a,a')^\opp$, while the
rest of the structure is inherited from that of $\FA$.

The {\em reverse} $2$-category $\FA^{\mathrm{rev}}$ of $\FA$ has same set of
objects
as $\FA$ and $\CHom_{\FA^\opp}(a,a')=\CHom_{\FA}(a',a)$. The composition
$$\CHom_{\FA^{\mathrm{rev}}}(a_1,a_2)\times
\CHom_{\FA^{\mathrm{rev}}}(a_2,a_3)\to
\CHom_{\FA^{\mathrm{rev}}}(a_1,a_3)$$
is given by $(b_1,b_2)\mapsto b_1b_2$ (composition in $\FA$). The
rest of the structure is inherited from that of $\FA$. 

\begin{defi}
A {\em $2$-functor} $R:\FA\to\FB$ between $2$-categories is
the data of
\begin{itemize}
\item a map $R:\Ob(\FA)\to\Ob(\FB)$
\item functors
$R:\FH om(a,a')\to\FH om(R(a),R(a'))$ for $a,a'\in\FA$
\item natural isomorphisms
$R(b_2)R(b_1)\iso R(b_2b_1)$ for $b_1,b_2$ $1$-arrows of $\FA$
\item invertible $2$-arrows $I_{R(a)}\iso R(I_a)$ for $a\in\FA$
\end{itemize}
such that the following diagrams commute
$$\xymatrix{
\bigl(R(b_3)R(b_2)\bigr)R(b_1) \ar[rr]^-{\can(b_3,b_2)\cdot R(b_1)} 
  \ar[d]_{\can(R(b_3),R(b_2),R(b_1))}&&
 R(b_3b_2)R(b_1) \ar[rr]^-{\can(b_3b_2,b_1)} && R\bigl( (b_3b_2)b_1\bigr)
  \ar[d]^{R(\can(b_3,b_2,b_1))} \\
R(b_3)\bigl(R(b_2)R(b_1)\bigr)\ar[rr]_-{R(b_3)\cdot \can(b_2,b_1)} &&
 R(b_3)R(b_2b_1) \ar[rr]_-{\can(b_3,b_2b_1)} && R\bigl(b_3(b_2b_1)\bigr)
}$$
$$\xymatrix{
R(b) I_{R(a)} \ar[rr]^-{R(b)\cdot\can(a)} \ar[d]_-{\can(R(b))} && R(b) R(I_a)
   \ar[d]^-{\can(b,I_a)}\\
R(b) && R(b I_a)\ar[ll]^-{R(\can(b))}
}
\hskip 5pt
\xymatrix{
I_{R(a')}R(b)\ar[rr]^-{\can(a')\cdot R(b)} \ar[d]_{\can(R(b))}&& R(I_{a'}) R(b) 
  \ar[d]^{\can(I_{a'},b)} \\
R(b) && R(I_{a'}b) \ar[ll]^-{R(\can(b))}
}$$
\end{defi}

When the $2$-arrows are identity maps $I_{R(a)}=R(I_a)$,
we say that the $2$-functor is {\em strict} (called strict pseudo-functor
in \cite{Gra}). 

\begin{defi}
A {\em morphism of $2$-functors} $\sigma:R\to R'$ is the data of 
\begin{itemize}
\item $1$-arrows $\sigma(a):R(a)\to R'(a)$
\item natural isomorphisms $R'(b)\sigma(a_1)\iso\sigma(a_2)R(b)$
for all $1$-arrows $b:a_1\to a_2$
\end{itemize}
such that the following diagrams commute
$$\xymatrix{
\bigl(R'(b_2) R'(b_1)\bigr)\sigma(a_1) \ar[d]^{\can(b_2,b_1)\cdot \sigma(a_1)}
  \ar[rrr]^-{\can(R'(b_2),R'(b_1),\sigma(a_1))} &&&
 R'(b_2)\bigl(R'(b_1)\sigma(a_1)\bigr) \ar[rrr]^-{R'(b_2)\cdot\can(b_1)} &&&
 R'(b_2)\bigl(\sigma(a_2)R(b_1)\bigr) 
  \ar[d]_{\can(R'(b_2),\sigma(a_2),R(b_1))^{-1}}\\
R'(b_2b_1)\sigma(a_1) \ar[d]^{\can(b_2b_1)} &&&&&&
 \bigl(R'(b_2)\sigma(a_2)\bigr)R(b_1) \ar[d]_{\can(b_2)R(b_1)} \\
\sigma(a_3) R(b_2b_1) &&& \sigma(a_3)\bigl(R(b_2)R(b_1)\bigr)
  \ar[lll]^{\sigma(a_3)\cdot\can(b_2,b_1)}&&&
 \bigl(\sigma(a_3)R(b_2)\bigr)R(b_1)\ar[lll]^-{\can(\sigma(a_3),R(b_2),R(b_1))}
}$$
$$\xymatrix{
I_{R'(a)}\sigma(a)\ar[r]^-{\can} \ar[d]_{\can\cdot\sigma(a)}
 & \sigma(a)\ar[r]^-{\can^{-1}} &
 \sigma(a)I_{R(a)} \ar[d]^{\sigma(a)\cdot\can} \\
R'(I_a)\sigma(a) \ar[rr]_{\can} && \sigma(a)R(I_a)
}$$
\end{defi}
These are quasi-natural transformations
with invertible $2$-arrows in \cite{Gra}.

\begin{defi}
A {\em morphism} $\gamma:\sigma\to \tilde{\sigma}$,
where $\sigma,\tilde{\sigma}:R\to R'$
are morphisms of $2$-functors, is the data of
$2$-arrows $\gamma(a):\sigma(a)\to \tilde{\sigma}(a)$ for $a\in\FA$
such that the following diagrams commute
$$\xymatrix{
R'(b)\sigma(a_1) \ar[rr]^-{R'(b)\gamma(a_1)} \ar[d]_{\can}^\sim &&
 R'(b)\tilde{\sigma}(a_1) \ar[d]^{\can}_\sim \\
\sigma(a_2)R(b) \ar[rr]_-{\gamma(a_2)R'(b)} && \tilde{\sigma}(a_2)R(b)
}$$
\end{defi}
These are called modifications in \cite{Gra}.

\smallskip
We denote by
$\FH om(\FA,\FB)$ denotes the $2$-category of $2$-functors $\FA\to\FB$.
When $\FB$ is a strict $2$-category, then $\FH om(\FA,\FB)$ is strict as
well.

\smallskip
Given a property of functors, we say that a $2$-functor $F:\FA\to\FB$
has {\em locally}
 that property if the functors $\CHom(a,a')\to\CHom(F(a),F(a'))$
have the property for all $a,a'$ objects of $\FA$.

A $2$-functor $F:\FA\to\FB$ is a {\em $2$-equivalence} if there is
a $2$-functor $G:\FB\to\FA$ and equivalences $\id_{\FA}\iso GF$ and
$FG\iso \id_{\FB}$. This is equivalent to the requirement that $F$ is
locally an equivalence and every object of $\FB$ is equivalent to an
object in the image of $F$.

\smallskip
Every $2$-category is $2$-equivalent to a strict $2$-category, but
there are $2$-functors between strict $2$-categories that are not
equivalent to strict ones.

\smallskip
Given $a$ an object of $\FA$, then
$\CE nd(a)$ is a monoidal category. Conversely, a monoidal category
gives rise to a $2$-category with a single object $\ast$, and the
notion of monoidal functor coincides with that of
$2$-functor (\ie, there is a $1,2,3$-fully faithful strict $3$-functor from
the $3$-category of monoidal categories to that of $2$-categories).

\smallskip
Let $k$ be a commutative ring. A {\em $k$-linear} $2$-category is a
$2$-category $\FA$ that is locally $k$-linear and such that juxtaposition
is $k$-linear. Given $\FA$ and $\FB$ two $k$-linear $2$-categories,
we denote by $\FH om(\FA,\FB)$ the $2$-category of $k$-linear $2$-functors
$\FA\to\FB$: this is the locally full sub-$2$-category of the category of
$2$-functors
obtained by requiring the functors in the definition of
$2$-functors to be $k$-linear.

Given $\FA$ a $2$-category, we denote by $k\FA$ the $k$-linear closure
of $\FA$: its objects are those of $\FA$ and
$\CHom_{k\FA}(a,a')=k\CHom_{\FA}(a,a')$.

\smallskip
Let $b:a\to a'$ be a $1$-arrow. A {\em right adjoint} (or {\em right dual})
 of $b$
is a triple $(b^\vee,\eps_b,\eta_b)$ where $b^\vee:a'\to a$ is a $1$-arrow and
$\eps_b:bb^\vee\to I_{a'}$ and $\eta_b:I_a\to b^\vee b$ are $2$-arrows
such that
the compositions
$$\xymatrix{
b\ar[r]^-{\can} & bI_a\ar[r]^-{b\eta_b} & b(b^\vee b)\ar[r]^-{\can} & (bb^\vee)b
\ar[r]^-{\eps_b b} & I_{a'}b\ar[r]^-{\can} & b}$$
and
$$\xymatrix{
b^\vee \ar[r]^-{\can} & I_ab^\vee\ar[r]^-{\eta_b b^\vee} & (b^\vee b) b^\vee
\ar[r]^-{\can} & b^\vee (b b^\vee) \ar[r]^-{b^\vee\eps_b} & b^\vee I_{a'}
\ar[r]^-{\can} & b^\vee}$$
are identities.
We also say that $(b,\eps_b,\eta_b)$ is a left adjoint (or dual) of
$b'=b^\vee$ (and we write $b={^\vee b}'$)
and we say
that $(b,b^\vee,\eps_b,\eta_b)$ (or simply $(b,b^\vee)$) is an adjoint
quadruple (resp. an adjoint pair).

To simplify the exposition, let us assume for the reminder of
\S \ref{se:def2cat} that $\FA$ is strict.
Let $(b^\vee,\eps_b,\tilde{\eta}_b)$ be a triple such that
$(\eps_b b)\circ (b\tilde{\eta}_b)$ and
$(b^\vee\eps_b)\circ (\tilde{\eta}_b b^\vee)$ are invertible.
Then, 
$(b^\vee,\eps_b,
(b^\vee ((\eps_b b)\circ (b\tilde{\eta}_b)))\circ\tilde{\eta}_b)$ is a
right adjoint of $b$.

\smallskip
Given $b_1:a\to a'$ a $1$-arrow and $(b_1,b_1^\vee)$ an adjoint pair, we have a
canonical isomorphism
$$\Hom(b,b_1)\iso \Hom(b_1^\vee,b^\vee),\
f\mapsto f^\vee=(b^\vee\eps_{b_1})\circ(b^\vee f b_1^\vee)\circ
(\eta_b b_1^\vee).$$

\smallskip
Assume now there are dual pairs $(b,b^\vee)$ and $(b^\vee,b)$.
We have an automorphism
\begin{equation}
\label{eq:Nakayama2cat}
\End(b)\iso \End(b),\ f\mapsto (f^\vee)^\vee.
\end{equation}

\subsubsection{Generators and relations}
An {\em equivalence relation} $\sim$ on $\FA$ is the data for
every $a,a'$ objects, for every $b,b':a\to a'$ of an equivalence
relation on $\Hom(b,b')$ compatible with composition and juxtaposition,
\ie, if $c_1\sim c_2$, then given a $2$-arrow $c$,
we have
$c_1\circ c\sim c_2\circ c$, $c\circ c_1\sim c\circ c_2$,
$c_1 c\sim c_2 c$ and $cc_1\sim cc_2$, whenever this makes sense.
Given a relation $\sim$ on $2$-arrows of $\CC$, the equivalence relation 
generated by $\sim$ is the coarsest refinement of $\sim$ that is an equivalence
relation.

Let $\FA$ be a $2$-category and $\sim$ an equivalence relation.
We denote by $\FA/\!\!\sim$ the $2$-category with same objects as $\FA$ and
with $\CHom_{\FA/\!\sim}(a,a')=\CHom_{\FA}(a,a')/\!\!\sim$ (so,
$\FA/\!\!\sim$ has the same $1$-arrows as $\FA$). The local quotient
functors induce a strict quotient $2$-functor $\FA\to\FA/\!\!\sim$.
 Given
a $2$-category $\FB$, the
quotient strict $2$-functor $\FA\to\FA/\!\!\sim$ induces a strict $2$-functor
$\FH om(\FA/\!\!\sim,\FB)\to \FH om(\FA,\FB)$ that is
locally an isomorphism. A $2$-functor $R$ is
in the image if and only if two equivalent $2$-arrows have the same image
under $R$.

\smallskip
 Given $S$ a set of $2$-arrows of
$\FA$, we denote by $\tS$ the smallest set of $2$-arrows of $\FA$ closed
under juxtaposition and composition
and containing $S$ and the invertible $2$-arrows.

 We denote
by $\FA[S^{-1}]$ the $2$-category with same objects as $\FA$ and with
$\CHom_{\FA[S^{-1}]}(a,a')=\CHom_{\FA}(a,a')[S(a,a')^{-1}]$,
where $S(a,a')$ are the $2$-arrows of $\tS$ that are in $\CHom_{\FA}(a,a')$
(so, $\FA[S^{-1}]$ has the same $1$-arrows as $\FA$).

The canonical strict $2$-functor $\FA\to\FA[S^{-1}]$ induces
a strict $2$-functor $\FH om(\FA[S^{-1}],\FB)\to\FH om(\FA,\FB)$ that is
locally an isomorphism. A $2$-functor $R$
is in the image if and only if the image under $R$
of any $2$-arrow in $S$ is invertible.

\smallskip
Assume $\FA$ is a $k$-linear $2$-category. Let $S$ be a set of $2$-arrows
of $\FA$. Given $a,a'$ objects of $\FA$, we consider the
equivalence relation $\sim_{S(a,a')}$ on $\CHom(a,a')$. Let $\sim$ be the
coarsest equivalence relation on $\FA$
that refines the relations $\sim_{S(a,a')}$.
We put $\FA/S=\FA/\!\!\sim$.

\medskip
A {\em $2$-quiver} $I=(I_0,I_1,I_2,s,t,s_2,t_2)$ is the data of
\begin{itemize}
\item three sets $I_0$ (vertices), $I_1$ ($1$-arrows) and $I_2$ ($2$-arrows)
\item maps $s,t:I_1\to I_0$ (source and target)
\item maps $s_2,t_2:I_2\to \CP=\CP(I_0,I_1,s,t)$ (source and target of
$2$-arrows) such that 
$s(s_2(c))=s(t_2(c))$ and
$t(s_2(c))=t(t_2(c))$ for all $c\in I_2$.
\end{itemize}

Let $I$ be a $2$-quiver. Let $a,a'\in I_0$.
We define a quiver $I(a,a')=(\tI_0,\tI_1,\ts,\tt)$. We put
$\tI_0=(s,t)^{-1}(a,a')$, the set of paths from $a$ to $a'$.
The set $\tI_1$ is given by triples $(b,c,b')$ where
$b,b'\in \CP$, $c\in I_2$ satisfy $t(b')=s(s_2(c))$, $t(s_2(c))=s(b)$,
$s(b')=a$, $t(b)=a'$. We put
$\ts(b,c,b')=bs_2(c)b'$ and $\tt(b,c,b')=bt_2(c)b'$.
We introduce a relation $\sim$ on $\CP(I(a,a'))$ by
$$(b_1t_2(c_1)b_2,c_2,b_3)(b_1,c_1,b_2s_2(c_2)b_3)\sim
(b_1,c_1,b_2t_2(c_2)b_3)(b_1s_2(c_1)b_2,c_2,b_3)$$
(whenever this makes sense).

The strict $2$-category $\FC(I)$ generated by $I$ is
defined as follows. Its set of objects is $I_0$. 
We put $\CHom(a,a')=\CC(I(a,a'))/\!\!\sim$. Composition of $1$-arrows is
concatenation of paths. Juxtaposition is given by
$$(b_1,c_1,b'_1)(b_2,c_2,b'_2)=
(b_1,c_1,b'_1b_2t_2(c_2)b'_2)\circ (b_1s_2(c_1)b'_1b_2,c_2,b'_2).$$

Note that the category $\FC(I)_{\le 1}$ is $\CC(I_0,I_1,s,t)$.

\medskip
Let $\FB$ be a strict $2$-category. An {\em $I$-diagram}
$D$ in $\FB$ is the data of 
\begin{itemize}
\item an object $a_i$ of $\FB$ for any $i\in I_0$
\item a $1$-arrow $b_j:a_{s(j)}\to a_{t(j)}$ for any $j\in I_1$
\item a $2$-arrow $c_k:b_{s_2(k)}\to b_{t_2(k)}$ for any $k\in I_2$
\end{itemize}
where given $p=(p_1,\ldots,p_n)\in\CP$, we put
$b_p=b_{p_1}\cdots b_{p_n}$.
$$\xymatrix{a_{s(s_2(k))}\ar@/^2pc/[rr]^{b_{s_2(k)}}_{}="1"
\ar@/_2pc/[rr]_{b_{t_2(k)}}^{}="2"
\ar@{=>}"1";"2" ^{c_k}&& a_{t(s_2(k))}}$$
The data of $b_j$'s and $c_k$'s is the same as the data, for $i,i'\in I_0$,
of an $I(i,i')$-diagram in $\CHom(a_i,a_{i'})$.

\medskip
A morphism $\sigma:D\to D'$ is the data of
\begin{itemize}
\item $1$-arrows $\sigma_i:a_i\to a'_i$ for $i\in I_0$
\item invertible $2$-arrows $\sigma_j:b'_j\sigma_{s(j)}\iso
\sigma_{t(j)}b_j$ for every $j\in I_1$
\end{itemize}
$$\xymatrix{
& a'_{s(j)} \ar[dr]^{b'_j}
\ar@{}[dd]|(.3){}="3"
\ar@{}[dd]|(.7){}="4"
 \\
a_{s(j)} \ar[dr]_{b_j} \ar[ur]^{\sigma_{s(j)}} 
&& a'_{t(j)} \\
& a_{t(j)} \ar[ur]_{\sigma_{t(j)}}
\ar@{=>}"3";"4"^{\sigma_j}_\sim
}$$
such that for every $k\in I_2$
with $s_2(k)=(j_1,\ldots,j_n)$ and 
$t_2(k)=(\bar{j}_1,\ldots,\bar{j}_{\bar{n}})$,
the following $2$-arrows $b'_{s_2(k)}\sigma_{s(j_n)}\to
\sigma_{t(j_1)}b_{t_2(k)}$ are equal:
$$\xymatrix{
\bullet\ar[d]_{\sigma_{s(j_n)}} \ar[r]^{b_{j_n}} 
\ar@/^3pc/[rrrr]^{b_{\bar{j_1}}\cdots b_{\bar{j}_{\bar{n}}}}^{}="4" &
 \bullet\ar[d]^{\sigma_{t(j_n)}} \ar@{.}[rr]^{}="3" &&
 \ar@{=>}"3";"4"_{c_k}
 \bullet\ar[d]_{\sigma_{t(j_2)}}\ar[r]^{b_{j_1}} &
 \bullet\ar[d]^{\sigma_{t(j_1)}} \\
\bullet \ar[r]_{b'_{j_n}}
\ar@{}[ur]|(.3){}="1" \ar@{}[ur]|(.7){}="2" \ar@{=>}"1";"2"^-{\sigma_{j_n}}  &
 \bullet\ar@{.}[rr] &&
 \bullet\ar[r]_{b'_{j_1}}
\ar@{}[ur]|(.3){}="1" \ar@{}[ur]|(.7){}="2" \ar@{=>}"1";"2"^-{\sigma_{j_1}}  &
 \bullet 
}=
\xymatrix{
\bullet\ar[r]^{b_{\bar{j}_{\bar{n}}}}\ar[d]_{\sigma_{s(\bar{j}_{\bar{n}})}} &
 \bullet\ar[d]^{\sigma_{t(\bar{j}_{\bar{n}})}} \ar@{.}[rr] &&
 \bullet\ar[r]^{b_{\bar{j}_1}} \ar[d]_{\sigma_{t(\bar{j}_2)}} &
 \bullet\ar[d]^{\sigma_{t(\bar{j}_1)}} \\
\bullet\ar[r]_{b'_{\bar{j}_{\bar{n}}}} 
\ar@{}[ur]|(.3){}="1" \ar@{}[ur]|(.7){}="2" 
 \ar@{=>}"1";"2"^-{\sigma_{\bar{j}_{\bar{n}}}}
  \ar@/_3pc/[rrrr]_{b'_{j_1}\cdots b'_{j_n}}^{}="4" &
 \bullet \ar@{.}[rr]^{}="3" \ar@{=>}"4";"3"_{c'_k}&&
 \bullet\ar[r]_{b'_{\bar{j}_1}} 
\ar@{}[ur]|(.3){}="1" \ar@{}[ur]|(.7){}="2" 
\ar@{=>}"1";"2"^-{\sigma_{\bar{j}_1}}
& \bullet 
}$$

\smallskip
A morphism $\gamma:\sigma\to\tilde{\sigma}$ is the data of $2$-arrows
$\gamma_i:\sigma_i\to\tilde{\sigma}_i$ for $i\in I_0$ such that
for every $j\in I_1$, we have 
$(\gamma_{t(j)}b_j)\circ\sigma_j=\tilde{\sigma}_j\circ(b'_j\gamma_{s(j)})$,
\ie, the following diagram of $2$-arrows is commutative:
$$\xy
(-60,0) *+{\bullet}="1",
(-40,0) *+{\bullet}="2",
(-10,0) *+{\bullet}="3",
(10,0)  *+{\bullet}="4",
(40,0)  *+{\bullet}="5",
(60,0)  *+{\bullet}="6",
 (30,15)*{};(-30,15)*{} **\crv{~**\dir2{-}(0,30)}?*+!D{\sigma_j}
 ?>*\dir2{>},
 (30,-15)*{};(-30,-15)*{} **\crv{~**\dir2{-}(0,-30)}?*+!U{\tilde{\sigma}_j}
 ?>*\dir2{>},
 \ar^{b_j}"1";"2",
 \ar@/^2pc/^{\sigma_{t(j)}}_{}"2";"3",
 \ar@/_2pc/_{\tilde{\sigma}_{t(j)}}^{}"2";"3",
 \ar@{=>}^{\gamma_{t(j)}} (-25,5)*{};(-25,-5)*{},
 \ar@/^2pc/^{\sigma_{s(j)}}"4";"5",
 \ar@/_2pc/_{\tilde{\sigma}_{s(j)}}"4";"5",
 \ar@{=>}^{\gamma_{s(j)}} (25,5)*{};(25,-5)*{},
 \ar^{b'_j}"5";"6",
\endxy$$
This gives rise to a strict $2$-category $\FH om(I,\FB)$
of $I$-diagrams in $\FB$.

\smallskip
Restriction gives a strict $2$-functor $H:\FH om(\FC(I),\FB)\to \FH om(I,\FB)$.
It is locally an isomorphism and it is surjective on objects, so it
is a $2$-equivalence.

\subsubsection{$2$-Representations}
Let $\FA$ and $\FB$ be two $2$-categories.
We will consider $2$-representations of
$\FA$ in $\FB$, \ie, $2$-functors 
$R:\FA\to\FB$.
We put $\FA\mMOD(\FB)=\FH om(\FA,\FB)$, a $2$-category. Given 
$R:\FA\to\FB$, a {\em sub-$2$-representation} is a $2$-functor
$R':\FA\to\FB$ equiped with a fully faithful morphism
$R'\to R$.
There is a canonical $2$-equivalence
$\FH om(\FA^\opp,\FB^\opp)\iso \FH om(\FA,\FB)^\opp$.

\smallskip
Let $S$ be a collection of objects of $\FB$. An {\em action} of
$\FA$ on $S$ is a $2$-representation of $\FA$ in $\FB$ with image contained
in $S$. Note that if $\FA$ has only one object and is viewed as a monoidal
category $\CA$ and $S=\{\CC\}$, we recover the usual
notion of an action of $\CA$ on $\CC$.

\smallskip
Let $a\in\CA$. We define a $2$-functor $\CHom(a,-):\FA\to\FC at$ by
$a'\mapsto \CHom(a,a')$. The functor
$\CHom(a',a'')\to \CHom(\CHom(a,a'),\CHom(a,a''))$ is given by juxtaposition.
The associativity and unit maps of $\FA$ provide the required $2$-arrows.

\smallskip
Let $R:\FA\to\FC at$ be a $2$-functor. 
Given $a$ an object of $\FA$, there is an equivalence of categories
from $R(a)$ to the category of morphisms $\CHom(a,-)\to R$:
\begin{itemize}
\item Given
$M$ an object of the category $R(a)$, we define a morphism
$\sigma:\CHom(a,-)\to R$. The functor
$\CHom(a,a')\to R(a')$ is $b\mapsto R(b)(M)$. The required natural isomorphisms
come from the natural isomorphisms $R(b)R(f)\iso R(bf)$.
\item
Conversely, given $\sigma:\CHom(a,-)\to R$, we put $M=\sigma(I_a)$.
\end{itemize}

\smallskip
Assume from now on that our $2$-categories are $k$-linear.

Let $b:a\to a'$ be a $1$-arrow. A {\em cokernel} of $b$
is the data of an object $\CC oker(b)$ and of a $1$-arrow $b':a'\to \CC oker(b)$
such that for any object $a''$, the functor
$\CHom(b',a''):\CHom(\CC oker(b),a'')\to \CHom(a',a'')$ is fully
faithful with image equivalent to the full subcategory of
$1$-arrows $b'':a'\to a''$ such that $b''b=0$. When a cokernel of $b$
exists, it is unique up to an equivalence unique up to a unique isomorphism.

We say that
$\FA$ {\em admits cokernels} if all $1$-arrows admit cokernels.
This is the case for the $2$-category of
$k$-linear categories, of abelian categories or of triangulated
categories.

We define {\em kernels} as cokernels taken in $\FA^{\mathrm{rev}}$.

\smallskip
Assume $\FB$ admits kernel and cokernels.

Let $b:a\to a'$ be a fully faithful $1$-arrow. We say that it
is {\em thick} if $b$ is a kernel of $a\to\CC oker(b)$.

When $\FB\subset \FL in_k$, the notion of thickness corresponds to
\begin{itemize}
\item $\FL in_k$ or $\FT ri$: $a$ is closed under direct summands
\item $\FA b$: $a$ is closed under extensions, subobjects and quotients
\end{itemize}

\smallskip
Let
$R,R':\FA\to\FB$ be two $2$-functors and
$\sigma:R'\to R$.
Assume $\sigma$ is locally fully faithful.
We define
$R''=
\CC oker\ \sigma:\FA\to\FB$ (denoted also by $R/R'$ when there is no ambiguity) 
by $R'':a\mapsto \CC oker\ \sigma(a)$. The composition
$\CH om(a,a')\xrightarrow{R(a,a')} \CH om(R(a),R(a'))\xrightarrow{\can}
\CH om(R(a),\CC oker\ \sigma(a'))$ factors uniquely through
$\CH om(\CC oker\ \sigma(a),\CC oker\ \sigma(a'))$ and this defines 
a functor $\CH om(a,a')\to\CH om(\CC oker\ \sigma(a),\CC oker\ \sigma(a'))$.
The
constraints are obtained by taking quotients.
$$\xymatrix{
&&& \CHom(\CC oker\ \sigma(a),\CC oker\ \sigma(a'))\ar[d] \\
&\CHom(a,a') \ar[r]\ar[d] \ar@{-->}[urr] 
 \ar `l[ld] `[dd] `[rr]^0 [drr]&
 \CHom(R(a),R(a')) \ar[r] \ar[d] &
 \CHom(R(a),\CC oker\ \sigma(a')) \ar[d] \\
&\CHom(R'(a),R'(a')) \ar[r] 
 \ar@{}[ru]|(.3){}="1" \ar@{}[ru]|(.7){}="2" \ar@{=>}"1";"2"^\sim&
 \CHom(R'(a),R(a')) \ar[r] &
 \CHom(R'(a),\CC oker\ \sigma(a')) \\
 &&&&
}$$

\smallskip
We have a Grothendieck group functor $K_0:{\FT ri}_{\le 1}\to\CA b$. When
$\FB$ is endowed with a canonical $2$-functor to the
$2$-category of triangulated categories, we will still denote
by $K_0$ the composite functor $\FB_{\le 1}\to\Ab$. For example,
$\FB$ is the category of exact categories or of dg-categories
and we consider the derived category $2$-functor. Viewing additive
categories as exact categories for the split structure provides another
example (this is the homotopy category functor).
This gives a ``decategorification''
functor $\FA\mMOD(\FB)_{\le 1}\to \CH om(\FA_{\le 1},\CA b)$.

\smallskip
Let $\FA$ and $\FB$ be $k$-linear $2$-categories. Assume $\FB$ is locally
idempotent-complete. Let $\FA^i$ be the idempotent completion
of $\FA$. The canonical strict $2$-functor $\FA\to\FA^i$ induces a
$2$-equivalence $\FA^i\mMOD(\FB)\iso \FA\mMOD(\FB)$.

\subsection{Symmetric algebras}
\label{se:symmalg}
The theory of symmetric or Frobenius algebras
is classical (cf eg \cite{Bro}).
We need here a version over a non-commutative base algebra and
we study transitivity properties.

\subsubsection{Serre functors}
Let $k$ be a field and $\CT_1$, $\CT_2$ be two $k$-linear categories.
Let $E:\CT_1\to\CT_2$ be a functor and
$(E,F)$ an adjoint pair, provided with bifunctorial isomorphisms
$$\alpha(M,N):
\Hom(EM,N)\iso \Hom(M,FN)\text{ for } M\in\CT_1 \text{ and }N\in\CT_2$$

Let $S_i$ be a Serre functor for $\CT_i$, for $i=1,2$:
we have bifunctorial isomorphisms
$$\gamma_i(M,N):\Hom(M,N)^*\iso\Hom(N,S_iM)\text{ for } M,N\in\CT_i.$$

Then, $(S_2^{-1}FS_1,E)$ is an adjoint pair with defining isomorphisms
given by the following commutative diagram
$$\xymatrix{
\Hom(S_2^{-1}FS_1N,M) \ar[rr]^-{\sim} && \Hom(N,EM) \\
\Hom(M,FS_1N)^*\ar[u]^{\gamma_2(S_2^{-1}FS_1N,M)^*}_\sim
\ar[rr]^-\sim_-{\alpha(M,S_1N)^*} && \Hom(EM,S_1N)^*
\ar[u]^\sim_{\gamma_1(N,EM)^*}
}$$

\begin{lemma}
Let $(E',F')$ be an adjoint pair, with $E':\CT_1\to\CT_2$. Given
$f\in\Hom(E,E')$, we have
${^\vee f}=S_2^{-1}f^\vee S_1$.
\end{lemma}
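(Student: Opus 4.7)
The plan is to invoke the characterizing property of ${}^\vee f$ and verify that the candidate $S_2^{-1}f^\vee S_1$ satisfies it. By the construction preceding the lemma, $(S_2^{-1}FS_1, E)$ and $(S_2^{-1}F'S_1, E')$ are both adjoint pairs, so ${}^\vee f : S_2^{-1}F'S_1 \to S_2^{-1}FS_1$ is the unique natural transformation with the property that, under the two adjunction isomorphisms displayed in the diagram above the lemma, precomposition $(-)\circ{}^\vee f$ on $\Hom(S_2^{-1}FS_1N, M)$ corresponds to postcomposition $f\circ(-)$ on $\Hom(N, EM)$. It therefore suffices to check this intertwining property for $S_2^{-1}f^\vee S_1$.

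To do so, I would stack the three-step factorization of the adjunction isomorphism (through $\gamma_2^*$, $\alpha^*$ and $\gamma_1^*$) for $(E,F)$ on top of the analogous factorization for $(E', F')$, producing three squares of $k$-duals of $\Hom$-spaces to verify. The leftmost square, involving $\gamma_2$, commutes by naturality of $\gamma_2(-, M)$ in its first argument applied to the morphism $S_2^{-1}f^\vee S_1 : S_2^{-1}F'S_1 N \to S_2^{-1}FS_1 N$; after applying $S_2$ this yields precisely $f^\vee S_1 : F'S_1 N \to FS_1 N$, which is the morphism appearing in the parallel $\Hom(M, -)^*$ column. The rightmost square, involving $\gamma_1$, commutes by naturality of $\gamma_1(N, -)$ in its second argument applied to $f_M : EM \to E'M$. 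Finally, the central square containing $\alpha$ is the $k$-dual of the defining compatibility of $f^\vee$, namely $\alpha(M, S_1N) \circ (-\circ f) = (f^\vee\circ -) \circ \alpha'(M, S_1N)$ on $\Hom(E'M, S_1N)$. Pasting the three commuting squares yields the desired intertwining property, and uniqueness forces ${}^\vee f = S_2^{-1}f^\vee S_1$.

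The main obstacle is purely bookkeeping: the argument weaves together Serre duality (which introduces $k$-duals and reverses arrows), the ambient adjunction $\alpha$, and a passage from right to left adjoints. The naturality is invoked in three different arguments with three different contravariance conventions, so keeping straight which $(-)^*$ denotes the dual of a linear map and which denotes pre- or postcomposition by a morphism requires care. Once the conventions are fixed, however, each of the three squares reduces to a single application of naturality, and no further computation is required.
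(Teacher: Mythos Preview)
Your proposal is correct and is essentially the paper's own argument: the paper draws exactly the three-square commutative diagram you describe (with $\gamma_1^*$, $\alpha^*$, and $\gamma_2^*$ as horizontal isomorphisms and the maps induced by $f$, $f^\vee$, and $S_2^{-1}f^\vee S_1$ as verticals), notes that each square commutes by the naturalities you identify, and concludes. The only discrepancy is that your ``leftmost'' and ``rightmost'' are the paper's ``rightmost'' and ``leftmost'', a harmless difference in orientation.
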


\begin{proof}
Given $M\in\CT_1$ and $N\in\CT_2$, we have a commutative diagram
$$\xymatrix{
\Hom(N,EM)\ar[d]^{\Hom(N,fM)} &
\Hom(EM,S_1N)^*\ \ \ar[l]_-{\gamma_1^*}^-\sim\ar[d]^{\Hom(fM,S_1N)^*} & 
\Hom(M,FS_1N)^* \ar[r]^-{\gamma_2^*}_-\sim\ar[d]^{\Hom(M,f^\vee S_1N)^*} 
\ar[l]_-{\alpha^*}^-\sim&
 \Hom(S_2^{-1}FS_1N,M)\ar[d]^{\Hom(S_2^{-1}f^\vee S_1N,M)} \\
\Hom(N,E'M) &
\Hom(E'M,S_1N)^* 
\ar[l]^-{\gamma_1^*}_-\sim& 
\Hom(M,F'S_1N)^*\ar[r]_-{\gamma_2^*}^-\sim
\ar[l]^-{\alpha^{\prime *}}_-\sim
 &
 \Hom(S_2^{-1}F'S_1N,M) 
}$$
and the result follows.
\end{proof}

\subsubsection{Frobenius forms}
Let $B$ be a $k$-algebra and $A$ a $B$-algebra.
We denote by $m:A\otimes_B A\to A$ the multiplication map.

The canonical isomorphism of $(A,B)$-bimodules
$\Hom_B(A,B)\iso\Hom_A(A,\Hom_B(A,B))$ restricts to an isomorphism
$$t\mapsto \hat{t}:\Hom_{B,B}(A,B)\iso\Hom_{A,B}(A,\Hom_B(A,B)).$$

Let us describe this explicitely.
Given $t:A\to B$ a morphism of $(B,B)$-bimodules, we have the
morphism of $(A,B)$-bimodules
\begin{align*}
\hat{t}:A&\to \Hom_B(A,B)\\
a&\mapsto (a'\mapsto t(a'a)).
\end{align*}

Conversely, given $f:A\to \Hom_B(A,B)$ a morphism of $(A,B)$-bimodules,
then $f(1):A\to B$ is a morphism of $(B,B)$-bimodules and
we have $f=\widehat{f(1)}$.

\begin{defi}
Let $t:A\to B$ be a morphism of $(B,B)$-bimodules. We say that $t$ is a 
Frobenius form if $A$ is a projective $B$-module of finite type and
$\hat{t}:A\to \Hom_B(A,B)$ is an isomorphism.
\end{defi}

Let $t:A\to B$ be a Frobenius form. It defines an
automorphism of $Z(B)$-algebras, the Nakayama automorphism:
$$\gamma_t:A^B\iso A^B,\ a\mapsto \hat{t}^{-1}\left(a'\mapsto t(aa')\right).$$
We have
$$t(aa')=t(a'\gamma_t(a)) \text{ for all }a\in A^B \text{ and }a'\in A.$$
This makes $\hat{t}$ into an isomorphism of $(A,B\otimes_{Z(B)} A^B)$-modules
$$\hat{t}:A_{1\otimes\gamma_t}\iso \Hom_B(A,B).$$

\smallskip
We say that $t$ is {\em symmetric} if $\gamma_t=\id_{A^B}$.

\begin{rem}
Note that
 if $t(aa')=t(a'a)$ for all $a,a'\in A$, then $A^B=A$.
\end{rem}

Given $t$ and $t'$ two Frobenius forms, there is a unique element 
$z\in (A^B)^\times$ such that $t'(a)=t(az)$ for all $a\in A$. If in addition
$t$ and $t'$ are symmetric, then $z\in Z(A^B)^\times$.

\subsubsection{Adjunction $(\Res,\Ind)$}
Let $B$ be a $k$-algebra and $A$ a $B$-algebra.

The data of an adjunction $(\Res_B^A,\Ind_B^A)$ is the same as the data
of an isomorphism $A\otimes_B-\iso \Hom_B(A,-)$ of functors
$B\mMOD\to A\mMOD$.

Assume there is such an adjunction. The functor $\Hom_B(A,-)$ is
right exact, hence $A$ is projective as a $B$-module.
The functor $\Hom_B(A,-)$ commutes with direct sums,
hence $A$ is a finitely generated projective $B$-module.

Assume now $A$ is a finitely generated projective $B$-module. We have a
canonical isomorphism
$$\Hom_B(A,B)\otimes_B -\iso \Hom_B(A,-).$$
So, the data of an adjunction $(\Res_B^A,\Ind_B^A)$ is the same as the data
of an isomorphism $f:A\iso \Hom_B(A,B)$ of $(A,B)$-bimodules.
Given $f$, let $t=f(1):A\to B$. This is the morphism of $(B,B)$-bimodules
corresponding to the counit $\eps:\Res_B^A\Ind_B^A\to \id_B$. On the
other hand, we have $f=\hat{t}$. Summarizing,
we have the following Proposition.

\begin{prop}
Let $B$ be an algebra and $A$ a $B$-algebra. We have inverse
bijections between the set of Frobenius forms and the set of adjunctions
$(\Res_B^A,\Ind_B^A)$:
\begin{align*}
t& \mapsto \text{ adjunction defined by } \hat{t} \\
\text{counit } & \mapsfrom \text{ adjunction}
\end{align*}
\end{prop}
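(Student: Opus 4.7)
The plan is to marshal the observations in the paragraph immediately preceding the statement into two mutually inverse assignments. The substance is essentially present already; what remains is to verify the constructions are well-defined on both sides and invert each other.

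First, I would use the equivalence (recalled just above the Proposition) between the data of an adjunction $(\Res_B^A,\Ind_B^A)$ and the data of a natural isomorphism $A\otimes_B-\iso \Hom_B(A,-)$ of functors $B\mMOD\to A\mMOD$. The preceding discussion shows that right-exactness and commutation with arbitrary direct sums of $\Hom_B(A,-)$, transported from $A\otimes_B-$, force $A$ to be a finitely generated projective $B$-module, so we may assume this throughout. Under this hypothesis, the canonical natural isomorphism $\Hom_B(A,B)\otimes_B-\iso \Hom_B(A,-)$ combined with Yoneda's lemma (a natural transformation $M\otimes_B-\to N\otimes_B-$ of functors $B\mMOD\to A\mMOD$ is determined by its value at $B$, which is an $(A,B)$-bimodule map $M\to N$) reduces the adjunction data to an $(A,B)$-bimodule isomorphism $f:A\iso \Hom_B(A,B)$.

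Second, I would identify such a bimodule isomorphism $f$ with a Frobenius form through the map $f\mapsto t:=f(1)$. Right $B$-linearity of $t$ is immediate from $f(1)\in\Hom_B(A,B)$, while left $B$-linearity $t(ba)=bt(a)$ follows from $f$ being left $A$-linear restricted to $b\in B\subset A$, using the description $(a\cdot\varphi)(a')=\varphi(a'a)$ of the left $A$-action on $\Hom_B(A,B)$. For arbitrary $a,a'\in A$ one then computes $f(a)(a')=(a\cdot f(1))(a')=f(1)(a'a)=t(a'a)=\hat t(a)(a')$, so $f=\hat t$; since $f$ is an isomorphism so is $\hat t$, and $t$ is a Frobenius form.

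Third, I would check the two assignments are inverse and match the statement. Starting from $f$ and applying $f\mapsto f(1)=t$ followed by $t\mapsto \hat t$ returns $\hat t=f$ by the computation above. Starting from a Frobenius form $t$, one has $\hat t(1)(a')=t(a'\cdot 1)=t(a')$ for all $a'$, so $\hat t(1)=t$. To identify the map $f\mapsto f(1)$ with the counit as described in the statement, I would observe that at $M=B$ the counit $\eps_B:\Hom_B(A,B)\to B$ of the standard adjunction $\Res_B^A\dashv\Hom_B(A,-)$ is evaluation at $1$, so under the isomorphism $f:A\iso \Hom_B(A,B)$ it corresponds to $a\mapsto f(a)(1)=f(1)(a)=t(a)$, i.e.\ to $t$ itself. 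The only real obstacle is careful bookkeeping of the $(A,B)$-bimodule structure on $\Hom_B(A,B)$ to ensure $t$ inherits left $B$-linearity; everything else is a reorganization of what has already been established.
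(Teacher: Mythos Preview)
Your proposal is correct and follows exactly the approach of the paper: the proposition there is stated as a summary of the preceding discussion, which already reduces an adjunction to an $(A,B)$-bimodule isomorphism $f:A\iso\Hom_B(A,B)$ and identifies $f$ with $\hat t$ for $t=f(1)$. You have simply fleshed out the verifications (bilinearity of $t$, the Yoneda step, the mutual inverse check, the identification with the counit) that the paper leaves implicit.
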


Assume we have a Frobenius form $t:A\to B$.
The unit of adjunction of the pair $(\Res_B^A,\Ind_B^A)$ corresponds
to a morphism of $(A,A)$-bimodules $A\to A\otimes_B A$. The image of $1$
under this morphism is the {\em Casimir element}
$\pi=\pi^A_B\in (A\otimes_B A)^A$.
It satisfies
\begin{equation}
\label{eq:charpi}
(t\otimes 1)(\pi)=(1\otimes t)(\pi)=1\in A.
\end{equation}
Conversely, given an element $\pi\in (A\otimes_B A)^A$, there
exists at most one $t\in\Hom_{B,B}(A,B)$ satisfying (\ref{eq:charpi}),
and such a morphism is a Frobenius form.

\smallskip
Note that right multiplication induces an isomorphism
$A^B\iso \End(\Ind_B^A)$ and the automorphism (\ref{eq:Nakayama2cat})
is the Nakayama automorphism $\gamma_t$.

\begin{rem}
We developed the theory for left modules, but this is the same as the
theory for right modules. Namely, let $t:A\to B$ be a Frobenius form. Since
$A$ is finitely generated and projective as a $B$-module, it follows that
$\Hom_B(A,B)$ is a finitely generated projective right $B$-module, hence
$A$ is a finitely generated projective right $B$-module.
Consider the composition
$$\check{t}:A\xrightarrow{a\mapsto (\zeta\mapsto \zeta(a))}
\Hom_{B^\opp}(\Hom_B(A,B),B)\xrightarrow{\Hom_{B^\opp}(\hat{t},B)}
\Hom_{B^\opp}(A,B),\ a\mapsto (a'\mapsto t(aa')).$$
The first map is an isomorphism since $A$ is finitely generated and projective
as a $B$-module. It follows that $\check{t}$ is an isomorphism.
\end{rem}

\subsubsection{Transitivity}
Let $C$ be an algebra, $B$ a $C$-algebra and $A$ a $B$-algebra. We assume that
$A$ (resp. $B$)
is a finitely generated projective $B$-module (resp. $C$-module).

Given $t\in\Hom_{B,B}(A,B)$, $t'\in\Hom_{C,C}(B,C)$ and
$t''=t'\circ t\in\Hom_{C,C}(A,C)$, we have a commutative diagram
$$\xymatrix{
A\ar[rrr]^-{\hat{t}''}\ar[d]_-{\hat{t}} &&& \Hom_C(A,C) \ar[d]_{\sim}^\can \\
\Hom_B(A,B)\ar[urrr]^{\Hom(A,t')} \ar[rrr]_-{\Hom(A,\hat{t}')} &&&
\Hom_B(A,\Hom_C(B,C))
}$$

The units of adjunction are given by composition:
$$A\xrightarrow{1\mapsto \pi_B^A} A\otimes_B A\xrightarrow{1\otimes 1\mapsto
1 \pi_C^B 1}A\otimes_C A,\ 1\mapsto \pi_C^A.$$

\begin{lemma}
\label{le:trans1}
If $t\in\Hom_{B,B}(A,B)$ and $t'\in\Hom_{C,C}(B,C)$ are Frobenius forms,
then $t'\circ t:A\to C$ is a Frobenius form.
\end{lemma}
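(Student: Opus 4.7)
The plan is to verify the three clauses in the definition of Frobenius form for $t''=t'\circ t$: namely that $t''$ is a map of $(C,C)$-bimodules, that $A$ is finitely generated projective as a $C$-module, and that $\hat{t''}\colon A\to\Hom_C(A,C)$ is an isomorphism. The first point is immediate: $t$ is $(B,B)$-bilinear and hence $(C,C)$-bilinear via the structure map $C\to B$, while $t'$ is $(C,C)$-bilinear by hypothesis, so the composite $t''$ is $(C,C)$-bilinear.

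For the second point, I would invoke transitivity of finite projectivity: since $A$ is finitely generated projective over $B$ and $B$ is finitely generated projective over $C$ (these are the standing assumptions of the subsection), $A$ is a direct summand of a free $B$-module of finite rank, which in turn is a direct summand of a free $C$-module of finite rank, so $A$ is finitely generated projective over $C$.

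The main and only nontrivial step is showing $\hat{t''}$ is an isomorphism. I would read this directly off the commutative diagram displayed immediately before the lemma, which factors $\hat{t''}$ as
\[
A\xrightarrow{\hat{t}}\Hom_B(A,B)\xrightarrow{\Hom(A,\hat{t}')}\Hom_B(A,\Hom_C(B,C))\xrightarrow{\can}\Hom_C(A,C).
\]
By hypothesis $\hat{t}$ and $\hat{t}'$ are isomorphisms, and $\Hom_B(A,-)$ sends the isomorphism $\hat{t}'$ to an isomorphism, so the middle arrow is an isomorphism. The canonical map $\can\colon\Hom_B(A,\Hom_C(B,C))\iso\Hom_C(A,C)$ is the standard tensor-hom adjunction isomorphism (valid for any $A$), so it too is an isomorphism. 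Composing gives $\hat{t''}$ as an isomorphism, completing the proof.

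The only potential obstacle worth mentioning is keeping track of the bimodule structures so that the commutative diagram is genuinely one of $(A,C)$-bimodules and its arrows are $(A,C)$-linear isomorphisms; but the paper has set this up explicitly in the preceding paragraph, so no additional work is needed beyond citing that diagram.
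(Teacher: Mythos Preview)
Your proof is correct and is exactly what the paper intends: the lemma is stated in the paper without an explicit proof, relying on the commutative diagram displayed just before it, and your argument spells out precisely the deduction from that diagram together with the routine checks of $(C,C)$-bilinearity and finite projectivity.
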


\begin{lemma}
\label{le:trans2}
Let $t'\in\Hom_{C,C}(B,C)$ and $t''\in\Hom_{C,C}(A,C)$ be Frobenius forms.
There is a unique $t\in\Hom_B(A,B)$ such that $t''=t'\circ t$. It
is a Frobenius form and it is given by
$t=\Hom_B(A,\hat{t}')^{-1}(\hat{t}''(1))\in \Hom_{B,B}(A,B)$.
\end{lemma}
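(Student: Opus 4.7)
\medskip
\noindent\textbf{Proof plan for Lemma~\ref{le:trans2}.}
The strategy is to invert the bottom horizontal map in the commutative diagram displayed just before Lemma~\ref{le:trans1}. Since $t'$ is a Frobenius form, $\hat{t}':B\iso\Hom_C(B,C)$ is an isomorphism of $(B,C)$-bimodules, so postcomposition with it induces an isomorphism
$$\Hom_B(A,\hat{t}'):\Hom_B(A,B)\iso\Hom_B(A,\Hom_C(B,C))$$
of $(A,B)$-bimodules. Combined with the canonical iso $\Hom_C(A,C)\iso\Hom_B(A,\Hom_C(B,C))$ coming from the identification $A=A\otimes_B B$ (tensor-hom adjunction, which is an isomorphism of $(A,C)$-bimodules), this gives a bijection $\Phi$ between the two Hom spaces, and I would first check that $\Phi$ restricts to a bijection $\Hom_{B,B}(A,B)\iso\Hom_{C,C}(A,C)$ sending $t\mapsto t'\circ t$ (this last identification is the content of the upper triangle of the diagram).

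Define $t$ to be the preimage of $t''=\hat{t}''(1)$ under $\Phi$; explicitly $t=\Hom_B(A,\hat{t}')^{-1}(\hat{t}''(1))$ after using the canonical identification. By construction $t\in\Hom_{B,B}(A,B)$ and $t'\circ t=t''$. For uniqueness, any $\tilde t\in\Hom_B(A,B)$ with $t'\circ\tilde t=t''$ yields $\hat{t}'\circ\tilde t=\can(\hat{t}''(1))$, so $\tilde t=\hat{t}'^{-1}\circ\can(\hat{t}''(1))=t$, since $\hat{t}'$ is an isomorphism.

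It remains to show $t$ is a Frobenius form. By assumption $A$ is a finitely generated projective $B$-module, so I only need $\hat{t}:A\to\Hom_B(A,B)$ to be an isomorphism. Reading the commutative diagram as a factorisation
$$\hat{t}=\Hom_B(A,\hat{t}')^{-1}\circ\can\circ\hat{t}'',$$
each of the three factors is an isomorphism ($\hat{t}''$ because $t''$ is Frobenius, $\can$ by tensor-hom, and $\Hom_B(A,\hat{t}')$ because $\hat{t}'$ is so), hence so is $\hat{t}$.

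The main obstacle is purely bookkeeping: verifying that the canonical identification $\Hom_C(A,C)\iso\Hom_B(A,\Hom_C(B,C))$ and the map $\Hom_B(A,\hat{t}')$ intertwine the correct bimodule structures, so that their composite $\Phi$ really restricts to $(B,B)$- and $(C,C)$-bimodule hom subspaces. Since the paper allows a non-commutative base, this step requires tracking left and right actions of $B$ and $C$ carefully, but it is formal once set up, because $\hat{t}'$ is an isomorphism of $(B,C)$-bimodules and the canonical iso comes from the $(A,C)$-bimodule isomorphism $A\otimes_B B\iso A$. After that, every other assertion in the lemma follows by a short diagram chase.
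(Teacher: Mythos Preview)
Your approach is exactly what the paper intends: no separate proof is given there, and the lemma is meant to be read off the commutative diagram displayed just before Lemma~\ref{le:trans1}. The existence and uniqueness of $t\in\Hom_B(A,B)$, and the factorisation $\hat t=\Hom_B(A,\hat t')^{-1}\circ\can\circ\hat t''$ showing that $\hat t$ is an isomorphism, go through exactly as you describe.

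The step you flag as ``purely bookkeeping'' is the one genuine gap. The map $\Phi:\Hom_B(A,B)\to\Hom_C(A,C)$, $t\mapsto t'\circ t$, is an isomorphism of $(A,C)$-bimodules, and it does send $\Hom_{B,B}(A,B)$ into $\Hom_{C,C}(A,C)$: if $t$ is right $B$-linear then $t'\circ t$ is right $C$-linear. But the reverse inclusion $\Phi^{-1}(\Hom_{C,C})\subset\Hom_{B,B}$ is \emph{not} formal, because $\Hom_C(A,C)$ carries no natural right $B$-action for $\Phi$ to intertwine; the fact that $\hat t'$ is a $(B,C)$-bimodule isomorphism controls the right $C$-structure only. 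Concretely, take $C=k$, $A=B=M_n(k)$, $t'$ the trace, and $t''(a)=t'(az)$ for an invertible non-central $z$. Both $t'$ and $t''$ are Frobenius forms, yet the unique left $B$-linear solution is $t(a)=az$, which is not right $B$-linear. So the assertion $t\in\Hom_{B,B}(A,B)$ does not follow from the diagram alone; in the paper's applications the base $C$ is central and the forms are compatibly chosen, so the issue never arises, but your argument as written does not establish right $B$-linearity in the stated generality.
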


Let $t''\in\Hom_{C,C}(A,C)$ and $\zeta\in A^C$. Define
$t'\in\Hom_{C,C}(B,C)$ by $t'(b)=t''(b\zeta)$. If $t''$ is a Frobenius
morphism and the pairing
$$B\times B\to C,\ (b,b')\mapsto t''(bb'\zeta)$$
is perfect, then $t'$ is a Frobenius form.

Assume now $t$, $t'$ and $t''$ are given and let $\zeta\in A^C$. Then,
$$t(\zeta)=1 \Leftrightarrow \forall b\in B,\ t'(bt(\zeta))=t'(b)
\Leftrightarrow \forall b\in B,\ t''(b\zeta)=t'(b).$$
Note that $\zeta$ is determined by $t'$ up to adding an element
$\xi\in A^C$ such that $t''(B\xi)=0$. The next lemma shows that under certain
conditions on $A$, the form $t'$ is always obtained from such a $\zeta$.

\begin{lemma}
\label{le:trans3}
Assume $B$ is a quotient of $A$ as a $(B,C)$-bimodule
(this is the case if $A$ is a progenerator for $B$ and $C\subset Z(A)$). 
Let $t\in\Hom_{B,B}(A,B)$ and $t''\in\Hom_{C,C}(A,C)$ be Frobenius forms.
There is a unique $t'\in\Hom_{C,C}(B,C)$ such that $t''=t'\circ t$. It
is a Frobenius form.
\end{lemma}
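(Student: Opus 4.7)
The strategy follows the discussion immediately preceding the lemma: I will construct $\zeta\in A^C$ with $t(\zeta)=1_B$, define $t'(b):=t''(b\zeta)$, and verify the required properties.

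For uniqueness, the hypothesis provides a surjective $(B,C)$-bimodule map $\phi:A\twoheadrightarrow B$. Since $\phi$ is left $B$-linear, $\phi\in\Hom_B(A,B)$, and via the Frobenius isomorphism $\hat t:A\iso\Hom_B(A,B)$ we can write $\phi=\hat t(a_0)$ for a unique $a_0\in A$. Right $C$-linearity of $\phi$, combined with injectivity of $\hat t$, forces $a_0\in A^C$. Since $\phi(a)=t(aa_0)$ already has image all of $B$, the Frobenius form $t$ itself is surjective, so any $t'\in\Hom_{C,C}(B,C)$ satisfying $t'\circ t=t''$ is uniquely determined on $\im t=B$.

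For existence, surjectivity of $\phi$ yields some $a^\flat\in A$ with $t(a^\flat a_0)=1_B$, but $a^\flat a_0$ need not lie in $A^C$. The crux is to upgrade this to a $C$-central element, equivalently to split $\phi$ as a $(B,C)$-bimodule surjection: a $(B,C)$-bilinear section of $\phi$ has the form $s(b)=ba_1$ with $a_1\in A^C$ and $\phi(a_1)=t(a_1a_0)=1_B$, so that $\zeta:=a_1a_0\in A^C$ satisfies $t(\zeta)=1_B$. To produce such an $a_1$, I would exploit the Casimir $\pi^A_C=\sum x_i\otimes y_i\in(A\otimes_C A)^A$ attached to $t''$: for any $a\in A$ the expression $\sigma(a):=\sum x_iay_i$ is a well-defined element of $Z(A)\subset A^C$, by the $A$-invariance of $\pi^A_C$ together with the fact that the tensor is over $C$. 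An appropriate averaging of $a^\flat a_0$ against $\pi^A_C$, normalized using the Casimir identity $\sum t''(x_i)y_i=1$, should furnish the desired $a_1$.

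Once $\zeta$ is constructed, verification is essentially routine: $t'(b):=t''(b\zeta)$ is left $C$-linear from the left $C$-linearity of $t''$, and right $C$-linear since $t'(bc)=t''(bc\zeta)=t''(b\zeta c)=t''(b\zeta)c=t'(b)c$, using $\zeta\in A^C$; the identity $t'\circ t=t''$ is exactly the equivalence noted before the lemma, given $t(\zeta)=1_B$. To see that $t'$ is a Frobenius form ($B$ is finitely generated projective over $C$ by the standing hypothesis), I would combine the Frobenius isomorphisms $\hat t$ and $\hat{t''}$ with the tensor--Hom identification $\Hom_C(A,C)\iso\Hom_B(A,\Hom_C(B,C))$ arising from $A\iso B\otimes_B A$, yielding an isomorphism $\Hom_B(A,B)\iso\Hom_B(A,\Hom_C(B,C))$ of $(A,C)$-bimodules. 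Since $A$ is a progenerator for $B$-modules (a generator because $B$ is a quotient of $A$, projective by hypothesis), Morita equivalence descends this to an isomorphism $B\iso\Hom_C(B,C)$ of $(B,C)$-bimodules, which after a Nakayama adjustment by a unit in $(B^C)^\times$ is identified with $\hat{t'}$.

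The main obstacle is producing $\zeta\in A^C$: surjectivity of $\phi$ alone delivers only an element of $A$ with the correct $t$-value, and ensuring $C$-centrality is the delicate point where the hypothesis that $B$ is a $(B,C)$-bimodule quotient of $A$ is essential. In the sufficient condition $C\subset Z(A)$ recorded parenthetically, $A^C=A$ and this difficulty evaporates entirely.
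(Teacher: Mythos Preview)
Your uniqueness argument via surjectivity of $t$ is correct and a bit slicker than the paper's (which instead notes that $\hat{t'}$ is determined by $\Hom_B(A,\hat{t'})$ since $A$ is a progenerator for $B$).

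The existence argument has a genuine gap. The averaging $\sigma(a)=\sum_i x_i a y_i$ is only well-defined for $a\in A^C$: the map $A\otimes_C A\to A$, $x\otimes y\mapsto xay$, descends from $A\otimes A$ precisely when $ca=ac$ for all $c\in C$. So $\sigma$ cannot manufacture an element of $A^C$ from an arbitrary $a^\flat a_0\in A$; it only accepts inputs that are already $C$-central, which is circular. You correctly observe that the difficulty evaporates when $C\subset Z(A)$, but your sketch does not handle the general hypothesis of the lemma.

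The paper does not try to split $\phi$. Rather than seeking $a_1\in A^C$ with $\phi(a_1)=1$, it constructs directly a $(B,C)$-linear $g:A\to B$ with $g(1)=1$; then $\zeta:=\hat t^{-1}(g)$ lies in $A^C$ automatically (by right $C$-linearity of $g$) and $t(\zeta)=g(1)=1$. In the parenthetical case the progenerator property gives a surjection $A^n\to B$ of $B$-modules; picking a preimage $m$ of $1$ and setting $g(a)$ equal to the image of $am$ yields a left $B$-linear map with $g(1)=1$, and centrality of $C$ in $A$ makes it right $C$-linear for free. In the general case the paper normalizes the given surjection $h$ by $h(1)^{-1}$.

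Finally, no ``Nakayama adjustment by a unit'' is needed in the Frobenius check. The commutative square relating $\hat t$, $\hat{t'}$, $\hat{t''}$ (displayed just before Lemma~\ref{le:trans1}) shows that $\Hom_B(A,\hat{t'})$ is literally a composite of isomorphisms. Since $A$ is finitely generated projective over $B$ (from $t$ Frobenius) and $B$ is a left $B$-module quotient of $A$ (from the hypothesis), $A$ is a progenerator for $B$; hence $\Hom_B(A,-)$ reflects isomorphisms and $\hat{t'}$ is an isomorphism on the nose.
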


\begin{proof}
Since $A$ is a progenerator for $B$, the morphism $\hat{t}'$ is determined by
$\Hom_B(A,\hat{t}')$. The unicity of $t'$ follows.

Assume $A$ is a progenerator for $B$ and $C$ is central in $A$.
Since $A$ is a progenerator for $B$, there exists an integer $n$ and
a surjection of $B$-modules $f:A^n\to B$. Let $m\in f^{-1}(1)$ and
consider the morphism $A\to A^n,\ a\mapsto am$. The composition
$g:A\to A^n\to B$ is a morphism of $B$-modules with $g(1)=1$. Since $C$ is
central, $g$ is a morphism of $(B,C)$-bimodules.

Assume now there is a surjective morphism of $(B,C)$-bimodules $h:A\to B$.
Then, $h(1)\in Z(C)^\times$. let $g:A\to B,\ a\mapsto ah(1)^{-1}$. This
is a morphism of $(B,C)$-bimodules with $g(1)=1$.

Let $\zeta=\hat{t}^{-1}(g)$. We have $t(\zeta)=1$ and we define $t'$ by
$t'(b)=t''(b\zeta)$.
We have $t''=t'\circ t$, the morphism $\Hom_B(A,\hat{t}')$ is invertible
and since $A$ is a progenerator for $B$, it follows that $\hat{t}'$ is
an isomorphism.
\end{proof}

\subsubsection{Bases}
Let $B$ be an algebra, $A$ a $B$-algebra and assume $A$ is free of
finite rank as a $B$-module. Let $\CB$ be a basis of $A$ as a left
$B$-module: $A=\bigoplus_{v\in\CB}Bv$. 

\smallskip
Let $t\in\Hom_{B,B}(A,B)$. Then, $t$ is a Frobenius form if and only
if there exists a {\em dual basis} $\CB^\vee=\{v^\vee\}_{v\in\CB}$: \ie,
$\CB^\vee$ 
satisfies $t(v'v^\vee)=\delta_{vv'}$ for $v,v'\in \CB$.

\smallskip
 Assume $t$ is
a Frobenius form. Then, $\CB^\vee$ exists and is unique. It is a basis
of $A$ as a right $B$-module.
We have
$$\hat{t}(v^\vee)=(\CB\ni v'\mapsto \delta_{v,v'}) \text{ for }v\in\CB.$$
Given $a\in A$, we have
$$a=\sum_{v\in\CB}t(av^\vee)v=\sum_{v\in\CB}v^\vee t(va).$$
Given $a\in A^B$, we have
$$\gamma_t(a)=\sum_{v\in\CB}v^\vee t(av).$$
The unit of the adjoint pair $(\Res_B^A,\Ind_B^A)$ is given by the
morphism of $(A,A)$-bimodules
$$A\to A\otimes_B A,\ 1\mapsto \pi_B^A=\sum_{v\in\CB}v^\vee\otimes v.$$

Consider now $C$ an algebra and a $C$-algebra structure on $B$ such that
$B$ is free of finite
rank as a $C$-module. Let $\CB'$ be a basis of $B$ as a $C$-module. Then,
$\CB''=\CB'\CB=\{v'v\}_{v\in\CB,v'\in\CB'}$ is a basis of $A$ as a $C$-module.

\smallskip
Let $t':B\to C$ be a Frobenius form. The dual basis to $\CB''$ for
the Frobenius form $t''=t'\circ t:A\to C$ is
$\CB^{\prime\prime\vee}=\{v^\vee v^{\prime\vee}\}_{v\in\CB,v'\in\CB'}$.
Given $a\in A$, we have
$$t(a)=\sum_{v'\in\CB'} t''(av^{\prime\vee})v'=\sum_{v'\in\CB'}v^{\prime\vee}
t''(v'a).$$
Given $v\in\CB$, we have
$$v^\vee=\sum_{v'\in\CB'} (v'v)^\vee t'(v').$$

\subsubsection{Ramification}
\label{se:ramification}
Let $A$ be a $B$-algebra endowed with a Frobenius form $t$ and assume
$A^B=A$.

The following statements are equivalent:
\begin{itemize}
\item[(a)] $A$ is a projective $(A\otimes_B A^\opp)$-module
\item[(b)] there exists $a\in A$ such that $m((1\otimes a\otimes 1\otimes 1)\pi)=1$
\item[(c)] there exists $a\in A$ such that $m((1\otimes 1\otimes a\otimes 1)\pi)=1$
\end{itemize}
where
$A\otimes_B A$ is viewed as a module over $\left((A\otimes A^\opp)\otimes_B
(A\otimes A^\opp)\right)$.

\smallskip
When $A$ is commutative, the statements (a)-(c)
above are equivalent to the following
two statements
\begin{itemize}
\item[(d)] $A$ is \'etale over $B$
\item[(e)] $m(\pi)\in A^\times$.
\end{itemize}

\section{Hecke algebras}
\subsection{Classical Hecke algebras}
\label{se:classicalHecke}
We recall in this section the various versions of affine Hecke algebras and the isomorphisms between them after suitable localizations. We consider only
the case of $\GL_n$: in this case, the inclusion 
$\BG_m^n\hookrightarrow \BG_a^n$ gives an algebraic $\GS_n$-equivariant
map that makes it possible to avoid completions. In general, one needs
to use the expotential map from the Lie algebra of a torus to the torus.
All constructions and results in this section extend to arbitrary Weyl groups.

\subsubsection{BGG-Demazure operators}
Given $1\le i\le n$,
we put $s_i=(i,i+1)\in\GS_n$. We
define an endomorphism of abelian groups
$\partial_i\in\End_\BZ(\BZ[X_1,\ldots,X_n])$ by
$$\partial_i(P)=\frac{P-s_i(P)}{X_{i+1}-X_i}.$$
The formula defines endomorphisms of various localizations, for
example $\BZ[X_1^{\pm 1},\ldots,X_n^{\pm 1}]$.

Given $w=s_{i_1}\cdots s_{i_r}$ a reduced decomposition of an element of
$\GS_n$, we put
$$\partial_w=\partial_{i_1}\cdots\partial_{i_r}.$$
This is independent of the choice of the reduced decomposition.

\smallskip
The $\BZ[X_1,\ldots,X_n]^{\GS_n}$-linear morphism $\partial_{w[1,n]}$ takes
values in $\BZ[X_1,\ldots,X_n]^{\GS_n}$. It
is a symmetrizing form for the $\BZ[X_1,\ldots,X_n]^{\GS_n}$-algebra
$\BZ[X_1,\ldots,X_n]$. We view $\BZ[X_1,\ldots,X_n]$ as a graded algebra
with $\deg(X_i)=2$. Then, $\partial_{w[1,n]}$ is homogeneous of
degree $-n(n-1)$.

\begin{lemma}
\label{le:Casimir}
Denote by $\pi$ the Casimir element for $\partial_{w[1,n]}$. Then
$m(\pi)=\prod_{1\le j<i\le n}(X_i-X_j)$.
\end{lemma}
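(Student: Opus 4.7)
The plan is to factor $A = \BZ[X_1, \ldots, X_n]$ over $B = A^{\GS_n}$ as a tower of simple extensions and to combine the resulting Casimirs via transitivity of Frobenius forms (Lemma \ref{le:trans1}). Set $B_i := A^{\GS_{\{i+1, \ldots, n\}}}$ for $0 \le i \le n-1$, so that $B_0 = B$ and $B_{n-1} = A$. Each step is a simple extension $B_i = B_{i-1}[T]/(f_{i-1}(T))$ with $f_{i-1}(T) := \prod_{j \ge i}(T - X_j) \in B_{i-1}[T]$ monic of degree $n-i+1$. I equip it with the Frobenius form $\tau_{i-1} : B_i \to B_{i-1}$ defined by $\tau_{i-1}(X_i^k) = \delta_{k, n-i}$ in the basis $\{1, X_i, \ldots, X_i^{n-i}\}$; a direct construction of the dual basis yields the classical formula $m(\pi^{\tau_{i-1}}) = f_{i-1}'(X_i) = \prod_{j > i}(X_i - X_j)$.

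Next, by Lemma \ref{le:trans1} and the explicit description of the Casimir under composition (insertion of $\pi^{\tau_{i-1}}$ between the tensor factors), the composed form $\tilde\tau := \tau_0 \circ \cdots \circ \tau_{n-2} : A \to B$ is a Frobenius form, and commutativity of $A$ gives the multiplicativity
\[
m(\pi^{\tilde\tau}) = \prod_{i=1}^{n-1} m(\pi^{\tau_{i-1}}) = \prod_{1 \le i < j \le n}(X_i - X_j).
\]
Now $\partial_{w[1,n]}$ and $\tilde\tau$ are both Frobenius forms on $A/B$, so by the general theory of \S\ref{se:symmalg} they differ by a unit: $\partial_{w[1,n]}(a) = \tilde\tau(a z)$ for a unique $z \in A^\times = \{\pm 1\}$. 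Dual bases scale inversely under this rescaling, giving $m(\pi) = z^{-1} m(\pi^{\tilde\tau})$.

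To pin down $z$, I evaluate both forms at $X^\rho := X_1^{n-1} X_2^{n-2} \cdots X_{n-1}$. Iterated extraction of top coefficients gives $\tilde\tau(X^\rho) = 1$; on the other hand, the classical identity $\partial_{w[1,n]}(P) = \Delta(P)/\delta$ (with $\Delta(P) := \sum_{w \in \GS_n} (-1)^{\ell(w)} w(P)$ and $\delta := \prod_{i < j}(X_j - X_i)$), combined with the signed Vandermonde identity $\Delta(X^\rho) = \det(X_j^{n-i})_{i,j} = (-1)^{\binom{n}{2}} \delta$, yields $\partial_{w[1,n]}(X^\rho) = (-1)^{\binom{n}{2}}$. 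Hence $z = (-1)^{\binom{n}{2}}$ and
\[
m(\pi) = (-1)^{\binom{n}{2}} \prod_{i < j}(X_i - X_j) = \prod_{1 \le j < i \le n}(X_i - X_j),
\]
as desired. The main delicacy is sign bookkeeping: the paper's convention $\partial_i(P) = (P - s_i P)/(X_{i+1} - X_i)$ differs from the standard BGG convention by $(-1)$ on each simple reflection, so both $\partial_{w[1,n]}(P) = \Delta(P)/\delta$ and $\partial_{w[1,n]}(X^\rho) = (-1)^{\binom{n}{2}}$ need to be verified in this sign convention.
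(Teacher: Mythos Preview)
Your proof is correct and takes a genuinely different route from the paper's. The paper argues abstractly: since $A=\BZ[X_1,\ldots,X_n]$ is \'etale over $B=A^{\GS_n}$ precisely away from the vanishing of the Vandermonde, the criterion of \S\ref{se:ramification} forces $\prod_{j<i}(X_i-X_j)\mid m(\pi)$; homogeneity then gives $m(\pi)=a\prod_{j<i}(X_i-X_j)$ for some $a\in\BZ$, and the integer $a$ is pinned down by applying $\partial_{w[1,n]}$ to both sides (using $\partial_{w[1,n]}(m(\pi))=\mathrm{rank}_B A=n!$). Your argument instead builds $m(\pi)$ explicitly from the tower $B=B_0\subset\cdots\subset B_{n-1}=A$, invoking the classical ``different'' formula $m(\pi)=f'(\alpha)$ for a monogenic extension with the top-coefficient trace, and then multiplicativity of $m(\pi)$ under composition of Frobenius forms (which indeed follows from the description of $\pi^A_C$ in terms of $\pi^A_B$ and $\pi^B_C$ together with commutativity of $A$).

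What each approach buys: the paper's proof is shorter and more conceptual, leaning on the \'etale/ramification discussion already set up in \S\ref{se:ramification}, but it needs the nontrivial input that $m(\pi)$ is homogeneous of the right degree and the computation $\partial_{w[1,n]}(\delta)=n!$. Your approach is more self-contained and constructive---it never invokes \'etaleness or degree counting---at the cost of a longer chain of explicit identifications and careful sign tracking. Your flagging of the sign issue is well taken: with the paper's convention $\partial_i=(1-s_i)/(X_{i+1}-X_i)$ one indeed gets $\partial_{w[1,n]}(X^\rho)=(-1)^{\binom{n}{2}}$ (check $n=2$: $\partial_1(X_1)=-1$), so the later assertion in \S\ref{se:nilaffineHecke} that this value is $1$ appears to carry a sign slip, while your computation here is consistent.
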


\begin{proof}
The algebra $\BZ[X_1,\ldots,X_n]$ is \'etale over
$\BZ[X_1,\ldots,X_n]^{\GS_n}$ outside $m(\pi)=0$. So,
$\prod_{1\le j<i\le n}(X_i-X_j)~|~m(\pi)$ (cf \S \ref{se:ramification}). Since
$m(\pi)$ is homogeneous of
degree $n(n-1)$, it follows that there is $a\in\BZ$ such that
$m(\pi)=a\prod_{1\le j<i\le n}(X_i-X_j)$. On the other hand,
$\partial_{w[1,n]}(m(\pi))=n!=\partial_{w[1,n]}
\left(\prod_{1\le j<i\le n}(X_i-X_j)
\right)$ and the lemma follows.
\end{proof}

\medskip
Let $A=\BZ[X_1,\ldots,X_n]\rtimes\GS_n$. This algebra
has a Frobenius form over $\BZ[X_1,\ldots,X_n]$ given by
$$Pw\mapsto P\delta_{w\cdot w[1,n]} \text{ for }
P\in\BZ[X_1,\ldots,X_n] \text{ and }w\in \GS_n.$$
By composition, we obtain a Frobenius form $t$ for $A$ over
$\BZ[X_1,\ldots,X_n]^{\GS_n}$ given by
$$t(Pw)=\partial_{w[1,n]}(P)\delta_{w\cdot w[1,n]} \text{ for }
P\in\BZ[X_1,\ldots,X_n] \text{ and }w\in \GS_n.$$
The corresponding Nakayama automorphism of $A$ is the involution
$$X_i\mapsto X_{n-i+1},\ s_i\mapsto -s_{n-i}.$$

\subsubsection{Degenerate affine Hecke algebras}
Let $\bar{H}_n$ be the {\em degenerate affine Hecke algebra} of $\GL_n$:
$\bar{H}_n=\BZ[X_1,\ldots,X_n]\otimes \BZ\GS_n$ as an abelian group,
$\BZ[X_1,\ldots,X_n]$ and $\BZ\GS_n$ are subalgebras and
$$T_i X_j=X_j T_i \text{ if } j-i\not=0,1 \text{ and }
T_iX_{i+1}-X_iT_i=1.$$
We denote here by $T_1,\ldots,T_{n-1}$ the Coxeter generators for $\GS_n$
and we write $T_w$ for the element $w$ of $\GS_n$.

Given $P\in \BZ[X_1,\ldots,X_n]$, we have
$T_i P-s_i(P)T_i=\partial_i(P)$.

\smallskip
We have a faithful representation on $\BZ[X_1,\ldots,X_n]=
\bar{H}_n\otimes_{\BZ\GS_n}\BZ$ where
$$T_i(P)=s_i(P)+\partial_i(P).$$
Here, $\BZ$ is the trivial representation of $\GS_n$.

\smallskip
The algebra $\bar{H}_n$ has a Frobenius form over $\BZ[X_1,\ldots,X_n]$
given by 
\begin{equation}
\label{eq:FroboverP}
PT_w\mapsto P\partial_{w\cdot w[1,n]} \text{ for }
P\in\BZ[X_1,\ldots,X_n] \text{ and }w\in \GS_n.
\end{equation}
By composition, we obtain a Frobenius form $t$ for $\bar{H}_n$ over
$\BZ[X_1,\ldots,X_n]^{\GS_n}$ given by
\begin{equation}
\label{eq:FroboverPinv}
t(PT_w)=\partial_{w[1,n]}(P)\delta_{w\cdot w[1,n]} \text{ for }
P\in\BZ[X_1,\ldots,X_n] \text{ and }w\in \GS_n.
\end{equation}
The corresponding Nakayama automorphism of $\bar{H}_n$ is the involution
$$X_i\mapsto X_{n-i+1},\ T_i\mapsto -T_{n-i}.$$

\subsubsection{Finite Hecke algebras}
Let $R=\BZ[q^{\pm 1}]$.
Let $H_n^f$ be the {\em Hecke algebra} of $\GL_n$: this
is the $R$-algebra generated by $T_1,\ldots,T_{n-1}$,
with relations
$$T_iT_{i+1}T_i=T_{i+1}T_iT_{i+1},\ \ T_iT_j=T_jT_i \text { if }
|i-j|>1 \text { and } (T_i-q)(T_i+1)=0.$$

\smallskip
Given $w=s_{i_1}\cdots s_{i_r}$ a reduced decomposition of an element
$w\in\GS_n$, we put $T_w=T_{i_1}\cdots T_{i_r}$.
Let $t_f$ be the $R$-linear form on $H_n^f$
defined by
$t_f(T_w)=\delta_{w\cdot w[1,n]}$.
This is a Frobenius form, with Nakayama automorphism the
involution given by $T_i\mapsto T_{n-i}$.

\begin{rem}
The algebra $H_n^f$ is actually symmetric, via the classical form
given by $T_w\mapsto\delta_{1,w}$. In other terms, the Nakayama automorphism
is inner: it is conjugation by $T_{w[1,n]}$. On the other hand,
the Hecke algebra is not symmetric over $\BZ[q]$ and the classical form
induces a degenerate pairing, while the form $t_f$ above is still
a Frobenius form over $\BZ[q]$ (cf \S \ref{se:nil}).
\end{rem}

\subsubsection{Affine Hecke algebras}
Let $H_n$ be the {\em affine Hecke algebra} of $\GL_n$:
$H_n=R[X_1^{\pm 1},\ldots,X_n^{\pm 1}]\otimes_R H^f$ as an $R$-module,
$R[X_1^{\pm 1},\ldots,X_n^{\pm 1}]$ and $H^f$ are subalgebras and
$$T_i X_j=X_j T_i \text{ if } j-i\not=0,1 \text{ and }
T_iX_{i+1}-X_iT_i=(q-1)X_{i+1}.$$
Given $P\in \BZ[X_1^{\pm 1},\ldots,X_n^{\pm 1}]$, we have
$T_i P-s_i(P)T_i=(q-1)X_{i+1}\partial_i(P)$.

\smallskip
We have a faithful representation on $R[X_1^{\pm 1},\ldots,X_n^{\pm 1}]=
H_n\otimes_{H_n^f} R$, where
$$T_i(P)=qs_i(P)+(q-1)X_{i+1}\partial_i(P).$$
Here $R$ denotes the one-dimensional representation of $H_n^f$ on
which $T_i$ acts by $q$.

\smallskip
\smallskip
The algebra $H_n$ has a Frobenius form over $\BZ[X_1,\ldots,X_n]$
given by (\ref{eq:FroboverP}) and a Frobenius form $t$ over
$\BZ[X_1,\ldots,X_n]^{\GS_n}$ given by (\ref{eq:FroboverPinv}).
The corresponding Nakayama automorphism of $H_n$ is the involution
$$X_i\mapsto X_{n-i+1},\ T_i\mapsto -qT_{n-i}^{-1}.$$

\subsubsection{Nil Hecke algebras}
\label{se:nil}
Let ${^0H}_n^f$ be the {\em nil Hecke algebra} of $\GL_n$:
this is the $\BZ$-algebra generated by $T_1,\ldots,T_{n-1}$,
with relations
$$T_iT_{i+1}T_i=T_{i+1}T_iT_{i+1},\ \ T_iT_j=T_jT_i \text { if }
|i-j|>1 \text { and } T_i^2=0.$$

\smallskip
Given $w=s_{i_1}\cdots s_{i_r}$ a reduced decomposition of an element
$w\in\GS_n$, we put $T_w=T_{i_1}\cdots T_{i_r}$.
Let $t_0$ be the linear form on ${^0H}_n^f$
defined by
$t_0(T_w)=\delta_{w\cdot w[1,n]}$. This is a Frobenius form, with Nakayama
automorphism given by $T_i\mapsto T_{n-i}$.

\smallskip
The nil Hecke algebra ${^0H}_n$ is a graded algebra with
$\deg T_i=-2$ and $t_0$ is homogeneous of degree $n(n-1)$.

\begin{lemma}
\label{le:isoandTw0}
Let $f:M\to N$ be a morphism of relatively $\BZ$-projective
${^0H}_n^f$-modules. If
$T_{w[1,n]}f:T_{w[1,n]}M\to T_{[1,n]}N$ is an isomorphism, then $f$
is an isomorphism.
\end{lemma}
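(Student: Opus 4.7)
The plan is to exploit the length (augmentation) filtration on $H:={^0H}_n^f$ to reduce the hypothesis about $T_{w[1,n]}f$ to an isomorphism on each graded piece of $f$, and then reassemble via the five-lemma. First I set up $I=\bigoplus_{w\ne 1}\BZ T_w\subset H$, the two-sided augmentation ideal; a direct reduced-expression argument shows $I^k=\bigoplus_{\ell(w)\ge k}\BZ T_w$, giving a decreasing filtration of two-sided ideals with $I^N=\BZ T_{w[1,n]}$ and $I^{N+1}=0$, where $N=\binom{n}{2}$. Pulling $T_i$ out of a reduced expression for $T_{w[1,n]}$ on either side yields $T_iT_{w[1,n]}=T_{w[1,n]}T_i=0$, hence $IT_{w[1,n]}=T_{w[1,n]}I=0$.

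For any left $H$-module $M$, set $F^kM=I^kM$ and $\gr^kM=F^kM/F^{k+1}M$. The multiplication map $I^k\otimes_\BZ M\to I^kM$ kills $I^{k+1}\otimes M$ and $I^k\otimes IM$, so it factors through a natural map $\bar\mu_M^k:\gr^kH\otimes_\BZ(M/IM)\to\gr^kM$. The key claim is that $\bar\mu_M^k$ is an isomorphism whenever $M$ is relatively $\BZ$-projective. For $M=H\otimes_\BZ V$: since each $\gr^kH$ is a free $\BZ$-module, the sequence $0\to I^{k+1}\to I^k\to\gr^kH\to 0$ is $\BZ$-split, so tensoring with $V$ preserves exactness and gives $\gr^kM\cong\gr^kH\otimes V$, while $M/IM=V$ and $\bar\mu_M^k$ is readily identified with the identity. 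For a direct summand $M$ of $H\otimes V$, naturality of $\bar\mu^k$ decomposes $\bar\mu^k_{H\otimes V}$ as a direct sum with $\bar\mu_M^k$ as one component, so $\bar\mu_M^k$ is again an isomorphism. Specializing $k=N$ yields a natural isomorphism $M/IM\xrightarrow{\sim}T_{w[1,n]}M$, $\bar m\mapsto T_{w[1,n]}m$, for every relatively $\BZ$-projective $M$.

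Given $f:M\to N$ with $T_{w[1,n]}f$ invertible, the $k=N$ case above identifies the induced map $\bar f:M/IM\to N/IN$ with $T_{w[1,n]}f$, so $\bar f$ is an isomorphism. Naturality of $\bar\mu^k$ then identifies $\gr^kf$ with $\id_{\gr^kH}\otimes\bar f$, which is an isomorphism for every $k$. A descending induction on $k$, applying the five-lemma to the rows $0\to I^{k+1}M\to I^kM\to\gr^kM\to 0$ and the analogous rows for $N$, shows that $f$ restricts to an isomorphism $I^kM\xrightarrow{\sim}I^kN$ at every level; taking $k=0$ gives the lemma.

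The main obstacle is the key claim that $\bar\mu_M^k$ is an isomorphism for all relatively $\BZ$-projective $M$: the induced case $M=H\otimes V$ is direct once one exploits the $\BZ$-freeness of $\gr^kH$, but for an arbitrary summand one cannot choose a concrete basis — the idempotent cutting out $M$ inside $H\otimes V$ is arbitrary, so the reduction must proceed purely through the naturality of $\bar\mu^k$ with respect to $H$-linear maps.
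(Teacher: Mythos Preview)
Your proof is correct. Both your argument and the paper's pivot on the same observation, namely that for a relatively $\BZ$-projective ${^0H}_n^f$-module $L$ the assignment $\bar l\mapsto T_{w[1,n]}l$ is a natural isomorphism $L/IL\xrightarrow{\sim}T_{w[1,n]}L$ (the paper records this as ``the annihilator of $T_{w[1,n]}$ on $L$ is $({^0H}_n^f)_{\le-2}L$''). From there the two proofs diverge. The paper applies Nakayama for the nilpotent ideal $I$ to get surjectivity of $f$, then asserts that $\ker f$ is a direct summand of $M$ (hence itself relatively projective), and deduces $\ker f=0$ from $T_{w[1,n]}\ker f=0$. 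You instead push the isomorphism $\bar f\colon M/IM\to N/IN$ through the natural maps $\bar\mu^k$ to conclude that every associated-graded piece $\gr^kf$ is an isomorphism, and then reassemble $f$ by a five-lemma induction down the $I$-adic filtration. Your route is longer but entirely self-contained; the paper's direct-summand step amounts to requiring that the surjection $M\twoheadrightarrow N$ be $\BZ$-split so that relative projectivity of $N$ (equivalently, relative injectivity of $M$, via the Frobenius form $t_0$) can be invoked --- this is automatic in the paper's applications, where the underlying $\BZ$-modules are free, but is not an immediate consequence of the hypotheses as stated. Your filtration argument bypasses that splitting question altogether.
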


\begin{proof}
The annihilator of $T_{w[1,n]}$ on a relatively
$\BZ$-projective module $L$ is $({^0H}_n^f)_{\le -2}L$. Nakayama's Lemma
shows that under the assumption of the lemma, the morphism $f$ is surjective.
On the other hand, $\ker f$ is a direct summand of $M$, hence
$\ker f$ is relatively $\BZ$-projective. Since $T_{w[1,n]}\ker f=0$, it
follows that $\ker f=0$.
\end{proof}

\smallskip
Let $A$ be an algebra. We denote by $A\wr {^0H}_n^f$ the algebra whose
underlying abelian group is $A^{\otimes n}\otimes {^0H}_n^f$, where
$A^{\otimes n}$ and ${^0H}_n^f$ are subalgebras and where
$(a_1\otimes\cdots \otimes a_n)T_i=T_i(a_1\otimes\cdots\otimes
a_{i-1}\otimes a_{i+1}\otimes a_i\otimes a_{i+2}\otimes\cdots\otimes a_n)$.

\subsubsection{Nil affine Hecke algebras}
\label{se:nilaffineHecke}
Let ${^0H}_n$ be the {\em nil affine Hecke algebra} of $\GL_n$:
${^0H}_n=\BZ[X_1,\ldots,X_n]\otimes {^0H}_n^f$ as an abelian group,
$\BZ[X_1,\ldots,X_n]$ and ${^0H}_n^f$ are subalgebras and
$$T_i X_j=X_j T_i \text{ if } j-i\not=0,1,\ 
T_iX_{i+1}-X_iT_i=1 \text{ and }
T_iX_i-X_{i+1}T_i=-1.$$
Given $P\in \BZ[X_1,\ldots,X_n]$, we have
$T_i P-s_i(P)T_i=PT_i -T_is_i(P)=\partial_i(P)$.

\smallskip
We have a faithful representation on $\BZ[X_1,\ldots,X_n]=
{^0H}_n\otimes_{{^0H}_n^f}\BZ$ where
$$T_i(P)=\partial_i(P).$$
Let $b_n=T_{w[1,n]}X_1^{n-1}X_2^{n-2}\cdots X_{n-1}$. By induction on $n$,
one sees that $\partial_{w[1,n]}(X_1^{n-1}X_2^{n-2}\cdots X_{n-1})=1$, hence
$b_n^2=b_n$. We have an isomorphism of ${^0H}_n$-modules
$$\BZ[X_1,\ldots,X_n]\iso {^0H}_n b_n,\ P\mapsto Pb_n.$$

Since $\{\partial_w(X_1^{n-1}\cdots X_{n-1})\}_{w\in \GS_n}$ is a basis
of $\BZ[X_1,\ldots,X_n]$ over $\BZ[X_1,\ldots,X_n]^{\GS_n}$, it follows
that the multiplication map gives an isomorphism of
$({^0H}_n^f,\BZ[X_1,\ldots,X_n]^{\GS_n})$-bimodules
$${^0H}_n^f\otimes (\BZ[X_1,\ldots,X_n]^{\GS_n}X_1^{n-1}\cdots X_{n-1}b_n)
\iso {^0H}_nb_n.$$

\begin{prop}
\label{pr:MoritanilHecke}
The action of ${^0H}_n$ on $\BZ[X_1,\ldots,X_n]$ induces an isomorphism
$${^0H}_n\iso \End_{\BZ[X_1,\ldots,X_n]^{\GS_n}}(\BZ[X_1,\ldots,X_n]).$$
 Since $\BZ[X_1,\ldots,X_n]$ is a free
$\BZ[X_1,\ldots,X_n]^{\GS_n}$-module
of rank $n!$, the algebra ${^0}H_n$ is isomorphic to a
$(n!\times n!)$-matrix algebra over $\BZ[X_1,\ldots,X_n]^{\GS_n}$.

The restriction to ${^0H}_n^f$ of any ${^0H}_n$-module is
relatively $\BZ$-projective.
\end{prop}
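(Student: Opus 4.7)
The plan is to establish the isomorphism $\phi : {^0H}_n \iso \End_R(P)$ (where $R := P^{\GS_n}$ and $P := \BZ[X_1,\ldots,X_n]$) induced by the action, and then to deduce the matrix algebra and relative projectivity statements from it. First, $\phi$ lands in $R$-linear endomorphisms because $R$ is central in ${^0H}_n$ (as $T_i p - p T_i = \partial_i(p) = 0$ for $p\in R$), and it is injective by the stated faithfulness of the representation on $P$. Since $P$ is free of rank $n!$ over $R$ with basis $\{\partial_w(\delta)\}_{w\in\GS_n}$, the target identifies with $M_{n!}(R)$, and the content of the first assertion reduces to showing $\phi$ is surjective.

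To show surjectivity I would exhibit explicit preimages of the matrix units $E_{u,v}$ in this basis. The key ingredient is that $\partial_{w[1,n]}:P\to R$ is a Frobenius form for $P/R$ (recorded right after Lemma~\ref{le:Casimir}), which furnishes a dual basis $\{\beta_w\}_{w\in\GS_n}\subset P$ satisfying $\partial_{w[1,n]}(\partial_v(\delta)\,\beta_w)=\delta_{v,w}$. For each pair $(u,v)$ I consider
$$h_{u,v}\ :=\ \partial_u(\delta)\cdot T_{w[1,n]}\cdot\beta_v\ \in\ {^0H}_n.$$
A direct trace of the action on a basis element $\partial_w(\delta)\in P$---multiply by $\beta_v$, apply $T_{w[1,n]}$ (which acts as $\partial_{w[1,n]}$), then multiply by $\partial_u(\delta)$---yields $\delta_{v,w}\,\partial_u(\delta)$. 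Hence $\phi(h_{u,v})=E_{u,v}$, so the image of $\phi$ contains an $R$-basis of $M_{n!}(R)$ and $\phi$ is an isomorphism. The matrix algebra claim is then immediate.

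For the relative projectivity of an arbitrary ${^0H}_n$-module $M$ restricted to ${^0H}_n^f$, I would combine the isomorphism ${^0H}_n b_n\iso P$ with the given decomposition ${^0H}_n^f\otimes_\BZ (R\,\delta\,b_n)\iso {^0H}_n b_n$ to obtain an isomorphism $P\cong {^0H}_n^f\otimes_\BZ R$ of $({^0H}_n^f,R)$-bimodules, with $R$ acting on the second tensor factor. From the matrix algebra description, $b_n$ is a full rank-$1$ idempotent with $b_n{^0H}_n b_n\cong R$ (since ${^0H}_n b_n\cong P$ is free of rank $n!$ over $R$), so Morita theory gives $M\cong P\otimes_R(b_nM)$ as left ${^0H}_n$-modules. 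Restricting to ${^0H}_n^f$:
$$\Res_{{^0H}_n^f}^{{^0H}_n} M\ \cong\ P\otimes_R(b_nM)\ \cong\ ({^0H}_n^f\otimes_\BZ R)\otimes_R(b_nM)\ \cong\ {^0H}_n^f\otimes_\BZ(b_nM),$$
exhibiting the restriction as relatively $\BZ$-free, hence in particular relatively $\BZ$-projective.

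The main obstacle I anticipate is recognizing the correct candidates $h_{u,v}$; once one organizes the construction of matrix-unit preimages around the dual basis coming from the Frobenius form $\partial_{w[1,n]}$, both the isomorphism $\phi$ and the $({^0H}_n^f,R)$-bimodule decomposition of $P$ that drives the relative projectivity statement drop out in a few lines, sidestepping the more intricate localization-at-the-discriminant route.
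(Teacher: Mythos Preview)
Your argument is correct. The construction $h_{u,v}=\partial_u(\delta)\,T_{w[1,n]}\,\beta_v$ does produce matrix units (the computation $\phi(h_{u,v})(\partial_w(\delta))=\partial_u(\delta)\cdot\partial_{w[1,n]}(\beta_v\,\partial_w(\delta))=\delta_{v,w}\,\partial_u(\delta)$ is clean), and the relative projectivity step via $P\cong{^0H}_n^f\otimes_\BZ R$ together with Morita is accurate---indeed it yields the sharper conclusion that $\Res_{{^0H}_n^f}^{{^0H}_n}M$ is relatively $\BZ$-\emph{free}, not merely projective.

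The paper takes a different route for the isomorphism. Rather than exhibit matrix units, it observes that $P\simeq{^0H}_n b_n$ is finitely generated projective over ${^0H}_n$, deduces that the canonical map ${^0H}_n\to\End_R(P)$ splits as a map of $R$-modules, and then finishes by comparing ranks: both sides are free $R$-modules of rank $(n!)^2$, so a split injection between them is an isomorphism. No localization at the discriminant is used. Your approach is more explicit and self-contained (it leverages the Frobenius form $\partial_{w[1,n]}$ and its dual basis directly), while the paper's is shorter but relies on the projectivity of $P$ as a ${^0H}_n$-module to force the splitting. For the final assertion the two arguments are essentially the same---both invoke Morita to write $N\simeq P\otimes_R(b_nN)$---except that the paper states the weaker direct summand conclusion, using only that $P$ is a summand of ${^0H}_n$ as a $({^0H}_n^f,R)$-bimodule, whereas you use the full bimodule isomorphism ${^0H}_n^f\otimes_\BZ(R\,\delta\,b_n)\iso{^0H}_n b_n$ recorded just before the proposition.
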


\begin{proof}
Since $\BZ[X_1,\ldots,X_n]$ is a finitely generated projective
${^0H}_n$-module, the canonical map
${^0H}_n\iso \End_{\BZ[X_1,\ldots,X_n]^{\GS_n}}(\BZ[X_1,\ldots,X_n])$
 splits as a morphism of
$\BZ[X_1,\ldots,X_n]^{\GS_n}$-modules. The first two assertions of the
proposition follow from the fact that
${^0H}_n$ is a free $\BZ[X_1,\ldots,X_n]^{\GS_n}$-module of rank $(n!)^2$.

The $({^0H}_n^f,\BZ[X_1,\ldots,X_n]^{\GS_n})$-bimodule 
$\BZ[X_1,\ldots,X_n]$ is a direct summand of ${^0H}_n$. So, given
$M$ an $\BZ[X_1,\ldots,X_n]^{\GS_n}$-module, then 
$\BZ[X_1,\ldots,X_n]\otimes_{\BZ[X_1,\ldots,X_n]^{\GS_n}}M$ is a direct
summand of ${^0H}_n^f\otimes_\BZ 
(\BZ[X_1,\ldots,X_n]\otimes_{\BZ[X_1,\ldots,X_n]^{\GS_n}}M)$ as an
${^0H}_n^f$-module. So, given $N$ an ${^0H}_n$-module, then
$N$ is a direct summand of ${^0H}_n^f\otimes_\BZ N$ as an ${^0H}_n^f$-module
and the proposition is proven.
\end{proof}

Lemma \ref{le:isoandTw0} joined with
Proposition \ref{pr:MoritanilHecke} gives
a useful criterion to check that a morphism of ${^0H}_n$-modules is
invertible. Note also that the proposition shows that
${^0H}_n$ is projective as a $({^0H}_n^f,{^0H}_n)$-bimodule.

\smallskip
The algebra ${^0H}_n$ has a Frobenius form over $\BZ[X_1,\ldots,X_n]$
given by (\ref{eq:FroboverP}) and a Frobenius form $t$ over
$\BZ[X_1,\ldots,X_n]^{\GS_n}$ given by (\ref{eq:FroboverPinv}).
The corresponding Nakayama automorphism of ${^0H}_n$ is the involution
$$X_i\mapsto X_{n-i+1},\ T_i\mapsto -T_{n-i}.$$

\smallskip
A special feature of the nil affine Hecke algebra, compared to the
affine Hecke algebra and the degenerate affine Hecke algebra, is that
the Nakayama automorphism $\gamma$
is inner, hence the nil affine Hecke algebra is actually
symmetric over $\BZ[X_1,\ldots,X_n]^{\GS_n}$. Indeed, when
viewed as a subalgebra of $\End_\BZ(\BZ[X_1,\ldots,X_n])$, then
${^0H}_n$ contains $\GS_n$. The injection of $\GS_n$ in
${^0H}_n$ is given by $s_i\mapsto (X_i-X_{i+1})T_i+1$ (cf also
\S \ref{se:isovariousHecke}). We have
$$w[1,n]\cdot a\cdot w[1,n]=\gamma(a) \text{ for all }a\in {^0H}_n.$$

It follows that the linear form $t'$ given by
$t'(a)=t(aw[1,n])$ is a symmetrizing form for ${^0H}_n$ over
$\BZ[X_1,\ldots,X_n]^{\GS_n}$.

\medskip
The nil affine Hecke algebra ${^0H}_n$ is a graded algebra with
$\deg X_i=2$ and $\deg T_i=-2$ and $t$ is homogeneous of degree $0$.
The nil affine Hecke algebra has also a bifiltration given by
$$F^{\le (i,j)}\left({^0H}_n\right)=\BZ[X_1,\ldots,X_n]_{\le i}\otimes
\left({^0H}_n^f\right)_{\ge -j}.$$
Note that $t(F^{<(n(n-1),n(n-1))})=0$.

\subsubsection{Isomorphisms}
\label{se:isovariousHecke}
The polynomial representations above induce isomorphisms with
the semi-direct product of the algebra of polynomials with $\GS_n$, after
a suitable localization.

\smallskip
Let $R'=\BZ[X_1,\ldots,X_n,(X_i-X_j)^{-1},(X_i-X_j-1)^{-1}]_{i\not=j}$.
We have an isomorphism of $R'$-algebras
$$R'\rtimes \GS_n\iso R'\otimes_{\BZ[X_1,\ldots,X_n]} \bar{H}_n,\ s_i\mapsto 
\frac{X_i-X_{i+1}}{X_i-X_{i+1}+1}(T_i-1)+1=
(T_i+1)\frac{X_i-X_{i+1}}{X_i-X_{i+1}-1}-1$$

\smallskip
Let $R'_q=R[X_1^{\pm 1},\ldots,X_n^{\pm 1},(X_i-X_j)^{-1},
(qX_i-X_j)^{-1}]_{i\not=j}$.
We have an isomorphism of $R'_q$-algebras
$$R'_q\rtimes \GS_n\iso R'_q\otimes_{R[X_1^{\pm 1},\ldots,X_n^{\pm 1}]}
 H_n,\ s_i\mapsto 
\frac{X_i-X_{i+1}}{qX_i-X_{i+1}}(T_i-q)+1=
(T_i+1)\frac{X_i-X_{i+1}}{X_i-qX_{i+1}}-1$$

\smallskip
Let ${^0R}'=\BZ[X_1,\ldots,X_n,(X_i-X_j)^{-1}]_{i\not=j}$.
We have an isomorphism of ${^0R}'$-algebras
$${^0R}'\rtimes \GS_n\iso {^0R}'\otimes_{\BZ[X_1,\ldots,X_n]} {^0H}_n,
\ s_i\mapsto 
(X_i-X_{i+1})T_i+1= T_i(X_{i+1}-X_i)-1$$

\medskip
Let us finally note that the functor 
$$M\mapsto M^{\GS_n}:({^0R}'\rtimes\GS_n)\mMod\to ({^0R}')^{\GS_n}\mMod$$
is an equivalence of categories. 

\subsection{Nil Hecke algebras associated with hermitian matrices}
\label{se:nilmatrix}
In this section, we introduce a flat family of algebras presented by
quiver and relations. To a symmetrizable Cartan datum afforded by a
quiver with automorphism, we associate a member of that family.

\subsubsection{Definition}
Let $I$ be a set, $k$ a commutative ring and $Q=(Q_{i,j})_{i,j\in I}$
a matrix in $k[u,v]$ with $Q_{ii}=0$ for all $i\in I$.

Let $n$ be a positive integer and $L=I^n$.
We define a (possibly non-unitary) $k$-algebra $H_n(Q)$ by generators
and relations.
It is generated by elements $1_\nu$, $x_{i,\nu}$ for $i\in\{1,\ldots,n\}$ and
$\tau_{i,\nu}$ for $i\in\{1,\ldots,n-1\}$ and $\nu\in L$ and the relations are
\begin{itemize}
\item $1_\nu 1_{\nu'}=\delta_{\nu,\nu'}1_\nu$
\item $\tau_{i,\nu}= 1_{s_i(\nu)}\tau_{i,\nu}1_\nu$
\item $x_{a,\nu}= 1_{\nu} x_{a,\nu} 1_\nu$
\item $x_{a,\nu}x_{b,\nu}=x_{b,\nu}x_{a,\nu}$
\item $\tau_{i,s_i(\nu)}\tau_{i,\nu}=
 Q_{\nu_i,\nu_{i+1}}(x_{i,\nu},x_{i+1,\nu})$
\item $\tau_{i,s_{j}(\nu)}\tau_{j,\nu}=\tau_{j,s_i(\nu)}\tau_{i,\nu}$ if $|i-j|>1$
\item
$\tau_{i+1,s_is_{i+1}(\nu)}\tau_{i,s_{i+1}(\nu)}\tau_{i+1,\nu}-
\tau_{i,s_{i+1}s_i(\nu)}\tau_{i+1,s_i(\nu)}\tau_{i,\nu}=$\\

\noindent
$\begin{cases}
(x_{i+2,\nu}-x_{i,\nu})^{-1}\left(Q_{\nu_i,\nu_{i+1}}(x_{i+2,\nu},x_{i+1,\nu})-
Q_{\nu_i,\nu_{i+1}}(x_{i,\nu},x_{i+1,\nu})\right)
& \text{if }\nu_i=\nu_{i+2}\\
0 & \text{otherwise}
\end{cases}$
\item
$\tau_{i,\nu}x_{a,\nu}-x_{s_i(a),s_i(\nu)}\tau_{i,\nu}=\begin{cases}
-1_\nu & \text{ if }a=i \text{ and } \nu_i=\nu_{i+1} \\
1_\nu & \text{ if }a=i+1 \text{ and } \nu_i=\nu_{i+1} \\
0 & \text{ otherwise.}\end{cases}$
\end{itemize}
for $\nu,\nu'\in I$, $1\le i,j\le n-1$ and $1\le a,b\le n$.

\begin{rem}
Note that when $I$ is finite, then
$H_n(Q)$ has a unit $1=\sum_{\nu\in L}1_{\nu}$.
\end{rem}

\begin{rem}
It is actually
more natural to view $H_n(Q)$ as a category $\CH_n(Q)$ with set of
objects $L$ and with $\Hom$-spaces generated by
$$x_{a,\nu}\in\End(\nu) \text{ for }1\le a\le n$$
$$\tau_{i,\nu}\in\Hom(\nu,s_i(\nu))\text{ for }1\le i\le n-1$$
with the relations above.
\end{rem}

Given $a\in 1_\nu H_n(Q)1_{\nu'}$, we will sometimes write
$x_ia$ for $x_{i,\nu}a$ and $ax_i$ for $ax_{i,\nu'}$ and proceed similarly for
$\tau_i$.

\medskip
Consider the (possibly non-unitary) algebra
$R_n=\left(k^{(I)}[x]\right)^{\otimes n}=
k[x_1,\ldots,x_n]\otimes (k^{(I)})^{\otimes n}$.
We denote by $1_s$ the idempotent corresponding to the $s$-th factor
of $k^{(I)}$ and we put $1_\nu=1_{\nu_1}\otimes\cdots\otimes 1_{\nu_n}$ for
$\nu\in L$.

There is a morphism of algebras
$R_n\to H_n(Q),\ x_i 1_\nu\mapsto x_{i,\nu}$. It restricts to
a morphism $R_n^{\GS_n}\to Z(H_n(Q))$. Note that
$R_1=H_1(Q)$ and we put $H_0(Q)=k$.

\smallskip
Let $J$ be a set of finite sequences of elements of
$\{1,\ldots,n-1\}$ such that
$\{s_{i_1}\cdots s_{i_r}\}_{(i_1,\ldots,i_r)\in J}$ is a set of minimal
length representatives of elements of $\GS_n$. Then,
$$S=\{\tau_{i_1,s_{i_2}\cdots s_{i_r}(\nu)}\cdots
 \tau_{i_r,\nu}x_{1,\nu}^{a_1}\cdots x_{n,\nu}^{a_n}
\}_{(i_1,\ldots,i_r)\in J,(a_1,\ldots,a_n)\in\BZ_{\ge 0}^n,\nu\in L}$$
generates $H_n(Q)$ as a $k$-module.

\smallskip
The algebra $H_n(Q)$ is filtered with $1_\nu$ and $x_{i,\nu}$ in degree $0$
and $\tau_{i,\nu}$ in degree $1$. The morphism $R_n\to H_n(Q)$ extends
to a surjective algebra morphism
$$k^{(I)}[x]\wr {^0H}_n^f\to \gr H_n(Q),\ 
T_i 1_\nu\mapsto \tau_{i,\nu}.$$
The algebra is said to satisfy the PBW (Poincar\'e-Birkhoff-Witt)
property if that morphism is an isomorphism.

\begin{thm}
\label{th:PBWHecke}
Assume $n\ge 2$.
The following assertions are equivalent
\begin{itemize}
\item $H_n(Q)$ satisfies PBW
\item $H_n(Q)$ is a free $k$-module with basis $S$
\item $Q_{ij}(u,v)=Q_{ji}(v,u)$ for all $i,j\in I$.
\end{itemize}
\end{thm}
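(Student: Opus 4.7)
The plan is to prove the three conditions form a cycle of implications. I will first establish ``PBW iff $S$ is a $k$-basis'', then ``basis $\Rightarrow$ hermitian'', then ``hermitian $\Rightarrow$ basis''.

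For the first equivalence, I first observe that $S$ always spans $H_n(Q)$ as a $k$-module, regardless of $Q$: the commutation, quadratic, and braid relations allow one to reduce any word in $1_\nu,x_{a,\nu},\tau_{i,\nu}$ to a linear combination of the form $\tau_{i_1}\cdots\tau_{i_r}\,x^{\mathbf{a}}1_\nu$ with $(i_1,\ldots,i_r)\in J$, proceeding by induction on $\tau$-length (analogous to the treatment of \S\ref{se:nilaffineHecke}). The filtration grading $\tau_{i,\nu}$ in degree $1$ and the remaining generators in degree $0$ makes the canonical surjection $k^{(I)}[x]\wr{^0H}_n^f \twoheadrightarrow \gr H_n(Q)$ an isomorphism precisely when $S$ is $k$-linearly independent in $H_n(Q)$, since the wreath product is $k$-free on the corresponding monomial basis.

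For ``basis $\Rightarrow$ hermitian'', I would compute the triple product $\tau_{i,\nu}\,\tau_{i,s_i(\nu)}\,\tau_{i,\nu}$ in two ways using associativity. Applying the quadratic relation to the first two factors yields $Q_{\nu_{i+1},\nu_i}(x_i,x_{i+1})\,\tau_{i,\nu}$; applying it to the last two yields $\tau_{i,\nu}\,Q_{\nu_i,\nu_{i+1}}(x_i,x_{i+1})$. In the case $\nu_i\neq\nu_{i+1}$ the final commutation relation has no correction, so $\tau_{i,\nu}$ passes through the polynomial swapping the variables, giving $Q_{\nu_i,\nu_{i+1}}(x_{i+1},x_i)\,\tau_{i,\nu}$. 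Rewriting the resulting identity as
\[
\tau_{i,\nu}\bigl(Q_{\nu_{i+1},\nu_i}(x_{i+1},x_i)-Q_{\nu_i,\nu_{i+1}}(x_i,x_{i+1})\bigr)=0 \quad\text{in }1_{s_i(\nu)}H_n(Q)1_\nu,
\]
and using that $\{\tau_{i,\nu}\,x^{\mathbf{a}}1_\nu\}_{\mathbf{a}\in\BZ_{\ge 0}^n}$ are part of the basis $S$, the polynomial coefficient must vanish, giving $Q_{ij}(u,v)=Q_{ji}(v,u)$ for $i\neq j$ (the case $i=j$ is vacuous since $Q_{ii}=0$).

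The main difficulty is the remaining implication, hermiticity $\Rightarrow$ $S$ is $k$-free. I would construct an explicit representation of $H_n(Q)$ on the polynomial module $V=\bigoplus_{\nu\in L}k[x_1,\ldots,x_n]$, with $1_\nu$ and $x_{a,\nu}$ acting by projection and multiplication respectively, and $\tau_{i,\nu}\colon V_\nu\to V_{s_i(\nu)}$ defined by the BGG--Demazure operator $\partial_i$ when $\nu_i=\nu_{i+1}$, and by a twisted transposition $f\mapsto B_\nu(x_i,x_{i+1})\,s_i(f)$ with coefficients $B_\nu$ chosen via a total order on $I$ so that $B_\nu(u,v)\,B_{s_i(\nu)}(v,u)=Q_{\nu_i,\nu_{i+1}}(u,v)$ whenever $\nu_i\neq\nu_{i+1}$; the existence of such a consistent factorization is exactly what hermiticity affords. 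Verification of the commutation relations is a direct computation using the Leibniz rule for $\partial_i$ together with the definition of $B_\nu$; the braid relation is the most delicate step and requires case analysis on the pattern of equalities among $\nu_i,\nu_{i+1},\nu_{i+2}$, with the correction term in the definition of $H_n(Q)$ matching the output of the polynomial computation in the critical case $\nu_i=\nu_{i+2}\neq \nu_{i+1}$. Once the representation is in place, for each pair $\nu,\nu'\in L$ and each minimal-length $w\in\GS_n$ with $w\nu'=\nu$, a triangularity argument with respect to a compatible monomial ordering on $V_{\nu'}$ and the $\GS_n$-length filtration shows that $\{\rho(\tau_w x^{\mathbf{a}}1_{\nu'})\}_{\mathbf{a}\in\BZ_{\ge 0}^n}$ acts $k$-linearly independently on $V_{\nu'}$. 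Combined with the spanning result from the first paragraph, this forces the elements of $S$ to be $k$-independent in $H_n(Q)$, completing the cycle.
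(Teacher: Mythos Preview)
Your proposal is correct and follows essentially the same route as the paper. The equivalence of the first two items via the spanning set and the associated-graded comparison is handled exactly as you describe, and your ``basis $\Rightarrow$ hermitian'' step is the paper's computation of $\tau_{i,\nu}\tau_{i,s_i(\nu)}\tau_{i,\nu}$ verbatim (the paper leaves the polynomial on the left of $\tau_{i,\nu}$ rather than the right, but this is immaterial).

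For the implication ``hermitian $\Rightarrow$ basis'', the paper does precisely what you outline: it fixes a total order on $I$, takes the asymmetric factorisation $P_{ij}=Q_{ij}$, $P_{ji}=1$ for $i<j$ (your $B_\nu$), and defines the polynomial representation with $\tau_{i,\nu}$ acting by $\partial_i$ in the diagonal case and by $P_{\nu_i,\nu_{i+1}}(x_{i+1},x_i)\,s_i$ otherwise. This is packaged separately as Proposition~\ref{trivialHnA}. The one point where the presentations differ is the linear-independence step: rather than arguing triangularity of the operators on $R_n$ directly, the paper records the formulas as an algebra map $H_n(Q)\to \CO'\otimes_{k^{(I)}[x]^{\otimes n}} A_n(I)$ into a localisation of the wreath product $k^{(I)}[x]\wr\GS_n$, and reads off linear independence of the image of $S$ from the evident $\{w\cdot x^{\mathbf a}\cdot 1_\nu\}$ basis there. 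This is the same triangularity, carried out in a slightly larger algebra instead of in $\End(R_n)$; it spares one the case analysis on the $\nu$-pattern that a direct operator argument would require, but there is no substantive difference in strategy.
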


\begin{proof}
The first two assertions are equivalent, thanks to the generating
family $S$ described above.

\smallskip
Let $\nu\in L$ with $\nu_i\not=\nu_{i+1}$.
We have
$$Q_{\nu_{i+1},\nu_i}(x_{i,s_i(\nu)},x_{i+1,s_i(\nu)})\tau_{i,\nu}=
\tau_{i,\nu}\tau_{i,s_i(\nu)}\tau_{i,\nu}=
\tau_{i,\nu}Q_{\nu_i,\nu_{i+1}}(x_{i,\nu},x_{i+1,\nu})=
Q_{\nu_i,\nu_{i+1}}(x_{i+1,s_i(\nu)},x_{i,s_i(\nu)})\tau_{i,\nu}.$$
It follows that
$$(Q_{\nu_{i+1},\nu_i}(x_{i,s_i(\nu)},x_{i+1,s_i(\nu)})-
Q_{\nu_i,\nu_{i+1}}(x_{i+1,s_i(\nu)},x_{i,s_i(\nu)})) \tau_{i,\nu}=0.$$
Assume $S$ is a basis of $H_n(Q)$. We have
$Q_{\nu_{i+1},\nu_i}(x_{i,s_i(\nu)},x_{i+1,s_i(\nu)})-
Q_{\nu_i,\nu_{i+1}}(x_{i+1,s_i(\nu)},x_{i,s_i(\nu)})=0$.
Consequently,  $Q_{ij}(u,v)=Q_{ji}(v,u)$ for all $i,j\in I$.

\smallskip
Assume $Q_{ij}(u,v)=Q_{ji}(v,u)$ for all $i,j\in I$. Choose an
ordering of pairs of distinct elements of $I$. Given
$i<j$, put $P_{ij}=Q_{ij}$ and $P_{ji}=1$. The theorem follows now
from Proposition \ref{trivialHnA} below.
\end{proof}

 Denote by $Q\mapsto\bar{Q}$ the automorphism given by
$\bar{Q}_{ij}(u,v)=Q_{ji}(v,u)$.
The algebras $H_n(Q)$ form a flat family of algebras over the space
of matrices $Q$ with
vanishing diagonal and hermitian with respect to the automorphism of
$k[u,v]$ swapping $u$ and $v$ (\ie, such that $\bar{Q}=Q$).

\begin{cor}
Assume $Q$ is hermitian.
Let $I'$ be a subset of $I$ and $Q'=(Q_{i,j})_{i,j\in I'}$. Then,
the canonical map $H_n(Q')\to H_n(Q)$ is injective and
induces isomorphisms
$1_\nu H_n(Q')1_{\nu'}\iso 1_\nu H_n(Q)1_{\nu'}$ for $\nu,\nu'\in (I')^n$.
\end{cor}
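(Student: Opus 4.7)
The strategy is to deduce the corollary directly from the PBW theorem (Theorem \ref{th:PBWHecke}). First I would observe that since $Q$ is hermitian, so is its restriction $Q'$: for $i,j\in I'$ we have $Q'_{ij}(u,v)=Q_{ij}(u,v)=Q_{ji}(v,u)=Q'_{ji}(v,u)$. Hence both $H_n(Q)$ and $H_n(Q')$ satisfy PBW, and each admits a $k$-basis of the shape
$$S=\bigl\{\tau_{i_1,s_{i_2}\cdots s_{i_r}(\nu)}\cdots\tau_{i_r,\nu}\,x_{1,\nu}^{a_1}\cdots x_{n,\nu}^{a_n}\bigr\}$$
indexed by a reduced word $(i_1,\ldots,i_r)\in J$, a multi-exponent $(a_1,\ldots,a_n)\in\BZ_{\ge 0}^n$, and $\nu$ running over $I^n$ (resp.\ $(I')^n$).

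Next I would analyze the piece $1_\nu H_n(Q)1_{\nu'}$ for $\nu,\nu'\in (I')^n$. Because the generators $\tau_{i,\mu}$ and $x_{a,\mu}$ act on the label $\mu$ by permutation of its entries (via some $s_i$) or not at all, a basis element of $S$ lies in $1_\nu H_n(Q)1_{\nu'}$ exactly when it starts from $\nu'$ on the right, ends at $\nu=s_{i_1}\cdots s_{i_r}(\nu')$ on the left, and passes only through intermediate labels $s_{i_k}\cdots s_{i_r}(\nu')$. Since $(I')^n$ is preserved by $\GS_n$ acting on positions, all these intermediate labels automatically lie in $(I')^n$ as soon as $\nu'$ does. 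Thus the subset of $S$ contributing to $1_\nu H_n(Q)1_{\nu'}$ coincides term-for-term with the basis of $1_\nu H_n(Q')1_{\nu'}$ described by the PBW theorem for $Q'$.

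I would then note that the canonical map $H_n(Q')\to H_n(Q)$ sends the PBW basis element of $H_n(Q')$ built from a given $(i_1,\ldots,i_r),(a_1,\ldots,a_n),\nu$ to the PBW basis element of $H_n(Q)$ with the same data (the defining relations used only involve $Q_{ij}$ with $i,j\in I'$, and these agree). Hence the induced maps
$$1_\nu H_n(Q')1_{\nu'}\longrightarrow 1_\nu H_n(Q)1_{\nu'}$$
send a $k$-basis bijectively to a $k$-basis, so they are isomorphisms. Finally, the decomposition $H_n(Q')=\bigoplus_{\nu,\nu'\in(I')^n}1_\nu H_n(Q')1_{\nu'}$ and the corresponding sub-decomposition of $H_n(Q)$ give injectivity of $H_n(Q')\to H_n(Q)$ immediately.

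The only point requiring any care is the verification that the generating family $S$ of Theorem \ref{th:PBWHecke} respects the restriction to $(I')^n$; the essential content is the observation that $\GS_n$ acts on labels only by permuting positions, so intermediate labels of any $\tau_w1_{\nu'}$ with $\nu'\in(I')^n$ remain in $(I')^n$. Once this is noted, everything reduces formally to the PBW property.
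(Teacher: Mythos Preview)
Your proof is correct and is exactly the argument the paper has in mind: the corollary is stated immediately after Theorem~\ref{th:PBWHecke} with no separate proof, precisely because it follows formally from the PBW bases once one observes (as you do) that $\GS_n$ acts on labels by permuting positions, so the intermediate idempotents encountered in any basis element starting from $\nu'\in(I')^n$ stay in $(I')^n$. There is nothing to add.
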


From Proposition \ref{trivialHnA} below, we obtain a description of
the center of $H_n(Q)$.

\begin{prop}
Assume $Q$ is hermitian. Then, we have
$Z(H_n(Q))=R_n^{\GS_n}$.
\end{prop}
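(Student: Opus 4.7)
The plan is to prove the equality in both directions using the PBW basis provided by Theorem \ref{th:PBWHecke} (which applies because $Q$ is hermitian). This basis gives a free $R_n$-module decomposition
$$H_n(Q) = \bigoplus_{w \in \GS_n,\ \nu \in L} k[x_1,\ldots,x_n]\cdot \tau_{w,\nu},$$
where $\tau_{w,\nu}$ is the element $\tau_{i_1}\cdots\tau_{i_r}1_\nu$ for a fixed reduced expression $w=s_{i_1}\cdots s_{i_r}$, and an induced length filtration $F^{\le d}$ whose associated graded is the wreath product $k^{(I)}[x]\wr {^0H}_n^f$. Note that $\tau_{w,\nu}\in 1_{w(\nu)}H_n(Q)1_\nu$.

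For the inclusion $R_n^{\GS_n}\subseteq Z(H_n(Q))$, I would write $p=\sum_\nu p_\nu 1_\nu$ with $s_i(p_\nu)=p_{s_i(\nu)}$, and check commutation with the generators $x_{a,\mu}$ and $\tau_{i,\mu}$ directly. The only non-trivial case is $\tau_{i,\mu}p-p\tau_{i,\mu}=(s_i(p_\mu)-p_{s_i(\mu)})\tau_{i,\mu}+\partial_i(p_\mu)1_\mu\cdot[\mu_i=\mu_{i+1}]$, using the defining relation $\tau_ix_a-x_{s_i(a)}\tau_i=\pm 1_\nu$ (valid only when $\nu_i=\nu_{i+1}$ and $a\in\{i,i+1\}$, and which iterates to the BGG-Demazure formula $\tau_ip-s_i(p)\tau_i=\partial_i(p)1_\nu$ in that case). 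Equivariance kills the first term; when $\mu_i=\mu_{i+1}$ we have $s_i(\mu)=\mu$, hence $p_\mu=s_i(p_\mu)$ and $\partial_i(p_\mu)=0$, so the second term vanishes too.

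For the reverse inclusion, I take $z\in Z(H_n(Q))$ and write $z=\sum_\nu z\,1_\nu$ with $z\,1_\nu\in 1_\nu H_n(Q)1_\nu$. Expanding in the PBW basis gives $z\,1_\nu=\sum_{w\in\GS_n^\nu} p_{w,\nu}\tau_{w,\nu}$, the sum restricted to the stabilizer $\GS_n^\nu=\{w:w(\nu)=\nu\}$. Let $d=\max\{\ell(w):p_{w,\nu}\ne 0\}$. Iterating the defining relation for $\tau x$ shows $\tau_w x_a\equiv x_{w(a)}\tau_w \pmod{F^{\le \ell(w)-1}}$, so the top-length part of $0=[z,x_a]\cdot 1_\nu$ reads
$$\sum_{\ell(w)=d,\,w(\nu)=\nu} p_{w,\nu}\bigl(x_{w(a)}-x_a\bigr)\tau_{w,\nu}=0.$$
By PBW the $\tau_{w,\nu}$ are $k[x_1,\ldots,x_n]$-linearly independent, so $p_{w,\nu}(x_{w(a)}-x_a)=0$ for each such $w$. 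For $w\ne 1$, choose $a$ with $w(a)\ne a$; then $x_{w(a)}-x_a$ is a monic (hence non-zero-divisor) element of $k[x_1,\ldots,x_n]$, forcing $p_{w,\nu}=0$. Thus $d=0$ and $z\,1_\nu=p_{1,\nu}1_\nu\in R_n1_\nu$, so $z\in R_n$.

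Finally, writing $z=\sum_\mu p_\mu 1_\mu$ with $p_\mu\in k[x_1,\ldots,x_n]$, I would use $z\tau_{i,\mu}=\tau_{i,\mu}z$. This yields $p_{s_i(\mu)}\tau_{i,\mu}=s_i(p_\mu)\tau_{i,\mu}+\partial_i(p_\mu)1_\mu\cdot[\mu_i=\mu_{i+1}]$. Since $\tau_{i,\mu}$ and $1_\mu$ are linearly independent over $k[x_1,\ldots,x_n]$ in the PBW basis (they correspond to distinct elements $s_i$ and $1$ of $\GS_n$), one gets $p_{s_i(\mu)}=s_i(p_\mu)$ in every case, which is exactly the $\GS_n$-invariance condition for $\sum_\mu p_\mu 1_\mu\in R_n$. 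The main obstacle I anticipate is organizing the filtration argument cleanly in Step 3, since the stabilizer $\GS_n^\nu$ can be large (when $\nu$ has repeated entries) and one must be sure that at fixed top length $d>0$ the chosen $a$ really yields a non-zero-divisor coefficient—but this reduces to the elementary observation that any monic polynomial in $k[x_1,\ldots,x_n]$ is a non-zero-divisor, independent of hypotheses on $k$.
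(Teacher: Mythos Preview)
Your proof is correct, but it follows a different route from the paper. The paper simply asserts that the proposition follows from Proposition~\ref{trivialHnA} (the polynomial realization): after localizing, $H_n(Q)$ becomes isomorphic to $\CO\otimes A_n(I)$, whose center is manifestly the invariants $\CO^{\GS_n}$; since $H_n(Q)$ is free over $R_n$ (Theorem~\ref{th:PBWHecke}) the localization map is injective, and intersecting $\CO^{\GS_n}$ back with $H_n(Q)$ yields $R_n^{\GS_n}$. Your approach instead works entirely inside $H_n(Q)$ using the PBW basis and the length filtration: commutation with $x_a$ forces a central element to have filtration degree zero (hence to lie in $R_n$), and commutation with $\tau_i$ then forces $\GS_n$-invariance. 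Your argument is more self-contained in that it avoids the localization isomorphism and the computation of the center of the semidirect product, relying only on the freeness statement of Theorem~\ref{th:PBWHecke}; the paper's route is quicker once Proposition~\ref{trivialHnA} is available. Both arguments ultimately rest on the same PBW theorem, and your observation that $x_{w(a)}-x_a$ is monic (hence a non-zero-divisor in $k[x_1,\ldots,x_n]$ regardless of $k$) is exactly what makes the filtration step go through without further hypotheses.
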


When $|I|=1$, then $H_n(Q)$
is the nil affine Hecke algebra ${^0 H}_n$ associated with $\GL_n$.

\smallskip
Given $0\le i\le n$, we have an injective morphism of $R_n$-algebras
$$H_i(Q)\otimes H_{n-i}(Q)\to H_n(Q)$$
given by $1_\nu\otimes 1_{\nu'}\mapsto 1_{\nu\cup \nu'}$,
$x_{j,\nu}\otimes 1_{\nu'}\mapsto x_{j,\nu\cup \nu'}$,
$1_\nu\otimes x_{j,\nu'}\mapsto x_{i+j,\nu\cup \nu'}$, etc.

\smallskip
Assume $Q$ is hermitian. Let $i_1,\ldots,i_m$ be distinct elements of $I$ and
let $d_1,\ldots,d_m\in\BZ_{\ge 0}$ with
$n=\sum_r d_r$. Let $\nu=(\underbrace{i_1,\ldots,i_1}_{d_1\text{ terms}},
\ldots,\underbrace{i_m,\ldots,i_m}_{d_m\text{ terms}})$.
The construction above induces
an isomorphism of algebras
$${^0H}_{d_1}\otimes\cdots\otimes {^0H}_{d_m}\iso
1_\nu H_n(Q)1_\nu.$$

\begin{rem}
The algebra Khovanov and Lauda \cite{KhoLau2} associate to a
symmetrizable Cartan matrix $(a_{ij})$ corresponds to
$Q_{ij}(u,v)=u^{-a_{ij}}+v^{-a_{ji}}$ for
$i\not=j$.
\end{rem}

\medskip
Let us describe some isomorphisms between $H_n(Q)$'s.

\smallskip
Let $\{a_i\}_{i\in I}$ in $k$ and $\{\beta_{ij}\}_{i,j\in I}$ in
$k^\times$.
Let $Q'_{ij}(u,v)=\beta_{ij}\beta_{ji} Q_{ij}(\beta_{jj} u+a_j,
\beta_{ii} v+a_i)$.
We have an isomorphism
$$H_n(Q')\iso H_n(Q),\ 1_\nu\mapsto 1_\nu,\ 
x_{i,\nu}\mapsto \beta_{\nu_i,\nu_i}^{-1}(x_{i,\nu}-a_{\nu_i}),\
\tau_{i,\nu}\mapsto \beta_{\nu_i,\nu_{i+1}}\tau_{i,\nu}.$$

The construction above provides an action of the subgroup
$\{(\beta_{ij})_{i,j}|\beta_{ij}\beta_{ji}=1\text{ and }\beta_{ii}=1\}$
of $(\BG_m)^{I\times I}$ on $H_n(Q)$.

\smallskip
Assume $Q$ is hermitian.
Given $\nu\in I^n$, we define $\bar{\nu}\in I^n$ by $\bar{\nu}_i=\nu_{n-i+1}$.
There is an involution of $H_n(Q)$
$$H_n(Q)\iso H_n(Q),\ 1_\nu\mapsto 1_{\bar{\nu}},\
x_{i,\nu}\mapsto x_{n-i+1,\bar{\nu}},\ \tau_{i,\nu}\mapsto
-\tau_{n-i,\bar{\nu}}.$$

\smallskip
Let us finally construct a duality. There is an isomorphism
$$H_n(Q)\iso H_n(Q)^\opp,\ 1_\nu\mapsto 1_\nu,\ x_{i,\nu}\mapsto x_{i,\nu},\
\tau_{i,\nu}\mapsto \tau_{i,s_i(\nu)}.$$

\begin{rem}
One can also work with a matrix $Q$ with values in $k(u,v)$ and define
$H_n(Q)$ by adding inverses of the relevant polynomials in $x_{i,\nu}$'s.
\end{rem}

\subsubsection{Polynomial realization}
\label{se:isographs}
Let $P=(P_{ij})_{i,j\in I}$ be a matrix in
$k[u,v]$ with $P_{ii}=0$ for all $i\in I$
and let $Q_{i,j}(u,v)=P_{i,j}(u,v)P_{j,i}(v,u)$.

Consider the (possibly non-unitary)
$k$-algebra $A_n(I)=
k^{(I)}[x]\wr \GS_n$.

\smallskip
The following Proposition provides a faithful
representation of $H_n(Q)$ on the space $R_n$. It also shows that, after
localization, the algebra $H_n(Q)$ depends only on the cardinality of $I$
(assuming non-vanishing of $Q_{ij}$ for $i\not=j$).

\begin{prop}
\label{trivialHnA}
Let $\CO'=\bigoplus_{\nu\in L}
k[x_1,\ldots,x_n][\{(x_i-x_j)^{-1}\}_{i\not=j,\nu_i=\nu_j}] 1_\nu$.
We have an injective morphism of $k$-algebras
$$H_n(Q)\to \CO'\otimes_{\BZ^{(I)}[x]^{\otimes n}}A_n(I)$$
$$1_\nu\mapsto 1_\nu,\ x_{a,\nu}\mapsto x_a 1_\nu,$$
$$\tau_{i,\nu}\mapsto\begin{cases}
(x_i-x_{i+1})^{-1}(s_i 1_\nu -1_\nu) & \text{ if }\nu_i=\nu_{i+1} \\
P_{\nu_i,\nu_{i+1}}(x_{i+1},x_i)s_i 1_\nu &
\text{ otherwise}
\end{cases}$$
for $1\le a\le n$, $1\le i\le n-1$ and $\nu\in L$.
It defines a faithful representation of $H_n(Q)$ on
$R_n=\bigoplus_{\nu\in L}k[x_1,\ldots,x_n]1_\nu$.

\smallskip
Assume $P_{i,j}\not=0$ for all $i\not=j$.
Let 
$$\CO=\bigoplus_{\nu\in L}
k[x_1,\ldots,x_n][\{P_{\nu_i,\nu_j}(x_i,x_j)^{-1}\}_{\nu_i\not=\nu_j},
\{(x_i-x_j)^{-1}\}_{i\not=j,\nu_i=\nu_j}] 1_\nu.$$
The morphism above induces an isomorphism
$\CO\otimes_{k^{(I)}[x]^{\otimes n}}H_n(Q)\iso
\CO\otimes_{k^{(I)}[x]^{\otimes n}}A_n(I)$.
\end{prop}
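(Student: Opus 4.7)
The plan is to verify the defining relations of $H_n(Q)$ one by one, then deduce injectivity from a triangular leading-term argument with respect to the ``length of permutation'' filtration on $\CO' \otimes A_n(I)$, and finally construct an explicit inverse to establish the localized isomorphism in part~2.

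First, I would dispatch the easy relations. The idempotent relations, the source/target constraints on $\tau_{i,\nu}$, and the commutativity of the $x$'s are immediate from the formulas, as is the distant commutation for $|i-j|>1$ (the two operators involve disjoint sets of variables). The $x$--$\tau$ commutation relation reduces, when $\nu_i=\nu_{i+1}$, to the Leibniz identity $\partial_i\circ x_a-x_{s_i(a)}\circ\partial_i=\delta_{a,i+1}-\delta_{a,i}$ for the Demazure operator $\partial_i=(x_i-x_{i+1})^{-1}(s_i-1)$; when $\nu_i\ne\nu_{i+1}$ it reduces to $s_i\circ x_a=x_{s_i(a)}\circ s_i$. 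The quadratic relation $\tau_{i,s_i(\nu)}\tau_{i,\nu}$ follows from $\partial_i^2=0$ in the equal case, and from
$$P_{\nu_{i+1},\nu_i}(x_{i+1},x_i)\,s_i\cdot P_{\nu_i,\nu_{i+1}}(x_{i+1},x_i)\,s_i=P_{\nu_{i+1},\nu_i}(x_{i+1},x_i)\,P_{\nu_i,\nu_{i+1}}(x_i,x_{i+1})=Q_{\nu_i,\nu_{i+1}}(x_i,x_{i+1})$$
in the unequal case, directly from the definition $Q_{ij}(u,v)=P_{ij}(u,v)P_{ji}(v,u)$.

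The delicate part is the braid relation, which I would handle by a case analysis on the pattern of $(\nu_i,\nu_{i+1},\nu_{i+2})$. When all three indices are equal we recover the classical nil Hecke identity $\partial_i\partial_{i+1}\partial_i=\partial_{i+1}\partial_i\partial_{i+1}$; when all three are distinct, both triple products are the same polynomial multiple of $s_is_{i+1}s_i=s_{i+1}s_is_{i+1}$; when exactly two adjacent indices coincide, a direct expansion mixing one Demazure operator with one $P$-factor makes both sides agree. The main case is $\nu_i=\nu_{i+2}\ne\nu_{i+1}$: expanding both triple products, the contributions involving $s_is_{i+1}s_i$ cancel, and the remaining terms combine (using hermiticity $Q_{ij}(u,v)=Q_{ji}(v,u)$ which is automatic from the factorization of $Q$) into the asserted correction $(x_{i+2}-x_i)^{-1}\bigl(Q_{\nu_i,\nu_{i+1}}(x_{i+2},x_{i+1})-Q_{\nu_i,\nu_{i+1}}(x_i,x_{i+1})\bigr)$, where the divisibility by $x_{i+2}-x_i$ follows since the bracketed quantity vanishes at $x_{i+2}=x_i$.

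For injectivity, I filter $\CO'\otimes A_n(I)$ by the length of its permutation component. The image of a monomial $\tau_{i_1,\cdot}\cdots\tau_{i_r,\cdot}\,x_{1,\nu}^{a_1}\cdots x_{n,\nu}^{a_n}$ from the generating family $S$ associated with a reduced decomposition $w=s_{i_1}\cdots s_{i_r}$ takes the form $c_{w,\nu,a}\cdot w\cdot x^a\cdot 1_\nu$ modulo terms of strictly smaller length, with $c_{w,\nu,a}$ a product of $P$-factors and inverse differences; the linear independence of these leading terms in $\CO'\otimes A_n(I)$, combined with the fact (already noted in the excerpt) that $S$ spans $H_n(Q)$, yields injectivity. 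The representation on $R_n$ is well-defined because each operator $\partial_i$ preserves the polynomial ring, and faithfulness follows from injectivity via the canonical embedding of $\CO'\otimes A_n(I)$ into $\End_k(R_n)$ after restriction. For the second part, once the $P_{\nu_i,\nu_j}(x_i,x_j)$ are invertible in $\CO$, each formula for $\tau_{i,\nu}$ can be solved for $s_i 1_\nu$: namely $s_i1_\nu=(x_i-x_{i+1})\tau_{i,\nu}+1_\nu$ if $\nu_i=\nu_{i+1}$, and $s_i1_\nu=P_{\nu_i,\nu_{i+1}}(x_{i+1},x_i)^{-1}\tau_{i,\nu}$ otherwise. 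One checks these expressions satisfy $s_i^2=1$ and the $\GS_n$ braid relation (the braid relation in $\GS_n$ follows from the braid relation already verified in $H_n(Q)$, since the correction term becomes zero once both sides are multiplied by the appropriate inverses), yielding a two-sided inverse $A_n(I)\to \CO\otimes H_n(Q)$ to the localization of the first map. The main obstacle is the explicit polynomial computation for the braid relation in the mixed case $\nu_i=\nu_{i+2}\ne\nu_{i+1}$.
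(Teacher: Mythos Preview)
Your proposal is correct and follows essentially the same route as the paper: a case analysis for the braid relation on the images $\tau'_{i,\nu}$ (the paper writes out exactly the five cases you list, with $\nu_i=\nu_{i+2}\neq\nu_{i+1}$ the substantive one), followed by the observation that the generating set $S$ maps to elements with linearly independent leading terms in the length filtration on $\CO'\otimes A_n(I)$, giving injectivity and simultaneously the PBW basis. For the localized statement the paper simply asserts $\CO\otimes B=\CO\otimes A_n(I)$ for $B$ the image; your explicit formulas solving $s_i1_\nu$ in terms of $\tau_{i,\nu}$ are precisely what justifies surjectivity, and once the localized map is a bijective algebra homomorphism there is no need to re-verify the $\GS_n$ relations on the inverse side.

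One point to correct: there is no embedding of $\CO'\otimes A_n(I)$ into $\End_k(R_n)$, since the inverted differences $(x_i-x_j)^{-1}$ do not preserve polynomials. What actually holds is that $\CO'\otimes A_n(I)$ acts faithfully on $\CO'$, the image of $H_n(Q)$ preserves the subspace $R_n\subset\CO'$, and an element $\sum_w a_w\,w$ with $a_w\in\CO'$ that kills every polynomial must vanish (clear denominators and use linear independence of the $w$'s as operators on $k[x_1,\dots,x_n]$ over the fraction field). The paper does not spell this step out either, so the gap is cosmetic rather than substantive.
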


\begin{proof}
Let $\tau'_{i,\nu}=\begin{cases}
(x_i-x_{i+1})^{-1}(s_i 1_\nu -1_\nu) & \text{ if }\nu_i=\nu_{i+1} \\
P_{\nu_i,\nu_{i+1}}(x_{i+1},x_i)s_i 1_\nu &
\text{ otherwise}.
\end{cases}$

Let us check that the defining relations of $H_n(Q)$ hold with $\tau_{i,\nu}$
replaced by $\tau'_{i,\nu}$.
We will not write the idempotents $1_\nu$ to make the calculations more
easily readable.

We have
\begin{multline*}\tau_{i,s_{i+1}(\nu)}'\tau_{i+1,\nu}'=\\
\begin{cases}
(x_i-x_{i+1})^{-1}\left((x_i-x_{i+2})^{-1}(s_is_{i+1}-s_i)-
(x_{i+1}-x_{i+2})^{-1}(s_{i+1}-1)\right) &
 \text{ if }\nu_i=\nu_{i+1}=\nu_{i+2} \\
P_{\nu_,\nu_{i+1}}(x_{i+1},x_i)(x_i-x_{i+2})^{-1}(s_is_{i+1}-s_i) &
 \text{ if }\nu_{i+1}=\nu_{i+2}\not=\nu_i \\
(x_i-x_{i+1})^{-1}\left(P_{\nu_{i+1},\nu_{i+2}}(x_{i+2},x_i)s_is_{i+1}-
P_{\nu_{i+1},\nu_{i+2}}(x_{i+2},x_{i+1})s_{i+1} \right)  &
 \text{ if }\nu_i=\nu_{i+2}\not=\nu_{i+1} \\
P_{\nu_i,\nu_{i+2}}(x_{i+1},x_i)P_{\nu_{i+1},\nu_{i+2}}(x_{i+2},x_i)s_is_{i+1}&
 \text{ if }\nu_{i+2}\not\in\{\nu_i,\nu_{i+1}\}.
\end{cases}
\end{multline*}

\smallskip
Assume $\nu_i=\nu_{i+1}=\nu_{i+2}$.
We have
\begin{multline*}\tau_{i,s_{i+1}s_i(\nu)}'\tau_{i+1,s_i(\nu)}'\tau_{i,\nu}'=\\
=(x_{i+1}-x_{i+2})^{-1}
(x_i-x_{i+2})^{-1}(x_i-x_{i+1})^{-1}(s_{i+1}s_is_{i+1}-s_{i+1}s_i-s_is_{i+1}
+s_i+s_{i+1}-1)\\
=\tau_{i+1,s_is_{i+1}(\nu)}'\tau_{i,s_{i+1}(\nu)}'\tau_{i+1,\nu}'
\end{multline*}
Assume $\nu_i=\nu_{i+1}\not=\nu_{i+2}$. We have
\begin{multline*}\tau_{i,s_{i+1}s_i(\nu)}'\tau_{i+1,s_i(\nu)}'\tau_{i,\nu}'=\\
=(x_{i+1}-x_{i+2})^{-1}P_{\nu_i,\nu_{i+2}}(x_{i+1},x_i)
P_{\nu_i,\nu_{i+2}}(x_{i+2},x_i)(s_{i+1}s_is_{i+1}-s_is_{i+1})\\
=\tau_{i+1,s_is_{i+1}(\nu)}'\tau_{i,s_{i+1}(\nu)}'\tau_{i+1,\nu}'
\end{multline*}
Assume $\nu_{i+1}=\nu_{i+2}\not=\nu_i$. We have
\begin{multline*}\tau_{i,s_{i+1}s_i(\nu)}'\tau_{i+1,s_i(\nu)}'\tau_{i,\nu}'=\\
=(x_i,x_{i+1})^{-1}P_{\nu_i,\nu_{i+1}}(x_{i+2},x_i)
P_{\nu_i,\nu_{i+1}}(x_{i+2},x_{i+1})(s_{i+1}s_is_{i+1}-s_{i+1}s_i)\\
=\tau_{i+1,s_is_{i+1}(\nu)}'\tau_{i,s_{i+1}(\nu)}'\tau_{i+1,\nu}'
\end{multline*}

Assume $\nu_i$, $\nu_{i+1}$ and $\nu_{i+2}$ are distinct. We have
\begin{multline*}\tau_{i,s_{i+1}s_i(\nu)}'\tau_{i+1,s_i(\nu)}'\tau_{i,\nu}'=\\
=P_{\nu_i,\nu_{i+1}}(x_{i+2},x_{i+1})P_{\nu_i,\nu_{i+2}}(x_{i+2},x_i)
P_{\nu_{i+1},\nu_{i+2}}(x_{i+1},x_i)s_{i+1}s_is_{i+1}\\
=\tau_{i+1,s_is_{i+1}(\nu)}'\tau_{i,s_{i+1}(\nu)}'\tau_{i+1,\nu}'
\end{multline*}

Assume finally $\nu_i=\nu_{i+2}\not=\nu_{i+1}$.
We have
\begin{multline*}\tau_{i,s_{i+1}s_i(\nu)}'\tau_{i+1,s_i(\nu)}'\tau_{i,\nu}'=\\
=(x_i-x_{i+2})^{-1}P_{\nu_{i+1},\nu_i}(x_{i+1},x_i)\left(
P_{\nu_i,\nu_{i+1}}(x_{i+2},x_{i+1})s_is_{i+1}s_i-P_{\nu_i,\nu_{i+1}}(x_i,x_{i+1})\right)
\end{multline*}
and
\begin{multline*}
\tau_{i+1,s_is_{i+1}(\nu)}'\tau_{i,s_{i+1}(\nu)}'\tau_{i+1,\nu}'=\\
(x_i-x_{i+2})^{-1}P_{\nu_i,\nu_{i+1}}(x_{i+2},x_{i+1})\left(
P_{\nu_{i+1},\nu_i}(x_{i+1},x_i)s_{i+1}s_is_{i+1}-
P_{\nu_{i+1},\nu_i}(x_{i+1},x_{i+2})\right)
\end{multline*}
hence
\begin{multline*}
\tau_{i+1,s_is_{i+1}(\nu)}'\tau_{i,s_{i+1}(\nu)}'\tau_{i+1,\nu}'-
\tau_{i,s_{i+1}s_i(\nu)}'\tau_{i+1,s_i(\nu)}'\tau_{i,\nu}'=\\
(x_i-x_{i+2})^{-1}\left(
P_{\nu_{i+1},\nu_i}(x_{i+1},x_i)P_{\nu_i,\nu_{i+1}}(x_i,x_{i+1})-
P_{\nu_{i+1},\nu_i}(x_{i+1},x_{i+2})P_{\nu_i,\nu_{i+1}}(x_{i+2},x_{i+1})\right).
\end{multline*}
The other relations are immediate to check.

\smallskip
Let $B$ be the $k$-subalgebra of $\CO\otimes_{k^{(I)}[x]^{\otimes n}}A_n(I)$
image of the morphism.
We have $\CO\otimes_{k^{(I)}[x]^{\otimes n}}B=
\CO\otimes_{k^{(I)}[x]^{\otimes n}}A_n(I)$.
The image of $S$ in $\CO\otimes_{k^{(I)}[x]^{\otimes n}}A_n(I)$ is
linearly independent
over $k$. It follows that 
the canonical map $H_n(Q)\to B$ is an isomorphism and that $S$ is a basis
of $H_n(Q)$ over $k$.
\end{proof}

\subsubsection{Cartan matrices}
\label{se:deformedHeckeCartan}
Let $C=(a_{ij})$ be a Cartan matrix, \ie, 
\begin{itemize}
\item $a_{ii}=2$,
\item
$a_{ij}\in\BZ_{\le 0}$ for $i\not=j$ and
\item $a_{ij}=0$ if and only if $a_{ji}=0$.
\end{itemize}

 We put $m_{ij}=-a_{ij}$.
Let $\{t_{i,j,r,s}\}$ be a family of indeterminates with
$i\not=j\in I$, $0\le r<m_{ij}$ and $0\le s<m_{ji}$ and such that
$t_{j,i,s,r}=t_{i,j,r,s}$. Let $\{t_{ij}\}_{i\not=j}$ be a family of
indeterminates with $t_{ij}=t_{ji}$ if $a_{ij}=0$.

Let $\Bk=\Bk_C=\BZ[\{t_{i,j,r,s}\}\cup\{t_{ij}^{\pm 1}\}]$. Let $Q_{ii}=0$,
$Q_{ij}=t_{ij}$ if $a_{ij}=0$ and
$$Q_{ij}=t_{ij}u^{m_{ij}}+\sum_{\substack{0\le r<m_{ij}\\ 0\le s<m_{ji}}}
t_{i,j,r,s} u^r v^s + t_{ji}v^{m_{ji}} \text{ for }i\not=j
\text{ and }a_{ij}\not=0.$$

We put $H_n(C)=H_n(Q)$. This is a $\Bk$-algebra, free as a $\Bk$-module.

\medskip
Consider $s\not=t\in I$ and assume $n=m_{st}+2$.
Let $\nu=(t,s,\ldots,s)\in I^n$. Given $0\le i\le n-1$, let
$c_i=s_i\cdots s_1$: we have $c_i(\nu)=(s,\ldots,s,t,s,\ldots,s)$, where
$t$ is in the $(i+1)$-th position.
The canonical isomorphisms ${^0H}_i\iso 1_{(s,\ldots,s)}H_i(Q)1_{(s,\ldots,s)}$
and
${^0H}_{n-i-1}\iso 1_{(s,\ldots,s)}H_{n-i-1}(Q)1_{(s,\ldots,s)}$ give rise to
a morphism of unitary algebras
$${^0H}_i\otimes {^0 H}_{n-1-i} \to
1_{c_i(\nu)}H_n(Q)1_{c_i(\nu)}.$$
We denote by $e_{i+1}$ the image of $b_i\otimes b_{n-1-i}$
(cf \S \ref{se:nilaffineHecke}).

\smallskip
The following Lemma generalizes a result of Khovanov and Lauda
\cite[Corollary 7]{KhoLau2}.
\begin{lemma}
\label{le:SerreHecke}
Let $P^i=H_n(Q)e_{i+1}$.
Define $\alpha_{i,i+1}=e_{i+1}\tau_{n-1}\cdots \tau_{i+2}\tau_{i+1}e_{i+2}$ and
$\alpha_{i+1,i}=e_{i+2}\tau_1\tau_2\cdots \tau_{i+1}e_{i+1}$.
We have a complex $P$ of projective $H_n(Q)$-modules
$$\xymatrix{
0\ar[r] &
P^0\ar[r]^-{\alpha_{0,1}} & P^1 \ar@/^1pc/@{.>}[l]^-{\alpha'_{1,0}}\ar[r]
&\cdots \ar[r]& P^{i-1}\ar[r]^-{\alpha_{i-1,i}} &
P^i \ar[r]^-{\alpha_{i,i+1}} \ar@/^1pc/@{.>}[l]^-{\alpha'_{i,i-1}}  &
P^{i+1} \ar@/^1pc/@{.>}[l]^-{\alpha'_{i+1,i}}\ar[r]& \cdots \ar[r]&
P^{n-1}\ar[r] & 0
}$$
which is homotopy equivalent to $0$, with splittings given by the maps
$\alpha'_{i+1,i}=(-1)^{i+n} t_{st}^{-1}\alpha_{i+1,i}$.
\end{lemma}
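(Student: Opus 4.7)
The plan is to verify two identities of elements of $H_n(Q)$: first, that $\alpha_{i-1,i}\cdot\alpha_{i,i+1}=0$, so that $(P,\alpha)$ is a complex; and second, that $\alpha'_{i,i-1}\cdot\alpha_{i-1,i}+\alpha_{i,i+1}\cdot\alpha'_{i+1,i}=e_{i+1}$ for all $i$, with the obvious omission of boundary terms at $i=0$ and $i=n-1$. Under the standard identification $\Hom_{H_n(Q)}(H_n(Q)e,H_n(Q)e') \simeq eH_n(Q)e'$, these two identities translate the claim that $(P,\alpha)$ is homotopy equivalent to $0$ with contracting homotopy $\alpha'$.

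The natural setting is the faithful polynomial representation of Proposition~\ref{trivialHnA}: after inverting the polynomials that separate positions, $H_n(Q)$ embeds into $\mathcal{O}'\rtimes\GS_n$, in which each $\tau_j$ becomes an explicit element combining a transposition with a rational function in $x_j,x_{j+1}$. The idempotents $e_{i+1}$, which are the images of the divided-power idempotents $b_i\otimes b_{n-1-i}$ from ${^0H}_i\otimes{^0H}_{n-1-i}$, become concrete antisymmetrization projectors onto the two blocks of $s$-strands flanking the unique $t$-strand. Both identities then become polynomial identities in $\mathcal{O}'\rtimes\GS_n$, which it suffices to check on the cyclic vector $e_{i+1}$.

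The first identity follows because, after using the braid relations to rearrange the $\tau_j$ with $j\geq i+2$ inside $\alpha_{i-1,i}\cdot\alpha_{i,i+1}$, one can pull a factor $\tau_j^2=Q_{ss}(x_j,x_{j+1})=0$ (for some pair of adjacent $s$-strands inside $b_i$ or $b_{n-1-i}$) out of the product, forcing the whole composition to vanish. For the homotopy identity, the key input is that crossing the $t$-strand over an $s$-strand and back produces a factor $\tau^2=Q_{ts}(x,x')$, whose leading term in the variable about to be absorbed by the adjacent divided power is $t_{st}$ times a power of that variable; the factor $t_{st}^{-1}$ in $\alpha'_{i+1,i}$ is precisely what is needed to normalize this contribution, and the sign $(-1)^{i+n}$ records the parity of the shuffle needed to return the $s$-block to its starting order after the $t$-strand passes through.

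The main obstacle is combinatorial bookkeeping: each $\alpha_{i,i+1}$ is a long word in the $\tau$'s, and the idempotents $e_{i+1}$ have lengthy presentations involving $T_{w[1,i]}$ and powers of the $x_j$'s, so the intermediate expressions are unwieldy. Conceptually, however, everything reduces to two facts: $Q_{ss}=0$ forces nilpotency on adjacent $s$-strands inside an antisymmetrization, and the leading coefficient $t_{st}$ of $Q_{ts}$ normalizes the homotopy across the $t/s$ boundary. Once these reductions are isolated, both identities become explicit polynomial verifications in $\mathcal{O}'\rtimes\GS_n$.
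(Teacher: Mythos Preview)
Your two-identity reduction is correct, and the first identity (that the $\alpha$'s form a complex) does indeed come down to producing a square $\tau_j^2=0$ of an $s/s$-crossing after a braid move. But your account of the homotopy identity misidentifies the mechanism, and this is a genuine gap.

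You describe the homotopy as coming from ``crossing the $t$-strand over an $s$-strand and back,'' producing a factor $\tau^2=Q_{ts}(x,x')$. This does not happen: in $\alpha_{i,i+1}\alpha_{i+1,i}$ the two occurrences of $\tau_{i+1}$ are separated by $\tau_1\cdots\tau_i$, and $\tau_i$ does not commute with $\tau_{i+1}$, so no $\tau_{i+1}^2$ factor can be extracted. More to the point, neither $\alpha_{i,i+1}\alpha_{i+1,i}$ nor $\alpha_{i,i-1}\alpha_{i-1,i}$ is individually a scalar multiple of $e_{i+1}$; only their \emph{difference} is. The relevant relation is therefore the deformed braid relation
\[
\tau_{i+1}\tau_i\tau_{i+1}-\tau_i\tau_{i+1}\tau_i
=(x_{i+2}-x_i)^{-1}\bigl(Q_{st}(x_{i+2},x_{i+1})-Q_{st}(x_i,x_{i+1})\bigr)
\]
at $c_i(\nu)$ (positions $i,i+2$ carry $s$, position $i+1$ carries $t$), not the quadratic relation. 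The paper first uses commutation and the absorption properties $b_rb_{r+1}=b_{r+1}$, $b_{r+1}T_1\cdots T_rb_r=T_1\cdots T_rb_r$ to bring both compositions to the common shape $\tau_1\cdots\tau_{i-1}\,\tau_{n-1}\cdots\tau_{i+2}\,(\cdot)\,e_{i+1}$, with $(\cdot)$ equal to $\tau_{i+1}\tau_i\tau_{i+1}$ in one case and $\tau_i\tau_{i+1}\tau_i$ in the other. The prefactor $\tau_1\cdots\tau_{i-1}\,\tau_{n-1}\cdots\tau_{i+2}$ acts on the resulting polynomial as the Demazure operator $\partial_{s_1\cdots s_{i-1}s_{n-1}\cdots s_{i+2}}$, and an explicit evaluation extracts the single coefficient $q_{n-2,0}=t_{st}$ with sign $(-1)^{n+i}$. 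That is where both the normalization $t_{st}^{-1}$ and the sign $(-1)^{i+n}$ actually come from.

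The detour through the faithful polynomial representation is unnecessary and, as written, does not lead to a proof: you never carry out any computation there, and the heuristics you offer (``parity of the shuffle'', ``precisely what is needed to normalize'') are not arguments. Everything can and should be done directly in $H_n(Q)$ using its defining relations; the deformed braid relation is the missing idea.
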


\begin{proof}
Note that $b_rb_{r+1}=b_{r+1}$ and
$b_{r+1}T_1\cdots T_r b_r=T_1\cdots T_r b_r$, hence
$\alpha_{i,i+1}=\tau_{n-1}\cdots \tau_{i+2}\tau_{i+1}e_{i+2}$ and
$\alpha_{i+1,i}=e_{i+2}\tau_1\tau_2\cdots \tau_{i+1}$.

We have 
$$\alpha_{i-1,i}\alpha_{i,i+1}=
e_i\tau_{n-1}\cdots\tau_i\tau_{n-1}\cdots\tau_{i+1}e_{i+2}=
e_i\tau_{n-2}\cdots\tau_i\tau_{n-1}\cdots\tau_ie_{i+2}=0.$$
It follows that the maps $\alpha_{i-1,i}$ provide a differential.

\smallskip
We have
$$\alpha_{i,i+1}\alpha_{i+1,i}=
\tau_{n-1}\cdots\tau_{i+1}\tau_1\cdots \tau_{i+1}e_{i+1}=
\tau_1\cdots\tau_{i-1}\tau_{n-1}\cdots\tau_{i+2}\tau_{i+1}\tau_i
\tau_{i+1}e_{i+1}$$
and
$$\alpha_{i,i-1}\alpha_{i-1,i}=
\tau_1\cdots\tau_i\tau_{n-1}\cdots\tau_ie_{i+1}=
\tau_1\cdots\tau_{i-1}\tau_{n-1}\cdots\tau_{i+2}\tau_i\tau_{i+1}\tau_i
e_{i+1}.$$
It follows that
$$\alpha_{i,i+1}\alpha_{i+1,i}-\alpha_{i,i-1}\alpha_{i-1,i}=
\partial_{s_1\cdots s_{i-1}s_{n-1}\cdots s_{i+2}}\Bigl(
(x_{i+2}-x_i)^{-1}\bigl(Q_{st}(x_{i+2},x_{i+1})-Q_{st}(x_i,x_{i+1})\bigr)\Bigr).$$
Write $Q_{st}(u,v)=\sum_{a,b}q_{ab}u^av^b$ with $q_{a,b}\in\BZ$. We have
$$(x_{i+2}-x_i)^{-1}\bigl(Q_{st}(x_{i+2},x_{i+1})-Q_{st}(x_i,x_{i+1})\bigr)=
\sum_{a\ge 1,b\ge 0}q_{ab}x_{i+1}^b(x_{i+2}^{a-1}+x_{i+2}^{a-2}x_i+\cdots+
x_i^{a-1}),$$
hence
$$\alpha_{i,i+1}\alpha_{i+1,i}-\alpha_{i,i-1}\alpha_{i-1,i}=
\sum_{a\ge 1,b\ge 0}q_{ab}x_{i+1}^b\sum_{c=i-1}^{a-n+i+1}
\partial_{s_1\cdots s_{i-1}}(x_i^c)\partial_{s_{n-1}\cdots s_{i+2}}
(x_{i+2}^{a-c-1})=(-1)^{n+i}q_{n-2,0}$$
and finally $\alpha_{i,i+1}\alpha'_{i+1,i}+\alpha'_{i,i-1}\alpha_{i-1,i}=1$.
\end{proof}

Assume $C$ is a symmetrizable Cartan matrix, \ie, there is a family
$(d_i)_{i\in I}$ of positive integers with $\lcm(\{d_i\})=1$ and such that
$(b_{ij})$ is symmetric, for $b_{ij}=d_ia_{ij}$.

Let $\Bk^\bullet$ be the quotient of $\Bk$ by the ideal generated by
those $t_{i,j,r,s}$ such that $d_ir+d_js\not=-b_{ij}$. Let
$H_n^\bullet(C)=\Bk^\bullet\otimes_\Bk H_n(C)$. The algebra
$H_n^\bullet(C)$ is graded with
$\deg 1_\nu=0$,
$\deg x_{i,\nu}=2d_{\nu_i}$ and $\deg\tau_{i,\nu}=-b_{\nu_i,\nu_{i+1}}$.

\begin{rem}
\label{re:finiterank}
The description of the basis $S$ for $H_n^\bullet(C)$ (cf Theorem
\ref{th:PBWHecke}) shows that the rank of the sum of the homogeneous
components of $1_{\nu'} H_n^\bullet(C)1_\nu$ with degree less than a given integer
is finite.
\end{rem}

\subsubsection{Quivers with automorphism}
\label{se:quivers}
Let $\Gamma$ be a quiver with a compatible automorphism
\cite[\S 12.1.1]{Lu}: this is the data of
\begin{itemize}
\item a set $\tI$ (vertices)
\item a set $H$ (edges) and a map with finite fibers $h\mapsto [h]$
from $H$ to the set of two-element subsets of $\tI$
\item maps $s:H\to \tI$ (source) and $t:H\to \tI$ (target) such that
$\{s(h),t(h)\}=[h]$ for any $h\in H$
\item automorphisms $a:\tI\to \tI$ and $a:H\to H$ such that
$s(a(h))=a(s(h))$ and $t(a(h))=a(t(h))$  and such that
$s(h)$ and $t(h)$ are not in the same $a$-orbit for $h\in H$.
\end{itemize}

We put $I=\tI/a$. We define 
$i\cdot i=2\#(i)$ and
$i\cdot j=-\#\{h\in H| [h]\in i\cup j\}$ for $i\not=j$ in $I$
(note that this uses only the graph structure, not the orientation).
This defines a Cartan datum and
$\left(2\frac{i\cdot j}{i\cdot i}\right)_{i,j}$
is a symmetrizable Cartan matrix.

Given $i,j\in I$, let $d_{ij}$ be the number of orbits of $a$ in
$\{h\in H| s(h)\in i \text{ and } t(h)\in j\}$. We have
$d_{ij}+d_{ji}=-2(i\cdot j)/\lcm(i\cdot i,j\cdot j)$ for $i\not=j$.

Define
$$P_{ij}(u,v)=\left(v^{l/(j\cdot j)}-u^{l/(i\cdot i)}\right)^{d_{ij}}
\text{ where }
l=\lcm(i\cdot i,j\cdot j), \text{ for }i\not=j \text{ and }
P_{ii}=0.$$

We have
$$Q_{ij}=(-1)^{d_{ij}}\left(u^{l/(i\cdot i)}-v^{l/(j\cdot j)}
\right)^{-2(i\cdot j)/l}\text{ for }i\not= j.$$

We put $k=\BZ$ and $H_n(\Gamma)=H_n(Q)$. This is a specialization of
the algebra $H_n(C)$ introduced in \S \ref{se:deformedHeckeCartan}.

\smallskip
The algebra $H_n(\Gamma)$ is graded with
$\deg 1_\nu=0$,
$\deg x_{i,\nu}=\nu_i\cdot \nu_i$ and $\deg\tau_{i,\nu}=-\nu_i\cdot \nu_{i+1}$. As
a graded algebra, it is a specialization of $H^\bullet_n(C)$ (here,
$d_i=(i\cdot i)/2$).

\smallskip
Consider another choice of orientation $s',t'$ of the graph
$(\tI,H,h\mapsto[h])$, compatible  with the automorphism $a$. Given
$i\not=j$, define
$$\beta_{ij}=\begin{cases}
(-1)^{d_{ij}+d'_{ij}} & \text{ if } d_{ij}\ge d'_{ij} \\
1 & \text{ otherwise.}
\end{cases}$$
We have an isomorphism
$$H_n(\Gamma)\iso H_n(\Gamma'),\
1_\nu\mapsto 1_\nu,\ x_{i,\nu}\mapsto x_{i,\nu},\ 
\tau_{i,\nu}\mapsto \beta_{\nu_i,\nu_{i+1}}\tau_{i,\nu}.$$

It follows that, up to isomorphism, the graded algebra $H_n(\Gamma)$ depends
only on the Cartan datum. Note
nevertheless that the system of isomorphisms constructed above between the
algebras corresponding to different orientations is not a transitive
system. Consequently, we do not define ``the'' algebra associated to a
Cartan datum (or a graph with automorphism). Note finally
that, up to isomorphism, $H_n(\Gamma)$ depends only on the Cartan matrix
and a change of quiver with automorphism
corresponds to a rescaling of the grading.

\smallskip
Note that if $\Gamma$ is the disjoint union of full subquivers $\Gamma_1$
and $\Gamma_2$, then $H_n(\Gamma)=H_n(\Gamma_1)\otimes H_n(\Gamma_2)$.

\subsubsection{Type $A$ graphs}
\label{se:typeAgraphs}
Let $k$ be a field and $q\in k^\times$.

Assume first $q=1$. Given $I$ a subset of $k$, we denote by
$I_1$ the quiver with set of vertices $I$ and with an arrow $i\to i+1$,
whenever $i,i+1\in I$.

Assume now $q\not=1$. Given $I$ a subset of $k^\times$, we denote by
$I_q$ the quiver with set of vertices $I$ and with an arrow $q\to qi$,
whenever $i,qi\in I$.

\medskip
Note that $I_q$ has type $A$ and we put
$\Gsl_{I_q}=\Gg_{I_q}$. Let us assume $I_q$ is connected. Let us 
describe the possible types for the underlying graph.

Assume $q=1$. Type:
\begin{itemize}
\item $A_n$ if $|I|=n$ and $k$ has characteristic $0$ or $p>n$.
\item $\tilde{A}_{p-1}$ if $|I|=p$ is the characteristic of $k$.
\item $A_{\infty}$ if $I$ is bounded in one direction but not finite.
\item $A_{\infty,\infty}$ if $I$ is unbounded in both directions.
\end{itemize}

Assume $q\not=1$. Denote by $e$ the multiplicative order of $q$. Type:
\begin{itemize}
\item $A_n$ if $|I|=n<e$.
\item $\tilde{A}_{e-1}$ if $|I|=e$.
\item $A_{\infty}$ if $I$ is bounded in one direction but not finite.
\item $A_{\infty,\infty}$ if $I$ is unbounded in both directions.
\end{itemize}

\subsubsection{Idempotents and representations}
\label{se:idempotents}
Let $k$ be a field and
let $\Gamma$ be a quiver.
We denote by $kH_n(\Gamma)\mMOD_0$ the category of 
$H_n(\Gamma)$-modules $M$ such that $M=\bigoplus_{\nu}1_\nu M$ and
for every $\nu$, the elements $x_{i,\nu}$ act locally
nilpotently on $1_\nu M$ for $1\le i\le n$.

\smallskip
Let $I$ be a subset of $k$ and let $\Gamma=I_1$.

 Let
$$\bar{\CO}'=\bigoplus_{\nu\in I^n}k[X_1,\ldots,X_n]
[\{(X_i-X_j)^{-1}\}_{i\not=j,\nu_i\not=\nu_j},
\{(X_i-X_j+1)^{-1}\}_{i\not=j,\nu_i+1\not=\nu_j}],$$
a non-unitary ring. Note that this is a subring of
$$\bigoplus_{\nu\in I^n}k[X_1,\ldots,X_n]
[(X_i-X_j-a)^{-1}]_{i\not=j,a\not=\nu_i-\nu_j}.$$

We denote by $1_\nu$ the unit of the summand of $\bar{\CO}'$ corresponding
to $\nu$. We put a structure of non-unitary algebra on
$\bar{\CO}' \bar{H}_n=\bar{\CO}'
\otimes_{\BZ[X_1,\ldots,X_n]}\bar{H}_n$
by setting
$$T_i 1_\nu-1_{s_i(\nu)}T_i=
(X_{i+1}-X_i)^{-1}(1_\nu-1_{s_i(\nu)}).
$$

Let
$$\tilde{\bar{\CO}}'=\bigoplus_{\nu\in I^n}k[x_1,\ldots,x_n]
[\{(\nu_i-\nu_j+x_i-x_j)^{-1}\}_{i\not=j,\nu_i\not=\nu_j},
\{(\nu_i-\nu_j+1+x_i-x_j)^{-1}\}_{i\not=j,\nu_i+1\not=\nu_j}],$$
a subring of
$$\bigoplus_{\nu\in I^n}k[x_1,\ldots,x_n]
[(x_i-x_j-a)^{-1}]_{i\not=j,a\not=0}.$$

From Proposition \ref{trivialHnA} and \S \ref{se:isovariousHecke},
we obtain the following proposition.

\begin{prop}
\label{pr:isowithgraph1}
We have an isomorphism of non-unitary algebras
$$\tilde{\bar{\CO}}' H_n(\Gamma)\iso\bar{\CO}' \bar{H}_n,\ 
x_i 1_\nu\mapsto (X_i-\nu_i) 1_\nu,$$
$$\tau_i 1_\nu\mapsto\begin{cases}
(X_i-X_{i+1}+1)^{-1}(T_i-1) 1_\nu & \text{ if }\nu_i=\nu_{i+1} \\
((X_i-X_{i+1})T_i+1) 1_\nu & \text{ if }\nu_{i+1}=\nu_i+1 \\
\frac{X_i-X_{i+1}}{X_i-X_{i+1}+1}(T_i-1)1_\nu+1_\nu & \text{ otherwise.}
\end{cases}$$
\end{prop}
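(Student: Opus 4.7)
The plan is to combine the faithful realization of Proposition \ref{trivialHnA} with the localization isomorphism of \S \ref{se:isovariousHecke}, connected by the ``spectral shift'' $X_i = x_i + \nu_i$ on each $1_\nu$-summand. First, I check that the rule $x_i 1_\nu \mapsto (X_i - \nu_i) 1_\nu$ extends to an isomorphism of non-unitary rings $\phi\colon \tilde{\bar{\CO}}' \iso \bar{\CO}'$: the shift carries $(\nu_i - \nu_j + x_i - x_j)$ to $(X_i - X_j)$ and $(\nu_i - \nu_j + 1 + x_i - x_j)$ to $(X_i - X_j + 1)$, and the conditions $\nu_i \not= \nu_j$ (respectively $\nu_i + 1 \not= \nu_j$) under which these elements are inverted on the two sides correspond exactly. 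Via $\phi$, both algebras in the proposition become algebras over the common base $\bar{\CO}'$.

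For $\Gamma = I_1$, the polynomials of \S \ref{se:quivers} specialize to $P_{i,i+1}(u,v) = v - u$, $P_{i+1,i} = 1$, and $P_{ij} = 1$ for $|i-j|>1$. Proposition \ref{trivialHnA} then sends $\tau_{i,\nu}$ to $(x_i - x_{i+1})^{-1}(s_i - 1) 1_\nu$, to $(x_i - x_{i+1}) s_i 1_\nu$, or to $s_i 1_\nu$ according to whether $\nu_i = \nu_{i+1}$, $\nu_{i+1} = \nu_i + 1$, or otherwise. The localization isomorphism of \S \ref{se:isovariousHecke} supplies $s_i = \frac{X_i - X_{i+1}}{X_i - X_{i+1} + 1}(T_i - 1) + 1$ inside a localization of $\bar{H}_n$. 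Substituting this formula into each case and transporting via $\phi$ reproduces, case by case, each of the three asserted images of $\tau_i 1_\nu$. The subtle point is the middle case: $(x_i - x_{i+1}) s_i 1_\nu$ transports to $(X_i - X_{i+1} + 1) s_i 1_\nu$ (using $\nu_{i+1} - \nu_i = 1$), and the factor $(X_i - X_{i+1} + 1)$ exactly cancels the denominator in the expression for $s_i$, leaving the \emph{polynomial} expression $((X_i - X_{i+1}) T_i + 1) 1_\nu$. This is what makes the image land in $\bar{\CO}' \bar{H}_n$ even though $(X_i - X_{i+1} + 1)^{-1}$ is \emph{not} available there when $\nu_{i+1} = \nu_i + 1$.

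To upgrade the resulting homomorphism $\psi$ to an isomorphism, I will construct its inverse directly by solving each case-formula for $T_i 1_\nu$ in terms of $\tau_{i,\nu}$ and $\tilde{\bar{\CO}}'$. A case analysis shows that the inverses $(x_i - x_{i+1} + \nu_i - \nu_{i+1})^{-1}$ and $(x_i - x_{i+1} + \nu_i - \nu_{i+1} + 1)^{-1}$ required by these inverse formulas are exactly those available in $\tilde{\bar{\CO}}'$ under the case assumption (for instance, when $\nu_{i+1} = \nu_i + 1$, the required $(x_i - x_{i+1} - 1)^{-1}$ belongs to $\tilde{\bar{\CO}}'$ precisely because $\nu_i \not= \nu_{i+1}$). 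For well-definedness of $\psi$ itself, I plan to embed both $\tilde{\bar{\CO}}' H_n(\Gamma)$ and $\bar{\CO}' \bar{H}_n$ into the common non-unitary semidirect product $\bar{\CO}'' \rtimes \GS_n$, where $\bar{\CO}''$ is obtained from $\bar{\CO}'$ by inverting all differences $X_i - X_j + a$ with $a \in \BZ$; inside this larger algebra the computed images of $\tau_{i,\nu}$ manifestly satisfy the defining relations of $H_n(\Gamma)$, since by Proposition \ref{trivialHnA} they realize the polynomial embedding of $H_n(\Gamma)$ into $\bar{\CO}'' \otimes A_n(I)$. The main obstacle I anticipate is the case-by-case bookkeeping of which inverses live in $\bar{\CO}'$ versus $\tilde{\bar{\CO}}'$, but in each of the three cases the assumption on $\nu$ provides precisely the inverse needed.
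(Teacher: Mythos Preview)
Your proposal is correct and follows exactly the route the paper indicates: the paper's entire proof is the sentence ``From Proposition \ref{trivialHnA} and \S \ref{se:isovariousHecke}, we obtain the following proposition,'' and you have correctly unpacked how the polynomial realization of $H_n(\Gamma)$ and the localization isomorphism $s_i \leftrightarrow T_i$ for $\bar{H}_n$ combine via the shift $X_i = x_i + \nu_i$ to yield the stated formulas. Your observation that the case $\nu_{i+1}=\nu_i+1$ works precisely because the factor $(x_i-x_{i+1})$ transports to $(X_i-X_{i+1}+1)$ and cancels the unavailable denominator is exactly the point that makes the map land in $\bar{\CO}'\bar{H}_n$ rather than a larger localization.
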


\smallskip
Let $M$ be a $k\bar{H}_n$-module. Given $a\in k^n$, we denote by
$M_a$ the $k[X_1,\ldots,X_n]$-submodule of $M$ of elements with support
contained in the closed point of $\BA_k^n$ given by $a$.

We denote by $\bar{\CC}_\Gamma$
the category of $k\bar{H}_n$-modules $M$ such that
$$M=\bigoplus_{a\in \Gamma^n} M_a.$$

\begin{thm}
\label{th:equivdegnil}
We have an equivalence of categories
$$kH_n(\Gamma)\mMOD_0\iso \bar{\CC}_\Gamma,\ M\mapsto M$$
where $X_i$ acts on $1_\nu M$ by $(x_i+\nu_i)$ and
$T_i$ acts on $1_\nu M$ by

\begin{itemize}
\item $(x_i-x_{i+1}+1)\tau_i+1$ if $\nu_i=\nu_{i+1}$ \\
\item $(x_i-x_{i+1}-1)^{-1}(\tau_i-1)$
if $\nu_{i+1}=\nu_i+1$ \\
\item $(x_i-x_{i+1}+\nu_{i+1}-\nu_i+1)
(x_i-x_{i+1}+\nu_{i+1}-\nu_i)^{-1}(\tau_i-1)+1$
otherwise.
\end{itemize}
\end{thm}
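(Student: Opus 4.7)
The strategy is to deduce the equivalence directly from the algebra isomorphism of Proposition~\ref{pr:isowithgraph1}. Both categories in the statement will be identified with modules over the common non-unitary algebra $\tilde{\bar{\CO}}' H_n(\Gamma)\iso \bar{\CO}'\bar{H}_n$, equipped with a compatible system of idempotents $1_\nu$.

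Given $M\in kH_n(\Gamma)\mMOD_0$, the hypothesis provides a decomposition $M=\bigoplus_\nu 1_\nu M$ with each $x_i$ locally nilpotent on $1_\nu M$. Consequently, for any $i\ne j$ with $\nu_i\ne \nu_j$, the element $x_i-x_j+(\nu_i-\nu_j)$ acts on $1_\nu M$ as a unit scalar plus a nilpotent operator, hence invertibly; the same holds for $x_i-x_j+(\nu_i-\nu_j)+1$ when $\nu_j\ne \nu_i+1$. These are precisely the elements inverted in $\tilde{\bar{\CO}}'$, so the $H_n(\Gamma)$-action extends uniquely to a $\tilde{\bar{\CO}}' H_n(\Gamma)$-action. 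Applying Proposition~\ref{pr:isowithgraph1} then yields a $\bar{\CO}'\bar{H}_n$-action, and restriction along $\bar{H}_n\hookrightarrow\bar{\CO}'\bar{H}_n$ gives a $k\bar{H}_n$-module structure on $M$. The formula $X_i=x_i+\nu_i$ on $1_\nu M$ shows that $1_\nu M\subseteq M_\nu$, and the direct sum decomposition forces equality, so $M\in\bar{\CC}_\Gamma$.

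The stated formulas for $T_i$ are then obtained by solving the Proposition's formulas for $\tau_i$ in terms of $T_i$, case by case, and substituting $X_i=x_i+\nu_i$ on $1_\nu M$. For example, when $\nu_i=\nu_{i+1}$, Proposition~\ref{pr:isowithgraph1} gives $\tau_i 1_\nu=(X_i-X_{i+1}+1)^{-1}(T_i-1)1_\nu$, which rearranges to $T_i 1_\nu=\bigl((x_i-x_{i+1}+1)\tau_i+1\bigr)1_\nu$; the $\nu_{i+1}=\nu_i+1$ and generic cases are analogous. Conversely, the inverse functor is constructed symmetrically: starting from $N\in\bar{\CC}_\Gamma$, the decomposition $N=\bigoplus_\nu N_\nu$ provides idempotents $1_\nu$, the operator $x_i:=X_i-\nu_i$ on $N_\nu$ is locally nilpotent by definition, and the same invertibility argument extends the action to $\bar{\CO}'\bar{H}_n$; Proposition~\ref{pr:isowithgraph1} then transports the structure to a $\tilde{\bar{\CO}}' H_n(\Gamma)$-action, which restricts to an $H_n(\Gamma)$-action on $N$ lying in $kH_n(\Gamma)\mMOD_0$.

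The two functors are visibly mutually inverse, since each is an instance of transport of structure along the isomorphism of Proposition~\ref{pr:isowithgraph1} combined with the canonical extension/restriction between an algebra and its localization, and both preserve the underlying $k$-module. The main obstacle is bookkeeping: one must verify that the three cases of the formula for $T_i$ are indeed obtained by inverting the Proposition's expressions for $\tau_i$, and that the idempotent-shifting behaviour $T_i\colon 1_\nu M\to 1_{s_i(\nu)}M$ is correctly tracked via the defining relation $T_i 1_\nu -1_{s_i(\nu)}T_i=(X_{i+1}-X_i)^{-1}(1_\nu-1_{s_i(\nu)})$ in $\bar{\CO}'\bar{H}_n$. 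Once this verification is carried out in each of the three cases $\nu_i=\nu_{i+1}$, $\nu_{i+1}=\nu_i+1$, and the generic case, the equivalence follows with no further work.
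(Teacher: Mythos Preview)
Your approach is correct and is precisely the one the paper intends: the paper gives no explicit proof of this theorem, stating it immediately after Proposition~\ref{pr:isowithgraph1} as a direct consequence of that isomorphism, and your argument spells out exactly this deduction (extension to the localization via local nilpotence of the $x_i$, transport of structure through the isomorphism, and inversion of the formulas for $\tau_i$ to recover $T_i$).
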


Assume $I$ is finite.
Let $d:I\to\BZ_{>0}$ be a function and
$\bar{H}_n(I,d)$ be the quotient of $k\bar{H}_n$ by the two-sided ideal
generated by
$\prod_{i\in I}(X_1-i)^{d(i)}$, a degenerate cyclotomic Hecke algebra.
Let $H_n(\Gamma,d)$ be the quotient of $H_n(\Gamma)$ by the ideal
generated by $x_i^{d(i)}$ for $i\in I$.

\begin{cor}
\label{co:isodeg}
The construction of Theorem \ref{th:equivdegnil} induces an
isomorphism of $k$-algebras
$H_n(\Gamma,d)\iso \bar{H}_n(I,d)$.
\end{cor}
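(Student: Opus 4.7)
The plan is to deduce this algebra isomorphism from the equivalence of categories in Theorem \ref{th:equivdegnil}, by restricting it to the module categories of the two cyclotomic quotients.

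First I would translate the defining relations across the equivalence. Under that equivalence, $X_1$ acts on $1_\nu M$ as $x_{1,\nu}+\nu_1$, with $x_{1,\nu}$ locally nilpotent, so that
$$\prod_{i\in I}(X_1-i)^{d(i)}\,1_\nu=x_{1,\nu}^{d(\nu_1)}\cdot \prod_{j\in I\setminus\{\nu_1\}}(x_{1,\nu}+\nu_1-j)^{d(j)},$$
and the second factor is a unit in $k$ plus a locally nilpotent term, hence invertible. Thus the cyclotomic relation $\prod_{i\in I}(X_1-i)^{d(i)}=0$ on a module in $\bar{\CC}_\Gamma$ corresponds precisely to $x_{1,\nu}^{d(\nu_1)}=0$ on each $1_\nu M$ in the corresponding module in $kH_n(\Gamma)\mMOD_0$, which is the defining relation of $H_n(\Gamma,d)$.

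Next I would verify that the equivalence restricts to an equivalence $H_n(\Gamma,d)\mMOD\iso\bar{H}_n(I,d)\mMOD$. This requires two symmetric finiteness statements. On the one side, every $\bar{H}_n(I,d)$-module must lie in $\bar{\CC}_\Gamma$, i.e.\ decompose into joint generalized eigenspaces for $(X_1,\ldots,X_n)$ indexed by $I^n$; this is the standard cyclotomic argument, propagating the relation $p(X_1)=0$ with $p=\prod_i(X-i)^{d(i)}$ to a polynomial relation with roots in $I$ on each $X_a$ by induction, using the intertwiner $T_iX_{i+1}-X_iT_i=1$ together with $T_i^2=1$. On the other side, every $H_n(\Gamma,d)$-module must lie in $kH_n(\Gamma)\mMOD_0$, i.e.\ each $x_{a,\nu}$ must be nilpotent; this is established in the same spirit, using the quadratic relations $\tau_{i,s_i\nu}\tau_{i,\nu}=Q_{\nu_i,\nu_{i+1}}(x_i,x_{i+1})$ and the commutation rules between $\tau_{i,\nu}$ and $x_{a,\nu}$ to propagate nilpotence of $x_{1,\nu}$ to every position.

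With these restrictions, the equivalence of Theorem \ref{th:equivdegnil} is the identity on underlying $k$-vector spaces, with the two actions related by the explicit formulas of Proposition \ref{pr:isowithgraph1}. The algebra isomorphism $H_n(\Gamma,d)\iso\bar{H}_n(I,d)$ can then be extracted by taking the regular left module as a projective generator on each side; equivalently, one may check directly that the formulas of Proposition \ref{pr:isowithgraph1} and their inverses from Theorem \ref{th:equivdegnil} descend to mutually inverse algebra homomorphisms between the two cyclotomic quotients, using that the denominators of the form $(X_i-X_{i+1}+1)1_\nu$, $(x_i-x_{i+1}-1)1_\nu$ and $(x_i-x_{i+1}+\nu_{i+1}-\nu_i)1_\nu$ are invertible on the relevant blocks by the spectral decompositions just established. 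The principal obstacle is the propagation of the cyclotomic (resp.\ nilpotence) relation from position $1$ to every position $a=1,\ldots,n$ on both sides, which guarantees finite-dimensionality of both cyclotomic algebras and the existence of a complete system of spectral idempotents $\{1_\nu\}_{\nu\in I^n}$; once these standard but non-trivial facts are in hand, the rest is a formal consequence of Theorem \ref{th:equivdegnil}.
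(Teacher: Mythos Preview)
The paper states this corollary without proof, treating it as an immediate consequence of Theorem~\ref{th:equivdegnil} and Proposition~\ref{pr:isowithgraph1}. Your outline is correct and supplies exactly the details that are implicit: the matching of the two cyclotomic ideals under the substitution $X_1 1_\nu=(x_{1,\nu}+\nu_1)1_\nu$, the propagation of the spectral condition from position $1$ to all positions on both sides, and the passage from a module-level equivalence to an algebra isomorphism via the explicit formulas.

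Your identification of the propagation step as the principal obstacle is accurate: without knowing that every $x_{a,\nu}$ is nilpotent in $H_n(\Gamma,d)$ (respectively that every $X_a$ has spectrum in $I$ on $\bar H_n(I,d)$), the denominators $(x_i-x_{i+1}-1)1_\nu$ and $(X_i-X_{i+1}+1)1_\nu$ appearing in the formulas are not a priori invertible in the cyclotomic quotients, and the idempotents $1_\nu$ are not even defined on the $\bar H_n$ side. Once propagation is established, the second route you mention---checking directly that the formulas of Proposition~\ref{pr:isowithgraph1} and Theorem~\ref{th:equivdegnil} descend to mutually inverse algebra maps---is the cleanest finish; the detour through projective generators is unnecessary since the equivalence is already the identity on underlying spaces.
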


\medskip
Let $k$ be a field and $q\in k-\{0,1\}$.
Let $I$ be a subset of $k^\times$ and let $\Gamma=I_q$.

Let
$$\CO'=\bigoplus_{\nu\in I^n}k[X_1^{\pm 1},\ldots,X_n^{\pm 1}]
[\{(X_i-X_j)^{-1}\}_{i\not=j,\nu_i\not=\nu_j},
\{(qX_i-X_j)^{-1}\}_{i\not=j,q\nu_i\not=\nu_j}],$$
a non-unitary $k[X_1^{\pm 1},\ldots,X_n^{\pm 1}]$-algebra.
Note that this is a subring of
$$\bigoplus_{\nu\in I^n}k[X_1^{\pm 1},\ldots,X_n^{\pm 1}]
[(X_i-aX_j)^{-1}]_{i\not=j,a\in k-\{0,\nu_i\nu_j^{-1}\}}.$$

We denote by $1_\nu$ the unit of the summand of $\CO'$ corresponding
to $\nu$. We put a structure of non-unitary algebra on
$\CO' H_n=\CO'\otimes_{\BZ[q^{\pm 1},X_1^{\pm 1},\ldots,X_n^{\pm 1}]}H_n$
by setting
$$T_i 1_\nu-1_{s_i(\nu)}T_i=
(1-q)X_{i+1}(X_i-X_{i+1})^{-1}(1_\nu-1_{s_i(\nu)}).
$$

Let
$$\tilde{\CO}'=\bigoplus_{\nu\in I^n}k[x_1^{\pm 1},\ldots,x_n^{\pm 1}]
[\{(\nu_i\nu_j^{-1}x_i-x_j)^{-1}\}_{i\not=j,\nu_i\not=\nu_j},
\{(q\nu_i\nu_j^{-1}x_i-x_j)^{-1}\}_{i\not=j,q\nu_i\not=\nu_j}],$$
a subring of
$$\bigoplus_{\nu\in \BZ^n}k[q^{\pm 1},x_1^{\pm 1},\ldots,x_n^{\pm 1}]
[(x_i-ax_j)^{-1}]_{i\not=j,a\in k-\{0,1\}}.$$

From Proposition \ref{trivialHnA} and \S \ref{se:isovariousHecke},
we obtain the following proposition.

\begin{prop}
\label{pr:isowithgraphq}
We have an isomorphism of non-unitary algebras
$$\tilde{\CO}' H_n(\Gamma)\iso\CO' H_n,\ 
x_i 1_\nu\mapsto \nu_i^{-1}X_i 1_\nu,$$
$$\tau_i 1_\nu\mapsto\begin{cases}
\nu_i(qX_i-X_{i+1})^{-1}(T_i-q) 1_\nu & \text{ if }\nu_i=\nu_{i+1} \\
q^{-1}\nu_i^{-1}((X_i-X_{i+1})T_i + (q-1)X_{i+1}))1_\nu
& \text{ if }\nu_{i+1}=q\nu_i \\
(\frac{X_i-X_{i+1}}{qX_i-X_{i+1}}(T_i-q)+1)1_\nu & \text{ otherwise.}
\end{cases}$$
\end{prop}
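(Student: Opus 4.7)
The plan is to mirror the proof of Proposition \ref{pr:isowithgraph1} in the multiplicative setting, combining the polynomial realization of Proposition \ref{trivialHnA} with the localized presentation of the affine Hecke algebra $H_n$ given in \S \ref{se:isovariousHecke}. The bridge between the two worlds is the rescaling $x_i 1_\nu \mapsto \nu_i^{-1} X_i 1_\nu$, adapted to the multiplicative nature of the vertices of $\Gamma = I_q$.

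The first step is to specialize Proposition \ref{trivialHnA} to $\Gamma = I_q$. Since the underlying Cartan datum is of type $A$ with $i\cdot i = 2$ for every vertex, the orientation-dependent polynomials from \S \ref{se:quivers} take the values $P_{\nu_i,\nu_{i+1}}(u,v) = v - u$ when $\nu_{i+1} = q\nu_i$, and $P_{\nu_i,\nu_{i+1}} = 1$ otherwise. This realizes $H_n(\Gamma)$ faithfully as a subalgebra of a localization of $k^{(I)}[x] \wr \GS_n$, with explicit formulas for $\tau_{i,\nu}$ in terms of $s_i$, the $x_j$'s and these $P_{\nu_i,\nu_{i+1}}$.

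Next, the second isomorphism of \S \ref{se:isovariousHecke} identifies, after inverting $(X_i - X_j)$ and $(qX_i - X_j)$ for $i \neq j$, the affine Hecke algebra $H_n$ with $R'_q \rtimes \GS_n$ via an explicit formula for $s_i$ in terms of $T_i$. Composing the embedding above with the inverse of this identification on each summand indexed by $\nu \in I^n$, and performing the substitution $x_j 1_\nu = \nu_j^{-1} X_j 1_\nu$, yields the claimed algebra map. The rescaling is chosen precisely so that the localization conditions match: the denominators $\nu_i\nu_j^{-1} x_i - x_j$ and $q\nu_i\nu_j^{-1} x_i - x_j$ defining $\tilde{\CO}'$ become, after multiplication by the unit $\nu_j$, the denominators $X_i - X_j$ and $qX_i - X_j$ defining $\CO'$ on the summand indexed by $\nu$. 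That the resulting map is in fact an isomorphism follows from the invertibility statements already contained in Proposition \ref{trivialHnA} and in \S \ref{se:isovariousHecke}.

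It then remains to verify, case by case on the pair $(\nu_i, \nu_{i+1})$, that the image of $\tau_{i,\nu}$ agrees with the formulas stated in the proposition: the case $\nu_i = \nu_{i+1}$, the case $\nu_{i+1} = q\nu_i$, and the generic case (where either $\nu_i$ and $\nu_{i+1}$ are non-adjacent or $\nu_i = q\nu_{i+1}$, both yielding $P_{\nu_i,\nu_{i+1}} = 1$, so that $\tau_{i,\nu}$ simply maps to $s_i 1_\nu$). Each case reduces to a routine substitution of the formula for $s_i$ in terms of $T_i$ into the expression for $\tau_{i,\nu}$ provided by Proposition \ref{trivialHnA}, followed by the rescaling $x_j = \nu_j^{-1} X_j$. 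The only real obstacle is bookkeeping: tracking the scalar prefactors $\nu_i^{\pm 1}$ and $q^{\pm 1}$ so that all three formulas come out on the nose. Once the correct normalization is pinned down, the verification runs entirely parallel to the degenerate case of Proposition \ref{pr:isowithgraph1}.
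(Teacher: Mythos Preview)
Your proposal is correct and follows exactly the approach the paper takes: the paper simply states that the proposition follows ``from Proposition \ref{trivialHnA} and \S \ref{se:isovariousHecke}'' without writing out any details, and you have supplied precisely those details---composing the polynomial realization with the localized identification $R'_q\rtimes\GS_n\iso R'_qH_n$, via the rescaling $x_j 1_\nu=\nu_j^{-1}X_j 1_\nu$.
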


\smallskip
Let $M$ be a $kH_n$-module. Given $a\in \left(k^\times\right)^n$,
we denote by
$M_a$ the $k[X_1^{\pm 1},\ldots,X_n^{\pm 1}]$-submodule of $M$ of elements
with support
contained in the closed point of $\BA_k^n$ given by $a$.

We denote by $\CC_\Gamma$ the category of $kH_n$-modules $M$ such that
$$M=\bigoplus_{a\in \Gamma^n} M_a.$$

\begin{thm}
\label{th:equivaffnil}
We have an equivalence of categories
$$kH_n(\Gamma)\mMOD_0\iso \CC_\Gamma,\ M\mapsto M$$
where $X_i$ acts on $1_\nu M$ by $\nu_i(x_i+1)$ and
$T_i$ acts on $1_\nu M$ by

\begin{itemize}
\item $(qx_i-x_{i+1})\tau_i+q$ if $\nu_i=\nu_{i+1}$ \\
\item $(q^{-1}x_i-x_{i+1})^{-1}(\tau_i+(1-q)x_{i+1})$ if $\nu_{i+1}=q\nu_i$ \\
\item $(\nu_ix_i-\nu_{i+1}x_{i+1})^{-1}\left(
(q\nu_ix_i-\nu_{i+1}x_{i+1})\tau_i+
(1-q)\nu_{i+1}x_{i+1}\right)$
otherwise.
\end{itemize}
\end{thm}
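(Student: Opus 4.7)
The plan is to adapt the argument sketched for Theorem \ref{th:equivdegnil} to the quantum setting, using Proposition \ref{pr:isowithgraphq} in place of Proposition \ref{pr:isowithgraph1}. The strategy has three stages: construct a forward functor $F: kH_n(\Gamma)\mMOD_0 \to \CC_\Gamma$, construct a candidate inverse $G$, and verify they are mutually inverse equivalences.

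For the forward direction, I would start with $M \in kH_n(\Gamma)\mMOD_0$ and define operators $X_i$ and $T_i$ on the underlying $k$-module of $M$ by the explicit formulas in the statement. The defining relations of $kH_n$ (quadratic and braid relations for the $T_i$, and the cross relations with the $X_j$) decompose into relations on each summand $1_\nu M$, and on each such summand they should reduce, via the explicit change of variables, to the relations in $H_n(\Gamma)$ between the $x_i$, $\tau_i$ and $1_\nu$. In effect this is exactly the content of the algebra isomorphism of Proposition \ref{pr:isowithgraphq}, restricted to the block indexed by $\nu$. Once the $H_n$-action is established, the condition $X_i=\nu_i(x_i+1)$ on $1_\nu M$ combined with the local nilpotence of $x_i$ shows that $X_i$ acts on $1_\nu M$ with generalized eigenvalue $\nu_i$, so $M=\bigoplus_\nu 1_\nu M$ decomposes as required for $\CC_\Gamma$.

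For the inverse direction, given $M \in \CC_\Gamma$, decompose $M=\bigoplus_{\nu\in I^n}M_\nu$ into generalized eigenspaces for the commuting family $(X_1,\ldots,X_n)$, let $1_\nu$ be the idempotent projection onto $M_\nu$, and set $x_i=\nu_i^{-1}X_i-1$ on $M_\nu$, which is automatically locally nilpotent. To define $\tau_i$, solve each of the three cases of the theorem's formula for $T_i$ for $\tau_i$. Each case requires inverting one of $qX_i-X_{i+1}$, $X_i-X_{i+1}$, or $X_i-qX_{i+1}$ on $M_\nu$, and these are invertible precisely under the conditions ($q\nu_i\neq\nu_{i+1}$, $\nu_i\neq\nu_{i+1}$, $q\nu_i\neq\nu_{i+1}$ and $\nu_i\neq\nu_{i+1}$) that separate the cases, since their generalized eigenvalues on $M_\nu$ are then nonzero scalars. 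Verification that the $H_n(\Gamma)$ relations hold is dual to the verification in the forward direction and follows from Proposition \ref{pr:isowithgraphq}. Since both $F$ and $G$ are the identity on underlying $k$-modules and the two changes of variables are explicitly inverse on each weight component, inspection shows $F$ and $G$ are mutually inverse.

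The main obstacle is the case-by-case verification of the relations on each $1_\nu M$, separated according to whether $\nu_i=\nu_{i+1}$, $\nu_{i+1}=q\nu_i$, or neither. Since Proposition \ref{pr:isowithgraphq} already packages the change of variables into an algebra isomorphism after inverting a suitable set of polynomials, the core of the verification is simply to check that on each $M_\nu$ the denominators that appear act as invertible operators: this reduces to the numerical conditions on $\nu$ that split the formulas into cases, which is why the three-case distinction in the statement is both necessary and sufficient.
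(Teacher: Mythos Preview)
Your approach is correct and coincides with the paper's: the theorem is stated immediately after Proposition \ref{pr:isowithgraphq} without a separate proof, the intended argument being precisely the one you give --- the algebra isomorphism of that proposition, applied blockwise, transports modules between the two categories once one checks that the localizing denominators act invertibly on each generalized eigenspace, which is exactly your case analysis on $q\nu_i-\nu_{i+1}$ and $\nu_i-\nu_{i+1}$. One small point worth making explicit in your write-up: the operator $T_i$ does not preserve individual blocks $M_\nu$ when $\nu_i\neq\nu_{i+1}$, so the verification that $G$ lands in $kH_n(\Gamma)\mMOD_0$ (in particular that $\tau_i$ sends $1_\nu M$ to $1_{s_i(\nu)}M$) uses the commutation relation $T_i 1_\nu - 1_{s_i(\nu)}T_i = (1-q)X_{i+1}(X_i-X_{i+1})^{-1}(1_\nu - 1_{s_i(\nu)})$ recorded just before Proposition \ref{pr:isowithgraphq}, not merely the formula for $T_i$ restricted to a single block.
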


Assume $I$ is finite.
Let $d:I\to\BZ_{>0}$ be a function and
$H_n(I,d)$ be the quotient of $kH_n$ by the two-sided ideal generated by
$\prod_{i\in I}(X_1-i)^{d(i)}$, a cyclotomic Hecke algebra.

\begin{cor}
\label{co:isoaff}
The construction of Theorem \ref{th:equivaffnil} induces an
isomorphism of $k$-algebras
$H_n(\Gamma,d)\iso H_n(I,d)$.
\end{cor}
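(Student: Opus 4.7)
The plan is to deduce the isomorphism directly from Theorem~\ref{th:equivaffnil} by matching the cyclotomic relations on the two sides under its equivalence of categories, and then recovering both algebras as endomorphism rings of progenerators.

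First I observe that under the equivalence $kH_n(\Gamma)\mMOD_0 \iso \CC_\Gamma$, on each summand $1_\nu M$ the element $x_i$ of $H_n(\Gamma)$ is identified with $\nu_i^{-1}X_i - 1$. Hence $x_i^{d(\nu_i)}1_\nu M = 0$ if and only if $(X_i - \nu_i)^{d(\nu_i)}1_\nu M = 0$. For $M\in\CC_\Gamma$, the decomposition $M=\bigoplus_\nu 1_\nu M$ is the simultaneous generalized eigenspace decomposition for $(X_1,\ldots,X_n)$, and on $1_\nu M$ the operator $\prod_{i\in I}(X_1-i)^{d(i)}$ differs from $(X_1-\nu_1)^{d(\nu_1)}$ by the invertible scalar $\prod_{i\neq\nu_1}(\nu_1-i)^{d(i)}$. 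Therefore the cyclotomic relation defining $H_n(I,d)$ on $M\in\CC_\Gamma$ is equivalent to $x_{1,\nu}^{d(\nu_1)}=0$ for all $\nu$, i.e.\ to the defining relation of $H_n(\Gamma,d)$. Consequently the equivalence of Theorem~\ref{th:equivaffnil} restricts to an equivalence between $H_n(\Gamma,d)\mMod$ and the full subcategory of $H_n(I,d)\mMod$ consisting of modules in $\CC_\Gamma$.

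To upgrade this to an isomorphism of algebras, I apply the restricted equivalence to the decomposition $H_n(\Gamma,d)=\bigoplus_\nu H_n(\Gamma,d)1_\nu$ as a progenerator of $H_n(\Gamma,d)\mMod$; its image $M$ is a projective generator on the other side, and
\[
H_n(\Gamma,d)=\End_{H_n(\Gamma,d)}(H_n(\Gamma,d))^\opp \cong \End_{H_n(I,d)}(M)^\opp.
\]
For the right-hand side to equal $H_n(I,d)$, I need to know that $H_n(I,d)$ itself lies in $\CC_\Gamma$ as a module over itself, i.e.\ that the full simultaneous generalized spectrum of $(X_1,\ldots,X_n)$ on $H_n(I,d)$ is contained in $I^n$. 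Once this is granted, the explicit formulas of Theorem~\ref{th:equivaffnil} identify the resulting isomorphism with the one described in the statement.

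The main obstacle is precisely this last spectral claim. It is the standard block-decomposition result for cyclotomic affine Hecke algebras analogous to Brundan--Kleshchev~\cite{BrKl}: the cyclotomic relation forces the spectrum of $X_1$ into $I$, and the commutation $T_iX_{i+1}=X_iT_i+(q-1)X_{i+1}$ transports spectra between $X_i$ and $X_{i+1}$ via the two-dimensional $(X_i,X_{i+1})$-stable subspace $\langle v, T_iv\rangle$ generated by a joint generalized eigenvector $v$; one then verifies inductively that these spectra remain in $I$, using that $\Gamma=I_q$ encodes the allowed transitions (and implicitly that $I$ is closed under the quiver's arrows in the sense needed for the cyclotomic quotient to be self-contained). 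All the other verifications --- well-definedness of the two mutually inverse maps on generators $1_\nu, x_{i,\nu}, \tau_{i,\nu}$ and $X_i, T_i$, and checking that they intertwine the defining relations --- are then routine consequences of the formulas in Theorem~\ref{th:equivaffnil}.
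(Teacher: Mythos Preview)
The paper offers no proof, treating the statement as immediate from Theorem~\ref{th:equivaffnil}; your approach through the module-category equivalence is the natural reading and is essentially how Brundan--Kleshchev proceed. You correctly isolate the one substantive input on the Hecke side, namely that $H_n(I,d)$ lies in $\CC_\Gamma$ as a module over itself.

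Two points deserve more care. First, there is a twin claim on the KLR side that you file under ``routine well-definedness'': to apply the equivalence to the regular module $H_n(\Gamma,d)$ you need every $x_{j,\nu}$, not just $x_{1,\nu}$, to be nilpotent in $H_n(\Gamma,d)$. Propagating nilpotency from $j=1$ to arbitrary $j$ along the KLR relations is the analogue of the Hecke-side spectral argument you sketch, and is no more routine than that. Second, your parenthetical that $I$ be ``closed under the quiver's arrows'' is not incidental but a genuine hypothesis that the paper itself suppresses: without stability of $I$ under multiplication by $q^{\pm 1}$ the statement is false. Take $I=\{1\}$, $d(1)=1$, $n=2$, with $q$ not a root of unity: then $H_2(\Gamma,d)={}^0H_2/(X_1)=0$ (the relations in the quotient give $X_2T_1=T_1X_2=1$ and $T_1^2=0$, hence $1=0$), whereas $H_2(I,d)\cong k[X_2]/\bigl((qX_2-1)(X_2-q)\bigr)$ has dimension~$2$.
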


\begin{rem}
The isomorphisms of Corollaries \ref{co:isodeg} and \ref{co:isoaff} 
have been constructed and studied independently by
Brundan and Kleshchev \cite{BrKl}. They provide gradings on (degenerate)
cyclotomic Hecke algebras.
\end{rem}

\section{$2$-categories}
\subsection{Construction}
\subsubsection{Half Kac-Moody algebras}
\label{se:halfKM}
Let $I$ be a set and $C=(a_{ij})_{i,j\in I}$ a Cartan matrix.
We consider the ring $\Bk$ and the matrix $Q$ of
\S \ref{se:deformedHeckeCartan}.

Define $\CB=\CB(C)$ as the free strict
monoidal $\Bk$-linear category generated by objects
$E_s \text{ for } s\in I$
and by arrows
$$x_s:E_s\to E_s \text{ and }
\tau_{st}:E_sE_t\to E_tE_s \text{ for }s,t\in I$$
with relations

\begin{enumerate}
\item 
\label{en:half1}
$\tau_{st}\circ \tau_{ts}=Q_{st}(E_tx_s,x_tE_s)$
\item
\label{en:half2}
$\tau_{tu}E_s\circ E_t \tau_{su}\circ \tau_{st}E_u-
E_u \tau_{st}\circ \tau_{su}E_t\circ E_s \tau_{tu}=
\begin{cases}
\frac{Q_{st}(x_sE_t,E_sx_t)E_s-E_sQ_{st}(E_tx_s,x_tE_s)}{x_sE_tE_s-E_sE_tx_s}E_s
 \text{ if } s=u\vspace{0.2cm}\\
0  \text{ otherwise.}
\end{cases}$
\item
\label{en:half3}
$\tau_{st}\circ x_s E_t-E_s x_t\circ \tau_{st}=\delta_{st}$
\item
\label{en:half4}
$\tau_{st}\circ E_sx_t-x_sE_t\circ \tau_{st}=-\delta_{st}$
\end{enumerate}

These relations state that the maps $x_s$ and $\tau_{st}$ give an action of
the nil affine Hecke algebra associated with $C$ on powers of $E$.
More precisely, we have an isomorphism of (non-unitary) algebras
\begin{align*}
H_n(C)&\iso \bigoplus_{\nu,\nu'\in I^n} \Hom_{\CB}(E_{\nu_n}\cdots
E_{\nu_1},E_{\nu'_n}\cdots E_{\nu'_1})\\
1_\nu&\mapsto \id_{E_{\nu_n}\cdots E_{\nu_1}} \\
x_{i,\nu}&\mapsto E_{\nu_n}\cdots E_{\nu_{i+1}}x_{\nu_i}E_{\nu_{i-1}}\cdots E_{\nu_1}\\
\tau_{i,\nu}&\mapsto E_{\nu_n}\cdots E_{\nu_{i+2}}\tau_{\nu_{i+1},\nu_i}
E_{\nu_{i-1}}\cdots E_{\nu_1}
\end{align*}

\smallskip
Let $s\in I$ and $n\ge 0$.
We have an isomorphism of algebras
$\Bk({^0H}_n)\iso \End_{\CB}(E_s^n)$
and we denote by $E^{(n)}_s=b_nE^n_s\in\CB^i$ the image of the idempotent
$b_n=T_{w[1,n]}X_1^{n-1}X_2^{n-2}\cdots X_{n-1}$
of ${^0 H}_n$ (cf \S \ref{se:nilaffineHecke}).
We denote also by $F^{(n)}_s$ the image of
$T_{w[1,n]}X_1^{n-1}X_2^{n-2}\cdots X_{n-1}\in {^0H}_n^\opp$. Note that
this idempotent
corresponds to the idempotent $b'_n=X_1^{n-1}X_2^{n-2}\cdots X_{n-1}
T_{w[1,n]}$ of ${^0H}_n$.
Thanks to Lemma \ref{pr:MoritanilHecke}, we
have the following result (as in \cite[Lemma 5.15]{ChRou}).

\begin{lemma}
The action map is an isomorphism
${^0H}_nb_n\otimes_{P_n^{\GS_n}}E^{(n)}_s\iso E^n_s$.
In particular, we have
$E^n_s\simeq n!\cdot E^{(n)}_s$.
Similarly, we have isomorphisms
$b'_n\cdot{^0H}_n\otimes_{P_n^{\GS_n}}F^{(n)}_s\iso F^n_s$.
In particular, we have
$F^n_s\simeq n!\cdot F^{(n)}_s$.
\end{lemma}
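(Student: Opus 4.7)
The plan is to read the lemma as an instance of internal Morita theory. Proposition \ref{pr:MoritanilHecke} provides a Morita equivalence between ${^0H}_n$ and $P_n^{\GS_n} = \BZ[X_1,\ldots,X_n]^{\GS_n}$, witnessed by the bimodule ${^0H}_n b_n \cong \BZ[X_1,\ldots,X_n]$: one has $b_n({^0H}_n)b_n \cong P_n^{\GS_n}$, and ${^0H}_n b_n$ is free of rank $n!$ as a right $P_n^{\GS_n}$-module. The inverse equivalences are $M \mapsto b_n M$ and $N \mapsto {^0H}_n b_n \otimes_{P_n^{\GS_n}} N$, and the counit
\[
{^0H}_n b_n \otimes_{P_n^{\GS_n}} b_n M \to M
\]
(given by multiplication) is always an isomorphism.

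The object $E_s^n \in \CB^i$ is equipped with the canonical isomorphism $\Bk({^0H}_n) \iso \End(E_s^n)$, so it can be viewed as an ${^0H}_n$-module in $\CB^i$ in the sense of \S\ref{se:2cat}. By definition $E_s^{(n)} = b_n E_s^n$, and the map of the lemma,
\[
{^0H}_n b_n \otimes_{P_n^{\GS_n}} E_s^{(n)} \to E_s^n,
\]
is precisely the Morita counit applied to $M = E_s^n$. It is therefore an isomorphism. For the decomposition $E_s^n \simeq n!\cdot E_s^{(n)}$, I would fix a basis of ${^0H}_n b_n$ as a right $P_n^{\GS_n}$-module (for instance $\{\partial_w(X_1^{n-1}\cdots X_{n-1})\}_{w\in\GS_n}$, via ${^0H}_n b_n \cong \BZ[X_1,\ldots,X_n]$): the tensor product then splits as a direct sum of $n!$ copies of $E_s^{(n)}$ in $\CB^i$.

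The assertion for $F_s^{(n)}$ is obtained by the mirror argument, using the idempotent $b_n'$ and the right-module version of Proposition \ref{pr:MoritanilHecke} (equivalently, applying the duality $H_n(Q)\iso H_n(Q)^\opp$ discussed in the preceding subsections). The only conceptually substantive point of the whole argument is the identification of the concrete ``action map'' with the abstract Morita counit; once that identification is in place, there is nothing to compute, and the finiteness/freeness data needed to conclude are already recorded in Proposition \ref{pr:MoritanilHecke}.
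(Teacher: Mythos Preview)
Your proposal is correct and follows essentially the same approach as the paper: the paper's entire argument is the sentence ``Thanks to [Proposition] \ref{pr:MoritanilHecke}, we have the following result (as in \cite[Lemma 5.15]{ChRou})'', and your write-up simply unpacks this by naming the Morita counit and the freeness of ${^0H}_n b_n$ over $P_n^{\GS_n}$ explicitly. There is nothing missing or divergent.
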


\smallskip
The following Proposition is a consequence of
Lemma \ref{le:SerreHecke} (apply $\Hom_{H_n(C)}(P,-)$).
It gives a categorical version of the Serre relations.

\begin{prop}
\label{pr:Serre}
Consider $s\not=t\in I$ and let $m=m_{st}$.
Let $\alpha_{i,i+1}=\tau_{m+1}\cdots \tau_{i+2}\tau_{i+1}$ and
$\alpha'_{i+1,i}=(-1)^{i+m} t_{st}^{-1}\tau_1\tau_2\cdots \tau_{i+1}$.
We have a complex
$$\xymatrix{
\cdots \ar[r]&
E_s^{(m-i)}E_tE_s^{(i+1)}\ar[r]^-{\alpha_{i,i+1}} &
E_s^{(m-i+1)}E_tE_s^{(i)} \ar[r]^-{\alpha_{i-1,i}} 
\ar@/^1pc/@{.>}[l]^-{\alpha'_{i+1,i}}  &
E_s^{(m-i+2)}E_tE_s^{(i-1)} \ar@/^1pc/@{.>}[l]^-{\alpha'_{i,i-1}}\ar[r]& \cdots 
}$$
which is homotopy equivalent to $0$, with splittings given by the maps
$\alpha'_{i+1,i}$.
In particular,
$$\bigoplus_{i\text{ even}}E_s^{(m-i+1)}E_tE_s^{(i)}\simeq
\bigoplus_{i\text{ odd}}E_s^{(m-i+1)}E_tE_s^{(i)}.$$
\end{prop}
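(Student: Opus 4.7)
The strategy is to transport Lemma \ref{le:SerreHecke} from the category of $H_n(C)$-modules to $\CB^i$ using the algebra isomorphism of \S\ref{se:halfKM}. Setting $n = m+2$ and $\nu = (t,s,\ldots,s)$ as in the lemma, the sequence $c_i(\nu)$ has $t$ in position $i+1$, so under the isomorphism $1_{c_i(\nu)}$ corresponds to the identity of $E_s^{m+1-i}E_tE_s^i$, and the idempotent $e_{i+1} = b_i \otimes b_{m+1-i}$ cuts out the divided-power summand $X^i := E_s^{(m+1-i)}E_tE_s^{(i)}$, which is exactly the $i$-th term of the complex in Proposition \ref{pr:Serre}.

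Under this isomorphism, $\Hom_{H_n(C)}(H_n(C)e_{i+1},H_n(C)e_{j+1}) = e_{i+1}H_n(C)e_{j+1} = \Hom_{\CB^i}(X^j,X^i)$, so a left module map $P^i \to P^j$ corresponds to the $\CB^i$-morphism $X^j \to X^i$ given by the same underlying element of $H_n(C)$. Applying this correspondence termwise, the cochain complex of projectives in Lemma \ref{le:SerreHecke} becomes a chain complex in $\CB^i$ whose terms are the $X^i$ and whose differentials are the elements $\alpha_{i,i+1} = \tau_{m+1}\cdots\tau_{i+1} \in e_{i+1}H_n(C)e_{i+2}$, now viewed as morphisms $X^{i+1} \to X^i$. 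These are exactly the maps $\alpha_{i,i+1}$ of the proposition; the splittings $\alpha'_{i+1,i}$ transport similarly to morphisms $X^i \to X^{i+1}$, and the identity $\alpha_{i,i+1}\alpha'_{i+1,i} + \alpha'_{i,i-1}\alpha_{i-1,i} = e_{i+1}$ in $H_n(C)$ translates to $\id_{X^i}$ in $\CB^i$, giving contractibility.

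For the final isomorphism assertion, I invoke the general fact that in any additive category a bounded contractible complex $(X^\bullet, d)$ with nullhomotopy $h$ (so $dh + hd = \id$) satisfies $\bigoplus_{i\text{ even}} X^i \simeq \bigoplus_{i\text{ odd}} X^i$: the endomorphism $d+h$ of $\bigoplus_i X^i$ exchanges the even- and odd-index components, and its square equals $\id + h^2$, which is invertible via the finite geometric series $\sum_{k\ge 0}(-h^2)^k$ since $h^2$ raises the index by $2$ and the complex is bounded (one checks $dh^2 = h^2 d$ so that $\id + h^2$ commutes with $d+h$, hence $d+h$ is a two-sided inverse of $(d+h)(\id+h^2)^{-1}$).

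The one point requiring care is the direction-reversal in the correspondence between left $H_n(C)$-modules and objects of $\CB^i$, which converts the cochain complex of Lemma \ref{le:SerreHecke} into the chain complex of the proposition; everything else is either a direct translation of the lemma or a standard homological fact.
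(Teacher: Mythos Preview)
Your proof is correct and follows exactly the route the paper indicates: the paper's entire argument is the parenthetical ``apply $\Hom_{H_n(C)}(P,-)$'' before the statement, and your proposal simply unpacks this---identifying the contravariant correspondence $H_n(C)e_{i+1}\leftrightarrow E_s^{(m+1-i)}E_tE_s^{(i)}$ via the algebra isomorphism of \S\ref{se:halfKM} and transporting the null-homotopy of Lemma~\ref{le:SerreHecke}. Your added paragraph deriving the even/odd isomorphism from contractibility via $d+h$ is a standard fact the paper leaves implicit; it is correct as stated.
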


\begin{rem}
The first part of Proposition \ref{pr:Serre} generalizes \cite{KhoLau2}.
We will give a different proof of the existence of an isomorphism (second
part of the Proposition) in \cite{Rou3} in the
case of integrable $2$-representations.
\end{rem}

\medskip
Assume now $C$ is symmetrizable and consider $(d_i)$, $(b_{ij})$ and
$\Bk^\bullet$ as in \S \ref{se:deformedHeckeCartan}. We put
$\CB_0^{\bullet}=\CB\otimes_\Bk\Bk^\bullet$.

The category
$\CB_0^{\bullet}$ can be enriched in graded abelian groups by
setting $\deg x_s=2d_s$ and $\deg\tau_{st}=-b_{st}$. We denote by
$\CB^{\bullet}$ the corresponding graded category. 
It follows from
Theorem \ref{th:PBWHecke} and
Remark \ref{re:finiterank} that $\Hom$-spaces
in $\CB^{\bullet}$ are free $\Bk^\bullet$-modules of
finite rank.

\smallskip
We put $E_s^{(n)}=b_nE^n_s(\frac{n(n-1)}{2}d_s)$. Note that
$P_n(\frac{n(n-1)}{2}d_s)$ is self-dual as a graded
$P_n^{\GS_n}$-module and we have
$$E_s^n\simeq v^{n(n-1)d_s}[n]_s! E_s^{(n)}$$
where $[n]_s!=[n]!(v^{d_s})$.

The maps $\alpha_{ij}$ and $\alpha'_{ij}$
of Proposition \ref{pr:Serre} are graded and
the proposition remains true in $\CB^\bullet$.

\medskip
Consider finally $\Gamma$ a quiver with a compatible automorphism
and consider the specialization $\Bk^\bullet\to\BZ$ of \S \ref{se:quivers}.
We put $\CB_\BZ^\bullet(\Gamma)
=\CB^{\bullet}(C)\otimes_{\Bk^\bullet} \BZ$.

\subsubsection{Kac-Moody algebras}
\label{se:defcat}
Let $C=(a_{ij})_{i,j\in I}$ be a Cartan matrix.
Let $(X,Y,\langle-,-\rangle,\{\alpha_i\}_{i\in I},\{\alpha^\vee_i\}_{i\in I})$
be a root datum of type $C$, \ie,
\begin{itemize}
\item $X$ and $Y$ are finitely generated free abelian groups and
$\langle-,-\rangle:Y\times X\to\BZ$ is a perfect pairing
\item $I\to X,\ i\mapsto \alpha_i$ and
$I\to Y,\ i\mapsto \alpha_i^\vee$ are injective and
$\langle\alpha_i^\vee,\alpha_j\rangle=a_{ij}$.
\end{itemize}

Associated with this data, there is a Kac-Moody algebra $\Gg$,
a quantum group $U_v(\Gg)$ when $C$ is symmetrizable,
as well as completed versions \cite{Lu}. Let us
recall those we will need.

\smallskip
Assume $C$ is symmetrizable.
Consider the $\BQ(v)$-algebra ${'U}_v^+(\Gg)$ generated by elements
$e_i$ for $i\in I$ with relations
\begin{equation}
\label{eq:Serre}
\sum_{a+b=1-a_{ij}} (-1)^a e_i^{(a)}e_je_i^{(b)}=0
\end{equation}
for any $i\not=j\in I$, where $e_i^{(a)}=\frac{e_i^a}{[a]_i!}$.
We denote by $U_v^+(\Gg)$ the $\BZ[v^{\pm 1}]$-subalgebra generated by
the $e_i^{(a)}$ for $i\in I$ and $a\ge 0$.
We define an algebra $U_v^-(\Gg)$ isomorphic to $U_v^+(\Gg)$ with
$e_i$ replaced by $f_i$.

Let ${'U}_v(\Gg)$ be the category enriched in $\BQ(v)$-vector spaces with
set of objects $X$ and morphisms generated by
$e_i:\lambda\to \lambda+\alpha_i$ and
$f_i:\lambda\to \lambda-\alpha_i$ subject to the following relations:

\begin{itemize}
\item the relation (\ref{eq:Serre}) and its version with $e_r$ replaced by
$f_r$

\item $[e_i,f_j]1_\lambda=\delta_{ij}
\langle \alpha_i^\vee,\lambda\rangle 1_\lambda$.
\end{itemize}

Let $U_v(\Gg)$ be the subcategory enriched in $\BZ[v^{\pm 1}]$-modules
of ${'U}_v(\Gg)$ with same objects as ${'U}_v(\Gg)$ and with morphisms
generated by $e_i^{(r)}$ and $f_i^{(r)}$ for $i\in I$ and $r\ge 0$.

We put $U_1(\Gg)=U_v(\Gg)\otimes_{\BZ[v^{\pm 1}]}
\BZ[v^{\pm 1}]/(v-1)$, etc.

Note that $\bigoplus_{\lambda,\mu\in X}\Hom_{U_v(\Gg)}(\lambda,\mu)$
is the non-unitary ring ${_\CA \dot{\BU}}$ of \cite[\S 23.2]{Lu}. 

The category of functors (compatible with the $\BZ[v^{\pm 1}]$-structure)
$U_v(\Gg)\to \BZ[v^{\pm 1}]\mMOD$ is equivalent
to the category of unital ${_\CA \dot{\BU}}$-modules via
$V\mapsto \bigoplus_\lambda V(\lambda)$ and we will identify the two categories.
A representation $V$ of $U_v(\Gg)$ is 
{\em integrable} if for every $v\in V$ and $i\in I$, there is $n_0$ such that
$e_i^{(n)}v=f_i^{(n)}v=0$ for all $n\ge n_0$.

\smallskip
Assume $C$ is a general Cartan matrix. The constructions above still
make sense in the non-quantum case and lead to a category
$U_1(\Gg)$.

\smallskip
We denote by $W=\langle\sigma_i\rangle_{i\in I}$ the Weyl group of
$\Gg$.

\subsubsection{$2$-Kac Moody algebras}
\label{se:def2km}
Let $\CB_1$ be the strict monoidal $\Bk$-linear category obtained from
$\CB$ by adding $F_s$
right dual to $E_s$ for every $s\in I$.
Define
$$\eps_s=\eps_{E_s}:E_s F_s\to \idun\text{ and } \eta_s=\eta_{E_s}:
\idun\to F_s E_s.$$
The dual pairs $(E_s,F_s)$ provides dual pairs $(E_s^n,F_s^n)$ and the
action of ${^0H}_n$ on $E_s^n$ induces
an action of $({^0H}_n)^\opp$ on $F_s^n$. We denote by
$x_s$ the endomorphism of $F_s$ induced by $x_s\in\End(E_s)$ and
denote also by $\tau_{st}:F_sF_t\to F_tF_s$ the morphism induced by
$\tau_{st}\in\Hom(E_sE_t,E_tE_s)$.

\smallskip
We define a morphism of monoids
$$h:\Ob(\CB_1)\to X,\ E_s\mapsto \alpha_s,\ F_s\mapsto -\alpha_s.$$

\smallskip
Consider the strict $2$-category $\FA_1$ with set of objects $X$ and
$\CHom(\lambda,\lambda')=h^{-1}(\lambda'-\lambda)$, a full subcategory
of $\CB_1$. We write $E_{s,\lambda}$ for $E_s\idun_\lambda$,
$\eps_{s,\lambda}$ for $\eps_s\idun_\lambda$, etc.

\smallskip
Let
$\FA=\FA(\Gg)$ be the $\Bk$-linear strict
$2$-category deduced from $\FA_1$ by inverting the following
$2$-arrows:
\begin{itemize}
\item  when $\langle\alpha_s^\vee,\lambda\rangle\ge 0$,
$$\rho_{s,\lambda}=
\sigma_{ss}+\sum_{i=0}^{\langle\alpha_s^\vee,\lambda\rangle-1}\eps_s\circ
(x_s^i F_s):
E_s F_s\idun_\lambda\to F_s E_s\idun_\lambda \oplus
\idun_\lambda^{\langle\alpha_s^\vee,\lambda\rangle}$$
\item  when $\langle\alpha_s^\vee,\lambda\rangle\le 0$,
$$\rho_{s,\lambda}=
\sigma_{ss}+\sum_{i=0}^{-1-\langle\alpha_s^\vee,\lambda\rangle}
(F_s x_s^i)\circ \eta_s:
E_s F_s\idun_\lambda\oplus
 \idun_\lambda^{-\langle\alpha_s^\vee,\lambda\rangle}
\to F_s E_s\idun_\lambda$$
\item $\sigma_{st}:E_sF_t\idun_\lambda\to F_tE_s\idun_\lambda$
 for all $s\not=t$ and all $\lambda$
\end{itemize}

where we define
$$\sigma_{st}=(F_t E_s\eps_t)\circ (F_t \tau_{ts}F_s)\circ
(\eta_t E_sF_t):E_s F_t\to F_t E_s.$$

\begin{rem}
The inversion of maps in the definition of $\FA$
accounts for the Lie algebra relations
$[e_s,f_s]=h_s$ and $[e_s,f_t]=0$ for $s\not=t$. The elements $h_\zeta$ for
$\zeta\in Y$ appear
only through their action as multiplication by $\langle\zeta,\lambda\rangle$
on the $\lambda$-weight space.
\end{rem}

Assume $C$ is symmetrizable.
We proceed as in \S \ref{se:halfKM} to define graded versions.
Let $\FA_0^{\bullet}=\FA\otimes_\Bk \Bk^\bullet$.
The category
$\FA_0^\bullet$ can be
enriched in graded abelian groups by setting 
$$\deg \eps_{s,\lambda}=d_s(1-\langle\alpha_s^\vee,\lambda\rangle)
\text{ and }
\deg \eta_{s,\lambda}=d_s(1+\langle\alpha_s^\vee,\lambda\rangle).$$
We denote by $\FA^{\bullet}$ the corresponding graded $2$-category.

Note that $\sigma_{st}$ is a graded map (for all $s,t\in I$), while
$\rho_{s,\lambda}$ carries shifts:
$$\rho_{s,\lambda}:
E_s F_s\idun_\lambda\iso F_s E_s\idun_\lambda \oplus
\bigoplus_{i=0}^{\langle\alpha_s^\vee,\lambda\rangle-1}
\idun_\lambda\bigl(d_s(2i+1-\langle\alpha_s^\vee,\lambda\rangle)\bigr)
\text{ when }
\langle\alpha_s^\vee,\lambda\rangle\ge 0,$$
$$\rho_{s,\lambda}:
E_s F_s\idun_\lambda\oplus
\bigoplus_{i=0}^{-1-\langle\alpha_s^\vee,\lambda\rangle} 
\idun_\lambda\bigl(-d_s(2i+1+\langle\alpha_s^\vee,\lambda\rangle)\bigr)
\iso F_s E_s\idun_\lambda
\text{ when }\langle\alpha_s^\vee,\lambda\rangle\le 0.$$

We have a dual pair in $\FA^\bullet$
$$\biggl(E_s\idun_\lambda,
\idun_\lambda F_s\Bigl(d_s(1+\langle\alpha_s^\vee,\lambda\rangle)\Bigr)\biggr)
.$$

Finally, given a quiver $\Gamma$ with a compatible automorphism and
associated Cartan matrix $C$, we put
$\FA^\bullet_\BZ(\Gamma)=\FA^{\bullet}\otimes_{\Bk^\bullet}\BZ$
(cf \S \ref{se:quivers}).
We put also $\FA_\BZ=\FA\otimes_\Bk\BZ$.

\medskip
Let us summarize: we have constructed several $2$-categories with set of objects
$X$ and with $\Hom(\lambda,\lambda')=h^{-1}(\lambda'-\lambda)$. Given a root
datum, we have a $\Bk$-linear $2$-category
$\CA$ and, when $C$ is symmetrizable, we have
a specialization $\CA^{\bullet}$ that
is $\Bk^\bullet$-linear and graded. Given in addition a quiver with
compatible automorphism affording the Cartan matrix, we have a further
specialization $\CA^{\bullet}_\BZ$ that is graded and $\BZ$-linear.

\begin{rem}
The action of ${^0H}_n$ on $E_s^n$ is given by
$$X_i\mapsto E_s^{n-i}x_s E_s^{i-1} \text{ and }
T_i\mapsto E_s^{n-i-1}\tau_{ss}E_s^{i-1}$$
while the action of ${^0H}_n^\opp$ on $F_s^n$ is given by
$$X_i\mapsto F_s^{i-1}x_s F_s^{n-i} \text{ and }
T_i\mapsto F_s^{i-1}\tau_{ss}F_s^{n-i-1}.$$
\end{rem}

\subsubsection{Second adjunctions}
We define ``candidates'' units and counits for an adjuntion
$(F_s,E_s)$.

\smallskip
Let $s\in I$ and $\lambda\in X$.
Assume $\langle \alpha_s^\vee,\lambda\rangle\ge 0$.
Let $\tilde{\eps}_{s,\lambda}:
F_sE_s\idun_\lambda\to \idun_\lambda$ be the map whose image
under
$$\Hom(\sigma_{ss},\idun_\lambda):
\Hom(F_sE_s\idun_\lambda,\idun_\lambda)\iso
\Hom(E_sF_s\idun_\lambda,\idun_\lambda)/
\left(\bigoplus_{i=0}^{\langle\alpha_s^\vee,\lambda\rangle-1}
\End(\idun_\lambda)\cdot\eps_s\circ (x^i_sF_s)\right)$$
coincides with $(-1)^{\langle\alpha_s^\vee,\lambda\rangle+1}\eps_s
\circ (x_s^{\langle\alpha_s^\vee,\lambda\rangle}F_s)$.

\smallskip
Assume $\langle\alpha_s^\vee,\lambda\rangle<0$.
Let $\tilde{\eps}_{s,\lambda}:F_sE_s\idun_\lambda\to \idun_\lambda$
be the unique morphism such that
$$\tilde{\eps}_{s,\lambda}\circ\rho_{s,\lambda}=
(0,0,\ldots,0,(-1)^{\langle\alpha_s^\vee,\lambda\rangle+1}).$$

\smallskip
Assume $\langle\alpha_s^\vee,\lambda\rangle>0$.
Let $\hat{\eta}_{s,\lambda}:\idun_{\lambda}\to
E_sF_s \idun_{\lambda}$ be the unique morphism such that
$$\rho_{s,\lambda}\circ \hat{\eta}_{s,\lambda}=
(0,0,\ldots,0,(-1)^{\langle\alpha_s^\vee,\lambda\rangle+1}).$$

\smallskip

Assume $\langle\alpha_s^\vee,\lambda\rangle\le 0$.
Let $\hat{\eta}_{s,\lambda}:\idun_{\lambda}\to
E_sF_s \idun_{\lambda}$ be the map whose image under
$$\Hom(\idun_{\lambda},\sigma_{ss}):
\Hom(\idun_{\lambda},E_sF_s \idun_{\lambda})\iso
\Hom(\idun_{\lambda},F_sE_s \idun_{\lambda})/
\left(\bigoplus_{i=0}^{-\langle\alpha_s^\vee,\lambda\rangle-1}(F_sx_s^i)
\circ\eta_s\cdot \End(\idun_{\lambda})\right)$$
coincides with $(-1)^{\langle\alpha_s^\vee,\lambda\rangle}
(F_sx_s^{-\langle\alpha_s^\vee,\lambda\rangle})\circ\eta_s$.

\subsubsection{Other versions}
\label{se:otherversions}
We define here $2$-categories related to the ones defined in the previous
section
by adding generators and imposing extra symmetry conditions
and relations.

\smallskip
We define $\CB_1^l$ as the strict monoidal $\Bk$-linear category
obtained from $\CB$ by adding $F_s$ left and
right adjoint to $E_s$ for
every $s\in I$.
Define
$$\eps_s^l=\eps_{F_s}:F_s\cdot E_s\to \idun,\text{ and }
\eta_s^l=\eta_{F_s}: \idun\to E_s\cdot F_s.$$

Define also $\FA_1^l$ and $\FA^l$ as $\FA_1$ and $\FA$ were defined
from $\CB_1$. Now, we define $\FA'$ 
 as the $\Bk$-linear
strict $2$-category obtained from $\FA^l$ by adding the relation
$\eps_s^l=\tilde{\eps}_s$. Note that given $X\in\FA'$, we have
$X^\vee={^\vee X}$.

\begin{rem}
The relation $\eps_s^l=\tilde{\eps}_s$ shows that $\eps_s^l$
can be expressed in terms of the maps $x_s$, $\tau_{s,s}$,
$\eta_s$ and $\eps_s$. As a consequence,
the adjunction $(F_s,E_s)$ is
determined by the adjunction $(E_s,F_s)$ and the maps $x_s$ and
$\tau_{ss}$.
\end{rem}

We define specializations of $\FA'$ in the same way as those defined
for $\FA$. Note that 
$$\deg \eta_{s,\lambda}^l=
d_s(1-\langle\alpha_s^\vee,\lambda\rangle) \text{ and }
\deg \eps_{s,\lambda}^l=
d_s(1+\langle\alpha_s^\vee,\lambda\rangle)$$
and we have a dual pair in $\FA^{\prime\bullet}$
$$\biggl(
\idun_\lambda F_s\Bigl(-d_s(1+\langle\alpha_s^\vee,\lambda\rangle)\Bigr),
E_s\idun_\lambda\biggr).$$

\medskip
We define $\bar{\FA}'$ to
be the quotient of $\FA'$ given by the relations
$\eta_s^l=\hat{\eta}_s$ and
$(f^\vee)^\vee=f$ for every $2$-arrow $f$ of $\FA'$.

There are canonical strict $2$-functors
$$\FA\to \FA'\to \bar{\FA}'.$$

\subsection{Properties}
\subsubsection{Symmetries}
\label{se:symmetries}
In \S \ref{se:symmetries}, we work in $\FA$.
The map $\sigma_{st}$ can be defined using the Hecke action on
$F^2$ instead of $E^2$:
\begin{lemma}
\label{le:sigmaF}
Given $s,t\in I$, we have
$\sigma_{st}=(E_sF_t\xrightarrow{E_sF_t\eta_s}E_sF_tF_sE_s
\xrightarrow{E_s\tau_{ts}E_s}E_sF_sF_tE_s\xrightarrow{\eps_sF_tE_s}F_tE_s)$.
\end{lemma}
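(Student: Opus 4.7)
My plan is to compare the two formulas for $\sigma_{st}$: the defining formula
$$\sigma_{st}=(F_tE_s\eps_t)\circ(F_t\tau_{ts}F_t)\circ(\eta_tE_sF_t),$$
which uses the adjunction $(E_t,F_t)$ and $\tau_{ts}:E_tE_s\to E_sE_t$, and the claimed alternative
$$\sigma'_{st}=(\eps_sF_tE_s)\circ(E_s\tau_{ts}E_s)\circ(E_sF_t\eta_s),$$
in which $\tau_{ts}:F_tF_s\to F_sF_t$ is the $F$-version. The key fact is that the $F$-version is, by construction, the right mate of the $E$-version under the two adjoint pairs $(E_s,F_s)$ and $(E_t,F_t)$; the plan is to substitute that mate formula into $\sigma'_{st}$ and reduce it to $\sigma_{st}$ using only interchange and the triangle identities for $(E_s,F_s)$.

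First, I would substitute the explicit mate expression
$$\tau_{ts}=(F_sF_t\hat\eps_{ts})\circ(F_sF_t\tau_{ts}F_tF_s)\circ(\hat\eta_{st}F_tF_s),$$
with $\hat\eta_{st}=(F_s\eta_tE_s)\circ\eta_s$ and $\hat\eps_{ts}=\eps_s\circ(E_s\eps_tF_s)$ the unit/counit of the composite adjunctions $E_tE_s\dashv F_sF_t$ and $E_sE_t\dashv F_tF_s$. This turns $\sigma'_{st}$ into a seven-term composition. Three of the middle arrows, namely those built from $\eta_t$, $\tau_{ts}$ on $E$'s, and $\eps_t$, act as identities on a leftmost $E_sF_s$ block and reassemble, via the juxtaposition structure of the $2$-category, into $E_sF_s\cdot(\sigma_{st}\cdot F_sE_s)$. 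The first collapse then comes from the triangle identity $(\eps_sE_s)\circ(E_s\eta_s)=\id_{E_s}$, applied to the innermost $\eta_s$ (coming from $\hat\eta_{st}$) together with the outermost $\eps_s$ of $\sigma'_{st}$; this reduces $\sigma'_{st}$ to $(F_t\eps_sE_s)\circ(\sigma_{st}\cdot F_sE_s)\circ(E_sF_t\eta_s)$. An interchange move then rewrites $(\sigma_{st}\cdot F_sE_s)\circ(E_sF_t\eta_s)$ as $(F_tE_s\eta_s)\circ\sigma_{st}$, and a second application of the triangle identity gives $(F_t\eps_sE_s)\circ(F_tE_s\eta_s)=\id_{F_tE_s}$, leaving $\sigma'_{st}=\sigma_{st}$ as desired.

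The main obstacle will be the bookkeeping in the seven-term composition: one needs to keep careful track of which pair of strings each factor acts on in order to recognise three of the arrows as a copy of $\sigma_{st}\cdot F_sE_s$, and to order the interchange moves so that the two $(E_s,F_s)$-zigzags cancel cleanly against the auxiliary $\hat\eta_{st}$ and $\hat\eps_{ts}$ and against the outer $\eta_s$, $\eps_s$. Once the terms are organised, no Hecke-algebra relations are needed; the argument uses only the definition of $\tau_{ts}$ on $F$'s as the right mate of $\tau_{ts}$ on $E$'s, the interchange law, and two applications of the triangle identity for $(E_s,F_s)$.
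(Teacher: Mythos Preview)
Your proposal is correct and follows essentially the same route as the paper's proof: both arguments rest on nothing more than the description of the $F$-version of $\tau_{ts}$ as the right mate of the $E$-version, the interchange law, and the triangle identities for $(E_s,F_s)$. The only difference is presentational: the paper packages the computation as a single commutative diagram (inserting $\eta_t$ on the left to unfold the defining formula for $\sigma_{st}$ and filling the interior with interchange squares), whereas you substitute the mate formula explicitly and cancel the two $(E_s,F_s)$-zigzags algebraically. Your identification of steps (c)(d)(e) as $E_sF_s\cdot\sigma_{st}\cdot F_sE_s$ and the two subsequent triangle-identity collapses are exactly the content of the paper's diagram chase, read cell by cell.
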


\begin{proof}
The lemma follows from the commutativity of the following diagram
$$\xymatrix{
E_sF_t\ar[r]^-{\bullet\eta}\ar[d]_-{\eta\bullet} 
\ar@/_3pc/[dd]_-{\eta\bullet} &
 E_sF_tF_sE_s\ar[r]^-{E_s\tau_{ts}E_s} & 
 E_sF_sF_tE_s\ar[rr]^-{\eps\bullet} && F_tE_s && \\
F_tE_tE_sF_t\ar[r]^-{\bullet\eta}&
 F_tE_tE_sF_tF_sE_s\ar[r]^-{\bullet\tau_{ts}E_s} & 
 F_tE_tE_sF_sF_tE_s\ar[rr]^-{F_tE_t\eps F_tE_s} &&
 F_tE_tF_tE_s \ar[u]^-{F_t\eps E_s} && \\
F_tE_tE_sF_t\ar[r]^-{\bullet\eta} \ar[drr]_-{F_t\tau_{ts}F_t}&
 F_tE_tE_sF_tF_sE_s\ar[r]^-{F_t\tau_{ts}\bullet} & 
 F_tE_sE_tF_tF_sE_s\ar[rr]^-{F_tE_s\eps F_sE_s} &&
 F_tE_sF_sE_s \ar@/_3pc/[uu]_-{F_t\eps E_s} && \\
&& F_tE_sE_tF_t \ar `r[rrrru]_-{\bullet\eps} `[uuu] [rruuu]
}$$
\end{proof}

We define the {\em Chevalley involution}, a strict $2$-equivalence of
$2$-categories
$I:\FA^{\opp}\iso\FA$ satisfying $I^2=\Id$ by
$$I(\idun_\lambda)=\idun_{-\lambda},\ I(E_s)=F_s,\ 
I(\eps_s)=-\eta_s,\ 
I(\tau_{st})=-\tau_{ts} \text{ and } I(x_s)=x_s.$$
Note that $I(\sigma_{st})=-\sigma_{ts}$ (Lemma \ref{le:sigmaF}),
$I(\rho_{s,\lambda})=-\rho_{s,-\lambda}$ and
$I(\tilde{\eps}_s)=-\hat{\eta}_s$.

\smallskip
We define the {\em Chevalley duality}, a strict $2$-equivalence of
$2$-categories
$D:\FA^{\mathrm{rev}}\iso \FA$
satisfying $D^2=\Id$ by
$$\idun_\lambda\mapsto\idun_\lambda,\ E_s\mapsto F_s,\ 
x_s\mapsto x_s, \tau_{st}\mapsto\tau_{ts}, \eps_s\mapsto \eps_s, \
\eta_s\mapsto \eta_s.$$
Note that $D(\sigma_{st})=\sigma_{ts}$ (Lemma \ref{le:sigmaF}) and
$D$ fixes $\rho_{s,\lambda}$, $\tilde{\eps}_s$ and $\hat{\eta}_s$.

\smallskip
There is also a strict equivalence of monoidal categories
$$\CB^{\mathrm{rev}}\iso
\CB,\ E_s\mapsto E_s,\ x_s\mapsto x_s,\ \tau_{st}\mapsto -\tau_{ts}.$$

\subsubsection{Relations in $\Gsl_2$}
\label{se:relationsl2}
We provide isomorphisms between sums of objects of type $E_s^mF_s^n$ and
sum of objects of type $F_s^nE_s^m$.

In this section, we work in the category $\CA$
associated with $\Gg=\Gsl_2$: $I=\{s\}$ with $s\cdot s=2$,
$X=Y=\BZ$, $\alpha_s^\vee=1$ and $\alpha_s=2$.

We put $E=E_s$ and $F=F_s$.
We put $\eps=\eps_s$ and $\eta=\eta_s$.
Let $i\in\BZ_{\ge 0}$. We define by induction $\eps_m:E^mF^m\to\idun$ and
$\eta_m:\idun\to F^mE^m$ in $\CB_1$. We put $\eps_0=\eta_0=\id$ and
$\eps_m=\eps_{m-1}\circ (E^{m-1}\eps F^{m-1})$ and
$\eta_m=(F^{m-1}\eta E^{m-1})\circ \eta_{m-1}$.

 Given $a,b\in\BZ_{\ge 0}$,
we denote by $\CP(a,b)$ the set of partitions with at most $a$ non-zero
parts, all of 
which are at most $b$. Given $\mu=(\mu_1\ge\cdots\mu_a\ge 0)\in\CP(a,b)$,
we denote by $m_\mu(X_1,\ldots,X_a)=\sum_{\sigma}X_1^{\mu_{\sigma(1)}}\cdots
X_a^{\mu_{\sigma(a)}}$ the corresponding monomial symmetric function (here,
$\sigma$ runs over $\GS_a$ modulo the stabilizer of $\mu$).

\medskip
Let $m,n,i\in\BZ_{\ge 0}$ with $i\le m$ and $i\le n$ and let
$\lambda\in X$. Let $r=m-n+\lambda$. Assume $r<0$. We put
$$L(m,n,i,\lambda)=\bigoplus_{\substack{w\in \GS_m^{m-i}\\
w'\in {^{n-i}\GS}_n\\ \mu\in\CP(i,-r-i)}}
(T_w\otimes m_\mu(X_{n-i+1},\ldots,X_n) X_{n-i+1}^{i-1}
X_{n-i+2}^{i-2}\cdots X_{n-1} T_{w'})\BZ\subset
{^0H}_m\otimes_{{^0H}_i} {^0H}_n,$$
where the right action (resp. the left action) of ${^0H}_i$ on
${^0H}_m$ (resp. on ${^0H}_n$) is via 
$X_r\mapsto X_{r+m-i}$ and $T_r\mapsto T_{r+m-i}$
(resp. $X_r\mapsto X_{r+n-i}$ and $T_r\mapsto T_{r+n-i}$). The
sum is direct since $T_{w[1,i]}X_1^{i-1}\cdots X_{i-1}T_{w[1,i]}\not=0$
(cf \S \ref{se:nilaffineHecke}).

Note that $\bigoplus_{\mu\in\CP(i,-r-i)}m_\mu(X_1,\ldots,X_i)\BZ$ is the
subspace of
$\BZ[X_1,\ldots,X_i]^{\GS_i}$ of symmetric polynomials whose degree in any
of the variables is at most $-r-i$. It has dimension
$-r\choose i$.
Note that $L(m,n,0,\lambda)=\BZ$ and $L(m,n,i,\lambda)=0$ if
$i>0$ and $r=0$.

\smallskip
Let $\bar{L}(m,n,i,\lambda)=L(m,n,i,\lambda)
({^0H}^f_{m-i}\otimes ({^0H}^f_{n-i})^\opp)$,
a $(({^0H}_m^f\otimes ({^0H}_n^f)^\opp),
({^0H}^f_{m-i}\otimes ({^0H}_{n-i}^f)^\opp))$-subbimodule
of ${^0H}_m\otimes_{{^0H}_i} {^0H}_n$.

When needed, we will also consider the modules 
$L([a,b],[a',b'],i,\lambda)$ and $\bar{L}([a,b],[a',b'],i,\lambda)$ 
where $1\le a\le b\le m$ and 
$1\le a'\le b'\le n$, which are defined similarly.

\begin{lemma}
\label{le:projL}
The multiplication map induces an isomorphism
$$L(m,n,i,\lambda)\otimes ({^0H}^f_{m-i}\otimes ({^0H}^f_{n-i})^\opp)\iso
\bar{L}(m,n,i,\lambda).$$
The 
$(({^0H}_m^f\otimes ({^0H}_n^f)^\opp),
({^0H}_{m-i}\otimes ({^0H}_{n-i})^\opp))$-subbimodule
$L(m,n,i,\lambda)
({^0H}_{m-i}\otimes ({^0H}_{n-i})^\opp)$
of ${^0H}_m\otimes_{{^0H}_i} {^0H}_n$
is projective.
\end{lemma}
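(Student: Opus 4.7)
The plan is to prove both assertions by exhibiting $\bar L(m,n,i,\lambda)$ explicitly as a free $\BZ$-module with basis indexed by the tensor product $L(m,n,i,\lambda)\otimes({^0H}^f_{m-i}\otimes ({^0H}^f_{n-i})^\opp)$, and then deducing projectivity from a direct-summand argument.

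The starting point is the parabolic decomposition of the finite nil Hecke algebras: ${^0H}_m^f$ decomposes as $\bigoplus_{w\in \GS_m^{m-i}}T_w\cdot\bigl({^0H}_{m-i}^f\otimes ({^0H}_i^f)^{\mathrm{last}(m)}\bigr)$ as a right module, while ${^0H}_n^f=\bigoplus_{w'\in {^{n-i}\GS_n}}\bigl({^0H}_{n-i}^f\otimes ({^0H}_i^f)^{\mathrm{last}(n)}\bigr)\cdot T_{w'}$ as a left module, where the superscript ``last'' refers to embedding ${^0H}_i$ via the last $i$ positions. Combined with the PBW property ${^0H}_n\cong P_n\otimes {^0H}_n^f$ from Proposition \ref{pr:MoritanilHecke} and the factorization $P_m=P_{m-i}^{\mathrm{first}}\otimes P_i^{\mathrm{last}}$, tensoring over ${^0H}_i$ yields an explicit $\BZ$-basis of ${^0H}_m\otimes_{{^0H}_i} {^0H}_n$ in which the two copies of ${^0H}_i$ (embedded as the last $i$ parts of ${^0H}_m$ and ${^0H}_n$) are identified.

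To deduce the first assertion, I would check that multiplying the generators $T_w\otimes m_\mu X_{n-i+1}^{i-1}\cdots X_{n-1}T_{w'}$ of $L$ on the right by ${^0H}^f_{m-i}$ (acting on the first tensor factor) and by $({^0H}^f_{n-i})^\opp$ (acting on the second) produces $\BZ$-linearly independent elements of ${^0H}_m\otimes_{{^0H}_i} {^0H}_n$. The key input is that the family $\{m_\mu(X_1,\ldots,X_i)X_1^{i-1}\cdots X_{i-1}\}_{\mu\in\CP(i,-r-i)}$ is part of a $P_i^{\GS_i}$-basis of $P_i$; this follows from Lemma \ref{le:Casimir} and the fact, used in Proposition \ref{pr:MoritanilHecke}, that $\{\partial_u(X_1^{i-1}\cdots X_{i-1})\}_{u\in\GS_i}$ is a $P_i^{\GS_i}$-basis of $P_i$. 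Injectivity of the multiplication map then follows from linear independence, and surjectivity is by definition of $\bar L$.

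For projectivity, once the first assertion is upgraded so that the right action extends from the finite parts ${^0H}^f_{m-i}\otimes ({^0H}^f_{n-i})^\opp$ to the full ${^0H}_{m-i}\otimes ({^0H}_{n-i})^\opp$ (again by the parabolic decomposition, now for the full nil affine Hecke algebras), it identifies $L({^0H}_{m-i}\otimes ({^0H}_{n-i})^\opp)$ with a direct summand of a free $\bigl(({^0H}_m^f\otimes ({^0H}_n^f)^\opp),({^0H}_{m-i}\otimes ({^0H}_{n-i})^\opp)\bigr)$-bimodule of the form $L_0\otimes_\BZ ({^0H}_{m-i}\otimes ({^0H}_{n-i})^\opp)$, where $L_0$ is the $\BZ$-span of the chosen generators. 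Being a direct summand of a free bimodule, it is projective. The main obstacle lies in the bookkeeping: one must consistently track which of the two copies of ${^0H}_i$ is being absorbed at each stage, verify that the tensor-product identification is compatible with the parabolic left/right actions, and in particular confirm that the polynomial element $X_{n-i+1}^{i-1}\cdots X_{n-1}$ sits correctly so that no unexpected relations arise among the proposed basis elements.
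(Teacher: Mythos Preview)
Your treatment of the first assertion is fine and is essentially the detailed unpacking of what the paper dismisses as ``clear'': the parabolic decomposition of ${^0H}_m^f$ and ${^0H}_n^f$ together with PBW show that the products $T_w\,h\otimes h'\,m_\mu X^{\bullet}T_{w'}$ are linearly independent in ${^0H}_m\otimes_{{^0H}_i}{^0H}_n$.

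There is, however, a genuine gap in your projectivity argument. You claim that $L({^0H}_{m-i}\otimes({^0H}_{n-i})^\opp)$ is a direct summand of a \emph{free bimodule} of the form $L_0\otimes_\BZ({^0H}_{m-i}\otimes({^0H}_{n-i})^\opp)$, where $L_0$ is the $\BZ$-span of the generators $T_w\otimes m_\mu X^{\bullet}T_{w'}$. But $L_0$ carries no left ${^0H}_m^f\otimes({^0H}_n^f)^\opp$-module structure: left multiplication by ${^0H}_m^f$ on the $T_w$ does not preserve the $\BZ$-span of the chosen coset representatives, and once you allow it you are back inside the original bimodule. So the object you write down is not a bimodule at all, let alone a free one. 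Your parabolic decomposition gives freeness as a \emph{right} $({^0H}_{m-i}\otimes({^0H}_{n-i})^\opp)$-module, which is much weaker than bimodule projectivity and is not what Lemma~\ref{le:isoandTw0} needs downstream.

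The paper's proof proceeds differently: it identifies $L({^0H}_{m-i}\otimes({^0H}_{n-i})^\opp)$ with $\binom{-r}{i}$ copies of ${^0H}_m^f P_{m-i}\otimes P_{n-i}X^{\bullet}\,{^0H}_n^f$, and then invokes the structural fact from \S\ref{se:nilaffineHecke} that ${^0H}_d$ is projective as a $({^0H}_d^f,{^0H}_d)$-bimodule. The reason this works is that $P_d\simeq {^0H}_d b_d$ and the isomorphism ${^0H}_d^f\otimes_\BZ P_d^{\GS_d}\iso {^0H}_d b_d$ from Proposition~\ref{pr:MoritanilHecke} is a $({^0H}_d^f,P_d^{\GS_d})$-\emph{bimodule} isomorphism; via the Morita equivalence ${^0H}_d\sim P_d^{\GS_d}$ this upgrades to bimodule projectivity of ${^0H}_d$ over $({^0H}_d^f,{^0H}_d)$. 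Inducing from ${^0H}_{m-i}^f$ to ${^0H}_m^f$ then gives the projectivity of ${^0H}_m^f P_{m-i}={^0H}_m^f\otimes_{{^0H}_{m-i}^f}{^0H}_{m-i}$ as a $({^0H}_m^f,{^0H}_{m-i})$-bimodule. This Morita/bimodule step is the missing ingredient in your outline; without it, PBW alone does not control the left ${^0H}_m^f$-action.
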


\begin{proof}
The first statement is clear.
The $(({^0H}_m^f\otimes ({^0H}_n^f)^\opp),
({^0H}_{m-i}\otimes ({^0H}_{n-i})^\opp))$-bimodule
$L(m,n,i,\lambda) ({^0H}_{m-i}\otimes ({^0H}_{n-i})^\opp)$
 is isomorphic to 
$-r\choose i$ copies of
$${^0H}_n^f \BZ[X_1,\ldots,X_{m-i}] \otimes
 \BZ[X_1,\ldots,X_{n-i}] X_{n-i+1}^{i-1}
X_{n-i+2}^{i-2}\cdots X_{n-1} {^0H}_n^f.$$
On the other hand,
${^0H}_d$ is projective as a 
$({^0H}_d^f,{^0H}_d)$-bimodule (cf \S \ref{se:nilaffineHecke}) and
the last statement of the lemma follows.
\end{proof}

Let $L'(m,n,i,\lambda)=\Hom_\BZ(L(n,m,i,-\lambda),\BZ)$ and
$$\bar{L}'(m,n,i,\lambda)=\Hom_{({^0H}_{m-i}^f)^\opp\otimes {^0H}_{n-i}^f}(
\bar{L}(n,m,i,-\lambda),{^0H}_{m-i}^f\otimes {^0H}_{n-i}^f).$$
The canonical isomorphism
$$L(n,m,i,-\lambda)\otimes_\BZ ({^0H}_{m-i}^f\otimes {^0H}_{n-i}^f)\iso
\bar{L}(n,m,i,-\lambda)$$
induces an isomorphism
$$\Hom_{\BZ}(L(n,m,i,-\lambda),{^0H}_{m-i}^f\otimes {^0H}_{n-i}^f) \iso
\bar{L}'(m,n,i,\lambda)$$
and composing with the canonical isomorphism
$$L'(m,n,i,\lambda)\otimes_\BZ ({^0H}_{m-i}^f\otimes {^0H}_{n-i}^f) \iso 
\Hom_{\BZ}(L(n,m,i,-\lambda),{^0H}_{m-i}^f\otimes {^0H}_{n-i}^f),$$
we obtain an isomorphism of right
$({^0H}_{m-i}^f\otimes ({^0H}_{n-i}^f)^\opp)$-modules
$$L'(m,n,i,\lambda)\otimes_\BZ ({^0H}_{m-i}^f\otimes {^0H}_{n-i}^f) \iso 
\bar{L}'(m,n,i,\lambda).$$

\medskip
Given $m,n\in\BZ_{\ge 0}$, we define by induction a map
$\sigma_{m,n}:E^mF^n\to F^nE^m$.
The maps $\sigma_{m,0}$ and $\sigma_{0,n}$ are identities. We put
$\sigma_{m,1}=(\sigma E^{m-1})\circ (E\sigma_{m-1,1})$ and
$\sigma_{m,n}=(F^{n-1}\sigma_{m,1})\circ(\sigma_{m,n-1}F)$.

\begin{lemma}
\label{le:sigmamn}
The map $\sigma_{m,n}$ is a morphism of 
$(H_m^f\otimes (H_n^f)^\opp)$-modules.
We have
$$\sigma_{m,1}=(E^mF\xrightarrow{\eta E^m F}FE^{m+1}F
\xrightarrow{F(T_1\cdots T_m)F} FE^{m+1}F\xrightarrow{FE^m\eps}FE^m)$$
and
$$\sigma_{1,n}=(EF^n\xrightarrow{EF^n\eta}EF^{m+1}E
\xrightarrow{E(T_1\cdots T_n)E} EF^{n+1}E\xrightarrow{\eps F^nE}F^nE).$$
Given $a,b\in\BZ_{\ge 0}$, we have a commutative diagram
$$\xymatrix{
E^aF^b\ar[r]^-{\eta\bullet} \ar[d]_-{\sigma_{a,b}} &
 FE^{a+1}F^b\ar[rr]^-{F\sigma_{a+1,b}} && F^{b+1}E^{a+1} \\
F^bE^a \ar[r]_-{F^b\eta\bullet} & F^{b+1}E^{a+1} 
\ar[urr]_{(T_b\cdots T_1)E^{a+1}}
}$$
\end{lemma}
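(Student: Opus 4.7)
The proof has three parts: the $(H_m^f\otimes(H_n^f)^\opp)$-linearity of $\sigma_{m,n}$, the explicit zig-zag formulas for $\sigma_{m,1}$ and $\sigma_{1,n}$, and the commutative square.  My plan is to prove the formulas first, since they carry the essential combinatorial content, and then deduce the other two statements from them.

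First I would establish the formula for $\sigma_{m,1}$ by induction on $m$, with base case $m=1$ given by the defining formula $\sigma=(FE\eps)\circ(F\tau_{ss}F)\circ(\eta EF)$.  For the inductive step, expand $\sigma_{m,1}=(\sigma E^{m-1})\circ(E\sigma_{m-1,1})$, using the $F$-version of $\sigma$ from Lemma~\ref{le:sigmaF} for the outer factor and the inductive hypothesis for $\sigma_{m-1,1}$.  Apply naturality of $\eta$ repeatedly to push all the $\eta$-factors to the rightmost position (producing the outer $\eta E^m F$) and naturality of $\eps$ to push the $\eps$-factors to the leftmost position (producing the outer $FE^m\eps$).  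The surviving middle $2$-cells are instances of $\tau_{ss}$: the $F$-version of the outer $\sigma$ contributes $T_m$ (acting between positions $m$ and $m+1$ of the middle $E^{m+1}$), while the inductive factor contributes $T_1\cdots T_{m-1}$ on the lower positions; these combine to the reduced product $T_1\cdots T_m\in{^0H}_{m+1}$ acting on the middle $E^{m+1}$.  The formula for $\sigma_{1,n}$ then follows by a parallel induction, or equivalently by applying the Chevalley duality $D$ of \S\ref{se:symmetries}, which fixes $\sigma$ and reverses composition.

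Next, the $(H_m^f\otimes(H_n^f)^\opp)$-linearity of $\sigma_{m,n}$ reduces, via the recursion $\sigma_{m,n}=(F^{n-1}\sigma_{m,1})\circ(\sigma_{m,n-1}F)$ and the functoriality of juxtaposition, to left $H_m^f$-linearity of $\sigma_{m,1}$ and right $(H_n^f)^\opp$-linearity of $\sigma_{1,n}$.  Using the explicit braid formula, commutation of $\sigma_{m,1}$ with the action of $T_i$ on $E^m$ (for $1\le i\le m-1$) becomes the nil Hecke identity $T_{i+1}(T_1\cdots T_m)=(T_1\cdots T_m)T_i$ inside the action on the middle $E^{m+1}$; this lifts the reduced-word identity $s_{i+1}(s_1\cdots s_m)=(s_1\cdots s_m)s_i$ in $\GS_{m+1}$, verified by a sequence of braid commutations.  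The resulting $T_i$ on positions $i,i+1\le m$ then passes through $FE^m\eps$ unchanged, since $\eps$ touches only position $m+1$; the dual calculation handles the right action.

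Finally, for the commutative square I would induct on $b$.  The case $b=0$ is trivial, and $b=1$ follows by direct computation using the braid formulas for $\sigma_{a,1}$ and $\sigma_{a+1,1}$ together with naturality of $\eta$ to isolate the leading $\tau_{ss}=T_1$ on the bottom row.  For the inductive step, factor $\sigma_{a,b}=(F^{b-1}\sigma_{a,1})\circ(\sigma_{a,b-1}F)$, apply the inductive hypothesis to $\sigma_{a,b-1}$, and use $T_b\cdots T_1=T_b\cdot(T_{b-1}\cdots T_1)$ to extend the nil Hecke element on the right side.  The main obstacle throughout is the careful bookkeeping of nil Hecke generator indices through the iterated naturality rearrangements; organizing the bookkeeping in terms of the underlying permutations in the relevant symmetric group, and which positions of the chain of $E$'s they touch, seems to be the cleanest strategy.
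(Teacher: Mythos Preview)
Your proposal follows essentially the same architecture as the paper's proof: establish the braid formula for $\sigma_{m,1}$ by induction on $m$, obtain $\sigma_{1,n}$ by Chevalley duality, deduce ${^0H}_m^f$-linearity from the reduced-word identity $T_{i+1}(T_1\cdots T_m)=(T_1\cdots T_m)T_i$, and prove the square by induction on $b$ with the case $b=1$ done directly. That is exactly what the paper does, in the same order.

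One small point to watch: in the inductive step for $\sigma_{m,1}$ you propose to expand the outer factor $\sigma E^{m-1}$ using the $F$-version of $\sigma$ from Lemma~\ref{le:sigmaF}. In that version the middle $\tau$ acts on $F^2$, not on $E^2$, so it does not literally ``contribute $T_m$ acting between positions $m$ and $m+1$ of the middle $E^{m+1}$'' as you say. The paper instead uses the original defining formula $\sigma=(FE\eps)\circ(F\tau F)\circ(\eta EF)$ for the outer factor; then the $\tau$ sits on the two leftmost $E$'s of $E^{m+1}$ and is genuinely $T_m$, while the inductive $T_1\cdots T_{m-1}$ is dragged past the zigzag $FE^2\eta\bullet$ followed by $FE\eps\bullet$ (which is the identity) to sit next to it. If you switch to the standard formula for $\sigma$ here, your bookkeeping description becomes correct as written; otherwise you would need an extra argument to convert the $F^2$-side $\tau$ into an $E$-side $T_m$.
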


\begin{proof}
We have a commutative diagram
$$\xymatrix{
E^mF\ar[r]^-{E\eta\bullet} \ar[d]_{\eta\bullet}
 & EFE^mF \ar[rr]^-{EF(T_1\cdots T_{m-1})F} &&
EFE^mF \ar[r]^-{\bullet\eps} & EFE^{m-1} \ar[r]^-{\eta\bullet} &
FE^2FE^{m-1} \ar[d]^-{FT\bullet}\\
FE^{m+1}F\ar[d]_-{FT\bullet} &&&&& FE^2FE^{m-1} \ar[d]^-{FE\eps\bullet} \\
FE^{m+1}F\ar[r]^-{FE^2\eta\bullet}\ar@/_2pc/[rr]_-{\id}&
FE^2FE^mF\ar[r]^-{FE\eps\bullet} &
FE^{m+1}F\ar[rr]^-{\bullet(T_1\cdots T_{m-1})F} &&
FE^{m+1}F\ar[r]^-{\bullet\eps} &FE^m
}$$
and the second statement follows
by induction. The third statement follows from the
second one by applying the Chevalley duality (cf \S \ref{se:symmetries}).

\smallskip
Let $i\in [1,m-1]$. Since $T_{i+1}T_1\cdots T_m=T_1\cdots T_mT_i$,
we have a commutative diagram
$$\xymatrix{
E^mF \ar[r]^-{\eta\bullet} \ar[dd]_{T_iF} &
FE^{m+1}F \ar[rr]^-{F(T_1\cdots T_m)F}
 \ar[drr]_{F(T_{i+1} T_1\cdots T_m)F\ \ \ \ }
 &&
FE^{m+1}F \ar[r]^-{\bullet\eps} & FE^m \ar[d]^{F T_i} \\
&&& FE^{m+1}F \ar[r]^-{\bullet\eps} & FE^m \\
E^m F \ar[r]^-{\eta\bullet} & FE^{m+1}F \ar[rur]_-{\ F(T_1\cdots T_m)F}
}$$
It follows that $\sigma_{m,1}$ commutes with the action of ${^0H}_m^f$ and
by induction we deduce that $\sigma_{m,n}$ commutes with ${^0H}_m^f$. The
commutation with $({^0H}_n^f)^\opp$ follows by applying the Chevalley 
duality.

\smallskip
We have a commutative diagram
$$\xymatrix{
E^aF\ar[r]^-{\eta\bullet} \ar[d]^-{\eta\bullet} \ar@/_3pc/[ddd]_-{\sigma_{a,1}}
 & FE^{a+1}F \ar[rrr]^-{F\sigma_{a+1,1}} \ar[d]_{F\eta\bullet} &&& F^2E^{a+1}\\
FE^{a+1}F \ar[dr]_-{F\eta\bullet} & F^2E^{a+2}F \ar[r]^-{F^2T\bullet} &
 F^2E^{a+2}F \ar[rr]^-{F^2E(T_1\cdots T_a)F} && 
 F^2E^{a+2}F \ar[u]^-{\bullet\eps} \\
& F^2E^{a+1}F \ar[r]^-{T\bullet} & F^2E^{a+1}F 
 \ar[urr]_-{F^2E(T_1\cdots T_a)F}\\
FE^a \ar[rrrr]_-{F\eta\bullet} &&&& F^2E^{a+1} \ar@/_3pc/[uuu]_{T\bullet}
}$$
hence, we obtain a commutative diagram
$$\xymatrix{
E^aF^b\ar[r]^-{\eta\bullet} \ar[d]_{\sigma_{a,1}\bullet} & FE^{a+1}F^b
\ar[rr]^-{F\sigma_{a+1,1}\bullet} &&
 F^2E^{a+1}F^{b-1} \ar[rr]^-{\bullet\sigma_{a+1,b-1}} && F^{b+1}E^{a+1} \\
FE^aF^{b-1} \ar[r]_-{F\eta\bullet} & F^2E^{a+1}F^{b-1} \ar[urr]_-{T\bullet}
\ar[rr]_-{F^2\sigma_{a+1,b-1}} && F^{b+1}E^{a+1} \ar[urr]_-{T\bullet}
}$$
The last part of the Lemma follows now by induction on $b$.
\end{proof}

Given $P\in\BZ[X_1,\ldots,X_n]$, we denote by $\deg_*(P)$ the maximum of
the degrees in any of the variables of $P$. Given $\mu$ a partition and
$l$ a non-negative integer, we denote by $\mu\cup\{l\}$ the partition
obtained by adding $l$ to $\mu$.

\begin{lemma}
\label{le:diffmonomial}
Let $a,b\in\BZ_{\ge 0}$, $P\in\BZ[X_1,\ldots,X_a]^{\GS_a}$ and
$Q\in\BZ[X_{a+1},\ldots,X_{a+b}]^{\GS_{[a+1,a+b]}}$.
Then,
$\deg_*\partial_{w[1,a+b]w[1,a]w[a+1,b]}(PQ)\le
\max(\deg_*(P)-b,\deg_*(Q)-a)$.

\smallskip
Let $\mu\in\CP(a,d)$ for some $d\in\BZ_{\ge 0}$ and let $l\ge d+a$. We have
$$\partial_{s_1\cdots s_a}(X_{a+1}^l m_\mu(X_1,\ldots,X_a))=
m_{\mu\cup\{l-a\}}(X_1,\ldots,X_{a+1})+R,$$
where $R$ is a symmetric polynomial with $\deg_*R<l-a$.
\end{lemma}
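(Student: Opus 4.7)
My plan is to address the two assertions in order: the second (an explicit identity) provides the key computational input for the first (a degree bound).

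For the second assertion, I would start from the identity
\[\partial_{s_1 s_2 \cdots s_a}(X_{a+1}^l) = h_{l-a}(X_1, \ldots, X_{a+1}),\]
where $h_d$ is the complete homogeneous symmetric polynomial of degree $d$ (taken as $0$ for $d < 0$). This follows by induction on $a$ from $\partial_a(X_{a+1}^l) = h_{l-1}(X_a, X_{a+1})$ together with the expansion identity $\partial_i(h_n(X_i, \ldots, X_{a+1})) = h_{n-1}(X_{i-1}, X_i, \ldots, X_{a+1})$, both verified by direct manipulation of divided differences on monomials. To prove the full identity, I would expand $m_\mu(X_1, \ldots, X_a)$ as a sum over distinct arrangements $\nu$ and apply $\partial_1 \partial_2 \cdots \partial_a$ to each $X_1^{\nu_1} \cdots X_a^{\nu_a} X_{a+1}^l$ via iterated use of the Leibniz rule $\partial_i(fg) = (\partial_i f) g + (s_i f)(\partial_i g)$. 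The ``main'' contribution --- where each $\partial_i$ is routed onto the $X_{a+1}$-factor while the monomial part is transported by $s_i$ --- sums to $m_{\mu \cup \{l-a\}}(X_1, \ldots, X_{a+1})$ after collecting arrangements; the hypothesis $l \ge d + a$ ensures $l - a \ge d$, so this polynomial has $\deg_* = l - a$ and $\mu \cup \{l-a\}$ is a genuine partition. The remaining ``mixed'' contributions form the remainder $R$, which is $\GS_{a+1}$-symmetric by the standard fact that $\partial_{s_1 \cdots s_a}$ sends $\GS_a$-invariants to $\GS_{a+1}$-invariants (since $s_1 \cdots s_a$ is the minimal coset representative of $w[1, a+1]$ modulo $\GS_a$); each mixed term has $\deg_* < l - a$ because the Leibniz action of some $\partial_i$ on a factor from $m_\mu$ strictly lowers its degree contribution.

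For the first assertion, I plan to use the reduced factorization
\[w[1, a+b] \cdot w[1, a] \cdot w[a+1, a+b] = (s_b s_{b+1} \cdots s_{a+b-1})(s_{b-1} s_b \cdots s_{a+b-2}) \cdots (s_1 s_2 \cdots s_a)\]
of length $ab$, expressing the minimal coset representative of $w[1, a+b]$ modulo $\GS_a \times \GS_{[a+1, a+b]}$. This induces a decomposition of the divided difference into $b$ ``shift'' factors, each of the form treated in the second assertion up to a reindexing of variables. I would proceed by induction on $b$: apply the rightmost factor $\partial_{s_1 \cdots s_a}$ to $PQ$ after expanding $Q = \sum_\nu c_\nu m_\nu(X_{a+1}, \ldots, X_{a+b})$ and extracting the $X_{a+1}$-dependence via $m_\nu = \sum_k X_{a+1}^k m_{\nu^{(k)}}(X_{a+2}, \ldots, X_{a+b})$. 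Each summand becomes $m_{\nu^{(k)}}(X_{a+2}, \ldots, X_{a+b}) \cdot \partial_{s_1 \cdots s_a}(P \cdot X_{a+1}^k)$ with $k \le \deg_* Q$, and the second assertion (extended to general $\GS_a$-symmetric $P$ by linearity) bounds the $X_1, \ldots, X_{a+1}$-degree by $\max(\deg_* P - 1, \deg_* Q - a)$. The resulting polynomial is $\GS_{a+1} \times \GS_{b-1}$-symmetric; applying the remaining factors to this reindexed configuration and invoking the inductive hypothesis, together with the $\GS_{a+b}$-symmetry of the final result, propagates the bound $\max(\deg_* P - b, \deg_* Q - a)$ uniformly across all variables.

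The main obstacle lies in the second assertion, specifically in the careful tracking of the iterated Leibniz expansion to identify the main term as exactly $m_{\mu \cup \{l-a\}}$ and to establish both the symmetry and the strict degree bound of the remainder $R$. The first assertion then becomes essentially degree bookkeeping, with the factorization reducing it to iterating the second and propagating the bound via the evolving symmetry structure at each inductive stage.
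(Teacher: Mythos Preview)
Your plan for the second assertion has a real gap: the Leibniz routing does not produce the decomposition you claim. Take $a=1$ and $\mu=(d)$. With $f=X_1^d$, $g=X_2^l$ the Leibniz rule gives
\[
\partial_1(X_1^d X_2^l)=(\partial_1 X_1^d)\,X_2^l+(s_1 X_1^d)\,\partial_1(X_2^l)
=-h_{d-1}(X_1,X_2)\,X_2^l+X_2^d\,h_{l-1}(X_1,X_2).
\]
Your ``main'' term $X_2^d h_{l-1}(X_1,X_2)$ is not $m_{(l-1,d)}(X_1,X_2)$ (already for $d=0$ it is $h_{l-1}$, not $X_1^{l-1}+X_2^{l-1}$), and your ``mixed'' term $-h_{d-1}(X_1,X_2)X_2^l$ has degree $l+d-1$ in $X_2$, far above $l-1$. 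The bound $\deg_*\le l-a$ arises only after cancellation between the two pieces; neither piece on its own satisfies it, so the sentence ``each mixed term has $\deg_*<l-a$ because $\partial_i$ strictly lowers its degree contribution'' is false. The same phenomenon persists for larger $a$: the main-of-main term is $m_\mu(X_2,\ldots,X_{a+1})\,h_{l-a}(X_1,\ldots,X_{a+1})$, not $m_{\mu\cup\{l-a\}}$.

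There is a second gap in your reduction of the first assertion to the second: you invoke ``the second assertion extended to general $\GS_a$-symmetric $P$ by linearity'' to bound $\partial_{s_1\cdots s_a}(P\cdot X_{a+1}^k)$, but the second assertion carries the hypothesis $l\ge d+a$. When $k<\deg_*P+a$ (which certainly occurs, since $k\le\deg_*Q$ is arbitrary) the second assertion does not apply to the summands $m_\mu$ with $\mu_1=\deg_*P$.

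The paper proceeds differently. For the first part it proves by induction on $n$ the monomial estimate
\[
\deg_*\bigl(\partial_{w[1,n]}(X_1^{a_1}\cdots X_n^{a_n})\bigr)\le\max_i a_i-n+1,
\]
then rewrites
$\partial_{w[1,a+b]w[1,a]w[a+1,a+b]}(PQ)=\partial_{w[1,a+b]}(P\,X_1^{a-1}\cdots X_{a-1}\,Q\,X_{a+1}^{b-1}\cdots X_{a+b-1})$
and reads off the bound. For the second part it does not use Leibniz at all: it inducts on $a$, splitting $m_\mu(X_1,\ldots,X_a)=\sum_{k\subset\mu}X_1^k\,m_{\mu\setminus k}(X_2,\ldots,X_a)$, applies the inductive hypothesis to $\partial_{s_2\cdots s_a}$ (whose output already has $X_2$-degree $\le l-a+1$), and then applies a single $\partial_{s_1}$, which cleanly isolates the $X_1^{l-a}$-coefficient as $m_\mu(X_2,\ldots,X_{a+1})$; symmetry in $X_1,\ldots,X_{a+1}$ then forces the $\deg_*=l-a$ part to be exactly $m_{\mu\cup\{l-a\}}$.
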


\begin{proof}
Let us first show by induction on $n\ge 1$
that given $a_1,\ldots,a_n\in\BZ_{\ge 0}$, we
have 
\begin{equation}
\label{eq:deg}
\deg_*(\partial_{w[1,n]}(X_1^{a_1}\cdots X_n^{a_n}))\le
\max(\{a_i\})-n+1.
\end{equation}
This clear for $n=1$. Applying a permutation of
$[1,n]$ if necessary, we
can assume that $a_n=\min(\{a_i\})$. Then,
\begin{align*}
\partial_{w[1,n]}(X_1^{a_1}\cdots X_n^{a_n})&=
(X_1\cdots X_n)^{a_n}\partial_{s_1\cdots s_{n-1}}
\partial_{w[1,n-1]}(X_1^{a_1-a_n}\cdots X_{n-1}^{a_{n-1}-a_n})\\
&=(X_1\cdots X_n)^{a_n}\partial_{s_1\cdots s_{n-1}}(R)
\end{align*}
where $R$ is a polynomial in $X_1,\ldots,X_{n-1}$ whose degree in 
$X_{n-1}$ is at most $\max(\{a_i\})-a_n-n+2$ by induction. It
follows that the degree in $X_n$ of $\partial_{s_1\cdots s_{n-1}}(R)$
is at most $\max(\{a_i\})-a_n-n+1$ and (\ref{eq:deg}) follows from the
fact that $\partial_{w[1,n]}(X_1^{a_1}\cdots X_n^{a_n})$ is a symmetric
polynomial.

We have
$\partial_{w[1,a+b]w[1,a]w[a+1,a+b]}(PQ)=
\partial_{w[1,a+b]}(PX_1^{a-1}\cdots X_{a-1}QX_{a+1}^{b-1}\cdots X_{a+b-1})$
and the first part of the lemma follows from (\ref{eq:deg}).

\smallskip
We prove the second part of the
lemma by induction on $a$. We write $k\subset \mu$ if there
is $i$ such that $\mu_i=k$ and we denote by $\mu\setminus k$ the partition
obtained by removing $k$ to $\mu$.
We have
$$\partial_{s_1\cdots s_a}(X_{a+1}^l m_\mu(X_1,\ldots,X_a))=
\sum_{k\subset\mu}\partial_{s_1}(X_1^k\partial_{s_2\cdots s_a}(
X_{a+1}^l m_{\mu\setminus k}(X_2,\ldots,X_a))).$$
By induction, we have
$$\partial_{s_2\cdots s_a}(
X_{a+1}^l m_{\mu\setminus k}(X_2,\ldots,X_a))=
X_2^{l-a+1}m_{\mu\setminus k}(X_3,\ldots,X_{a+1})+R,$$
where the degree in $X_2$ of $R$ is strictly less than $l-a+1$.
It follows that
$$\partial_{s_1\cdots s_a}(X_{a+1}^l m_\mu(X_1,\ldots,X_a))=
\sum_{k\subset\mu}X_1^{l-a}X_2^km_{\mu\setminus k}(X_3,\ldots,X_{a+1})+R'=
X_1^{l-a}m_{\mu}(X_2,\ldots,X_{a+1})+R',$$
where the degree in $X_1$ of $R'$ is strictly less than $l-a$. The lemma
follows.
\end{proof}

The following Lemma is clear.
\begin{lemma}
\label{le:filtrationiso}
Let $\CC$ be a $k$-linear category, $X,Y$ two objects of $\CC$,
$L$ and $L'$ two right $\End(X)$-modules and
$f:L\to\Hom(X,Y)$ and $f':L'\to\Hom(X,Y)$ two morphisms of
right $\End(X)$-modules. Let
$\phi:L\otimes_{\End(X)}X\to Y$ and
$\phi':L'\otimes_{\End(X)}X\to Y$ be the associated morphisms.

Consider finite filtrations on $L$ and on $L'$
such that $f(L^{<i})=f'(L^{\prime <i})$ for all $i$.
Assume there are
isomorphisms $L^{\le i}/L^{<i}\iso L^{\prime\le i}/L^{\prime <i}$ for all $i$
such that the following diagram commutes
$$\xymatrix{
L^{\le i}/L^{<i}\ar[dr]\ar[dd]_\sim \\
& \Hom(X,Y)/{L^{<i}\End(X)} \\
L^{\prime \le i}/L^{\prime <i}\ar[ur] \\
}$$
Then, $\phi$ is an isomorphism if and only if $\phi'$ is an isomorphism.
\end{lemma}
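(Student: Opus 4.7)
The plan is to argue by induction on the length $N$ of the given filtrations, reducing to a comparison of associated graded pieces via the five-lemma.

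For each $i$, let $\phi^{\le i}: L^{\le i}\otimes_{\End(X)} X \to Y$ be the morphism adjoint (via the universal property of the tensor product) to the restriction $f|_{L^{\le i}}$, and define $\phi^{<i}$, $\phi^{\prime\le i}$, $\phi^{\prime<i}$ analogously. First I would observe that the hypothesis $f(L^{<i}) = f'(L^{\prime <i})$ together with the commuting triangle on graded pieces implies the stronger statement $f(L^{\le i}) = f'(L^{\prime\le i})$ (as subsets of $\Hom(X,Y)$) for every $i$, and that the induced maps on the quotients $f(L^{\le i})/f(L^{<i})$ correspond under the given isomorphisms $L^{\le i}/L^{<i}\iso L^{\prime\le i}/L^{\prime <i}$.

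Working in a suitable abelian enlargement of $\CC$ so that kernels, cokernels and the five-lemma are available, let $M^{\le i}$ denote the image of $\phi^{\le i}$ in $Y$; by the previous step, the analogous $M^{\prime\le i}$ constructed from $\phi'$ coincides with $M^{\le i}$. The right-exact sequence
$$L^{<i}\otimes_{\End(X)} X \to L^{\le i}\otimes_{\End(X)} X \to (L^{\le i}/L^{<i})\otimes_{\End(X)} X \to 0$$
maps under $\phi^{\le i}$ to the short exact sequence $0\to M^{<i}\to M^{\le i}\to M^{\le i}/M^{<i}\to 0$, and the induced map $(L^{\le i}/L^{<i})\otimes_{\End(X)} X\to M^{\le i}/M^{<i}$ is determined by the graded data in the hypothesis, hence agrees with the corresponding map for $\phi'$. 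A five-lemma argument then shows that $\phi^{\le i}$ is an isomorphism onto $M^{\le i}$ if and only if $\phi^{\prime\le i}$ is an isomorphism onto $M^{\le i}$, and this propagates by induction on $i$.

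For $i$ large enough that $L^{\le i}=L$ and $L^{\prime\le i}=L'$, we conclude as follows: if $\phi$ is an iso, then its image is all of $Y$, so $M^{\le i} = Y$, and the inductive equivalence produces an iso $L'\otimes_{\End(X)} X\iso M^{\le i}=Y$, which is precisely $\phi'$; the converse is symmetric, completing the lemma. The only point requiring attention -- that $-\otimes_{\End(X)} X$ is merely right-exact, so that the leftmost column of the comparison ladder is not the kernel -- is harmless, because the five-lemma invocation uses only right-exact rows together with the inductive iso on the leftmost vertical map.
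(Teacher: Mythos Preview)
The paper gives no proof here; it simply declares the lemma ``clear''. So there is nothing to compare against except the intended one-line idea, which is essentially: construct an isomorphism $L\otimes_{\End(X)}X \iso L'\otimes_{\End(X)}X$ over $Y$ by a block-triangular change of basis (the diagonal blocks being the given isomorphisms on graded pieces, the sub-diagonal corrections existing precisely because $f(L^{<i})=f'(L^{\prime<i})$), so that $\phi$ and $\phi'$ differ by an automorphism of the source. In the paper's applications the filtrations are split with free graded pieces, which is what makes this transparent.

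Your five-lemma scheme is in the right spirit but, as written, does not close. The inductive statement you carry is the \emph{biconditional} ``$\phi^{\le i}$ is an isomorphism onto $M^{\le i}$ $\Leftrightarrow$ $\phi^{\prime\le i}$ is'', and you try to propagate it using the five-lemma on the ladder with rows $L^{<i}\otimes X\to L^{\le i}\otimes X\to \mathrm{gr}^i L\otimes X\to 0$ over $0\to M^{<i}\to M^{\le i}\to M^{\le i}/M^{<i}\to 0$. But the five-lemma needs the \emph{outer vertical maps to be isomorphisms}, not merely a biconditional about them. And you cannot extract that the left map $\phi^{<i}$ is an isomorphism from the assumption that $\phi^{\le i}$ is one: since $-\otimes_{\End(X)}X$ is only right exact, the map $L^{<i}\otimes X\to L^{\le i}\otimes X$ need not be injective, so $\ker\phi^{<i}=\ker\bigl(L^{<i}\otimes X\to L^{\le i}\otimes X\bigr)$ may be nonzero even when $\phi^{\le i}$ is an isomorphism. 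Your final paragraph acknowledges right-exactness but then invokes ``the inductive iso on the leftmost vertical map'', which is exactly what you do not have.

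The fix is to strengthen the inductive hypothesis to the existence of an isomorphism $\psi^{\le i}:L^{\le i}\otimes_{\End(X)}X\iso L^{\prime\le i}\otimes_{\End(X)}X$ with $\phi^{\prime\le i}\circ\psi^{\le i}=\phi^{\le i}$; this is what the triangular argument produces directly, and for it the right-exactness issue disappears because you are no longer comparing kernels across two unrelated ladders.
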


\begin{lemma}
\label{le:basiccommutation}
Assume $m-n+\lambda\le 0$. We have an isomorphism
$\sum_i \mathrm{act}\circ \Bigl(\id\otimes\bigl((F^{n-i}\eta_iE^{m-i})\circ
\sigma_{m-i,n-i}\bigr)\Bigr)$:
$$\bigoplus_{i=0}^{\min(m,n)} L(m,n,i,\lambda)\otimes_\BZ E^{m-i}F^{n-i}
\idun_{\lambda} (i(i-2m-\lambda))
\iso F^nE^m\idun_{\lambda}.$$
It induces an isomorphism of $({^0H}_m^f\otimes ({^0H}_n^f)^\opp)$-modules:
$$\bigoplus_{i=0}^{\min(m,n)} \bar{L}(m,n,i,\lambda)
\otimes_{{^0H}_{m-i}^f\otimes ({^0H}_{n-i}^f)^\opp} E^{m-i}F^{n-i}
\idun_{\lambda} (i(i-2m-\lambda)) \iso F^nE^m\idun_\lambda.$$

Assume $m-n+\lambda\ge 0$. We have an isomorphism
$\sum_i (\id\otimes(\sigma_{m-i,n-i}\circ (E^{m-i}\eps_i F^{n-i})))\circ
\mathrm{act}^*$:
$$E^mF^n\idun_\lambda\iso
\bigoplus_{i=0}^{\min(m,n)} L'(m,n,i,\lambda)\otimes_\BZ F^{n-i}E^{m-i}
\idun_{\lambda}(i(2n-\lambda-i)).$$
It induces an isomorphism of $({^0H}_m^f\otimes ({^0H}_n^f)^\opp)$-modules:
$$E^mF^n\idun_\lambda\iso
\bigoplus_{i=0}^{\min(m,n)} \bar{L}'(m,n,i,\lambda)
\otimes_{{^0H}_{m-i}^f\otimes ({^0H}_{n-i}^f)^\opp} F^{n-i}E^{m-i}
\idun_{\lambda}(i(2n-\lambda-i)).$$
\end{lemma}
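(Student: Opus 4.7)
The plan is to prove the first isomorphism (the case $m-n+\lambda\le 0$); the second then follows by applying the Chevalley involution $I$ from \S\ref{se:symmetries}, which interchanges $E_s\leftrightarrow F_s$, sends $\lambda\mapsto -\lambda$, and dualizes $\bar{L}(m,n,i,\lambda)$ into the appropriate twist of $\bar{L}'(n,m,i,-\lambda)$, so that the two statements are interchanged.

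We proceed by induction on $n$, with the trivial base $n=0$ (only the $i=0$ term contributes and $\sigma_{m,0}=\id$). The crux is the auxiliary case $n=1$, which reads
$$FE^m\idun_\lambda \iso E^m F\idun_\lambda\oplus\bar{L}(m,1,1,\lambda)\otimes_{{^0H}_{m-1}^f} E^{m-1}\idun_\lambda,$$
and which we prove by sub-induction on $m$. The base $m=1$ is precisely the invertibility of $\rho_{s,\lambda}$ built into the definition of $\FA$. For the sub-inductive step, we split $FE^m=(FE)\cdot E^{m-1}$, apply the $m=1$ case at the intermediate weight $\lambda+2(m-1)$, and then invoke the case $(m-1,1)$ on the resulting $FE^{m-1}$; Lemma \ref{le:sigmamn} is used to identify the iterated composition with the map $\sigma_{m,1}$ appearing in the statement.

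For the main inductive step, write $F^n E^m=F\cdot F^{n-1}E^m$, apply the inductive hypothesis to $F^{n-1}E^m$, and then apply the $n=1$ case to each summand $FE^{m-j}$. This produces a decomposition of $F^n E^m\idun_\lambda$ indexed by pairs $(j,\varepsilon)\in\BZ_{\ge 0}\times\{0,1\}$, which we reorganize by $i=j+\varepsilon$ to obtain the correct $E^{m-i}F^{n-i}\idun_\lambda$ summands. To match the precise modules $\bar{L}(m,n,i,\lambda)$, we apply Lemma \ref{le:filtrationiso} with a filtration by the width of the partition $\mu$ indexing the monomial symmetric functions; Lemma \ref{le:diffmonomial} then shows that the iterated nil-Hecke commutations send the expected generators $m_\mu(X_{n-i+1},\ldots,X_n)X_{n-i+1}^{i-1}\cdots X_{n-1}$ to themselves modulo lower-order terms, and that the $\binom{-r}{i}$ multiplicity is correct. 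The projectivity assertion of Lemma \ref{le:projL} allows one to upgrade the resulting abelian-group isomorphism to the claimed bimodule isomorphism.

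The main obstacle is the final combinatorial identification: tracking exactly how the polynomial factors in the definition of $L(m,n,i,\lambda)$ emerge from iterating the $n=1$ case. The filtration approach via Lemmas \ref{le:filtrationiso} and \ref{le:diffmonomial} is precisely the device that makes this feasible, since it reduces the problem to comparing leading terms rather than computing closed-form expressions; a direct computation becomes intractable quickly due to the recursive nature of $\sigma_{m,n}$ and the nontrivial relations among the operators $\partial_w$ on symmetric polynomials.
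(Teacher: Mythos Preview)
Your overall strategy---reducing the second statement to the first via the Chevalley involution, arguing by induction, isolating the case $n=1$ as the key step, and using Lemmas~\ref{le:filtrationiso} and~\ref{le:diffmonomial} to match leading terms---is the same as the paper's. Two points, however, are genuine gaps.

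First, in your $n=1$ sub-induction you split $FE^m=(FE)\cdot E^{m-1}$ and invoke the $m=1$ case at weight $\lambda+2(m-1)$. But this weight can be strictly positive (take $m=3$, $\lambda=-3$), and then $\rho_{s,\lambda+2(m-1)}$ gives $EF\simeq FE\oplus\idun^{\lambda+2(m-1)}$, not $FE\simeq EF\oplus(\cdots)$. You are left with $FE^m$ only as a \emph{summand} of $EFE^{m-1}$, and extracting the claimed decomposition would require a cancellation argument you have not supplied. The paper avoids this by splitting in the other order, $FE^m=(FE^{m-1}\idun_{\lambda+2})\cdot E$: the inductive call at $(m-1,1,\lambda+2)$ needs $m+\lambda\le 0$, and the base call at $(1,1,\lambda)$ needs $\lambda\le 0$, both of which hold.

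Second, your main inductive step applies the hypothesis to $F^{n-1}E^m\idun_\lambda$, which requires $m-(n-1)+\lambda\le 0$; this fails on the boundary $m-n+\lambda=0$. The paper treats this boundary separately (using Chevalley duality to assume $n>1$ and a different two-step decomposition), and you will need to do the same.

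One further remark: the paper makes essential use of the reduction, via Lemma~\ref{le:isoandTw0} and the projectivity in Lemma~\ref{le:projL}, to checking the map is an isomorphism after left multiplication by $T_{w[1,m]}\otimes T_{w[1,n]}$. This is what turns the leading-term comparison from Lemma~\ref{le:diffmonomial} into an honest isomorphism; your closing sentence gestures at this but does not make the mechanism explicit.
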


\begin{proof}
Note first that the statements for $(m,n,\lambda)$ where $m-n+\lambda\le 0$
are transformed into the statements for $(n,m,-\lambda)$ by the Chevalley
involution $I$. It is immediate to check that the maps are graded and it is
enough to prove the Lemma in the non-graded setting.

\smallskip
Assume $m-n+\lambda\le 0$.
Note that the first statement is equivalent to the second one
(Lemma \ref{le:projL}), whose map
makes sense thanks to Lemma \ref{le:sigmamn}.
We will drop
the idempotents $\idun_\lambda$ to simplify notations. Note that
the result holds for $m=n=1$ as $\rho_{s,\lambda}$ is invertible by
definition.

Since 
$\bar{L}(m,n,i,\lambda)
\otimes_{{^0H}_{m-i}^f\otimes ({^0H}_{n-i}^f)^\opp}
({^0H}_{m-i}\otimes {^0H}_{n-i})$ is
projective as a
$({^0H}_m^f\otimes ({^0H}_n^f)^\opp,{^0H}_{m-i}\otimes
({^0H}_{n-i})^\opp)$-bimodule (Lemma \ref{le:projL}),
it is enough to show that the second map is an isomorphism after
multiplication by $T_{w[1,m]}\otimes T_{w[1,n]}$ (Lemma \ref{le:isoandTw0}).

\smallskip
We prove the Lemma by induction on $n+m$. Note that the Lemma holds trivially
when $n=0$ or $m=0$ as well as when $(m,n)=(1,1)$. So, we can assume $m+n\ge 3$.

\medskip
$\bullet$ Let us first consider the case $m-n+\lambda=0$. Applying the Chevalley
duality if necessary, we can assume that $n>1$.
By induction, we have isomorphisms
\begin{multline}
\label{eq:step0}
E^mF^n\oplus \bar{L}(m,n-1,1,\lambda-2)\otimes_{{^0H}_{m-1}^f\otimes
({^0H}_{n-2}^f)^\opp} E^{m-1}F^{n-1}\iso
F^{n-1}E^mF \\
\iso F^nE^m\oplus \bar{L}'(m,1,1,\lambda)\otimes_{{^0H}_{m-1}^f}
F^{n-1}E^{m-1}.
\end{multline}
By Lemma \ref{le:sigmamn}, we have a
commutative diagram
$$\xymatrix{
E^{m-1}F\ar[r]^-{\eta\bullet} \ar[d]_-{\sigma_{m-1,1}} &
 FE^mF\ar[r]^-{F\sigma_{m,1}} & F^2E^m \\
FE^{m-1}\ar[r]_-{F\eta\bullet} & F^2E^m\ar[ur]_-{TE^m}
}$$
It follows that the composition of maps in (\ref{eq:step0}) has one of
its components equal to
\begin{multline}
\label{eq:step00}
\mathrm{act}\circ
(T_{n-1}E^m)\circ (F^{n-1}\eta E^{m-1})\circ (F^{n-2}\sigma_{m-1,n-1}):\\
\bar{L}(m,n-1,1,\lambda-2)\otimes_{{^0H}_{m-1}^f\otimes
({^0H}_{n-2}^f)^\opp} E^{m-1}F^{n-1}\to F^nE^m.
\end{multline}
We have $(T_{w[1,m]}\otimes T_{w[1,n]}^\opp)\bar{L}(m,n-1,1,\lambda-2)=
(T_{w[1,m]}\otimes T_{w[1,n]})\BZ$ and it follows that the map in
(\ref{eq:step00}) vanishes after multiplication by 
$(T_{w[1,m]}\otimes T_{w[1,n]}^\opp)$. We deduce that the component
$\sigma_{m,n}:E^mF^n\to F^nE^m$ of the composition of maps in (\ref{eq:step0})
is an isomorphism.

\medskip
$\bullet$ We consider now the case $n=1$ and $m+\lambda\le 0$.
By induction,
we have an isomorphism
$$E^{m-1}FE\oplus L([2,m],1,1,\lambda+2)\otimes E^{m-1} \iso FE^m.$$
So, we have an isomorphism
\begin{equation}
\label{eq:step1}
E^mF\oplus L(1,1,1,\lambda)\otimes E^{m-1}\oplus 
L([2,m],1,1,\lambda+2)\otimes E^{m-1} \iso FE^m
\end{equation}
Taking the image under the Chevalley duality of the commutative
diagram of Lemma \ref{le:sigmamn}, we obtain a commutative diagram
$$\xymatrix{
E^{m-1}\ar[r]^-{\bullet\eta} \ar[dr]_-{\eta\bullet} &
 E^{m-1} FE \ar[r]^-{\sigma_{m-1,1}E} & FE^m \\
& FE^m \ar[ur]_-{F(T_1\cdots T_{m-1})}
}$$
It follows that the isomorphism (\ref{eq:step1}) induces an isomorphism
$(\sigma_{m,1},\mathrm{act}\circ (\id\otimes (\eta E^{m-1})),
(\mathrm{act}\circ (\id\otimes (\eta E^{m-2})))E)$:
$$E^mF\oplus \left(\bigoplus_{0\le i<-\lambda}X_1^iT_1\cdots T_{m-1}\BZ
\right)\otimes E^{m-1}\oplus L([2,m],1,1,\lambda+2)\otimes E^{m-1} \iso FE^m.$$
Let 
$$M=\left(\bigoplus_{0\le i<-\lambda}X_1^iT_1\cdots T_{m-1} {^0H}_{m-1}\right)
\oplus \bigoplus_{\substack{w\in\GS_{[2,m]}^{[2,m-1]}\\ l< -m-\lambda}}
T_w(X_m^l){^0H}_{m-1}.$$
This is a ${^0H}_m^f$-submodule of ${^0H}_m$.
We have
\begin{align*}
T_{w[1,m]}M&=T_{w[1,m]}\left(\sum_{i<-\lambda}
\partial_{s_{m-1}\cdots s_1}(X_1^i){^0H}_{m-1}
+\sum_{i<-m-\lambda}(X_m^i) {^0H}_{m-1}\right)\\
&=
T_{w[1,m]}\sum_{i\le -\lambda-m}(X_m^i){^0H}_{m-1}\\
&= T_{w[1,m]}\bar{L}(m,1,1,\lambda){^0H}_{m-1}.
\end{align*}

Note that $M$ is generated by 
$\dim_\BZ L(m,1,1,\lambda)$ elements as a right ${^0H}_{m-1}$-module.
Since $\bar{L}(m,1,1,\lambda){^0H}_{m-1}$ is a free right ${^0H}_{m-1}$-module
of rank $\dim_\BZ L(m,1,1,\lambda)$, it follows that
$M$ is a free right ${^0H}_{m-1}$-module of that rank.
We have an isomorphism
\begin{equation}
\label{eq:step2}
(\sigma_{m,1},\mathrm{act}\circ (\id\otimes (\eta E^{m-1}))):
E^mF\oplus M\otimes_{{^0H}_{m-1}}E^{m-1}\iso FE^m.
\end{equation}
The morphism
\begin{equation}
\label{eq:step3}
(\sigma_{m,1},\mathrm{act}\circ (\id\otimes (\eta E^{m-1}))):
E^mF\oplus \bar{L}(m,1,1,\lambda)\otimes_{{^0H}_{m-1}^f}E^{m-1}\iso FE^m
\end{equation}
becomes an isomorphism after multiplication by $T_{w[1,m]}$, since it
coincides with the multiplication by $T_{w[1,m]}$ of the isomorphism
(\ref{eq:step2}). It follows from Lemmas \ref{le:projL} and
\ref{le:isoandTw0} that  the
morphism (\ref{eq:step3}) is an isomorphism and the lemma is proven
when $n=1$.

\medskip
$\bullet$ We consider finally the case $n>1$ and $m-n+\lambda<0$.
We have an isomorphism
$$F\bigoplus_{i=0}^{n-1} L(m,n-1,i,\lambda)\otimes E^{m-i}F^{n-i-1}
\iso F^nE^m.$$
The case $n=1$ of the lemma gives isomorphisms
$$\left(L(m-i,1,1,\lambda-2(n-i-1))
\otimes E^{m-i-1}\right)F^{n-i-1}\oplus
E^{m-i}F^{n-i}\iso FE^{m-i}F^{n-i-1}.$$
Combining the previous two isomorphisms, we obtain a isomorphism
$$\bigoplus_{i=0}^{n} \bigl(L(m,[2,n],i,\lambda)\oplus
L(m,[2,n],i-1,\lambda)\otimes L(m-i+1,1,1,\lambda-2(n-i))\bigr)\otimes
E^{m-i}F^{n-i}\iso F^nE^m.$$
In that isomorphism, the map
$L(m,[2,n],i,\lambda)\otimes E^{m-i}F^{n-i}\to F^nE^m$ is
$\mathrm{act}\circ \Bigl(\id\otimes\bigl((F^{n-i}\eta_iE^{m-i})\circ
\sigma_{m-i,n-i}\bigr)\Bigr)$. It follows from Lemma \ref{le:sigmamn}
that the map
$$L(m,[2,n],i-1,\lambda)\otimes L(m-i+1,1,1,\lambda-2(n-i))\otimes
E^{m-i}F^{n-i}\to F^nE^m$$
is
$$\mathrm{act}\circ
\biggl(\id\otimes\Bigl(\mathrm{act}\circ\bigl((T_{n-i}\cdots T_1)E^{m-i+1}\bigr)
\Bigr)\biggr)
\circ\Bigl(\id\otimes \id\otimes \bigl((F^{n-i}\eta_iE^{m-i})\circ
\sigma_{m-i,n-i}\bigr)\Bigr).$$
Let $i>0$ and
$$M_i=L(m,[2,n],i,\lambda)\oplus
\left(\bigoplus_{l<-r+n-i}T_{n-i}\cdots T_1X_1^l\BZ\right)
\cdot L(m,[2,n],i-1,\lambda)\cdot
\left(\bigoplus_{1 \le j\le m-i} T_j\cdots T_{m-i}\BZ\right),$$
a subgroup of ${^0H}_m\otimes_{{^0H}_i}{^0H}_n$.
We have shown that there is an isomorphism
$$\mathrm{act}\circ \Bigl(\id\otimes\bigl((F^{n-i}\eta_iE^{m-i})\circ
\sigma_{m-i,n-i}\bigr)\Bigr):
\bigoplus_{i=0}^{n} M_i\otimes E^{m-i}F^{n-i}\iso F^nE^m.$$
Let 
$$N_i=
\bigoplus_{\substack{w'\in {^{[2,n-i+1]}\GS}_{[2,n]}\\ 
\mu\in\CP(i-1,-r-i)\\ l<-r+n-i}}
T_{n-i}\cdots T_1 X_1^l m_\mu(X_{n-i+2},\ldots,X_n)
X_{n-i+2}^{i-2}\cdots X_{n-1} T_{w'}\BZ$$
and 
$$N'_i=\bigoplus_{\substack{w'\in {^{[2,n-i]}\GS}_{[2,n]}\\
 \mu\in\CP(i,-r-1-i)}}
m_\mu(X_{n-i+1},\ldots, X_n)X_{n-i+1}^{i-1}\cdots X_{n-1} T_{w'}\BZ.$$
We have
$$M_i=
\left(\bigoplus_{w\in \GS_m^{m-i}}T_w\BZ\right)\otimes 
\left(
N'_i\oplus N_i\right).$$

We have
$$T_{w[n-i+1,n]}N'_i T_{w[1,n]}=
\sum_{ \mu\in\CP(i,-r-1-i)}m_\mu(X_{n-i+1},\ldots, X_n) T_{w[1,n]}\BZ
$$
and
\begin{multline*}
T_{w[n-i+1,n]}N_i T_{w[1,n]}=\\
=\sum_{\substack{\mu\in\CP(i-1,-r-i)\\ l<-r+n-i}}
\partial_{s_{n-1}\cdots s_{n-i+1}}(
\partial_{s_{n-i}\cdots s_1}(X_1^l) m_\mu(X_{n-i+2},\ldots,X_n)) T_{w[1,n]}\BZ\\
=\sum_{\substack{\mu\in\CP(i-1,-r-i)\\ l<-r+n-i}}
(\sum_{k\le l-n+i}
P_{k,l}(X_1,\ldots,X_{n-i})
R_{k,\mu}(X_{n-i+1},\ldots,X_n))
 T_{w[1,n]}\BZ
\end{multline*}
where $P_{k,l}$ is a symmetric polynomial
and $R_{k,\mu}=
\partial_{s_{n-1}\cdots s_{n-i+1}}(X_{n-i+1}^k m_\mu(X_{n-i+2},\ldots,X_n))$
satisfies $\deg_* R_{k,\mu}\le \max(k-i+1,-r-i-1)$
by Lemma \ref{le:diffmonomial}.

Let us fix $k$ and $l$.
By induction, the composite morphism
$$E^{m-i}F^{n-i}\xrightarrow{\sigma_{m-i,n-i}} F^{n-i}E^{m-i}
\xrightarrow{P_{k,l}E^{m-i}}F^{n-i}E^{m-i}$$
is equal to
$$\sum_{\substack{j\ge i\\ \mu'\in\CP(j-i,-r-j+i)}}
\!\!\!\!\!\!\!\!\!\!\!\!
((m_{\mu'}(X_{n-j+1},\ldots,X_{n-i})X_{n-j+1}^{j-i-1}\cdots
X_{n-i-1})E^{m-i})\circ
(\id\otimes((F^{n-j}\eta_{j-i}E^{m-j})\circ\sigma_{m-j,n-j}))
\circ f_{j,\mu'}$$
for some $f_{j,\mu'}:E^{m-i}F^{n-i}\to E^{m-j}F^{m-j}$.
We have
\begin{multline*}
T_{w[n-j+1,n]w[n-i+1,n]}m_{\mu'}(X_{n-j+1},\ldots,X_{n-i})
X_{n-j+1}^{j-i-1}\cdots X_{n-i-1}
R_{k,\mu}(X_{n-i+1},\ldots,X_n) T_{w[1,n]}=\\
=
\partial_{w[n-j+1,n]w[n-i+1,n]w[n-j+1,n-i]}(m_{\mu'}(X_{n-j+1},\ldots,X_{n-i})
R_{k,\mu}(X_{n-i+1},\ldots,X_n)) T_{w[1,n]}=\\
=S_{k,\mu,\mu'}(X_{n-j+1},\ldots,X_n)T_{w[1,n]},
\end{multline*}
where $S_{k,\mu,\mu'}$ is a symmetric polynomial and
$\deg_* S_{k,\mu,\mu'}\le -r-j$ by Lemma \ref{le:diffmonomial}. Note that
if $j=i$ and $k\not=-r-1$, then $\deg_* S_{k,\mu,\mu'}\le -r-i-1$.

Assume $l=-r+n-i-1$ and $k=l-n+i=-r-1$. We have
$$R_{k,\mu}=\partial_{s_{n-1}\cdots s_{n-i+1}}
(X_{n-i+1}^{-r-1} m_\mu(X_{n-i+2},\ldots,X_n))
=m_{\mu\cup\{-r-i\}}(X_{n-i+1},\ldots,X_n)+T(X_{n-i+1},\ldots,X_n),$$
where $T$ is a symmetric polynomial with $\deg_*T\le -r-i-1$
(Lemma \ref{le:diffmonomial}).

\smallskip
We have shown that the images of $L(m,n,i,\lambda)$ and of
$M_i$ in $\Hom(E^{m-i}F^{n-i},F^{(n)}E^{(m)})$ coincide modulo maps that 
factor through 
$$\bigoplus_{j>i} (T_{w[1,m]}T_{w[1,n]}^\opp)
L(m,n,i,\lambda)\otimes E^{m-j}F^{n-j}\to F^{(n)}E^{(m)}.$$
Using Lemma \ref{le:filtrationiso},
we deduce by descending induction on $i$ that the lemma holds, using
that $\dim_\BZ M_i=\dim_\BZ L(m,n,i,\lambda)$ as in the case
$n=1$ considered earlier.
\end{proof}

\begin{rem}
Let $\hat{\CB}_1$ the $k$-linear category
$\CB\times\CB^{\opp}$. Denote by
$F_s$ the object $E_s$ of $\CB^{\opp}$ and define
$\hat{h}:\Ob(\hat{\CB}_1)\to X,\ (M,N)\mapsto h(M)+h(N)$.
Consider the $2$-category $\hat{\FA}_1$ with set of objects $X$ and
$\CHom(\lambda,\lambda')=\hat{h}^{-1}(\lambda'-\lambda)$. The isomorphisms
of Lemma \ref{le:basiccommutation}, together with $\sigma_{st}$ for
$s\not=t$, are the first steps
to provide a direct construction of a composition on the
homotopy category of $\hat{\FA}_1$ (after adding maps
$(M\otimes E_s,F_s\otimes N)\to (M,N)$).
\end{rem}

\subsubsection{Decomposition of $[E_s^{(m)},F_t^{(n)}]$}

\begin{lemma}
\label{le:commutationEF}
Let $s\in I$ and $m,n\in\BZ_{\ge 0}$. Let 
$r=m-n+\langle\alpha_s^\vee,\lambda\rangle$.
We have the following isomorphisms in $\FA^i$
and in $\FA^{\bullet i}$:
$$E_s^{(m)}F_s^{(n)}\simeq
\bigoplus_{i\in \BZ_{\ge 0}} 
\begin{bmatrix}r\\i\end{bmatrix}_s
F_s^{(n-i)}E_s^{(m-i)} 
\text{ if }r\ge 0
$$
$$E_s^{(m)}F_s^{(n)}\oplus
\bigoplus_{i\in 1+2\BZ_{\ge 0}} 
\begin{bmatrix}i-1-r \\ i\end{bmatrix}_s
F_s^{(n-i)}E_s^{(m-i)} \simeq 
\bigoplus_{i\in 2\BZ_{\ge 0}} 
\begin{bmatrix}i-1-r\\i\end{bmatrix}_s
F_s^{(n-i)}E_s^{(m-i)} 
\text{ if }r<0
$$
$$F_s^{(n)}E_s^{(m)}\oplus
\bigoplus_{i\in 1+2\BZ_{\ge 0}} 
\begin{bmatrix}i-1+r \\ i\end{bmatrix}_s
E_s^{(m-i)}F_s^{(n-i)} 
\simeq\bigoplus_{i\in 2\BZ_{\ge 0}} 
\begin{bmatrix}i-1+r\\i\end{bmatrix}_s
E_s^{(m-i)}F_s^{(n-i)} 
\text{ if }r>0
$$

$$F_s^{(n)}E_s^{(m)}\simeq
\bigoplus_{i\in \BZ_{\ge 0}} 
\begin{bmatrix}-r\\i\end{bmatrix}_s
E_s^{(m-i)}F_s^{(n-i)} 
\text{ if }r\le 0
$$

Let $t\in I-\{s\}$ and $m,n\in\BZ_{\ge 0}$. We have the following isomorphisms
in $\FA^i$ and in $\FA^{\bullet i}$:
$$E_s^{(m)}F_t^{(n)}\simeq F_t^{(n)}E_s^{(m)}.$$
\end{lemma}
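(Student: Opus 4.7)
The proof splits into the easy case $s\neq t$ and the case $s=t$, which reduces to an application of Lemma \ref{le:basiccommutation} followed by passage to divided powers. For $s\neq t$, the $2$-arrow $\sigma_{st}$ is inverted in $\FA$ by construction, so juxtaposition yields an isomorphism $E_s^m F_t^n\idun_\lambda\iso F_t^n E_s^m\idun_\lambda$; this isomorphism is equivariant for the natural $H_m(Q)\otimes H_n(Q)^{\mathrm{opp}}$-action (since the $\tau_{ss},x_s$-action on the $E_s^m$-factor and the $\tau_{tt},x_t$-action on the $F_t^n$-factor commute with $\sigma_{st}$ by the defining relations, as encoded in the pictures of Lemma \ref{le:sigmaF}). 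Applying the primitive idempotents $b_m\in{^0H}_m$ and $b'_n\in({^0H}_n)^{\mathrm{opp}}$ (whose images in $\CB^i$ cut out $E_s^{(m)}$ and $F_t^{(n)}$) then gives $E_s^{(m)}F_t^{(n)}\simeq F_t^{(n)}E_s^{(m)}$ both in $\FA^i$ and in the graded version $\FA^{\bullet i}$ (the graded case is automatic since $\sigma_{st}$ has degree~$0$).

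\paragraph{The case $s=t$, $r\geq 0$ (first isomorphism).}
Apply the $m-n+\langle\alpha_s^\vee,\lambda\rangle\ge 0$ case of Lemma \ref{le:basiccommutation} to obtain
$$E_s^m F_s^n\idun_\lambda\iso \bigoplus_{i=0}^{\min(m,n)}\bar L'(m,n,i,\lambda)\otimes_{{^0H}_{m-i}^f\otimes ({^0H}_{n-i}^f)^{\mathrm{opp}}} F_s^{n-i}E_s^{m-i}\idun_\lambda\bigl(i(2n-\lambda-i)d_s\bigr).$$
Cutting by the outer idempotents $b_m\otimes b'_n$ extracts $E_s^{(m)}F_s^{(n)}$ from the left-hand side (absorbing a factor $v^{(m(m-1)+n(n-1))d_s}[m]_s![n]_s!$), and simultaneously cutting by the inner idempotents $b_{m-i}\otimes b'_{n-i}$ extracts $F_s^{(n-i)}E_s^{(m-i)}$ from each summand. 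What remains to prove is that the graded rank of the cut-down bimodule $(b_m\otimes b'_n)\cdot\bar L'(m,n,i,\lambda)\cdot(b_{m-i}\otimes b'_{n-i})$, after these shifts and normalisation by $[m]_s![n]_s!/[m-i]_s![n-i]_s!$, equals $\begin{bmatrix}r\\i\end{bmatrix}_s$. Using the explicit basis of $L(n,m,i,-\lambda)$ indexed by $\GS_n^{n-i}\times{^{m-i}\GS}_m\times\CP(i,r-i)$, all of the coset-length contributions cancel against the $[k]_s!$ factors, leaving only the contribution from the monomial symmetric functions $\{m_\mu\,|\,\mu\in\CP(i,r-i)\}$; the identity
$$\sum_{\mu\in\CP(i,r-i)} v^{2 d_s|\mu|}=v^{d_s i(r-i)}\begin{bmatrix}r\\i\end{bmatrix}_s$$
(a standard generating function for monomial symmetric functions in $i$ variables of bounded degree) yields exactly $\begin{bmatrix}r\\i\end{bmatrix}_s$.

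\paragraph{The remaining three isomorphisms.}
The fourth isomorphism (case $r\le 0$, describing $F_s^{(n)}E_s^{(m)}$) is obtained by applying the Chevalley involution $I:\FA^{\mathrm{opp}}\iso\FA$ of \S\ref{se:symmetries} to the first isomorphism for parameters $(n,m,-\lambda)$, whose shift parameter equals $-r\ge 0$; since $I$ sends $E_s\leftrightarrow F_s$ and $\idun_\lambda\leftrightarrow\idun_{-\lambda}$, this rewrites the first isomorphism into exactly the stated form. For the second and third isomorphisms, one uses the elementary identity
$$\begin{bmatrix}r\\i\end{bmatrix}_s=(-1)^i\begin{bmatrix}i-1-r\\i\end{bmatrix}_s,$$
obtained by the substitution $a\mapsto i-1-a$ in the product formula from the introduction. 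When $r<0$, the coefficients $\begin{bmatrix}i-1-r\\i\end{bmatrix}_s$ lie in $\BZ_{\ge 0}[v^{\pm 1}]$; starting from the fourth isomorphism (valid for $r\le 0$) and isolating the $i=0$ summand $E_s^{(m)}F_s^{(n)}$, then separating terms of even and odd $i$ and transferring odd ones to the other side, produces the second isomorphism. The third isomorphism follows analogously from the first (or from the second via $I$).

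\paragraph{Main obstacle.}
The technical heart of the argument is the graded multiplicity computation in the $r\ge 0$ case: one must keep track of the shift $i(2n-\lambda-i)d_s$ from Lemma \ref{le:basiccommutation}, the shifts $v^{k(k-1)d_s}[k]_s!$ coming from $E_s^k\simeq v^{k(k-1)d_s}[k]_s! E_s^{(k)}$ for each $k\in\{m,n,m-i,n-i\}$, and the graded rank of $L'(m,n,i,\lambda)$ with its prescribed basis — and verify that everything collapses to precisely $\begin{bmatrix}r\\i\end{bmatrix}_s$ via the symmetric-function identity above. Once this is established, the rest of the lemma is a formal consequence of Chevalley symmetry and the binomial identity.
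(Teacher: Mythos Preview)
Your treatment of the case $s\neq t$, of the first isomorphism (via Lemma~\ref{le:basiccommutation} and passage to divided powers), and of the fourth (via the Chevalley involution $I$) matches the paper's approach; indeed you are more explicit than the paper on the graded multiplicity count.

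The genuine gap is in your derivation of the second isomorphism (and hence the third). You propose to start from the fourth isomorphism, ``isolate the $i=0$ summand $E_s^{(m)}F_s^{(n)}$, then separate terms of even and odd $i$ and transfer odd ones to the other side''. But the fourth isomorphism reads
\[
F_s^{(n)}E_s^{(m)}\ \simeq\ \bigoplus_{i\ge 0}\begin{bmatrix}-r\\ i\end{bmatrix}_s\, E_s^{(m-i)}F_s^{(n-i)},
\]
with all coefficients in $\BZ_{\ge 0}[v^{\pm 1}]$ and with summands $E_s^{(m-i)}F_s^{(n-i)}$ on the right; there is no sign to separate by, and the second isomorphism involves instead the objects $F_s^{(n-i)}E_s^{(m-i)}$ with the \emph{different} coefficients $\begin{bmatrix}i-1-r\\ i\end{bmatrix}_s$. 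The identity $\begin{bmatrix}r\\ i\end{bmatrix}=(-1)^i\begin{bmatrix}i-1-r\\ i\end{bmatrix}$ you quote does not by itself bridge this; and ``transferring summands across'' would require a cancellation property that is not available in a general additive category.

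What is actually needed (and what the paper uses) is the Vandermonde-type consequence
\[
\sum_{i}(-1)^i\begin{bmatrix}i-1-r\\ i\end{bmatrix}\begin{bmatrix}-r\\ l-i\end{bmatrix}=0\qquad (l\ge 1),
\]
equivalently that the unitriangular matrix $\bigl(\begin{bmatrix}-r\\ j-i\end{bmatrix}\bigr)_{i,j}$ has inverse $\bigl((-1)^{j-i}\begin{bmatrix}j-i-1-r\\ j-i\end{bmatrix}\bigr)_{i,j}$. One then substitutes the fourth isomorphism for every $F_s^{(n-i)}E_s^{(m-i)}$ occurring on \emph{both} sides of the claimed second isomorphism, obtaining on each side an explicit direct sum of objects $E_s^{(m-l)}F_s^{(n-l)}$; the identity above shows the multiplicities of each $E_s^{(m-l)}F_s^{(n-l)}$ agree, and equal nonnegative multiplicities trivially give an isomorphism. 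No categorical cancellation is invoked. Once this is supplied, your outline is complete and agrees with the paper's.
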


\begin{proof}
The first isomorphism follows from the isomorphism of
$({^0H}_m\otimes {^0H}_n^\opp)$-modules in Lemma \ref{le:basiccommutation}.
Assume $r<0$. Given $l\in\BZ_{>0}$, we have (cf e.g. \cite[1.3.1(e) p.9]{Lu})
$$\sum_i (-1)^i \begin{bmatrix}i-1-r\\i\end{bmatrix}\cdot
\begin{bmatrix}-r\\l-i\end{bmatrix}=0.$$
It follows that
\begin{align*}
\bigoplus_{\substack{i\in 1+2\BZ_{\ge 0} \\ j\ge 0}}
\begin{bmatrix}i-1-r\\i\end{bmatrix}\cdot
\begin{bmatrix}-r\\j\end{bmatrix} E^{(m-i-j)}F^{(n-i-j)}&\simeq
\bigoplus_{\substack{i\in 2\BZ_{>0} \\ j\ge 0}}
\begin{bmatrix}i-1-r\\i\end{bmatrix}\cdot
\begin{bmatrix}-r\\j\end{bmatrix} E^{(m-i-j)}F^{(n-i-j)}\oplus \\
&\oplus\bigoplus_{i\ge 1} \begin{bmatrix}-r\\i\end{bmatrix} E^{(m-i)}F^{(n-i)},
\end{align*}
hence
$$\bigoplus_{i\in 1+2\BZ_{\ge 0}}\begin{bmatrix}i-1-r\\i\end{bmatrix}
F^{(n-i)}E^{(m-i)}\simeq
\bigoplus_{i\in 2\BZ_{>0}}\begin{bmatrix}i-1-r\\i\end{bmatrix}
F^{(n-i)}E^{(m-i)}
\oplus\bigoplus_{i\ge 1} \begin{bmatrix}-r\\i\end{bmatrix} E^{(m-i)}F^{(n-i)}
$$
using the first isomorphism of the lemma for $(m-i,n-i)$.
The second isomorphism of the lemma
follows by applying again the first isomorphism.

The third and fourth isomorphism follow from the second and first by
applying the Chevalley involution.

\smallskip
The isomorphisms $\sigma_{st}$ induce an isomorphism
$E_s^mF_t^n\iso F_t^nE_s^m$ compatible with the action of
${^0H_m}\otimes {^0H_n}$ (the proof in Lemma
\ref{le:sigmamn} works when $s\not=t$).
It follows that
$E_s^{(m)}F_t^{(n)}\simeq F_t^{(n)}E_s^{(m)}$.
\end{proof}

\subsubsection{Decategorification}
Proposition \ref{pr:Serre}
shows that we have a morphism of algebras
$$U_1^+(\Gg)\to\CB^i_{\le 1},\ e_s^{(r)}\mapsto [E_s^{(r)}]$$
and, when $C$ is symmetrizable, to a morphism of $\BZ[v^{\pm 1}]$-algebras
$$U_v^+(\Gg)\to\CB^{\bullet i}_{\le 1},\ e_s^{(r)}\mapsto [E_s^{(r)}].$$

\smallskip
The defining relations for $\FA$ show that 
we have a monoidal functor:
$$U_1(\Gg)\to \FA^i_{\le 1},\ \ 
\lambda\mapsto\lambda,\ e_s^{(r)}\mapsto [E_s^{(r)}],\  f_s^{(r)}\mapsto
 [F_s^{(r)}]$$
and, when $C$ is symmetrizable,
a monoidal functor compatible with the $\BZ[v^{\pm 1}]$-structure:
$$U_v(\Gg)\to \FA^{\bullet i}_{\le 1},\ \ 
\lambda\mapsto\lambda,\ e_s^{(r)}\mapsto [E_s^{(r)}],\  f_s^{(r)}\mapsto
 [F_s^{(r)}].$$

\section{$2$-Representations}
We assume in this section that the set $I$ is finite.
All results are stated over $\Bk$ and are related to
representations of $\Gg$. They generalize immediately to the graded case over
$\Bk^\bullet$ and relate then to representations of $U_v(\Gg)$.

\subsection{Integrable representations}
\subsubsection{Definition}
\label{se:defintegrable}
Let $\FB$ be a $\Bk$-linear $2$-category.

Given $R:\FA\to\FB$ a $2$-functor, we have a collection
$\{R(\lambda)\}_{\lambda\in X}$ of objects of $\FB$.
We say that $R$ gives 
a $2$-representation of $\FA$
on $\{R(\lambda)\}$. If this makes sense,
we put $\CV=\bigoplus_{\lambda\in X} R(\lambda)$ and say that we
have a $2$-representation of $\FA$ on $\CV$.

\smallskip
The data of a strict 
$2$-functor $R:\FA\to\FB$ is the same as the data of
\begin{itemize}
\item a family $(\CV_\lambda)_{\lambda\in X}$ of objects of $\FB$
\item $1$-arrows
$E_{s,\lambda}:\CV_{\lambda}\to\CV_{\lambda+\alpha_s}$
and
$F_{s,\lambda}:\CV_{\lambda}\to\CV_{\lambda-\alpha_s}$ for $s\in I$
\item $x_{s,\lambda}\in\End(E_{s,\lambda})$ and
$\tau_{s,t,\lambda}\in\Hom(E_{s,\lambda+\alpha_t}E_{t,\lambda},
E_{t,\lambda+\alpha_s}E_{s,\lambda})$ for $s,t\in I$
\item an adjunction $(E_{s,\lambda},F_{s,\lambda+\alpha_s})$
\end{itemize}
such that
\begin{itemize}
\item relations (\ref{en:half1})-(\ref{en:half4}) in \S \ref{se:halfKM} hold
\item the maps $\rho_{s,\lambda}$ and $\sigma_{st}$ for $s\not=t$
are isomorphisms.
\end{itemize}

Note that there are canonical strict $2$-functors that are locally
equivalences
$$\bar{\FA}'\mMOD(\FB)\to \FA'\mMOD(\FB)\to \FA\mMOD(\FB).$$

\smallskip
From now on, we assume $\FB$ is a locally full $2$-subcategory of
$\FL in_{\Bk}$.

\begin{defi}
A $2$-representation $\FA\to\FB$ is {\em integrable} if $E_s$ and
$F_s$ are locally nilpotent for all $s$, \ie, for any $\lambda$ and
any object $M$ of the category $\CV_\lambda$, there is an integer $n$
such that $E_{s,\lambda+n\alpha_s}\cdots E_{s,\lambda+\alpha_s}
E_{s,\lambda}(M)=0$ and
$F_{s,\lambda-n\alpha_s}\cdots F_{s,\lambda-\alpha_s}
F_{s,\lambda}(M)=0$.
\end{defi}

Our main object of study is the $2$-category of integrable $2$-representations
of $\FA$ in $\Bk$-linear, abelian, triangulated and dg-categories.

Given $\CV$ a $2$-representation of $\FA$, then we endow
$\CV^\opp$ with the structure of a $2$-representation of $\FA$ by using
the Chevalley involution $I$, with $(\CV^\opp)_\lambda=(\CV_{-\lambda})^\opp$.

Let $\CV$ be an integrable $2$-representation of $\FA$ in $\FL in_\Bk$. 
There is an induced action of $\Ho^b(\FA)$ on $\Ho^b(\CV)$.

\begin{lemma}
\label{le:invertcusp}
Let $s\in I$.
\begin{itemize}
\item
Let $C\in \Ho^b(\CV)$.
If $\Hom_{\Ho^b(\CV)}(E_s^iM,C)=0$ for all $M\in \Ho^b(\CV)$ such that $F_sM=0$
and all $i\ge 0$, then $C=0$.
\item
Let $X$ be a $1$-arrow
of $\Ho^b(\FA)$ with a right dual. If $XE_s^i(M)=0$ 
for all $M\in \Ho^b(\CV)$ such that $F_sM=0$ and all $i\ge 0$,
then $X(N)=0$ for all $N\in \Ho^b(\CV)$.
\item
Let $f$ a $2$-arrow of $\Ho^b(\FA)$ between $1$-arrows with right duals.
 If $f(E_s^i M)$ is an isomorphism
for all $M\in \Ho^b(\CV)$ such that $F_sM=0$ and all $i\ge 0$,
then $f(N)$ is an isomorphism for all $N\in \Ho^b(\CV)$.
\end{itemize}
\end{lemma}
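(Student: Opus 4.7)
I reduce all three statements to a single generation result. Define
\[
\CT = \text{the smallest thick triangulated subcategory of }\Ho^b(\CV)\text{ containing }\{E_s^iM : F_sM=0,\,i\ge 0\}.
\]
I claim $\CT=\Ho^b(\CV)$; this yields all three assertions. For (i), the subcategory $\{N : \Hom(N,C)=0\}$ is thick triangulated (via the long exact sequences of $\Hom(-,C)$) and contains the generators by hypothesis, hence equals $\Ho^b(\CV)$, giving $\id_C=0$ and so $C=0$. For (ii), $\{N : XN=0\}$ is thick triangulated because $X$ acts as a triangulated functor on $\Ho^b(\CV)$ (having a right dual) and contains the generators. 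For (iii), I apply (ii) to $Z=\cone(f)$, which inherits a right dual from those of the source and target of $f$.

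The plan for the generation claim is a primary induction on the smallest $n$ with $F_s^nN=0$ (finite by integrability). The base $n\le 1$ is immediate: $N$ itself is then a generator. For the step ($n\ge 2$), $F_sN$ has $F_s$-depth $\le n-1$, so lies in $\CT$ by induction, whence $E_sF_sN\in\CT$ since $\CT$ is stable under the additive functor $E_s$. Let $r=\langle\alpha_s^\vee,\lambda\rangle$ for $\lambda$ the weight of $N$. When $r\ge 1$, the defining isomorphism $\rho_{s,\lambda}$ of $\FA$ gives $E_sF_sN\simeq F_sE_sN\oplus N^r$, exhibiting $N$ as a direct summand of $E_sF_sN\in\CT$, hence placing $N$ in $\CT$.

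For $r\le 0$ this fails: $N$ is instead a summand of $F_sE_sN$ (using $\rho_{s,\lambda}$ for $r\le -1$), but membership of $F_sE_sN$ in $\CT$ requires $E_sN\in\CT$, and the $F_s$-depth of $E_sN$ can exceed $n$. I will therefore invoke a secondary descending induction on $k\ge 0$, valid by local $E_s$-nilpotency: for $k$ sufficiently large one has $E_s^kN=0\in\CT$, and one shows $E_s^kN\in\CT$ by descending on $k$. At weight $\lambda+k\alpha_s$ with $r_k=r+2k$: if $r_k\ge 1$, applying the previous paragraph to $E_s^kN$ places it as a summand of $E_sF_s(E_s^kN)\in\CT$ (using Lemma~\ref{le:commutationEF} to identify $F_sE_s^{(k)}N$ as a summand of $E_s^{(k)}F_sN\in\CT$, and $E_s^kN\simeq (E_s^{(k)}N)^{k!}$); if $r_k\le -1$, $E_s^kN$ is a summand of $F_sE_s^{k+1}N\in\CT$ by descending hypothesis.

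The principal obstacle is the resonant case $r_k=0$, which arises for at most one $k$ and where $\rho_{s,\lambda+k\alpha_s}$ reduces to $\sigma_{ss}\colon E_sF_s\iso F_sE_s$ with no $\idun$-summand. For $r\le -2$ and $k=-r/2$, the plan is to apply Lemma~\ref{le:commutationEF} to $E_s^{(n+k)}F_s^{(n)}N$ with $n$ one less than the $F_s$-depth of $N$, so that $F_s^{(n)}N$ itself belongs to the generating family (as $F_s\cdot F_s^{(n)}N=0$); the decomposition, with weight parameter $k+r<0$, exhibits $E_s^{(k)}N$ as a direct summand with positive multiplicity $\binom{n-1-r-k}{n}\ge 1$, while the other summands $F_s^{(n-i)}E_s^{(n+k-i)}N$ for $i\ge 1$ have $E_s$-exponent $n+k-i>k$ and are already in $\CT$ by the $r_k\ge 1$ sub-case. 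The residual case $r=0$ with $k=0$, where this binomial coefficient vanishes, is the subtlest: there the plan is to use the triangle associated with $\eps_s\colon E_sF_s\to\idun$ (or $\eta_s\colon\idun\to F_sE_s$) at weight $\lambda$ together with the fact that $E_sN$ and $F_sN$ both lie in $\CT$ (at adjacent weights where $r_{\pm 1}=\pm 2$, so the $r_k\ge 1$ argument applies), to place $N$ itself in $\CT$ by the exactness of the triangulated action; identifying this cocone as an object of $\CT$ is the delicate technical heart of the proof.
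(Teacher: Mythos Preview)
Your approach is much more ambitious than the paper's, and it is incomplete.

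The paper proves (i) in two lines without any generation statement: if $C\neq 0$, pick the maximal $i$ with $F_s^iC\neq 0$ (this exists by integrability); then $M:=F_s^iC$ satisfies $F_sM=0$, and adjunction gives
\[
0\neq \End(F_s^iC)\simeq \Hom(E_s^iF_s^iC,\,C)=\Hom(E_s^iM,\,C),
\]
contradicting the hypothesis. Part (ii) is then deduced from (i) by setting $C=X^\vee X(N)$: one has $\Hom(E_s^iM,X^\vee X N)\simeq\Hom(XE_s^iM,XN)=0$, so $X^\vee XN=0$ by (i), whence $XN=0$ as a retract of $XX^\vee XN$. Part (iii) follows from (ii) by taking cones, exactly as you do. Notice that (i) only asserts that the right orthogonal of your set of generators is zero, which is strictly weaker than generation and is all that is needed.

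Your generation argument has genuine gaps beyond the one you flag. In the step ``$r_k\ge 1$'', you claim that $F_sE_s^{(k)}N$ is a summand of $E_s^{(k)}F_sN$. But Lemma~\ref{le:commutationEF} gives this only when the parameter $k-1+r$ is $\ge 0$; for $\lceil(1-r)/2\rceil\le k< 1-r$ one instead has
$F_sE_s^{(k)}N\simeq E_s^{(k)}F_sN\oplus (1-k-r)\,E_s^{(k-1)}N$, and the extra term $E_s^{(k-1)}N$ is precisely what you do not yet know lies in $\CT$ (descending induction goes the wrong way). Already for $r=-1$, $k=1$ this breaks: you need $F_sE_sN\in\CT$, but $F_sE_sN\simeq E_sF_sN\oplus N$ and $N$ is what you are after. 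Your treatment of the resonant case $r_k=0$ via the alternating-sum isomorphism in Lemma~\ref{le:commutationEF} runs into the same difficulty: knowing that $A\oplus B\simeq C\oplus D$ with $A,C\in\CT$ does not let you conclude $D\in\CT$. And the case $r=0$, $k=0$ you leave open entirely.

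In short: the paper's trick of producing the witness $M=F_s^iC$ directly from $C$ bypasses all of these difficulties; your route through thick generation may or may not ultimately succeed, but it is substantially harder and the sketch as written does not close.
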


\begin{proof}
Let $i$ be a maximal integer such that $F_s^iC\not=0$. 
We have
$$\End(F_s^iC)\simeq \Hom(E_s^iF_s^i C,C)=0,$$
hence a contradiction and consequently $C=0$.

\smallskip
Let $X^\vee$ be a right dual of $X$. Let $M,N\in \Ho^b(\CV)$ such that $F_sM=0$
and let $i\ge 0$. We have
$$\Hom(E_s^i(M),X^\vee\cdot X(N))\simeq \Hom(X E_s^i(M), X(N))=0$$
and we deduce from the first statement of the Lemma that $X^\vee X(N)=0$,
hence $X(N)=0$.

\smallskip
The last assertion follows from the second one by taking for $X$ the
cone of $f$.
\end{proof}

\subsubsection{Simple $2$-representations}
\label{se:simple2rep}
We assume that the root datum is $Y$-regular, \ie, the image
of the embedding $I\to Y$ is linearly independent in $Y$
(cf \cite[\S 2.2.2]{Lu}). Let $X^+=\{\lambda\in X |
\langle\alpha_i^\vee,\lambda\rangle\in\BZ_{\ge 0}\ \text{ for all }
i\in I\}$. The set $X$ is endowed with a poset structure defined by
$\lambda\ge\mu$ if $\lambda-\mu\in\bigoplus_{i\in I}\BZ_{\ge 0}\alpha_i^\vee$.

\smallskip
Let $\lambda\in -X^+$. Consider the $2$-functor
$\CHom(\lambda,-):\FA\to \FL in_\Bk$ and let
$R:\FA\to\FL in_\Bk$ be
the $2$-subfunctor generated by the $F_{s,\lambda}$
for $s\in I$, \ie, $R(\mu)$ is the $\Bk$-linear full subcategory 
of $\CHom(\lambda,\mu)$ with
objects in $h^{-1}(\mu-\lambda+\alpha_s)F_s$.
We denote by $\CL(\lambda)$ the quotient $2$-functor, viewed
as a $\Bk$-linear category endowed with a decomposition
$\CL(\lambda)=\bigoplus_{\mu\in X}\CL(\lambda)_\mu$ and endowed
with an action of $\FA$.

Denote by $\bar{\idun}_\lambda$ the identity functor of $\CL(\lambda)_\lambda$.
It follows from Lemma \ref{le:basiccommutation} that 
$F_sE_s^{\langle\alpha_s^\vee,-\lambda\rangle+1}\idun_\lambda$
is isomorphic to a direct summand of
 $E_s^{\langle\alpha_s^\vee,-\lambda\rangle+1}F_s\idun_\lambda$. In
particular, $F_sE_s^{\langle\alpha_s^\vee,-\lambda\rangle+1}\bar{\idun}_\lambda
=0$. The isomorphism
$$\End(E_s^{\langle\alpha_s^\vee,-\lambda\rangle+1}\bar{\idun}_\lambda)
\simeq \Hom(
E_s^{\langle\alpha_s^\vee,-\lambda\rangle}\bar{\idun}_\lambda,
F_sE_s^{\langle\alpha_s^\vee,-\lambda\rangle+1}\bar{\idun}_\lambda)$$
shows that 
$$E_s^{\langle\alpha_s^\vee,-\lambda\rangle+1}\bar{\idun}_\lambda=0.$$

Since $F_sE_t\idun_\mu$ is a direct summand of $E_tF_s\idun_\mu$ plus a
multiple of $\idun_\mu$, it follows that
every object of $\CL(\lambda)$ is isomorphic to a direct summand of a sum of
objects of the form $E_{s_1}\cdots E_{s_n}\bar{\idun}_\lambda$ for
some $s_1,\ldots,s_n\in I$. In particular, every object of
$\CL(\lambda)_\lambda$ is isomorphic to a direct summand of a multiple of
$\bar{\idun}_\lambda$. Since $\End(\bar{\idun}_\lambda)$ is a quotient
of $\End(\idun_\lambda)$, it is commutative and
$\CL(\lambda)_\lambda$ is equivalent to a full subcategory of
$\End(\bar{\idun}_\lambda)\mproj$.

Note that when $C$ is a symmetrizable Cartan matrix, then
$\BC\otimes K_0(\CL(\lambda))$ is isomorphic to the simple
integrable representation of $\Gg$ with lowest weight $\lambda$
\cite[Corollary 10.4]{Kac}, or
it is $0$. We will show in \cite{Rou3} that it is indeed
non zero and determine $\End(\bar{\idun}_\lambda)$.

\subsubsection{Lowest weights}
\label{se:lowest}
Let $A$ be an $\End(\bar{\idun}_\lambda)$-algebra. Let
$\CV=\CL(\lambda)\otimes_{\End(\bar{\idun}_\lambda)} A$, given by
$\CV_\mu=\CL(\lambda)_\mu\otimes_{\End(\bar{\idun}_\lambda)}A$,
where the map $\End(\bar{\idun}_\lambda)\to Z(\CL(\lambda)_\mu)$ is
given by right multiplication.  The action of $\FA$ on $\CL(\lambda)$
extends to an action on $\CV$. Similarly, if
$\CA$ is a $\End(\bar{\idun}_\lambda)$-linear category, we have an
action of $\FA$ on 
$\CL(\lambda)\otimes_{\End(\bar{\idun}_\lambda)} \CA$.

\medskip
Let $\CV$ be a $2$-representation of $\FA$ in
$\FL in_{\Bk}$. Given $\lambda\in X$, we denote by
$\CV_\lambda^{\mathrm{lw}}$ the full subcategory of $\CV_\lambda$ of objects
$M$ such that $F_sM=0$ for all $s\in I$.

\begin{lemma}
If $\CV_\lambda^{\mathrm{lw}}\not=0$, then $\lambda\in -X^+$.
\end{lemma}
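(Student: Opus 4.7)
The plan is to use the defining isomorphism $\rho_{s,\lambda}$ from \S \ref{se:def2km} applied to a nonzero lowest-weight object. Recall that for $\langle\alpha_s^\vee,\lambda\rangle\ge 0$, we have by construction an isomorphism
$$\rho_{s,\lambda}:E_sF_s\idun_\lambda\iso F_sE_s\idun_\lambda\oplus\idun_\lambda^{\langle\alpha_s^\vee,\lambda\rangle}$$
in $\FA$, hence in any $2$-representation.

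Suppose $M\in\CV_\lambda^{\mathrm{lw}}$ is nonzero, so $F_sM=0$ for every $s\in I$. Assume for contradiction that $\lambda\notin -X^+$, i.e.\ there exists $s\in I$ with $\langle\alpha_s^\vee,\lambda\rangle>0$. Evaluating the isomorphism $\rho_{s,\lambda}$ on $M$ gives
$$E_sF_sM\iso F_sE_sM\oplus M^{\langle\alpha_s^\vee,\lambda\rangle}.$$
The left-hand side is zero since $F_sM=0$, so in particular $M^{\langle\alpha_s^\vee,\lambda\rangle}=0$; but $\langle\alpha_s^\vee,\lambda\rangle\ge 1$ and $M\ne 0$, a contradiction. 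Therefore $\langle\alpha_s^\vee,\lambda\rangle\le 0$ for every $s\in I$, which is exactly the condition $\lambda\in -X^+$.

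There is essentially no obstacle: the statement is a direct consequence of the very definition of $\FA$, where $\rho_{s,\lambda}$ was formally inverted precisely to encode (on weight spaces) the Lie algebra relation $[e_s,f_s]=h_s$. No integrability, no finiteness, and no results beyond the definition of $\FA$ are needed.
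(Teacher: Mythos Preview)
Your proof is correct and is essentially the same as the paper's: both use the isomorphism $\rho_{s,\lambda}$ (for $\langle\alpha_s^\vee,\lambda\rangle>0$) applied to a nonzero $M\in\CV_\lambda^{\mathrm{lw}}$ to derive a contradiction from $F_sM=0$. The paper phrases the conclusion as ``$M$ is a direct summand of $E_sF_sM=0$'', while you read off $M^{\langle\alpha_s^\vee,\lambda\rangle}=0$ directly from the isomorphism; these are the same argument.
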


\begin{proof}
Assume there is $s\in I$ such that
$\langle \alpha_s^\vee,\lambda\rangle>0$ and let
 $M\in\CV_\lambda^{\mathrm{lw}}$.
Then, $M$ is a direct summand of $E_sF_s M=0$.
\end{proof}

Assume $\lambda\in -X^+$.
The canonical morphism of $2$-representations $\CHom(\lambda,-)\to \CV$ 
associated with $M\in\CV_\lambda^{\mathrm{lw}}$
factors through a morphism $R_M:\CL(\lambda)\to \CV$. Note that
$R_M(\bar{\idun}_\lambda)=M$. So, we have a morphism of algebras
$\End(\bar{\idun}_\lambda)\to \End(M)$ and this shows that the morphism
above extends to a morphism of $2$-representations
$R_M:\CL(\lambda)\otimes_{\End(\bar{\idun}_\lambda)}\End(M)\to \CV$.
We have also a canonical morphism of $2$-representations
$R_\lambda:\CL(\lambda)\otimes_{\End(\bar{\idun}_\lambda)}
\CV_\lambda^{\mathrm{lw}}\to \CV$ that extends $R_M$.

\begin{lemma}
\label{le:highestweight}
Let $\CV$ be a $2$-representation of $\FA$ in
$\FL in_{\Bk}$ and $\lambda\in -X^+$.
The morphism of $2$-representations
$$R_\lambda:
\CL(\lambda)\otimes_{\End(\bar{\idun}_\lambda)}
\CV_\lambda^{\mathrm{lw}} \to \CV$$
is fully faithful.
\end{lemma}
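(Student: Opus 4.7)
The plan is to verify full faithfulness by an explicit Hom-space computation, combining the adjunction $(E_s,F_s)$ with the commutation isomorphisms of Lemma~\ref{le:basiccommutation}. By additivity and the fact that every object of $\CL(\lambda)$ is a direct summand of a sum of objects of the form $E_{s_1}\cdots E_{s_n}\bar\idun_\lambda$, it suffices to show that for all sequences $(s_1,\ldots,s_n),(t_1,\ldots,t_m)$ in $I$ and all $M,N\in\CV_\lambda^{\mathrm{lw}}$, the natural map
\[
\Hom_{\CL(\lambda)}(E_{s_1}\cdots E_{s_n}\bar\idun_\lambda,\,E_{t_1}\cdots E_{t_m}\bar\idun_\lambda)\otimes_{\End(\bar\idun_\lambda)}\Hom_\CV(M,N)\to\Hom_\CV(E_{s_1}\cdots E_{s_n}M,\,E_{t_1}\cdots E_{t_m}N)
\]
is bijective.

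Applying the adjunction $(E_s,F_s)$ repeatedly on the source rewrites both sides as Hom-spaces with target $W\bar\idun_\lambda$ and $WN$ respectively, where $W=F_{s_n}\cdots F_{s_1}E_{t_1}\cdots E_{t_m}$. Iterating Lemma~\ref{le:basiccommutation} for same-root factors and the inverses of the isomorphisms $\sigma_{st}$ for distinct-root factors yields a decomposition in $\FA^i$
\[
W\idun_\lambda\iso\bigoplus_\alpha P_\alpha\otimes_\BZ U_\alpha\idun_\lambda,
\]
where each $P_\alpha$ is a free abelian group (whose rank depends on $\lambda$ and the combinatorics of root pairings) and each $U_\alpha$ is a $1$-arrow in normal form with all $E$'s to the left of all $F$'s. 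This decomposition takes place at the universal level of $\FA^i$ and is transported identically by the $2$-functors to $\CL(\lambda)$ and $\CV$. Applying it to $\bar\idun_\lambda$ or $N$ annihilates every summand with a non-trivial $F$-factor on the right; among the surviving pure-$E$ summands $U_\alpha=E_{u_1}\cdots E_{u_j}$, the grading by $X$ forces contributions only from $U_\alpha$ with $\sum_i\alpha_{u_i}=0$, and for $j>0$ the dual adjunction $(F_u,E_u)$ (valid for integrable $2$-representations by Theorem~\ref{th:critsigmast}) gives $\Hom_\CV(M,E_{u_1}\cdots E_{u_j}N)\simeq\Hom_\CV(F_{u_j}\cdots F_{u_1}M,N)=0$ since $F_{u_1}M=0$, and the analogous vanishing holds on the $\CL(\lambda)$-side. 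Only the index with $U_\alpha=\idun_\lambda$ contributes, yielding $\bigoplus_{\alpha:\,U_\alpha=\idun}P_\alpha\otimes\Hom_\CV(M,N)$ on both sides (after collapsing the $\End(\bar\idun_\lambda)\otimes_{\End(\bar\idun_\lambda)}$ tensor), and the comparison map induced by $R_\lambda$ is the identity on this identification.

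The main obstacle is the elimination of pure-$E$ summands with $j>0$ and zero net weight-shift, which requires invoking the dual adjunction $(F_s,E_s)$ and so implicitly the integrability of the $2$-representations in play. A secondary technical burden is the combinatorial bookkeeping of the iterated normal-form decomposition of $W\idun_\lambda$: each same-root commutation step introduces abelian-group multiplicities encoded by the modules $L(\cdot,\cdot,\cdot,\lambda)$ of Lemma~\ref{le:basiccommutation}, and one must check that these multiplicities match between the two sides, which is automatic because the decomposition is performed in $\FA^i$ and then pushed forward by the morphism of $2$-representations $R_\lambda$.
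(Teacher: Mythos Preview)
Your argument opens exactly as the paper's does---use the right adjunction $(E_s,F_s)$ to rewrite the relevant Hom as $\Hom(\bar\idun_\lambda\otimes M,\,W(\bar\idun_\lambda\otimes N))$ versus $\Hom_\CV(M,WN)$ with $W=F_{s_n}\cdots F_{s_1}E_{t_1}\cdots E_{t_m}$---but then takes a longer path. You attempt an explicit normal-form decomposition of $W\idun_\lambda$ in $\FA^i$, whereas the paper simply invokes the structural fact already established in \S\ref{se:simple2rep}: since $W\bar\idun_\lambda$ lies in $\CL(\lambda)_\lambda$, it is a direct summand of a multiple of $\bar\idun_\lambda$. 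The comparison map then reduces to a direct summand of the tautological identification $\Hom(M,N)\to\Hom(M,N)$, and nothing further is needed.

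Your explicit route has two weak points. First, the clean direct-sum decomposition you assert is not always available in $\FA^i$: Lemma~\ref{le:basiccommutation} expresses $F_sE_s\idun_\mu$ as $E_sF_s\idun_\mu$ plus copies of $\idun_\mu$ only when $\langle\alpha_s^\vee,\mu\rangle\le 0$; at positive intermediate weights one only knows that $F_sE_s\idun_\mu$ is a direct \emph{summand} of $E_sF_s\idun_\mu$. (This is why \S\ref{se:simple2rep} phrases everything in terms of direct summands.) Second, your elimination of pure-$E$ summands with $j>0$ invokes a left adjunction $(F_u,E_u)$. That adjunction is not part of a $2$-representation of $\FA$; it is produced by Theorem~\ref{th:leftadjoint} (not Theorem~\ref{th:critsigmast}, which concerns invertibility of $\sigma_{st}$) and requires $\CV$ to be integrable---a hypothesis the lemma does not carry. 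The paper's argument sidesteps both issues by never leaving $\CL(\lambda)_\lambda$ once the adjunction has been applied.
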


\begin{proof}
Let $\lambda\in -X^+$ and $M\in\CV_\lambda^{\mathrm{lw}}$.
Let $\CL_M(\lambda)=\CL(\lambda)\otimes_{\End(\bar{\idun}_\lambda)}\End(M)$.
Let $X$ be an object of $\CHom(\lambda,\mu)$ that is a product
of $E_s$'s. There is an
object $Y$ of $\CHom(\mu,\lambda)$ right dual to $X$.
We have a commutative diagram of canonical maps
$$\xymatrix{
\End_{\CL_M(\lambda)}(X \bar{\idun}_\lambda)\ar[r]^-\sim \ar[d] &
\Hom_{\CL_M(\lambda)}(\bar{\idun}_\lambda,YX\bar{\idun}_\lambda)\ar[d] \\
\End_\CV(XM)\ar[r]_-\sim & \Hom_\CV(M,YXM)
}$$
Since $YX\bar{\idun}_\lambda$ is isomorphic to a direct summand of
a multiple of 
$\bar{\idun}_\lambda$, the right vertical map is an isomorphism, hence
the left vertical map is an isomorphism as well. It follows that
$R_M$ is fully faithful, hence $R_\lambda$ is fully faithful as well.
\end{proof}

\begin{lemma}
\label{le:0onsimple}
Let $C$ be a $1$-arrow of $\Ho^b(\FA)$ with a right dual. If $C$ acts by
$0$ on $\Ho^b(\CL(\lambda))$ for all $\lambda\in -X^+$, then $C$ acts
by $0$ on $\Ho^b(\CV)$ for all integrable $2$-representations 
$\CV$ of $\FA$ in $\FL in_\Bk$.

Let $f$ be a $2$-arrow of $\Ho^b(\FA)$ between $1$-arrows with right duals.
 If $f$ is an isomorphism on $\Ho^b(\CL(\lambda))$ for all $\lambda\in -X^+$,
then $f$ is an isomorphism on $\Ho^b(\CV)$ for all integrable
 $2$-representations $\CV$ of $\FA$ in $\FL in_\Bk$.
\end{lemma}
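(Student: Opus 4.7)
Plan: The second statement reduces to the first by applying the first to the cone $C = \cone(f)$, which is a $1$-arrow of $\Ho^b(\FA)$ with right dual $\cone(f^\vee)$, and for which acting by $0$ on $\Ho^b(\CL(\lambda))$ is equivalent to $f$ being an isomorphism there. So the work is in the $1$-arrow version.

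The plan combines Lemma~\ref{le:highestweight} with iterated application of Lemma~\ref{le:invertcusp}. The first step is to handle lowest-weight objects. If $M \in \CV_\mu^{\mathrm{lw}}$ with $\mu \in -X^+$, Lemma~\ref{le:highestweight} provides a fully faithful morphism of $2$-representations
\[
R_M \colon \CL(\mu) \otimes_{\End(\bar{\idun}_\mu)} \End(M) \to \CV
\]
sending $\bar{\idun}_\mu$ to $M$, and this induces a fully faithful, $\Ho^b(\FA)$-equivariant functor on bounded homotopy categories. The hypothesis ``$C$ acts by $0$ on $\Ho^b(\CL(\mu))$'' extends to the $\End(M)$-linearized version, so for any $X \in \Ho^b(\FA)$ we have
\[
C(X(M)) = R_M\bigl((CX)(\bar{\idun}_\mu)\bigr) = R_M(0) = 0.
\]
Thus $C$ vanishes on every object in the $\Ho^b(\FA)$-orbit of any lowest-weight object of weight in $-X^+$.

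The second step is to reduce the general case to the first using Lemma~\ref{le:invertcusp}(2). For $X = C$ and any chosen $s \in I$, the lemma yields the equivalence: $C$ acts by $0$ on all of $\Ho^b(\CV)$ iff $CE_s^i$ vanishes on $\{M : F_s M = 0\}$ for every $i \geq 0$. Since $CE_s^i$ inherits a right dual from those of $C$ and $E_s$, and since by the first step it also vanishes on all $\Ho^b(\CL(\lambda))$ (as $C$ does and $E_s$ stabilizes each such category), the same reduction may be iterated. Iterating, to show that $C$ acts by $0$ on $\Ho^b(\CV)$ it suffices to show that each $1$-arrow $CE_{s_1}^{i_1}\cdots E_{s_k}^{i_k}$ vanishes on objects with $F_{s_k} M = 0$, and by strengthening to vanishing on all of $\Ho^b(\CV)$ at each stage, the conditions propagate so that eventually one is reduced to checking vanishing on objects satisfying $F_t M = 0$ for every $t \in I$, namely on lowest-weight objects, which are handled by the first step.

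The principal obstacle is arranging the iteration so that it terminates cleanly. At each application of Lemma~\ref{le:invertcusp}(2), only a single vanishing $F_{s_k} M = 0$ is imposed on the test object, so one must either order the iterations over the finite set $I$ so that these conditions accumulate, or alternatively invoke Lemma~\ref{le:invertcusp}(1) applied to the object $CE_{s_1}^{i_1}\cdots E_{s_k}^{i_k}(M)$, using adjunction to translate successive $\Hom$-vanishing conditions into conditions that, combined with integrability (finiteness of the $F$-nilpotency data for each object), force reduction to the lowest-weight case treated in step one. This bookkeeping is the technical heart of the argument.
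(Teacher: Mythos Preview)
Your proposal matches the paper's approach exactly: reduce the $2$-arrow statement to the $1$-arrow one via the cone, use Lemma~\ref{le:highestweight} to show $C$ vanishes on the $\FA$-orbit of every lowest-weight object, and then invoke Lemma~\ref{le:invertcusp} to conclude. The paper's proof is a single terse paragraph that simply cites Lemma~\ref{le:invertcusp} at the final step; you are right that for $|I|>1$ this invocation is not literally one application of that lemma, and the descending-chain/adjunction bookkeeping you flag is precisely what is being left implicit.
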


\begin{proof}
Let $M\in\CV_{\lambda}$ such that $FM=0$. Lemma 
\ref{le:highestweight} provides a fully faithful morphism of
$2$-representations
$$R:\CV(\lambda)\otimes_{\End(\bar{\idun}_\lambda)}\End(M)\to
\CV$$
with $R(\bar{\idun}_\lambda)\simeq M$. We deduce that $C(E^iM)=0$
for all $i$. This holds also for $\CV$ replaced by $\Ho^b(\CV)$ and
Lemma \ref{le:invertcusp} shows that $C$ acts by $0$ on $\CV$.

The second statement follows by taking for $C$ the cone of $f$.
\end{proof}

\begin{prop}
\label{pr:highestweight}
Let $\CV$ be a $2$-representation of $\FA'$ in
$\FL in_{\Bk}$ and $\lambda\in -X^+$.
The morphism of $2$-representations
$$\sum_{\lambda\in -X^+}R_\lambda:
\bigoplus_{\lambda\in -X^+}\CL(\lambda)\otimes_{\End(\bar{\idun}_\lambda)}
\CV_\lambda^{\mathrm{lw}} \to \CV$$
is fully faithful.
\end{prop}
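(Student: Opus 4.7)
The plan is to verify two things separately: (i) for each $\lambda\in -X^+$, the restriction $R_\lambda$ is fully faithful, and (ii) for distinct $\lambda\neq\mu$ in $-X^+$, $\Hom_\CV(R_\lambda(X\otimes M),R_\mu(Y\otimes N))=0$ for all admissible inputs. Together these give fully faithfulness of $\sum_\lambda R_\lambda$, since the source Hom between $(X,M)$ in the $\lambda$-summand and $(Y,N)$ in the $\mu$-summand is by definition zero when $\lambda\neq\mu$.

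Part (i) follows immediately from Lemma \ref{le:highestweight}: a $2$-representation of $\FA'$ restricts to one of $\FA$ without altering the underlying $\Bk$-linear categories or their Hom-spaces, so that lemma applies verbatim.

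For part (ii), fix $\lambda\neq\mu$ in $-X^+$, $M\in\CV_\lambda^{\mathrm{lw}}$, $N\in\CV_\mu^{\mathrm{lw}}$, and objects $X\in\CL(\lambda)_\nu$, $Y\in\CL(\mu)_\nu$. By \S \ref{se:simple2rep}, every object of $\CL(\lambda)_\nu$ is, up to isomorphism in the idempotent completion, a direct summand of a sum of $E$-word products applied to $\bar{\idun}_\lambda$, so we may assume that $X$ and $Y$ are $E$-products. The additional adjunction $(F_s,E_s)$ available in $\FA'$ provides a left adjoint ${^\vee Y}$ of $Y$, equal to the reverse $F$-word, so
$$\Hom_\CV(XM,YN)\simeq \Hom_\CV({^\vee Y}\,X\,M,N).$$
Iterated use of Lemma \ref{le:commutationEF} in the idempotent completion $\FA^i$ rewrites the composite ${^\vee Y}X$ (an $F$-word followed by an $E$-word) as a direct sum of direct summands of $1$-arrows of the form ($E$-word)($F$-word), commuting each $F_s$ past the $E_t$'s to its right. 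Applied to $M$, the innermost $F_s$ in any nontrivial $F$-part annihilates $M$, so only summands with trivial $F$-part survive. Since ${^\vee Y}X$ has weight $\mu-\lambda\neq 0$, the resulting $E$-products $E^{(a^{(i)})}$ are all nontrivial. Hence, by the $(E,F)$-adjunction of $\FA$,
$$\Hom_\CV(E^{(a^{(i)})}M,N)\simeq \Hom_\CV(M,F^{(a^{(i)})}N)=0,$$
the right-hand side vanishing because the innermost $F_s$ of $F^{(a^{(i)})}$ kills $N$. This proves (ii).

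The main technical obstacle is the iterated commutation of $F$'s past $E$'s: Lemma \ref{le:commutationEF} treats a single pair $(E_s^{(m)},F_s^{(n)})$ at a time, so a general composite requires many applications and careful bookkeeping. Fortunately one only needs to conclude that the surviving summands have nontrivial $E$-part, a statement about weights rather than multiplicities, so the argument can be carried out in the idempotent completion (or in $\Ho^b$ if convenient) without tracking exact multiplicity constants.
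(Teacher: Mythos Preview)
Your proof is correct and follows essentially the same strategy as the paper's: reduce to $E$-words via \S\ref{se:simple2rep}, invoke Lemma~\ref{le:highestweight} for each fixed $\lambda$, and for $\lambda\neq\mu$ use the biadjunction in $\FA'$ together with the $FE/EF$ commutation to kill cross-Homs. The only organizational difference is that the paper runs a clean induction on $m=\ell(X)$: it peels off one $E_{s_1}$ from $X$ via the $(F_{s_1},E_{s_1})$-adjunction, $\Hom(E_{s_1}\cdots E_{s_m}M,\,YN)\simeq\Hom(E_{s_2}\cdots E_{s_m}M,\,F_{s_1}YN)$, then observes $F_{s_1}YN$ is a summand of a sum of strictly shorter $E$-words in $N$, and inducts; the base case $m=0$ is handled by $\Hom(M,E_{t_1}\cdots N)\simeq\Hom(F_{t_1}M,\cdots)=0$. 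Your version moves all of $Y$ across at once as ${^\vee Y}$ and commutes wholesale, then applies the $(E,F)$-adjunction once more at the end; this is the same content but with heavier bookkeeping, as you note. The paper's induction avoids that bookkeeping and does not need the extra final adjunction step.
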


\begin{proof}
Let $\lambda\in -X^+$ and $M\in\CV_\lambda^{\mathrm{lw}}$.
Consider $\mu\in -X^+$ with $\mu\not=\lambda$ and
let $N\in \CV_\mu^{\mathrm{lw}}$.
Let $s_1,\ldots,s_m,t_1,\ldots,t_n\in I$ such that
$\alpha_{s_1}+\cdots+\alpha_{s_m}+\lambda=
\alpha_{t_1}+\cdots+\alpha_{t_n}+\mu$.
If $m=0$, then we have
$$\Hom(M,E_{t_1}\cdots E_{t_n}N)\simeq \Hom(F_{t_1}M,
E_{t_2}\cdots E_{t_n}N)=0.$$
Assume $m>0$.
Since $F_{s_1}E_{t_1}\cdots E_{t_n}N$ is isomorphic to a direct summand of
a sum of objects of the form
 $E_{t_1}\cdots E_{t_{i-1}}E_{t_{i+1}}\cdots E_{t_n}N$ for $t_i=s_1$, it
follows by induction on $m$ that
$$\Hom(E_{s_1}\cdots E_{s_m}M,
E_{t_1}\cdots E_{t_n}N)=0.$$
So, there are no non-zero maps between an object in the image of $R_\lambda$
and an object in the image of $R_\mu$. Lemma \ref{le:highestweight}
provides the conclusion.
\end{proof}

An immediate consequence of Proposition \ref{pr:highestweight} 
is a decomposition result for additive $2$-representations generated 
by lowest weight objects.

\begin{cor}
\label{th:isotypicadditive}
Assume $\CV$ is an idempotent complete integrable $2$-representation and
every object of $\CV$ is a direct summand
of $XM$ for some object $X$ of $\FA'$ and $M\in\CV$ with
$F_iM=0$ for all $i$.

Then, there is an equivalence of $2$-representations
$$\sum_{\lambda\in -X^+}R_\lambda:
\bigoplus_{\lambda\in -X^+}
\left(\CL(\lambda)\otimes_{\End(\bar{\idun}_\lambda)}
\CV_\lambda^{\mathrm{lw}}\right)^i \iso \CV.$$
\end{cor}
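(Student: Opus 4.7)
The plan is to deduce this as a direct consequence of Proposition \ref{pr:highestweight} together with the hypothesis on generation by lowest weight objects. Fully faithfulness is already in hand, so the only real content is essential surjectivity, and the idempotent completion on the left is exactly what is needed to absorb the ``direct summand'' clause in the hypothesis.

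First I would observe that $\sum_\lambda R_\lambda$ is a morphism of $2$-representations whose source is $\bigoplus_{\lambda\in -X^+}\CL(\lambda)\otimes_{\End(\bar{\idun}_\lambda)}\CV_\lambda^{\mathrm{lw}}$, and by Proposition \ref{pr:highestweight} it is locally fully faithful. Since $\CV$ is idempotent complete, this fully faithful morphism extends uniquely to a fully faithful morphism from the idempotent completion of the source, \ie, the functor of the corollary is fully faithful.

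For essential surjectivity, let $N$ be an object of $\CV_\mu$ for some $\mu\in X$. By hypothesis, $N$ is a direct summand of $X(M)$ for some $1$-arrow $X$ of $\FA'$ and some $M\in \CV_\lambda$ with $F_iM=0$ for all $i\in I$ (so $M\in\CV_\lambda^{\mathrm{lw}}$, and $\lambda\in -X^+$ by the lemma preceding \S\ref{se:lowest}). Since $R_\lambda$ is a morphism of $2$-representations (of $\FA'$, using the canonical extension of an integrable $2$-representation of $\FA$ to $\FA'$ provided by Theorem \ref{th:critsigmast}), we have
$$R_\lambda\bigl(X\bar{\idun}_\lambda\otimes_{\End(\bar{\idun}_\lambda)}M\bigr)\simeq X(M),$$
so $X(M)$ lies in the essential image of $\sum_\lambda R_\lambda$. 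Consequently $N$, as a direct summand of an object in the essential image, lies in the essential image of the extension of $\sum_\lambda R_\lambda$ to the idempotent completion of the source.

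Combining the two steps, $\sum_\lambda R_\lambda$ induces an equivalence of $\Bk$-linear categories $\bigl(\bigoplus_{\lambda\in -X^+}\CL(\lambda)\otimes_{\End(\bar{\idun}_\lambda)}\CV_\lambda^{\mathrm{lw}}\bigr)^i\iso \CV$. The compatibility with the $\FA$-action is automatic since it is present at the level of $\sum_\lambda R_\lambda$ before passing to the idempotent completion. The main subtlety, rather than any obstacle, is just to note that the ``direct summand'' in the hypothesis is precisely what forces the idempotent completion on the source; without it one would only obtain essential surjectivity up to direct summands.
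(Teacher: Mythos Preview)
Your proposal is correct and follows exactly the approach the paper has in mind: the corollary is stated as an immediate consequence of Proposition \ref{pr:highestweight}, and you have spelled out precisely why --- fully faithfulness comes from that proposition, and essential surjectivity from the generation hypothesis together with idempotent completion. One small citational slip: the extension of an integrable $2$-representation of $\FA$ to $\FA'$ is Theorem \ref{th:leftadjoint}, not Theorem \ref{th:critsigmast} (and in any case the corollary is already phrased for $\FA'$, so no extension is actually needed here).
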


\subsubsection{Jordan-H\"older series}
We denote by $\FO^{\mathrm{int}}(\FB)$
the $1,2$-full subcategory of $2$-representations $\CV$ of $\FA'$ in $\FB$
which are integrable and such that $\{\lambda\in -X^+ | \CV_\lambda\not=0\}$ is
bounded below (\ie, there is an integer $n$ such that given a sequence
$\lambda_1>\lambda_2>\cdots>\lambda_r$ of elements of $-X^+$ with
$\CV_{\lambda_i}\not=0$ for all $i$, then $r\le n$).

\begin{thm}
\label{th:JordanHolder}
Let $\CV$ be an idempotent complete $2$-representation in
$\FO^{\mathrm{int}}(\FL in_\Bk)$. There is a filtration by
full $\Bk$-linear $2$-subrepresentations
$$0=\CV\{0\}\subset \CV\{1\}\cdots\subset\cdots \subset\CV\{n\}=\CV,$$
there are $\End(\bar{\idun}_{\lambda})$-linear categories
$\CM_{\lambda,l}$ for $\lambda\in -X^+$ and isomorphisms of
$2$-representations
$$\CV\{l\}/\CV\{l-1\}\iso
 \bigoplus_{\lambda\in -X^+}
\left(\CL(\lambda)\otimes_{\End(\bar{\idun}_{\lambda})}
\CM_{\lambda,l}\right)^i.$$
\end{thm}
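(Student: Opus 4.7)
The plan is to argue by induction on the chain bound $n$ attached to $\Lambda:=\{\lambda\in -X^+ \mid \CV_\lambda\neq 0\}$. The base case $n=0$ gives $\Lambda=\emptyset$, and hence $\CV=0$: by integrability any nonzero object of $\CV$ would, after a suitable sequence of $F_s$'s, produce a nonzero lowest weight object, whose weight would lie in $-X^+$ by the lemma of \S\ref{se:lowest}, contradicting $\Lambda=\emptyset$. For the inductive step, let $M_{\min}\subset\Lambda$ be the (nonempty, since descending chains terminate) set of minimal elements. The first observation is that for $\lambda\in M_{\min}$ every object of $\CV_\lambda$ is already of lowest weight: if $F_sN\neq 0$ for some $s$ and $N\in\CV_\lambda$, the same integrability argument applied to $F_sN$ yields a lowest weight object of weight $\mu\in -X^+$ with $\mu\le\lambda-\alpha_s<\lambda$, contradicting minimality of $\lambda$.

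Let $\CV\{1\}$ be the idempotent closure in $\CV$ of the sub-$2$-representation generated by $\bigcup_{\lambda\in M_{\min}}\CV_\lambda$. Every object of $\CV\{1\}$ is by construction a direct summand of $X\cdot N$ for some $X\in\FA'$ and some lowest weight $N\in\CV_\lambda$ with $\lambda\in M_{\min}$, so Corollary \ref{th:isotypicadditive} applied inside $\CV\{1\}$, together with Proposition \ref{pr:highestweight}, yields an equivalence of $2$-representations
$$\bigoplus_{\lambda\in M_{\min}}\bigl(\CL(\lambda)\otimes_{\End(\bar{\idun}_\lambda)}\CV_\lambda\bigr)^i \iso \CV\{1\}.$$
Setting $\CM_{\lambda,1}:=\CV_\lambda$ for $\lambda\in M_{\min}$ and $\CM_{\lambda,1}:=0$ otherwise gives the first step of the filtration in the required form.

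Next, form the quotient $\bar{\CV}:=\CV/\CV\{1\}$ in $\FL in_\Bk$; the action of $\FA'$ descends since $\CV\{1\}$ is stable under $E_s$ and $F_s$, integrability is inherited, and the images of $\rho_{s,\lambda}$ and $\sigma_{st}$ remain invertible on the quotient. The crucial point is that $\bar{\CV}_\lambda=0$ for every $\lambda\in M_{\min}$: from the equivalence above, $\CV\{1\}_\lambda\simeq\bigoplus_{\mu\in M_{\min}}\CL(\mu)_\lambda\otimes_{\End(\bar{\idun}_\mu)}\CV_\mu$, and the description of $\CL(\mu)$ in \S\ref{se:simple2rep} shows that $\CL(\mu)_\lambda\ne 0$ forces $\lambda-\mu$ to lie in $\sum\BZ_{\ge 0}\alpha_i$, so the incomparability of distinct elements of $M_{\min}$ leaves only the diagonal term $\mu=\lambda$, which contributes $\CL(\lambda)_\lambda\otimes\CV_\lambda=\CV_\lambda$ itself. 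Consequently the chain bound for $\bar{\CV}$ drops to $n-1$, the inductive hypothesis supplies a filtration of $\bar{\CV}$ of the desired form, and lifting it through the quotient functor produces the filtration $\CV\{1\}\subset\CV\{2\}\subset\cdots\subset\CV\{n\}=\CV$.

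The main technical obstacle is the quotient step: one must check that $\bar{\CV}$ genuinely inherits an integrable $2$-representation of $\FA'$ of the expected form, and, more subtly, that the subquotients obtained by lifting the filtration from $\bar{\CV}$ still split as direct sums $\bigoplus_\lambda(\CL(\lambda)\otimes\CM_{\lambda,l})^i$ rather than becoming nontrivial extensions of such. Both depend on the clean identification $\CV\{1\}_\lambda=\CV_\lambda$ for $\lambda\in M_{\min}$, which isolates the minimal-weight contributions and ensures that the filtration constructed here is compatible with forming quotients in $\FL in_\Bk$.
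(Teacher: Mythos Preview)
Your argument is essentially the paper's: induction on the chain bound, embedding $\bigoplus_{\lambda\in L}\CL(\lambda)\otimes_{\End(\bar{\idun}_\lambda)}\CV_\lambda^{\mathrm{lw}}$ fully faithfully via Proposition~\ref{pr:highestweight} for $L$ the set of minimal weights in $-X^+$, then passing to the cokernel and applying induction. You add scaffolding the paper omits (the base case, the identification $\CV_\lambda=\CV_\lambda^{\mathrm{lw}}$ for $\lambda\in L$, and the discussion of lifting the filtration through the quotient), and your ``integrability argument'' that repeated application of $F_s$'s reaches a lowest weight object is at the same level of rigor as the paper's bare assertion that the embedding is an equivalence on minimal weight spaces.
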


\begin{proof}
We proceed by induction on the maximal length of a sequence
$\lambda_1<\cdots<\lambda_n$ of elements of $-X^+$ such that
$\CV_{\lambda_i}\not=0$.
Let $L$ be the set of minimal elements $\lambda\in -X^+$ such that
$\CV_\lambda\not=0$.
Proposition \ref{pr:highestweight} gives a fully faithful morphism of
 $2$-representations
$$\bigoplus_{\lambda\in L}\CL(\lambda)\otimes_{\End(\bar{\idun}_\lambda)}
\CV_\lambda^{\mathrm{lw}} \to \CV$$
that is an equivalence on $\lambda$-weight spaces for $\lambda\in L$.
By induction, its cokernel satisfies the conclusion of the Theorem and
we are done.
\end{proof}

This theorem extends to abelian and (dg) triangulated settings, cf
\cite{Rou3}.

\subsubsection{Bilinear forms}
Assume $\CV$ is a $2$-representation of $\FA'$ in 
$\FT ri_k$, where $k$ is a field endowed with a $\Bk$-algebra structure.

The action of $\FA'$ on $\CV$ induces an action
of $U_1(\Gg)$ on $K_0(\CV)$. The same holds for $2$-representations in
abelian or exact categories.

\smallskip
Assume $\CV$ is $\Ext$-finite, \ie,
$\dim_k \bigoplus_{i\in\BZ} \Hom_{\CV}(M,N[i])<\infty$ for all
$M,N\in\CV$.

We have a pairing on $K_0(\CV)$:
$$K_0(\CV)\times K_0(\CV)\to\BZ,\ \langle [M],[N]\rangle=
\sum_i (-1)^i \dim_k \Hom(M,N[i]).$$

We have
$$\langle e_s(v),v'\rangle=\langle v,f_s(v')\rangle
\text{ and }
\langle f_s(v),v'\rangle=\langle v,e_s(v')\rangle.$$

Note in particular that if $L$ is a field such that
the pairing is perfect on $L\otimes K_0(\CV)$, then
$L\otimes K_0(\CV)$ is a semi-simple representation of
$L\otimes_{\BZ} U_1(\Gg)$.

\subsection{Simple $2$-representations of $\Gsl_2$}
\label{se:sl2}
\subsubsection{Symmetrizing forms}
Fix a positive integer $n$.
Let $i$ be an integer with $0\le i\le n$.
We put $P_i=k[X_1,\ldots,X_i]$.
We denote by $H_{i,n}$ the subalgebra of ${^0H_n}$ generated by
$T_1,\ldots,T_{i-1}$ and $P_n^{\GS[i+1,n]}$. This is the
same as the subalgebra generated by ${^0 H}_i$ and
$P_n^{\GS_n}$.
 We have a
decomposition as abelian groups
$$H_{i,n}=
{^0H}_i^f\otimes_\BZ P_n^{\GS[i+1,n]}.$$
and a decomposition as algebras
\begin{equation}
\label{eq:Hin}
H_{i,n}={^0H}_i\otimes_\BZ \BZ[X_{i+1},\ldots,X_n]^{\GS[i+1,n]}.
\end{equation}

\begin{lemma}
\label{le:formHin}
The algebra $H_{i,n}$ has a symmetrizing 
form over $P_n^{\GS_n}$ 
\begin{align*}
t_i:H_{i,n} & \to P_n^{\GS_n}(2i(i-n)) \\
P\cdot T_w\cdot w[1,i]&\mapsto
\partial_{w[1,n]\cdot w[i+1,n]}(P)\delta_{w,w[1,i]}
\end{align*}
for $w\in\GS_i$  and  $P\in P_n^{\GS[i+1,n]}$.
\end{lemma}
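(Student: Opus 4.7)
The plan is to use transitivity of Frobenius forms (Lemmas \ref{le:trans1}--\ref{le:trans2}) applied to the tower $P_n^{\GS_n} \subset P_n^{\GS_i \times \GS[i+1,n]} \subset H_{i,n}$, combined with the tensor decomposition (\ref{eq:Hin}). First, from \S\ref{se:nilaffineHecke}, ${^0H}_i$ is a symmetric algebra over $P_i^{\GS_i}$ via $a \mapsto t(a \cdot w[1,i])$, where $w[1,i]$ denotes the image in ${^0H}_i$ of the longest element of $\GS_i$ under the group embedding $\GS_i \hookrightarrow {^0H}_i$, $s_j \mapsto (X_j-X_{j+1})T_j+1$. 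Tensoring this symmetric form with the identity on $k[X_{i+1},\ldots,X_n]^{\GS[i+1,n]}$ produces a symmetric Frobenius form, homogeneous of degree $0$,
\[
u_i: H_{i,n} \to P_n^{\GS_i \times \GS[i+1,n]}, \qquad P \cdot T_w \cdot w[1,i] \mapsto \partial_{w[1,i]}(P)\,\delta_{w,w[1,i]},
\]
for $P \in P_n^{\GS[i+1,n]}$ and $w \in \GS_i$, since $\partial_{w[1,i]}$ acts only through the variables $X_1,\ldots,X_i$.

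Next, let $w_J = w[1,i]\cdot w[i+1,n]$ be the longest element of $\GS_i \times \GS[i+1,n]$, and set $v = w[1,n]\cdot w_J$. I would invoke the classical fact that $P_n^{\GS_i \times \GS[i+1,n]}$ is a free symmetric Frobenius extension of $P_n^{\GS_n}$ via the BGG--Demazure operator $\partial_v$: this follows from Lemma \ref{le:trans2} applied to the length-additive factorization
\[
\partial_{w[1,n]} = \partial_v \circ \partial_{w[1,i]} \circ \partial_{w[i+1,n]},
\]
whose validity rests on $\ell(v)+\ell(w[1,i])+\ell(w[i+1,n]) = i(n-i) + \binom{i}{2} + \binom{n-i}{2} = \binom{n}{2}$. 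Since $\GS_i$ and $\GS[i+1,n]$ act on disjoint sets of variables, the intermediate form $\partial_{w[1,i]}\partial_{w[i+1,n]}: P_n \to P_n^{\GS_i \times \GS[i+1,n]}$ is symmetric Frobenius, so Lemma \ref{le:trans2} yields that $\partial_v$ is too, of degree $-2\ell(v) = 2i(i-n)$.

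Composing $t_i = \partial_v \circ u_i$ then produces a symmetric Frobenius form over $P_n^{\GS_n}$ by Lemma \ref{le:trans1}, symmetry being preserved because $\partial_v$ is a map between commutative rings. The explicit formula comes from $\partial_v \partial_{w[1,i]} = \partial_{v\cdot w[1,i]}$ (again length-additive) together with the identity $v \cdot w[1,i] = w[1,n] \cdot w[i+1,n]$, which holds because $w[1,i]$ and $w[i+1,n]$ are commuting involutions; this gives
\[
t_i(P \cdot T_w \cdot w[1,i]) = \partial_{w[1,n]\cdot w[i+1,n]}(P)\,\delta_{w,w[1,i]}.
\]
The degree of $t_i$ is $-2\ell(v) = 2i(i-n)$, matching the claimed shift, as $u_i$ and $w[1,i]\in {^0H}_i$ are homogeneous of degree $0$. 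The main point requiring justification is step two: the symmetric-Frobenius property of $\partial_v$ for the extension $P_n^{\GS_n} \subset P_n^{\GS_i \times \GS[i+1,n]}$, which, although classical for parabolic subgroups of Weyl groups, is not stated explicitly earlier in the paper and requires carefully verifying both the length-additive Demazure factorization and the symmetric-Frobenius property of the intermediate forms.
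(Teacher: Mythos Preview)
Your approach is essentially the paper's own: factor through the parabolic invariants $P_n^{\GS_i\times\GS[i+1,n]}$, use the symmetrizing form on ${^0H}_i$ tensored with the identity for the upper step, use the Demazure operator $\partial_v$ with $v=w[1,n]\,w[1,i]\,w[i+1,n]$ for the lower step, and compose via Lemma~\ref{le:trans1}. The paper carries out exactly these three steps.

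The one slip is the lemma you invoke for the lower step. You have the Frobenius forms $t=\partial_{w[1,i]}\partial_{w[i+1,n]}:P_n\to P_n^{\GS_i\times\GS[i+1,n]}$ and $t''=\partial_{w[1,n]}:P_n\to P_n^{\GS_n}$, and you want to conclude that $t'=\partial_v:P_n^{\GS_i\times\GS[i+1,n]}\to P_n^{\GS_n}$ is Frobenius. That is the direction of Lemma~\ref{le:trans3} (from $t$ and $t''$ to $t'$), not Lemma~\ref{le:trans2} (from $t'$ and $t''$ to $t$). The hypothesis of Lemma~\ref{le:trans3} is satisfied here because $P_n$ is a progenerator over $P_n^{\GS_i\times\GS[i+1,n]}$ and the base $P_n^{\GS_n}$ is central in $P_n$; this is precisely what the paper checks. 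With that citation corrected, your argument matches the paper's proof.
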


\begin{proof}
The decomposition
(\ref{eq:Hin}) shows that $H_{i,n}$ has a symmetrizing
form over $P_n^{\GS[1,i]\times\GS[i+1,n]}$
given by $P T_w w[1,i]\mapsto \partial_{w[1,i]}(P)\delta_{w,w[1,i]}$ for
$w\in\GS_i$ and $P\in P_n^{\GS[i+1,n]}$.

The algebra $P_n$ has a symmetrizing form over $P_n^{\GS_n}$ given by
$\partial_{w[1,n]}$ and a symmetrizing form over
$P_n^{\GS[1,i]\times\GS[i+1,n]}$ given by
$\partial_{w[1,i]}\partial_{w[i+1,n]}$.
It follows from Lemma \ref{le:trans3} that
the algebra $P_n^{\GS[1,i]\times\GS[i+1,n]}$ has a symmetrizing form
over $P_n^{\GS_n}$ given by $\partial_{w[1,n]\cdot w[1,i]\cdot w[i+1,n]}$.
The lemma follows now from Lemma \ref{le:trans1}.
\end{proof}

Let $e_i(\cdots)$ (resp. $h_i(\cdots)$) denote the elementary (resp. complete)
symmetric functions and put $e_i=h_i=0$ for $i<0$.

\begin{lemma}
\label{le:basisrelativeP}
The morphism $\partial_{s_{n-1}\cdots s_{i+1}}$ is a symmetrizing form
for the $P_n^{\GS[i+1,n]}$-algebra
$P_n^{\GS[i+2,n]}$. The set
$\{X_{i+1}^j\}_{0\le j\le n-i-1}$ is a basis, with dual basis
$\{(-1)^j e_{n-i-1-j}(X_{i+2},\ldots,X_n)\}$.
\end{lemma}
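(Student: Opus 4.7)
The plan is to split the argument into three pieces: freeness of the module, the Frobenius (hence symmetrizing) property of $\partial_{s_{n-1}\cdots s_{i+1}}$, and verification of the dual basis formula.

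First, I would establish freeness. The identity $e_k(X_{i+2},\ldots,X_n) = e_k(X_{i+1},\ldots,X_n) - X_{i+1}e_{k-1}(X_{i+2},\ldots,X_n)$ lets us write every elementary symmetric function in $X_{i+2},\ldots,X_n$ as a polynomial in $X_{i+1}$ with coefficients in $P_n^{\GS[i+1,n]}$, so $P_n^{\GS[i+2,n]} = P_n^{\GS[i+1,n]}[X_{i+1}]$. Since $X_{i+1}$ is a root of the monic polynomial $p(T) = \prod_{j=i+1}^n (T-X_j)\in P_n^{\GS[i+1,n]}[T]$ of degree $n-i$, the family $\{X_{i+1}^j\}_{0\le j\le n-i-1}$ spans over $P_n^{\GS[i+1,n]}$; a rank count (both modules are free of rank $n-i = [\GS[i+1,n]:\GS[i+2,n]]$) gives linear independence.

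Second, I would check Frobeniusness by transitivity. Put $v = s_{n-1}\cdots s_{i+1}$: it has length $n-i-1 = \binom{n-i}{2} - \binom{n-i-1}{2}$, and a direct computation of the permutation shows $w[i+1,n] = v\cdot w[i+2,n]$ with lengths adding, so $\partial_{w[i+1,n]} = \partial_v\circ\partial_{w[i+2,n]}$. Now apply Lemma \ref{le:trans3} to the tower $P_n^{\GS[i+1,n]} \subset P_n^{\GS[i+2,n]} \subset P_n$: the hypotheses hold since $P_n$ is a free (hence progenerator) $P_n^{\GS[i+1,n]}$-module and $P_n^{\GS[i+1,n]}$ is central. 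The two known Frobenius forms $\partial_{w[i+2,n]}$ and $\partial_{w[i+1,n]}$ then force $\partial_v$ to be a Frobenius form over $P_n^{\GS[i+1,n]}$, which is symmetrizing by commutativity.

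Third, I would verify the dual basis formula by an explicit computation. Each $\partial_k$ with $i+1\le k\le n-1$ is $P_n^{\GS[i+1,n]}$-linear (since $P_n^{\GS[i+1,n]}$ is $s_k$-invariant), so $\partial_v$ is $P_n^{\GS[i+1,n]}$-linear. A short induction using $\partial_{i+1}(X_{i+1}^N) = -\sum_{k=0}^{N-1}X_{i+1}^{N-1-k}X_{i+2}^k$ and degree count gives $\partial_v(X_{i+1}^m) = 0$ for $m < n-i-1$ and $\partial_v(X_{i+1}^{n-i-1}) = (-1)^{n-i-1}$; then the relation $p(X_{i+1}) = 0$ iterated with $\partial_v$ yields $\partial_v(X_{i+1}^{n-i-1+r}) = (-1)^{n-i-1} h_r(X_{i+1},\ldots,X_n)$ for $r\ge 0$. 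Combined with the identity $e_k(X_{i+2},\ldots,X_n) = e_k(X_{i+1},\ldots,X_n) - X_{i+1}e_{k-1}(X_{i+2},\ldots,X_n)$ and $P_n^{\GS[i+1,n]}$-linearity, an induction on $k$ gives
\[
\partial_v\bigl(X_{i+1}^j\, e_k(X_{i+2},\ldots,X_n)\bigr) = \sum_{l=0}^{k} (-1)^l\, e_{k-l}(X_{i+1},\ldots,X_n)\,\partial_v(X_{i+1}^{j+l}).
\]
Setting $k = n-i-1-j'$ and substituting the formulas for $\partial_v(X_{i+1}^m)$, the sum becomes $(-1)^j \sum_{r\ge 0}(-1)^r e_{j-j'-r}h_r$, which by Newton's identity $\sum_{r\ge 0}(-1)^r e_{M-r}h_r = \delta_{M,0}$ equals $(-1)^j\delta_{j,j'}$. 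Multiplying by $(-1)^{j'}$ yields the claimed pairing $\delta_{j,j'}$.

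The main obstacle is the third step: managing the reduction of high powers $X_{i+1}^m$ for $m\ge n-i$ via $p(X_{i+1}) = 0$ and then packaging the resulting expression into Newton's identity; everything else is formal.
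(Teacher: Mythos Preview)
Your proof is correct and follows essentially the same route as the paper: the Frobenius/symmetrizing property is obtained from transitivity (Lemma~\ref{le:trans3}) applied to the tower $P_n^{\GS[i+1,n]}\subset P_n^{\GS[i+2,n]}\subset P_n$ together with the known forms $\partial_{w[i+1,n]}$ and $\partial_{w[i+2,n]}$, and the dual-basis verification proceeds via the recursion coming from $e_k(X_{i+2},\ldots,X_n)=e_k(X_{i+1},\ldots,X_n)-X_{i+1}e_{k-1}(X_{i+2},\ldots,X_n)$ and Newton's identity between elementary and complete symmetric functions. The only cosmetic difference is that the paper computes $\partial_v(X_{i+1}^j)$ in one stroke by iterating the identity $\partial_{s_m}(h_j(X_1,\ldots,X_m))=-h_{j-1}(X_1,\ldots,X_{m+1})$ (since $X_{i+1}^j=h_j(X_{i+1})$), rather than splitting into low degrees plus an appeal to the relation $p(X_{i+1})=0$; and the paper does not separate out a freeness step, since exhibiting a dual basis already certifies it.
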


\begin{proof}
The first statement follows as in the proof of Lemma \ref{le:formHin} from
Lemma \ref{le:trans3}.
We have
$$\partial_{s_m}(h_j(X_1,\ldots,X_m))=-h_{j-1}(X_1,\ldots,X_{m+1})
\text{ and }
\partial_{s_m}(e_j(X_{m+1},\ldots,X_n))=e_{j-1}(X_m,\ldots,X_n).$$

Let $k,j\in [0,n-i-1]$. We have
$e_k(X_{i+2},\ldots,X_n))=e_k(X_{i+1},\ldots,X_n)-
X_{i+1}e_{k-1}(X_{i+2},\ldots,X_n)$, hence
\begin{align*}
\partial_{s_{n-1}\cdots s_{i+1}}(X_{i+1}^je_k(X_{i+2},\ldots,X_n))=&
(-1)^{n+i+1}h_{j-n+i+1}(X_{i+1},\ldots,X_n)e_k(X_{i+1},\ldots,X_n)-\\
&-\partial_{s_{n-1}\cdots s_{i+1}}(X_{i+1}^{j+1}e_{k-1}(X_{i+2},\ldots,X_n))
\end{align*}
By induction, we obtain
\begin{align*}
\partial_{s_{n-1}\cdots s_{i+1}}(X_{i+1}^je_k(X_{i+2},\ldots,X_n))=&
(-1)^{n+i+1}(h_{j-n+i+1}e_k-h_{j-n+i+2}e_{k-1}+\cdots+\\
&+(-1)^k h_{j-n+i+1+k}e_0),
\end{align*}
where we wrote $e_j$ and $h_j$ for the functions in the variables
$X_{i+1},\ldots,X_n$.
It follows from the fundamental relation between elementary and complete
symmetric functions that
$$\partial_{s_{n-1}\cdots s_{i+1}}(X_{i+1}^je_k(X_{i+2},\ldots,X_n))=0
\text{ if } j+k\not=n-i-1$$
while 
$$\partial_{s_{n-1}\cdots s_{i+1}}(X_{i+1}^je_{n-i-1-j}(X_{i+2},\ldots,X_n))=
(-1)^j.$$
\end{proof}

\subsubsection{Induction and restriction}
\label{se:mincatadjunction}
We have the usual canonical adjoint pair
$(\Ind_{H_{i,n}}^{H_{i+1,n}}, \Res_{H_{i,n}}^{H_{i+1,n}})$. The symmetric forms
on the algebras $H_{i,n}$ and $H_{i+1,n}$ described in Lemma
\ref{le:formHin} provide an adjoint pair
$(\Res_{H_{i,n}}^{H_{i+1,n}},\Ind_{H_{i,n}}^{H_{i+1,n}})$ and we will now
describe the units and counits of that pair, in terms of morphisms of
bimodules.

\smallskip

The following proposition gives a Mackey decomposition for nil affine Hecke
algebras.
\begin{prop}
\label{pr:isoH}
Assume $i\le n/2$. We have an isomorphism of graded
$(H_{i,n},H_{i,n})$-bimodules

\begin{align*}
\rho_i:H_{i,n}\otimes_{H_{i-1,n}}H_{i,n}(2)
\oplus \bigoplus_{j=0}^{n-2i-1} H_{i,n}(-2j)&\iso H_{i+1,n} \\
(a\otimes a',a_0,\ldots,a_{n-2i-1})&\mapsto
aT_i a'+\sum_{j=0}^{n-2i-1} a_j X_{i+1}^j.
\end{align*}
Assume $i\ge n/2$. We have an isomorphism of graded
$(H_{i,n},H_{i,n})$-bimodules

\begin{align*}
\rho_i:H_{i,n}\otimes_{H_{i-1,n}}H_{i,n}(2)&\iso H_{i+1,n}
\oplus \bigoplus_{j=0}^{2i-n-1} H_{i,n}(2j+2) \\
a\otimes a'&\mapsto
(aT_i a',aa',aX_ia',\ldots,aX_i^{2i-n-1}a').
\end{align*}
\end{prop}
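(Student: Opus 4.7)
The proposition categorifies the $\Gsl_2$-commutation relation $[e,f] = h$ at weight $\lambda = n-2i$ as it acts on the simple $(n+1)$-dimensional representation: Case 1 ($i \le n/2$, $\lambda \ge 0$) corresponds to the decomposition $EF \simeq FE \oplus \lambda\cdot\id$, and Case 2 ($i \ge n/2$, $\lambda \le 0$) to $FE \simeq EF \oplus (-\lambda)\cdot\id$. The proof amounts to a Mackey-type decomposition for the chain $H_{i-1,n} \subset H_{i,n} \subset H_{i+1,n}$. The two cases are related by the symmetry swapping the two sides of the exchange (essentially the Chevalley involution), so I will focus on Case 1.

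For well-definedness of $\rho_i$ as a bimodule map, the only substantive check is that $a \otimes a' \mapsto a T_i a'$ descends to $H_{i,n}\otimes_{H_{i-1,n}}H_{i,n}$, which amounts to verifying that $T_i$ commutes with $H_{i-1,n}$. Since $H_{i-1,n}$ is generated by $T_1,\ldots,T_{i-2}$ and $P_n^{\GS[i,n]}$, the first set commutes with $T_i$ by the braid relation. For the second, only $X_i$ and $X_{i+1}$ interact nontrivially with $T_i$; using $T_iX_i = X_{i+1}T_i - 1$ and $T_iX_{i+1} = X_iT_i + 1$, one directly verifies $T_i(X_i+X_{i+1}) = (X_i+X_{i+1})T_i$ and $T_iX_iX_{i+1} = X_iX_{i+1}T_i$, so $T_i$ commutes with any polynomial symmetric in $X_i, X_{i+1}$ and hence with $P_n^{\GS[i,n]}$. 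That $X_{i+1}$ is central in $H_{i,n}$ (Case 1) and that $X_i$ commutes with $H_{i-1,n}$ (Case 2) follow by similar direct checks, yielding well-definedness of the remaining summands of $\rho_i$.

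For bijectivity, the key tool is the PBW theorem (Theorem \ref{th:PBWHecke}) combined with Lemma \ref{le:basisrelativeP}, which exhibits $P_n^{\GS[i+2,n]}$ as a free $P_n^{\GS[i+1,n]}$-module on $\{X_{i+1}^j\}_{0 \le j \le n-i-1}$. Using the double-coset decomposition $\GS_{i+1} = \GS_i \sqcup \GS_i s_i \GS_i$ (whose nontrivial coset has stabilizer $\GS_{i-1}$), every element of $H_{i+1,n}$ admits a unique expression $\sum_{j=0}^{n-i-1} h_j X_{i+1}^j + aT_ia'$, where $h_j \in H_{i,n}$ and the second piece lies in $H_{i,n}T_iH_{i,n}$ modulo the $H_{i-1,n}$-bimodule identifications. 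This reduces bijectivity of $\rho_i$ to the nil-Hecke Mackey isomorphism $H_{i,n}\otimes_{H_{i-1,n}}H_{i,n} \iso H_{i,n}T_iH_{i,n}$, which can be proved by expanding $a \otimes a'$ in a PBW basis of $H_{i,n}$ as a right $H_{i-1,n}$-module (coset representatives of $\GS_i/\GS_{i-1}$ times a basis of $P_n^{\GS[i+1,n]}$ over $P_n^{\GS[i,n]}$) and checking that $aT_ia'$ produces a PBW basis of $H_{i,n}T_iH_{i,n}$ inside $H_{i+1,n}$. The hardest step will be reconciling the $n-i$ trivial-coset summands with the $n-2i$ direct-sum copies appearing in the source of $\rho_i$ in Case 1: the excess $i$ powers $X_{i+1}^j$ for $n-2i \le j \le n-i-1$ must be absorbed into the image of the Mackey map, which requires rewriting them via relations such as $T_iX_{i+1}^l = X_i^lT_i + (\text{lower-order})$ and careful bookkeeping to match degrees. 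Case 2 is the dual situation: for $i > n/2$, the Mackey map fails to be injective, and its defect of rank $2i-n$ is precisely captured by the residues $(aa', aX_ia', \ldots, aX_i^{2i-n-1}a')$, so enlarging the target by these $H_{i,n}$-summands restores bijectivity.
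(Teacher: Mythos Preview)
The paper's proof is entirely different and much shorter: it does not attempt any direct PBW or Mackey computation. Instead it quotes \cite[Proposition~5.32]{ChRou} to know that $\rho_i$ becomes an isomorphism after applying $-\otimes_{P_n^{\GS_n}}k$ for every field $k$, deduces that it is an isomorphism after $-\otimes_{P_n^{\GS_n}}\BZ$, and then invokes (graded) Nakayama's Lemma over the positively graded ring $P_n^{\GS_n}$ to lift to the integral statement. All the explicit basis work you are proposing is absorbed into that citation.

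Your direct approach is reasonable in outline, and the well-definedness check is fine, but you yourself flag the crux as unfinished. The sentence beginning ``The hardest step will be\ldots'' is not a side remark: it is the entire content of the proof. Concretely, in Case~1 you must show that the $i$ ``excess'' elements $X_{i+1}^{n-2i},\ldots,X_{i+1}^{n-i-1}$ already lie in the image of $(a\otimes a')\mapsto aT_ia'$ modulo the span of the lower powers, \emph{and} that this absorption does not introduce a kernel. That requires explicit identities in the nil affine Hecke algebra (iterating $T_iX_{i+1}=X_iT_i+1$ to rewrite $X_{i+1}^j$ as $X_i^jT_iX_{i+1}-T_iX_{i+1}^{j+1}+\cdots$ and controlling the remainder), followed by a genuine rank/basis argument to rule out linear dependence. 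You have described neither. Likewise in Case~2, asserting that the ``defect of rank $2i-n$ is precisely captured'' by the residues is the statement to be proved, not an argument for it.

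If you want to salvage the direct route, one clean reorganisation is: use the decomposition \eqref{eq:Hin} and Lemma~\ref{le:basisrelativeP} to show that source and target of $\rho_i$ are free graded $P_n^{\GS_n}$-modules of the same graded rank, then prove surjectivity only (for which the absorption identities above suffice); injectivity then follows automatically. This is essentially what \cite{ChRou} does over a field, and the paper simply cites it rather than redoing it.
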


\begin{proof}
By \cite[Proposition 5.32]{ChRou}, we know that the maps above
are isomorphisms after applying $-\otimes_{P_n^{\GS_n}}k$, where
$k$ is any field. 
So, the maps are isomorphisms after applying
$-\otimes_{P_n^{\GS_n}}\BZ$.
The proposition follows now from Nakayama's Lemma.
\end{proof}

Let $\CB_i$ be a basis for $H_{i,n}$ over $P_n^{\GS_n}$ and
$\{b^\vee\}_{b\in\CB_i}$ be the dual basis.
The symmetrizing forms on $H_{i,n}$ and $H_{i+1,n}$
induce a canonical morphism of
$(H_{i,n},H_{i,n})$-bimodules, which is the Frobenius form of
$H_{i+1,n}$ as an $H_{i,n}$-algebra:
$$\eps_i:H_{i+1,n}\to H_{i,n}(-2(n-2i-1))$$
and a canonical morphism of
$(H_{i+1,n},H_{i+1,n})$-bimodules
$$\eta_i:H_{i+1,n}\to H_{i+1,n}\otimes_{H_{i,n}}H_{i+1,n}(2(n-2i-1)).$$
They give rise to the counit and unit of the adjoint pair
$(\Res_{H_{i,n}}^{H_{i+1,n}},\Ind_{H_{i,n}}^{H_{i+1,n}})$. Note that
$t_i\circ\eps_i=t_{i+1}$.

\smallskip
\begin{lemma}
\label{le:epsi}
Let $P\in P_n^{\GS[i+2,n]}$ and $w\in\GS_{i+1}$.
We have 
$$\eps_i(P T_w s_1\cdots s_i)=\begin{cases}
\partial_{s_{n-1}\cdots s_{i+1}}(P) T_{ws_1\cdots s_i}
& \text{ if }w\in \GS_is_i\cdots s_1 \\
0 & \text{ otherwise.}
\end{cases}$$

We have
\begin{align*}
\eps_i(P)&=
\partial_{s_{n-1}\cdots s_{i+1}}\left(P(X_1-X_{i+1})\cdots
 (X_i-X_{i+1})\right) \\
\text{and }\eps_i(PT_i)&=
-\partial_{s_{n-1}\cdots s_{i+1}}\left(P(X_1-X_{i+1})\cdots (X_{i-1}-X_{i+1})
\right).
\end{align*}

When $i<n/2$, we have
$$\eps_i(T_i)=\eps_i(X_{i+1}^j)=0 \text{ for }j<n-2i-1
\text{ and } \eps_i(X_{i+1}^{n-2i-1})=(-1)^{n+1}.$$

When $i\ge n/2$, we have
$$\eps_i(T_i)=(-1)^{n+1}X_i^{2i-n}\pmod{\sum_{j=0}^{2i-n-1}
P_n^{\GS[1,i]\times\GS[i+1,n]}X_i^j}.$$
\end{lemma}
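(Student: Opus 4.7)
The plan is to use the uniqueness of $\eps_i$ as a Frobenius form. By Lemma~\ref{le:trans2} applied with $C=P_n^{\GS_n}$, $B=H_{i,n}$, $A=H_{i+1,n}$, $t''=t_{i+1}$ and $t'=t_i$, the map $\eps_i$ is the unique element of $\Hom_{H_{i,n}}(H_{i+1,n},H_{i,n})$ satisfying $t_i\circ\eps_i=t_{i+1}$. So any candidate left $H_{i,n}$-module map obeying this identity must equal $\eps_i$, and the task reduces to verifying the identity on a generating set.

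First I would establish the first formula. Using Proposition~\ref{pr:isoH} and the nil-Hecke PBW basis, the elements of the form $PT_wT_{s_1\cdots s_i}$ (with $w$ a coset representative in $\GS_{i+1}$ and $P\in P_n^{\GS[i+2,n]}$) span $H_{i+1,n}$ as a left $H_{i,n}$-module. On these generators I define a candidate map $\eps_i'$ by the stated formula and check $t_i(\eps_i'(a)\cdot h)=t_{i+1}(a\cdot h)$ for $a$ a generator and $h\in H_{i,n}$. This reduces, via the explicit description in Lemma~\ref{le:formHin} and Lemma~\ref{le:basisrelativeP}, to the transitivity identity
\[
\partial_{w[1,n]\cdot w[i+2,n]} \;=\; \partial_{w[1,n]\cdot w[i+1,n]}\circ \partial_{s_{n-1}\cdots s_{i+1}}
\]
for BGG--Demazure operators, combined with the length-additivity combinatorics in $\GS_{i+1}$: the coset $\GS_i\cdot s_i\cdots s_1$ is precisely the locus on which the relevant products of $T$'s remain length-additive and thereby contribute nontrivially to $t_{i+1}$.

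Second, I would derive formulas (2) and (3) by reducing $\eps_i(P)$ and $\eps_i(PT_i)$ to the already-established first formula. The key move is the repeated use of the nil-Hecke commutation relations $T_jX_{j+1}-X_jT_j=1$ and $T_jX_j-X_{j+1}T_j=-1$ in ${}^0H_n$ to transport polynomials past the product $T_{s_1\cdots s_i}$. An induction on $i$ (the same kind of braid manipulation that produces the Casimir element $\prod_{j<i}(X_i-X_j)$ in Lemma~\ref{le:Casimir}) shows that $P\in H_{i+1,n}$ rewrites as $P(X_1-X_{i+1})\cdots(X_i-X_{i+1})\cdot T_{s_i\cdots s_1}T_{s_1\cdots s_i}$ modulo generators on which $\eps_i'$ vanishes by the first formula; invoking the first formula then yields (2). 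Formula (3) arises by the same calculation applied to $PT_i$, with the extra $T_i$-relation producing one fewer factor $(X_k-X_{i+1})$ and the sign.

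Finally, (4) and (5) are evaluations of (2) and (3). Substituting $P=X_{i+1}^j$ into (2) and expanding $X_{i+1}^j(X_1-X_{i+1})\cdots(X_i-X_{i+1})$, one pairs against the dual basis $\{(-1)^j e_{n-i-1-j}(X_{i+2},\ldots,X_n)\}$ supplied by Lemma~\ref{le:basisrelativeP} to read off the vanishing for $j<n-2i-1$ and the leading value $(-1)^{n+1}$ at $j=n-2i-1$; the relation $\eps_i(T_i)=0$ for $i<n/2$ follows by setting $P=1$ in (3). Formula (5) in the range $i\ge n/2$ is the same degree count applied to (3) with $P=1$, with $X_i^{2i-n}$ emerging as the leading monomial (the lower-order ambiguity being absorbed by the polynomial subspace quotiented out in the congruence). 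The principal difficulty is Step~1: making the first formula precise requires carefully aligning the nil-Hecke coset combinatorics in $\GS_i\backslash\GS_{i+1}$ with the Frobenius-form transitivity along the chain $P_n^{\GS_n}\subset P_n^{\GS[i+1,n]}\subset P_n^{\GS[i+2,n]}$, which is what the mysterious condition ``$w\in\GS_is_i\cdots s_1$'' encodes.
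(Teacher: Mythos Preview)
Your overall architecture matches the paper: establish the first formula via the uniqueness in Lemma~\ref{le:trans2}, derive the formulas for $\eps_i(P)$ and $\eps_i(PT_i)$ from it, then specialize. But there is a concrete confusion that breaks Steps~1 and~2 as written.

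In the statement, $s_1\cdots s_i$ denotes the product of the symmetric group elements $s_j=(X_j-X_{j+1})T_j+1$ inside ${}^0H_n$ (cf.\ \S\ref{se:nilaffineHecke}), \emph{not} $T_1\cdots T_i$. You write the spanning family as $PT_wT_{s_1\cdots s_i}$ and later the rewriting of $P$ as $P\prod(X_k-X_{i+1})\,T_{s_i\cdots s_1}T_{s_1\cdots s_i}$. In the nil Hecke algebra $T_i\cdots T_1\cdot T_1\cdots T_i=0$ (the middle $T_1^2$ kills it), so that expression is identically zero, and for the same reason the family $\{PT_wT_1\cdots T_i\}$ cannot span $H_{i+1,n}$. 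By contrast $s_1\cdots s_i$ is a unit, so $\{PT_w\,s_1\cdots s_i\}_{w\in\GS_{i+1}}$ is a genuine $P_n^{\GS[i+2,n]}$-basis, which is what the paper uses.

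Once this is fixed, the paper's route for Step~1 is more direct than checking the Frobenius identity on products: define the candidate $f$ $\BZ$-linearly on that basis, then verify left $H_{i,n}$-linearity by the single computation $f(T_jPT_w\,s_1\cdots s_i)=T_jf(PT_w\,s_1\cdots s_i)$ for $j<i$ (the $P_n^{\GS[i+1,n]}$-linearity is immediate). After that, $t_i\circ f=t_{i+1}$ is read off directly from Lemma~\ref{le:formHin}, and uniqueness gives $f=\eps_i$.

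For Step~2 the correct mechanism is not braid manipulation toward a Casimir, but the filtration identity
\[
s_i\cdots s_1=(X_1-X_{i+1})\cdots(X_i-X_{i+1})\,T_i\cdots T_1 \pmod{F^{<(*,2i)}},
\]
which comes from expanding $\prod_j\bigl((X_j-X_{j+1})T_j+1\bigr)$. Writing $P=P(s_i\cdots s_1)(s_1\cdots s_i)$ and applying the first formula, the leading term gives $\partial_{s_{n-1}\cdots s_{i+1}}\bigl(P\prod(X_k-X_{i+1})\bigr)$, while every lower term is of the form $P'T_w\,s_1\cdots s_i$ with $\ell(w)<i$; such $w$ lie outside $\GS_i s_i\cdots s_1$, so $\eps_i$ annihilates them. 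The same trick with $T_is_i\cdots s_1=-T_is_{i-1}\cdots s_1$ yields the formula for $\eps_i(PT_i)$. Your evaluations (4) and (5) are then carried out as you describe, using Lemma~\ref{le:basisrelativeP} and the expansion of $e_k(X_1,\dots,X_{i-1})$ in powers of $X_i$ modulo $P_i^{\GS_i}$.
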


\begin{proof}
Let us consider the first equality. Let $f:H_{i+1,n}\to H_{i,n}$ be the
$\BZ$-linear map sending $P T_w s_1\cdots s_i$ to 
the second term of the equality. Note that
$f(Pa)=Pf(a)$ for all $P\in P_n^{\GS[i+1,n]}$ and
$a\in H_{i+1,n}$.

Let $j<i$, let $P\in P_n^{\GS[i+2,n]}$ and let
$w\in\GS_{i+1}$.
If $w{\not\in}\GS_is_i\cdots s_1$, then
$$f(T_j P T_w s_1\cdots s_i)=0=T_jf(P T_w s_1\cdots s_i).$$
Assume now $w\in \GS_is_i\cdots s_1$. Then,
\begin{align*}
f(T_j P T_w s_1\cdots s_i)&=\partial_{s_{n-1}\cdots s_{i+1}}(s_j(P))
T_jT_{ws_1\cdots s_i}+
\partial_{s_{n-1}\cdots s_{i+1}s_j}(P)T_{ws_1\cdots s_i}\\
&=T_j\partial_{s_{n-1}\cdots s_{i+1}}(P) T_{ws_1\cdots s_i}\\
&=T_jf(P T_w s_1\cdots s_i).
\end{align*}
It follows that $f$ is left $H_{i,n}$-linear.
Since $t_i\circ f=t_{i+1}$, we obtain the first equality from
Lemma \ref{le:trans2}.

\smallskip
We have
$$s_i\cdots s_1=(X_1-X_{i+1})\cdots (X_i-X_{i+1})T_i\cdots T_1
\mod F^{<(*,2i)}$$
hence $\eps_i(P)=
\partial_{s_{n-1}\cdots s_{i+1}}\left(P(X_1-X_{i+1})\cdots
 (X_i-X_{i+1})\right)$.

We have
$$T_is_i\cdots s_1=-T_i s_{i-1}\cdots s_1=
-(X_1-X_{i+1})\cdots (X_{i-1}-X_{i+1})T_i\cdots T_1
\mod F^{<(*,2i)}$$
hence $\eps_i(PT_i)=
-\partial_{s_{n-1}\cdots s_{i+1}}\left(P(X_1-X_{i+1})\cdots (X_{i-1}-X_{i+1})
\right)$.

\smallskip
The vanishing statements follow immediately from degree considerations.

\smallskip
Let $P=X_{i+1}^{n-2i-1} (X_1-X_{i+1})(X_2-X_{i+1})\cdots (X_i-X_{i+1})$.
We have
$$P=\sum_{j=0}^i (-1)^j X_{i+1}^{n-2i-1+j}e_{i-j}(X_1,\ldots,X_i).$$
We have $\partial_{s_{n-1}\cdots s_{i+1}}(X_{i+1}^r)=0$ for
$r<n-i-1$. It follows that
$$\eps_i(X_{i+1}^{n-2i-1})=\partial_{s_{n-1}\cdots s_{i+1}}(P)=(-1)^i
\partial_{s_{n-1}\cdots s_{i+1}}(X_{i+1}^{n-i-1})=(-1)^{n+1}$$
by Lemma \ref{le:basisrelativeP}.

\smallskip
Assume $i\ge n/2$. We have
\begin{align*}
\eps_i(T_i)&=-\partial_{s_{n-1}\cdots s_{i+1}}(
(X_1-X_{i+1})\cdots(X_{i-1}-X_{i+1}))\\
&=\sum_{j=n-i-1}^{i-1}(-1)^{j+1}
\partial_{s_{n-1}\cdots s_{i+1}}(X_{i+1}^j)
e_{i-1-j}(X_1,\ldots,X_{i-1}).
\end{align*}
Since 
$e_{k+1}(X_1,\ldots,X_{i-1})=e_{k+1}(X_1,\ldots,X_i)-
X_i e_k(X_1,\ldots,X_{i-1})$, we see
by induction that
$e_k(X_1,\ldots,X_{i-1})\in (-1)^kX_i^k+\sum_{j<k}
P_i^{\GS_i}X_i^j$.
It follows that
$$\eps_i(T_i)=(-1)^{n+1}X_i^{2i-n}\pmod{\sum_{j=0}^{2i-n-1}
P_n^{\GS[1,i]\times\GS[i+1,n]}X_i^j}.$$
\end{proof}

\begin{lemma}
\label{le:etai}
We have
$$\eta_i(1)=T_i\cdots T_1 s_1\cdots s_i \pi + \cdots +
T_1 s_1\cdots s_i \pi T_i \cdots T_2+ s_1\cdots s_i \pi T_i\cdots T_1,$$
where $\pi=\sum_{j=0}^{n-i-1} (-1)^j 
e_{n-i-1-j}(X_{i+2},\ldots,X_n)\otimes X_{i+1}^j$.

Let $P\in P_n^{\GS_i}$. We have
$$m((1\otimes P\otimes 1\otimes 1)\eta_i(1))=(-1)^{n+1}
\partial_{s_1\cdots s_i}(P
(X_{i+1}-X_{i+2})\cdots (X_{i+1}-X_n))$$
and
$$m((1\otimes T_{i+1}P\otimes 1\otimes 1)\eta_i(1))=(-1)^n
\partial_{s_1\cdots s_i}(P
(X_{i+1}-X_{i+3})\cdots (X_{i+1}-X_n)).$$
When $i>n/2-1$, we have
$$m((1\otimes X_{i+1}^j\otimes 1\otimes 1)\eta_i(1))=
m((1\otimes T_{i+1}\otimes 1\otimes 1)\eta_i(1))=0
\text{ for }j<2i-n+1$$
$$\text{and }
m((1\otimes X_{i+1}^{2i-n+1}\otimes 1\otimes 1)\eta_i(1))=(-1)^{n+1}.$$
When $i\le n/2-1$, we have
$$m((1\otimes T_{i+1}\otimes 1\otimes 1)\eta_i(1))=(-1)^nX_{i+2}^{n-2i-2}
\pmod{\sum_{j=0}^{n-2i-3}P_n^{\GS[1,i+1]\times
\GS[i+2,n]}X_{i+2}^j}.$$
\end{lemma}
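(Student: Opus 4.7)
The element $\eta_i(1)\in (H_{i+1,n}\otimes_{H_{i,n}}H_{i+1,n})^{H_{i+1,n}}$ is the Casimir $\pi^{H_{i+1,n}}_{H_{i,n}}$ for the Frobenius form $\eps_i$ of Lemma \ref{le:epsi}: it is uniquely characterized by $H_{i+1,n}$-centrality together with $(\eps_i\otimes\id)(\eta_i(1))=1\in H_{i+1,n}$.  My plan is to verify these two properties for the proposed right-hand side $\Psi$, and then to derive the subsequent multiplication formulas by direct substitution, the only non-formal ingredient being a single combinatorial identity.

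\textbf{Step 1: the counit condition.}  To compute $(\eps_i\otimes\id)(\Psi)$, I would process each summand indexed by $k\in\{0,\ldots,i\}$.  Since $e_{n-i-1-j}(X_{i+2},\ldots,X_n)\in P_n^{\GS[i+2,n]}$ commutes with $T_1,\ldots,T_i$, it slides to the left past the Weyl factor $T_k\cdots T_1\,s_1\cdots s_i$, so the argument of $\eps_i$ takes the shape $P\cdot T_w\cdot s_1\cdots s_i$ with $w=s_k\cdots s_1\in\GS_{i+1}$.  By the first formula of Lemma \ref{le:epsi}, this vanishes unless $w\in \GS_i\cdot s_i\cdots s_1$.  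A direct coset computation ($s_k\cdots s_1\cdot s_1\cdots s_i$ telescopes to $s_{k+1}\cdots s_i$ for $k<i$ and to the identity for $k=i$) shows that this coset condition forces $k=i$.  The surviving term reduces to
\begin{equation*}
\sum_{j=0}^{n-i-1}(-1)^j\,\partial_{s_{n-1}\cdots s_{i+1}}\bigl(e_{n-i-1-j}(X_{i+2},\ldots,X_n)\bigr)\cdot X_{i+1}^j=1
\end{equation*}
by Lemma \ref{le:basisrelativeP}, which expresses the duality of the bases $\{X_{i+1}^j\}$ and $\{(-1)^je_{n-i-1-j}(X_{i+2},\ldots,X_n)\}$ with respect to $\partial_{s_{n-1}\cdots s_{i+1}}$.

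\textbf{Step 2: centrality.}  For $H_{i+1,n}$-centrality of $\Psi$, I would invoke transitivity of the Casimir along the tower $H_{i,n}\subset B\subset H_{i+1,n}$ with $B=H_{i,n}\otimes_{P_n^{\GS[i+1,n]}}P_n^{\GS[i+2,n]}$.  By Lemma \ref{le:basisrelativeP}, the inner factor $\pi$ is the Casimir of $B$ over $H_{i,n}$, hence $B$-central.  One then identifies $\{T_iT_{i-1}\cdots T_{k+1}\}_{k=0}^i$ as a $B$-basis of $H_{i+1,n}$ with dual basis $\{T_k\cdots T_1\cdot s_1\cdots s_i\}_{k=0}^i$ for the Frobenius form $H_{i+1,n}\to B$ obtained from $\eps_i$ via Lemma \ref{le:trans3}; the duality is a second application of the first formula of Lemma \ref{le:epsi}.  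Multiplicativity of Casimirs in towers then gives $\Psi=\pi^{H_{i+1,n}}_{H_{i,n}}$, hence $\Psi=\eta_i(1)$.

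\textbf{Step 3: the multiplication formulas.}  With the explicit formula for $\eta_i(1)$ in hand, the remaining identities are computations.  For $P\in P_n^{\GS_i}$, the polynomials $P$, $e_{n-i-1-j}(X_{i+2},\ldots,X_n)$, and $X_{i+1}^j$ pairwise commute, so substitution and the identity $\sum_j(-1)^je_{n-i-1-j}X_{i+1}^j=(-1)^{n-i-1}\prod_{l=i+2}^n(X_{i+1}-X_l)$ (obtained by expanding $\prod_l(y-X_l)$ at $y=X_{i+1}$) yield
\begin{equation*}
m((1\otimes P\otimes 1\otimes 1)\eta_i(1))=(-1)^{n-i-1}\sum_{k=0}^iT_k\cdots T_1\cdot s_1\cdots s_i\cdot P\!\!\prod_{l=i+2}^n\!\!(X_{i+1}-X_l)\cdot T_i\cdots T_{k+1}.
\end{equation*}
The claim then reduces to the key combinatorial identity
\begin{equation*}
\sum_{k=0}^iT_k\cdots T_1\cdot s_1\cdots s_i\cdot Q\cdot T_i\cdots T_{k+1}=(-1)^i\,\partial_{s_1\cdots s_i}(Q)
\end{equation*}
valid for any $Q\in P_n$, which I would prove by induction on $i$ using the nil Hecke relations $T_jf-s_j(f)T_j=\partial_j(f)$, the nilpotency $T_j^2=0$, and the intertwining $s_j\cdot f=s_j(f)\cdot s_j$ in the $\GS_n$-embedding $s_j=(X_j-X_{j+1})T_j+1$; the base case $i=1$ simplifies directly to $s_1QT_1+T_1s_1Q=-\partial_1(Q)$ after expanding $s_1$.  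The formula involving $T_{i+1}P$ is obtained by inserting $T_{i+1}$ on the left and using $T_{i+1}(X_{i+1}-X_{i+2})=-1+(X_{i+2}-X_{i+1})T_{i+1}$, which cancels the factor $(X_{i+1}-X_{i+2})$ in the product with an extra sign and accounts for the $(-1)^n$ in the stated formula.  The specific values for $P=X_{i+1}^j$ and the two $T_{i+1}$-entries in the last cases follow by a degree count on $\partial_{s_1\cdots s_i}$: this operator lowers total degree by $i$, so polynomials below the critical degree map to $0$, while the top-degree coefficient is evaluated using Lemma \ref{le:basisrelativeP}.

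\textbf{Main obstacle.}  The principal technical difficulty is the polynomial identity $\sum_kT_k\cdots T_1\cdot s_1\cdots s_i\cdot Q\cdot T_i\cdots T_{k+1}=(-1)^i\partial_{s_1\cdots s_i}(Q)$ of Step 3: a nil-Hecke analog of a classical Weyl-group summation.  Its inductive proof requires careful management of left- and right-multiplication by $T_j$'s, the polynomial conjugation rule, and the nilpotency $T_j^2=0$.  Once this identity is in hand, all remaining vanishing statements of the lemma follow from polynomial-degree considerations.
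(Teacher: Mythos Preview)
Your Steps 1--2 are essentially the paper's own argument, reorganized: both derive the formula for $\eta_i(1)$ by transitivity along the tower $H_{i,n}\subset H_{i,n}\otimes_{P_n^{\GS[i+1,n]}}P_n^{\GS[i+2,n]}\subset H_{i+1,n}$, using the dual bases of Lemma~\ref{le:basisrelativeP} for the inner step and the bases $\{T_i\cdots T_{k+1}\}$, $\{T_k\cdots T_1\,s_1\cdots s_i\}$ for the outer one.

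Step 3 is where you genuinely diverge from the paper.  The paper never proves (or states) your summation identity $\sum_k T_k\cdots T_1\,s_1\cdots s_i\,Q\,T_i\cdots T_{k+1}=(-1)^i\partial_{s_1\cdots s_i}(Q)$.  Instead it left-multiplies $f(b)=m((1\otimes b\otimes 1\otimes 1)\eta_i(1))$ by $T_{w[1,i+2]}$: this kills every summand with $k\ge 1$ (since $T_{w[1,i+2]}T_j=0$), converts $s_1\cdots s_i$ into $(-1)^i$ (since $T_{w[1,i+2]}s_j=-T_{w[1,i+2]}$), and reduces $T_{w[1,i+2]}f(b)$ to a single explicit term.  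The paper then invokes the centralizer computation $H_{i+2,n}^{H_{i+1,n}}=P_n^{\GS[1,i+1]\times\GS[i+3,n]}$ to see that $f(b)\in P_n$ for $b\in H_{i+2,n}^{H_{i,n}}$, and recovers $f(b)$ from $T_{w[1,i+2]}f(b)$ via injectivity of left multiplication by $T_{w[1,i+2]}$ on $P_n$.  Your route avoids this centralizer/injectivity detour at the price of the summation identity; the paper's route is computationally lighter but less self-contained.  (Your identity is correct---the inductive step uses $s_i Q T_i=s_i(Q)T_i=T_iQ-\partial_i(Q)$ to split off $(-1)^i\partial_{s_1\cdots s_i}(Q)$ via the $(i{-}1)$-case, leaving a residual sum one checks vanishes---but this residual step needs more than you indicate.)

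There is a genuine gap in your $T_{i+1}P$ argument.  First, your commutation formula is off: $T_{i+1}(X_{i+1}-X_{i+2})=(X_{i+2}-X_{i+1})T_{i+1}-2$, not $-1$.  More importantly, the mechanism ``cancels the factor $(X_{i+1}-X_{i+2})$'' does not work, because $T_{i+1}$ sits between the factors $e_{n-i-1-j}(X_{i+2},\ldots,X_n)$ (which involve $X_{i+2}$ and do not commute with $T_{i+1}$) and $X_{i+1}^j$, so the product $\prod_l(X_{i+1}-X_l)$ never forms.  What does work is to push $T_{i+1}$ \emph{left} through the elementary symmetric functions via
\[
e_m(X_{i+2},\ldots,X_n)\,T_{i+1}=T_{i+1}\,e_m(X_{i+1},X_{i+3},\ldots,X_n)+e_{m-1}(X_{i+3},\ldots,X_n).
\]
Summing against $(-1)^j X_{i+1}^j$, the $T_{i+1}$-term vanishes because $\sum_j(-1)^je_{n-i-1-j}(X_{i+1},X_{i+3},\ldots,X_n)X_{i+1}^j$ is (up to sign) the product over $l\in\{i+1,i+3,\ldots,n\}$ of $(X_{i+1}-X_l)$, which contains the factor $(X_{i+1}-X_{i+1})=0$.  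The surviving piece is the polynomial $P\cdot(-1)^{n-i}\prod_{l\ge i+3}(X_{i+1}-X_l)$, and now your key identity applies and yields the stated $(-1)^n\partial_{s_1\cdots s_i}(\cdots)$.  The paper achieves the same simplification implicitly: after multiplying by $T_{w[1,i+2]}$, any residual $T_{i+1}$ on the left is absorbed.
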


\begin{proof}
Let $\CB=\{X_{i+1}^j\}_{0\le j\le n-i-1}$,
a basis for $\BZ[X_{i+1},\ldots,X_n]^{\GS[i+2,n]}$ over
$\BZ[X_{i+1},\ldots,X_n]^{\GS[i+1,n]}$ with dual
basis $\CB^\vee=\{(-1)^j e_{n-i-1-j}(X_{i+2},\ldots,X_n)\}$ for
the symmetrizing form $\partial_{s_{n-1}\cdots s_{i+1}}$
(Lemma \ref{le:basisrelativeP}).
Let $\CB$ be a basis for $\BZ[X_{i+1},\ldots,X_n]^{\GS[i+2,n]}$ over
$\BZ[X_{i+1},\ldots,X_n]^{\GS[i+1,n]}$ and $\CB^\vee$ the dual basis for
the symmetrizing form $\partial_{s_{n-1}\cdots s_{i+1}}$.
Let $\pi=\sum_{a\in\CB}a^\vee\otimes a$ be the Casimir element.
Let
$R=\{1,T_i,\ldots,T_i\cdots T_1\}$, a basis of
${^0H}_{i+1}^f$ over ${^0H}_i^f$. Its dual basis for
the Frobenius form 
$$T_w\mapsto
\begin{cases}
T_{ws_1\cdots s_i}
& \text{ if }w\in \GS_is_i\cdots s_1 \\
0 & \text{ otherwise}
\end{cases}$$
is given by $\{1^\vee=T_i\cdots T_1,\ldots,(T_i\cdots T_2)^\vee=T_1,
(T_i\cdots T_1)^\vee=1\}$.
It follows from Lemmas \ref{le:epsi} and \ref{le:trans2} that
$$T_ws_1\cdots s_i\mapsto
\begin{cases}
T_{ws_1\cdots s_i}
& \text{ if }w\in \GS_is_i\cdots s_1 \\
0 & \text{ otherwise.}
\end{cases}$$
extends to a Frobenius form for the $\left({^0H}_i\otimes
\BZ[X_{i+1},\ldots,X_n]^{\GS[i+2,n]}\right)$-algebra $H_{i+1,n}$ for which the basis
dual to $R$ is $\{h^\vee s_1\cdots s_i\}_{h\in R}$.
Then,
$\{ah\}_{a\in\CB,h\in R}$ is a basis of $H_{i+1,n}$ as an $H_{i,n}$-module.
Furthermore, the dual basis for the Frobenius form $\eps_i$ is
$\{h^\vee s_1\cdots s_i a^\vee \}_{a\in\CB,h\in R}$ (cf Lemma
\ref{le:epsi}).
So, we have
$$\eta_i(1)=T_i\cdots T_1 s_1\cdots s_i \pi + \cdots +
T_1 s_1\cdots s_i \pi T_i \cdots T_2+ s_1\cdots s_i \pi T_i\cdots T_1.$$
We deduce that
$$T_{w[1,i+2]}\eta_i(1)=T_{w[1,i+2]} s_1\cdots s_i \pi T_i\cdots T_1=
(-1)^i T_{w[1,i+2]} \pi T_i\cdots T_1.$$
Let $b\in H_{i+2,n}^{H_{i,n}}$. Define
$$f(b)=m((1\otimes b\otimes 1\otimes 1)\eta_i(1))=
\sum_{a\in\CB,h\in R} h^\vee s_1\cdots s_i a^\vee b a h\in
H_{i+2,n}.$$
We have
$$T_{w[1,i+2]}f(b)=(-1)^i T_{w[1,i+2]}\sum_{a\in\CB}a^\vee b a T_i\cdots T_1.$$
Since $\deg(\pi)=2(n-i-1)$, it follows that
$$T_{w[1,i+2]}f(b)=0 \text{ for } b\in F^{<(2(2i-n+1),*)}.$$

\smallskip
We have
$$\left(\BQ(X_1,\ldots,X_n)^{\GS[i+3,n]}\rtimes\GS_{i+2}\right)^
{\BQ(X_1,\ldots,X_n)^{\GS[i+2,n]}\rtimes\GS_{i+1}}=
\BQ(X_1,\ldots,X_n)^{\GS[1,i+1]\times\GS[i+3,n]}$$
and
$$H_{i+2,n}^{H_{i+1,n}}=P_n^{\GS[1,i+1]\times\GS[i+3,n]}.$$
We deduce that given $b\in H_{i+2,n}^{H_{i,n}}$,
then $f(b)\in P_n$. Note that
left multiplication by $T_{w[1,i+2]}$ is injective on
$P_n$.

\smallskip
We have $m(\pi)=(X_{i+2}-X_{i+1})\cdots (X_n-X_{i+1})$ by
Lemma \ref{le:Casimir}.
Let $P\in P_n^{\GS_i}$. We have
$T_{w[1,i+2]}f(P)=(-1)^iT_{w[1,i+2]}\partial_{s_1\cdots s_i}(P
(X_{i+2}-X_{i+1})\cdots (X_n-X_{i+1}))$,
hence 
$$f(P)=(-1)^{n+1}\partial_{s_1\cdots s_i}(P
(X_{i+1}-X_{i+2})\cdots (X_{i+1}-X_n)).$$

\smallskip
We have
$$T_{w[1,i+2]}f(T_{i+1}P)=(-1)^iT_{w[1,i+2]}\sum_{j=0}^{n-i-2}
(-1)^j e_{n-i-2-j}(X_{i+3},\ldots,X_n)PX_{i+1}^j T_i\cdots T_1$$
hence
$$f(T_{i+1}P)=(-1)^n\partial_{s_1\cdots s_i}(P
(X_{i+1}-X_{i+3})\cdots (X_{i+1}-X_n)).$$

\smallskip
Assume $i>n/2-1$.
The vanishing statements are immediate consequences of the previous
two equalities of the Lemma.

We have
\begin{align*}
f(X_{i+1}^{2i-n+1})&=(-1)^{n+1}\partial_{s_1\cdots s_i}(X_{i+1}^{2i-n+1}
(X_{i+1}-X_{i+2})\cdots (X_{i+1}-X_n))\\
&= (-1)^{n+1}\partial_{s_1\cdots s_i}(X_{i+1}^i)=(-1)^{n+1}.
\end{align*}

Assume $i\le n/2-1$. We have
\begin{align*}
f(T_{i+1})&=(-1)^n\partial_{s_1\cdots s_i}((X_{i+1}-X_{i+3})
\cdots (X_{i+1}-X_n))\\
&=
\sum_{j=i}^{n-i-2}(-1)^{i-j}\partial_{s_1\cdots s_i}(X_{i+1}^j)
e_{n-i-2-j}(X_{i+3},\ldots,X_n).
\end{align*}
By induction, we see that
$e_k(X_{i+3},\ldots,X_n)\in (-1)^kX_{i+2}^k+
\sum_{j<k}\BZ[X_{i+2},\ldots,X_n]^{\GS[i+2,n]}X_{i+2}^j$.
Consequently,
$$f(T_{i+1})=(-1)^nX_{i+2}^{n-2i-2}
\pmod{\sum_{j=0}^{n-2i-3}P_n^{\GS[1,i+1]\times
\GS[i+2,n]}X_{i+2}^j}.$$
\end{proof}

As a consequence of Lemmas \ref{le:epsi} and \ref{le:etai}, we obtain a
description of the units and counits $\eta_i$ and $\eps_i$
through the isomorphisms of Proposition \ref{pr:isoH}.

\begin{prop}
\label{pr:adjmin}
If $i<n/2$ then we have a commutative diagram
$$\xy
(-50,0) 
*+{H_{i,n}\otimes_{H_{i-1,n}}H_{i,n}(2)
\oplus H_{i,n}\oplus H_{i,n}(-2)\oplus\cdots\oplus H_{i,n}(-2(n-2i-1))}="1",
(40,0) *+{H_{i+1,n}}="2",
(40,-20) *+{H_{i,n}(-2(n-2i-1))}="3",
 \ar^{\eps_i}"2";"3",
 \ar_-{\sim}^-{\rho_i}"1";"2",
 \ar_{(0,0,\ldots,0,(-1)^{n+1})\ \ \ \ }"1";"3",
\endxy$$

If $i\ge n/2$ then the image of $\eps_i\circ\rho_i$ in 
$$\Hom_{H_{i,n},H_{i,n}}(H_{i,n}\otimes_{H_{i-1,n}}H_{i,n}(2),
H_{i,n}(2(2i-n+1)))/\bigoplus_{j=0}^{2i-n-1}
(a\otimes a'\mapsto aX_i^j a')\cdot Z(H_{i,n})_{2(2i-n-j)}$$
is equal to the image of the map $a\otimes a'\mapsto (-1)^{n+1}aX_i^{2i-n}a'$.

\smallskip
If $i\le n/2-1$ then the image of $\rho_{i+1}\circ\eta_i$ in
$$\Hom_{H_{i+1,n},H_{i+1,n}}(H_{i+1,n},H_{i+2,n}(2(n-2i-2)))/
\bigoplus_{j=0}^{n-2i-3}
X_{i+2}^j Z(H_{i+1,n})_{2(n-2i-2-j)}$$
is equal to $(-1)^n X_{i+2}^{n-2i-2}$.

\smallskip
If $i> n/2-1$ then we have a commutative diagram
$$\xy
(-50,0) *+{H_{i+1,n}}="1",
(40,0) *+{H_{i+1,n}\otimes_{H_{i,n}}H_{i+1,n}(2(n-2i-1))}="2",
(40,-20) *+{H_{i+2,n}(2(n-2i-2)) \oplus H_{i+1,n}(2(n-2i-1))\oplus 
H_{i+1,n}(2(n-2i+1))\oplus\cdots\oplus H_{i+1,n}}="3",
\ar^-{\eta_i}"1";"2",
\ar_-{(0,0,\ldots,0,(-1)^{n+1})\ \ \ \ }"1";"3",
\ar_-{\sim}^-{\rho_{i+1}}"2";"3",
\endxy$$
\end{prop}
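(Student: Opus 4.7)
The proposition gathers four statements, each asserting what a natural map ($\eps_i$ or $\eta_i$) does once $\rho_i$ or $\rho_{i+1}$ is applied as an identification. The proof in each case is a direct unwinding: decompose the element via the explicit formula for $\rho_i$ (resp.\ $\rho_{i+1}$) from Proposition~\ref{pr:isoH}, then evaluate $\eps_i$ (resp.\ $\eta_i$) term by term using Lemma~\ref{le:epsi} (resp.\ Lemma~\ref{le:etai}).

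For the two $\eps_i$ statements I would use that $\eps_i$ is $(H_{i,n},H_{i,n})$-bilinear. When $i<n/2$, Proposition~\ref{pr:isoH} gives $\rho_i(a\otimes a',a_0,\ldots,a_{n-2i-1})=aT_ia'+\sum_j a_jX_{i+1}^j$, so $\eps_i\circ\rho_i=a\eps_i(T_i)a'+\sum_j a_j\eps_i(X_{i+1}^j)$; inserting the values $\eps_i(T_i)=0$, $\eps_i(X_{i+1}^j)=0$ for $j<n-2i-1$, and $\eps_i(X_{i+1}^{n-2i-1})=(-1)^{n+1}$ from Lemma~\ref{le:epsi} yields the first diagram. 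When $i\ge n/2$, only the $H_{i+1,n}$-component of $\rho_i$ lies in the domain of $\eps_i$, and that component is $aT_ia'$, so $\eps_i\circ\rho_i=a\eps_i(T_i)a'$; Lemma~\ref{le:epsi} gives $\eps_i(T_i)\equiv(-1)^{n+1}X_i^{2i-n}$ modulo $\sum_{j=0}^{2i-n-1}P_n^{\GS[1,i]\times\GS[i+1,n]}X_i^j$, and using $Z(H_{i,n})=P_n^{\GS[1,i]\times\GS[i+1,n]}$ this is exactly the claimed identification in the quotient of $\Hom$-spaces.

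For the two $\eta_i$ statements I would apply the same procedure to $\rho_{i+1}$. The key identification is that each component of $\rho_{i+1}(\eta_i(1))$ in the direct-sum decomposition of Proposition~\ref{pr:isoH} (at level $i+1$) is precisely $m\bigl((1\otimes v\otimes 1\otimes 1)\eta_i(1)\bigr)$, where $v\in\{T_{i+1},1,X_{i+1},X_{i+1}^2,\ldots\}$ is the element of $H_{i+1,n}$ that the corresponding component of $\rho_{i+1}$ inserts between the two tensor factors. Lemma~\ref{le:etai} has been set up to compute exactly these quantities: when $i\le n/2-1$ it yields the $H_{i+2,n}$-component as $(-1)^nX_{i+2}^{n-2i-2}$ modulo $\sum_{j=0}^{n-2i-3}Z(H_{i+1,n})X_{i+2}^j$, proving the third assertion; when $i>n/2-1$ it gives the vanishing of every component except the one corresponding to $v=X_{i+1}^{2i-n+1}$, whose value is $(-1)^{n+1}$, proving the fourth.

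The only genuine obstacle is bookkeeping: matching each component of $\rho_i$ (resp.\ $\rho_{i+1}$) with the correct formula in Lemma~\ref{le:epsi} (resp.\ Lemma~\ref{le:etai}), and verifying that the grading shifts in Proposition~\ref{pr:isoH}, together with the shifts in the targets of $\eps_i$ and $\eta_i$, line up with those stated (in particular the degree constraint $z\in Z(H_{i,n})_{2(2i-n-j)}$ on the central elements multiplying $X_i^j$ follows from requiring $aX_i^jza'$ to land in the correct graded piece of $H_{i,n}(2(2i-n+1))$). Once this alignment is checked, each of the four assertions is immediate.
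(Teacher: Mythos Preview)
Your proposal is correct and follows exactly the paper's approach: the paper's proof is the single sentence that Proposition~\ref{pr:adjmin} is a consequence of Lemmas~\ref{le:epsi} and~\ref{le:etai} via the explicit form of $\rho_i$ in Proposition~\ref{pr:isoH}, and you have faithfully unwound that. Your identification of each component of $\rho_{i+1}(\eta_i(1))$ with $m\bigl((1\otimes v\otimes 1\otimes 1)\eta_i(1)\bigr)$ for the appropriate $v$, together with $Z(H_{i,n})=P_n^{\GS[1,i]\times\GS[i+1,n]}$, is exactly the bookkeeping that makes Lemmas~\ref{le:epsi} and~\ref{le:etai} match the four stated conclusions.
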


\subsubsection{$\Gsl_2$-action}
\label{se:sl2min}
Let $\tilde{\CL}(-n)_\lambda=H_{(n+\lambda)/2,n}\mfree$ for 
$\lambda\in\{-n,-n+2,\ldots,n-2,n\}$. We define
$E=\bigoplus_{i=0}^{n-1}\Ind_{H_{i,n}}^{H_{i+1,n}}$ and
$F=\bigoplus_{i=0}^{n-1}\Res_{H_{i,n}}^{H_{i+1,n}}$.
We have a canonical adjunction $(E,F)$.
Multiplication
by $X_{i+1}$ induces an endomorphism of $\Ind_{H_{i,n}}^{H_{i+1,n}}$ and
taking the sum over all $i$, we obtain an endomorphism $x$ of $E$.
Similarly, $T_{i+1}$ induces an endomorphism of
$\Ind_{H_{i,n}}^{H_{i+2,n}}$ and we obtain an endomorphism $\tau$ of $E^2$.
Propositions \ref{pr:isoH} and \ref{pr:adjmin} show that this endows
$\tilde{\CL}(-n)=\bigoplus_\lambda \tilde{\CL}(-n)_\lambda$ with an
action of $\bar{\FA}'$.

Let $R=R_M:\CL(-n)\otimes_{\End(\bar{\idun}_{-n})}P_n^{\GS_n}
\to \tilde{\CL}(-n)$ be the
morphism of $2$-representations associated with $M=P_n^{\GS_n}\in
\tilde{\CL}(-n)_{-n}$ (cf \S \ref{se:lowest}).

\begin{prop}
\label{pr:minsl2}
The canonical map $\End(\bar{\idun}_{-n})\to P_n^{\GS_n}$ is an
isomorphism and $R$ induces an isomorphism of $2$-representations
of $\FA$
$$\CL(-n)\iso \tilde{\CL}(-n).$$
In particular, the action of $\FA$ on 
$\CL(-n)$ induces an action of $\bar{\FA}'$.
\end{prop}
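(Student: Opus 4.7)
The plan is to carry out three steps: (i) verify $\tilde{\CL}(-n)$ is a $2$-representation of $\bar{\FA}'$ (not merely of $\FA$); (ii) use Lemma \ref{le:highestweight} to produce a fully faithful morphism $R$ and check essential surjectivity; (iii) identify $A:=\End(\bar{\idun}_{-n})$ with $P_n^{\GS_n}$, after which the tensor in the source of $R$ becomes trivial.

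For step (i), the $\FA$-action on $\tilde{\CL}(-n)$ is already encoded via Propositions \ref{pr:isoH} and \ref{pr:adjmin}. To upgrade it to $\bar{\FA}'$, I would check that the adjunction $(F,E)$ built from the symmetrizing forms of Lemma \ref{le:formHin} satisfies $\eps^l_s=\tilde{\eps}_s$, $\eta^l_s=\hat{\eta}_s$, and $(f^\vee)^\vee=f$. The sign $(-1)^{n+1}$ and the normalizations in Proposition \ref{pr:adjmin} are exactly calibrated so that the candidate counit $\tilde{\eps}_s$ and unit $\hat{\eta}_s$, defined abstractly via $\rho_{s,\lambda}^{-1}$, match the concrete $\eps_i$ and $\eta_i$ obtained from the Frobenius forms.

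For step (ii), $M=H_{0,n}=P_n^{\GS_n}$ lies in $\tilde{\CL}(-n)^{\mathrm{lw}}_{-n}$ (since $\tilde{\CL}(-n)_{-n-2}=0$). Lemma \ref{le:highestweight} then gives a fully faithful $R:\CL(-n)\otimes_A P_n^{\GS_n}\to\tilde{\CL}(-n)$. Essential surjectivity on the weight space $-n+2i$ follows because $R$ sends $E^i\bar{\idun}_{-n}\otimes 1$ to $\Ind_{H_{0,n}}^{H_{i,n}}(P_n^{\GS_n})=H_{i,n}$, and every object of $\tilde{\CL}(-n)_{-n+2i}=H_{i,n}\mfree$ is a direct summand of a power of $H_{i,n}$.

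For step (iii), Lemma \ref{le:basiccommutation} applied inside $\CL(-n)$ with $F\bar{\idun}_{-n}=0$ gives $F^iE^i\bar{\idun}_{-n}\simeq L(i,i,i,-n)\otimes_\BZ\bar{\idun}_{-n}$ and hence $\End_{\CL(-n)}(E^i\bar{\idun}_{-n})\simeq L(i,i,i,-n)\otimes_\BZ A$. Under $R_M$ this maps to $\End_{\tilde{\CL}(-n)}(E^iM)=H_{i,n}^\opp$, and $L(i,i,i,-n)$ has $\BZ$-rank $(i!)^2\binom{n}{i}$, matching the $P_n^{\GS_n}$-rank of $H_{i,n}$. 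An explicit section $\psi:P_n^{\GS_n}\to A$ of the canonical map $\phi:A\to P_n^{\GS_n}$ can be constructed as $p\mapsto \tilde{\eps}^{(n)}\circ(F^n\cdot p\cdot E^n)\circ\eta^{(n)}$, where $\tilde{\eps}^{(n)}$ and $\eta^{(n)}$ are iterated applications of $\tilde{\eps}_{s,\mu}$ and $\eta_s$ across intermediate weights; Proposition \ref{pr:adjmin} can be used to verify $\phi\circ\psi=\id$. This is the main obstacle: since the tensor product in the source of $R$ a priori collapses information about $A$, one must combine this explicit section with the matching ranks to conclude that $\phi$ is an isomorphism. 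Once $A=P_n^{\GS_n}$, the tensor is trivial, $R$ gives $\CL(-n)\simeq\tilde{\CL}(-n)$, and the $\bar{\FA}'$-action transfers from $\tilde{\CL}(-n)$ to $\CL(-n)$.
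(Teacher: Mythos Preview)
Your steps (i) and (ii) are sound and match what the paper does implicitly. The difficulty is entirely in step (iii), and there your argument is incomplete.

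The rank-matching part of your step (iii) does not help. Full faithfulness of $R$ from Lemma~\ref{le:highestweight} is only for $\CL(-n)\otimes_A P_n^{\GS_n}$, not for $\CL(-n)$ itself. So the isomorphism you obtain is
\[
\End_{\CL(-n)}(E^i\bar{\idun}_{-n})\otimes_A P_n^{\GS_n}\ \simeq\ \End_{\tilde{\CL}(-n)}(E^iM),
\]
and both sides are free of the same rank over $P_n^{\GS_n}$ regardless of what $A$ is; the tensor over $A$ has already killed $\ker\phi$. No information about $A$ survives this comparison.

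Your section construction is closer to the mark, but you do not explain why a section forces $\phi$ to be injective --- a split surjection of commutative rings need not be an isomorphism. The missing observation is that the section is a morphism of $A$-algebras (with $P_n^{\GS_n}$ an $A$-algebra via $\phi$); then $s(\phi(a))=a\cdot s(1)=a$, so $s\circ\phi=\id_A$. The paper produces this section more cleanly than your iterated $\tilde{\eps}^{(n)}$ formula: Lemma~\ref{le:basiccommutation} gives $\bar{\idun}_{-n}\iso F^{(n)}E^{(n)}\bar{\idun}_{-n}$, so the functor $E^{(n)}$ induces an isomorphism $A\iso\End(E^{(n)}\bar{\idun}_{-n})$ (apply adjunction). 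On the other hand $P_n^{\GS_n}=Z({}^0H_n)$ acts on $E^{(n)}$ in $\FA$, giving an $A$-algebra map $P_n^{\GS_n}\to\End(E^{(n)}\bar{\idun}_{-n})$. Composing with the inverse isomorphism yields the desired $A$-algebra section of $\phi$, and a small commutative diagram (everything is a canonical $A$-algebra map) checks $\phi\circ s=\id$. This bypasses both the $\tilde{\eps}$ computation and the rank comparison.
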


\begin{proof}
The canonical map $\bar{\idun}_{-n}\to F^{(n)}E^{(n)}\bar{\idun}_{-n}$
is an isomorphism by Lemma \ref{le:basiccommutation}. It follows that
$E^{(n)}$ induces an isomorphism $\End(\bar{\idun}_{-n})\iso
\End(E^{(n)}\bar{\idun}_{-n})$. We have a commutative diagram of
canonical morphisms of $\End(\bar{\idun}_{-n})$-algebras
$$\xymatrix{
& \End(\bar{\idun}_{-n}) \ar[dl]_-{\sim} \ar[d] \ar[dr] \\
\End(E^{(n)}\bar{\idun}_{-n})\ar[r] & \End(E^{(n)}M) &
 P_n^{\GS_n} \ar[l]_-\sim \ar@/^2pc/[ll]_{ }
}$$
so the canonical map
$\End(\bar{\idun}_{-n})\to P_n^{\GS_n}$ is a split surjection of
 $\End(\bar{\idun}_{-n})$-algebras, hence it is an isomorphism.
The proposition follows.
\end{proof}

\subsection{Construction of representations}
In this section, we show that, for integrable representations, certain
axioms are consequences of others.

\subsubsection{Biadjointness}
\begin{thm}
\label{th:leftadjoint}
The canonical strict $2$-functor $\FA\to
\FA'$ induces an equivalence from the $2$-category of integrable
$2$-representations of $\FA'$ to the $2$-category of
integrable $2$-representations of $\FA$.
\end{thm}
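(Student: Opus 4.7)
The theorem asserts that every integrable $2$-representation $R:\FA\to\FB$ extends uniquely to a $2$-representation of $\FA'$. Since $\FA'$ differs from $\FA$ only by the addition, for each $s\in I$, of an adjunction $(F_s,E_s)$ whose counit equals the already-defined $\tilde{\eps}_s$, the plan is to provide such an adjunction canonically, with unit equal to $\hat{\eta}_s$ (the only possible choice, by uniqueness of units once the counit is fixed). Thus the content of the theorem reduces to verifying the two zigzag identities
$$(E_s\tilde{\eps}_s)\circ(\hat{\eta}_s E_s)=\id_{E_s},\qquad (\tilde{\eps}_s F_s)\circ(F_s\hat{\eta}_s)=\id_{F_s}$$
in every integrable $2$-representation of $\FA$, for every $s\in I$.

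The key step is a reduction to $\Gg=\Gsl_2$. Both $\tilde{\eps}_s$ and $\hat{\eta}_s$ are constructed purely from $E_s,F_s,\eps_s,\eta_s,x_s,\tau_{s,s}$ and the inverses of the maps $\rho_{s,\lambda}$. Given an integrable $R:\FA\to\FB$, I would group weight spaces along the level sets of $\langle\alpha_s^\vee,-\rangle$: setting $\CU_n=\bigoplus_{\langle\alpha_s^\vee,\lambda\rangle=n} R(\lambda)$ yields an integrable $2$-representation $\CU=\bigoplus_n\CU_n$ of $\FA(\Gsl_2)$, and the images in $\CU$ of $\tilde{\eps}_s$ and $\hat{\eta}_s$ are precisely the corresponding $\Gsl_2$-constructs. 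The two zigzag identities in $R$ thus become the analogous identities in $\CU$, so it suffices to prove the theorem for $\Gg=\Gsl_2$.

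For $\Gsl_2$, Proposition \ref{pr:minsl2} equips each $\CL(-n)$ with an action of $\bar{\FA}'$, and hence of $\FA'$. So $(F,E)$ is an adjoint pair on $\CL(-n)$ whose counit, by the defining relation of $\FA'$, equals $\tilde{\eps}$; the zigzag identities for $(\tilde{\eps},\hat{\eta})$ therefore hold on every $\CL(-n)$. To propagate them to an arbitrary integrable $\FA(\Gsl_2)$-representation $\CV$, I would apply a ``vanishing on simples'' variant of Lemma \ref{le:0onsimple}: a $2$-arrow $\gamma$ between $1$-arrows with right duals that vanishes on every $\CL(\lambda)$ vanishes on every integrable $\CV$. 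This can be obtained by combining Lemma \ref{le:highestweight} (which fully faithfully embeds $\CL(\lambda)\otimes_{\End(\bar{\idun}_\lambda)}\End(M)$ in $\CV$ for each $M\in\CV_\lambda^{\mathrm{lw}}$) with the first assertion of Lemma \ref{le:invertcusp} (applied to the image $1$-arrow of $\gamma$). Taking $\gamma$ to be the difference of the two sides of each zigzag identity then gives the result.

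Local full faithfulness of $\FA'\mMOD(\FB)\to\FA\mMOD(\FB)$ is essentially automatic: once the extension is canonical, any morphism of $\FA$-representations automatically intertwines the induced adjunction data, since that data is expressed in terms of $\tilde{\eps}_s$ and $\hat{\eta}_s$ which are preserved by naturality. The main obstacle I anticipate is the reduction to $\Gsl_2$ in the second paragraph: one must verify that the inversions $\rho_{s,\lambda}^{-1}$ and all derived constructions restrict coherently to the regrouped representation $\CU$, and that $\CU$ is genuinely integrable. A secondary technical point is setting up the ``vanishing'' version of Lemma \ref{le:0onsimple} in enough generality to cover arbitrary integrable $2$-representations without any boundedness hypothesis on the occurring weights.
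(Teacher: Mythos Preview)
Your overall architecture---reduce to $\Gsl_2$, verify on the $\CL(-n)$, and propagate---matches the paper, but there is a genuine gap in the propagation step and a second issue you have not addressed.

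\textbf{First gap: vanishing of $2$-arrows does not propagate the way you claim.}
You want to show that $\gamma=(E\tilde{\eps})\circ(\hat{\eta}E)-\id_E$ and its $F$-analogue are \emph{zero} on every integrable $\CV$, by invoking a ``vanishing on simples implies vanishing everywhere'' principle for $2$-arrows. But Lemma~\ref{le:0onsimple} only gives ``acts by $0$'' for $1$-arrows and ``is an isomorphism'' for $2$-arrows; both reductions go through the cone, which turns the question into a $1$-arrow acting by $0$. There is no cone trick for ``$2$-arrow equals $0$'', and your suggested route---apply the first part of Lemma~\ref{le:invertcusp} to ``the image $1$-arrow of $\gamma$''---does not make sense in $\FL in_\Bk$ or in $\Ho^b(\CV)$, where a $2$-arrow has no image $1$-arrow. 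Naturality of $\gamma$ only tells you that $\gamma(N)$ kills the image of every $E(g)$ with $g:E^iM\to N$ and $M$ lowest-weight; in a general $\Bk$-linear category this does not force $\gamma(N)=0$. The paper avoids this by only proving that the zigzag composites are \emph{invertible}, and then invoking the general fact from \S\ref{se:def2cat} that if $(E\tilde{\eps})\circ(\hat{\eta}E)$ and $(\tilde{\eps}F)\circ(F\hat{\eta})$ are invertible one can correct the unit (keeping the counit $\tilde{\eps}$) to obtain a genuine adjunction. Note that this means the resulting unit need not be $\hat{\eta}$; your claim that $\hat{\eta}_s$ is ``the only possible choice'' presupposes what you are trying to prove.

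\textbf{Second gap: the $F$-zigzag cannot be handled symmetrically.}
Even the invertibility version of Lemma~\ref{le:0onsimple} requires the $1$-arrows involved to have \emph{right} duals. The first zigzag lives in $\End(E_s)$ and $E_s$ has right dual $F_s$, so the lemma applies. But the second zigzag lives in $\End(F_s)$, and in $\FA$ the functor $F_s$ has a left dual ($E_s$) but no right dual---that is exactly what the theorem is meant to produce. The paper resolves this by passing to $\CV^{\opp}$ with the $\FA$-action twisted by the Chevalley involution $I$: since $I(\tilde{\eps}_s)=-\hat{\eta}_s$ and $I$ swaps $E$ and $F$, the first zigzag on $\CV^{\opp}$ is the second zigzag on $\CV$, and invertibility transfers. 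You will need this step (or something equivalent) as well.
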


\begin{proof}
It is enough to consider the case $\Gg=\Gsl_2$ and an integrable
$2$-representation $\CV$ of $\FA$.
The theorem will follow from the fact that
the maps $\tilde{\eps}_{\lambda}$
are the counits of an adjoint pair
$(F\Id_{\CV_\lambda},\Id_{\CV_\lambda}E)$.
It is enough to show that
\begin{equation}
\label{eq:compo}
(E\tilde{\eps})\circ(\hat{\eta} E) \text{ and }
(\tilde{\eps} F)\circ(F\hat{\eta}) \text{ are invertible}.
\end{equation}
Note that this holds for
$\CV=\CL(\lambda)$ by Proposition \ref{pr:adjmin}.
Lemma \ref{le:0onsimple} shows that the first invertibility in 
(\ref{eq:compo}) holds for any $\CV$.
Now, applying that result to $\CV^\opp$
endowed with the action of $\FA$ induced by $I$, we obtain that
the second invertibility in (\ref{eq:compo}) holds as well.
\end{proof}

A consequence of Theorem \ref{th:leftadjoint} is an extension of Lemma
 \ref{le:0onsimple}.

\begin{lemma}
\label{le:0onsimple2}
Let $C$ be a $1$-arrow of $\Ho^b(\FA)$. If $C$ acts by
$0$ on $\Ho^b(\CL(\lambda))$ for all $\lambda\in -X^+$, then $C$ acts
by $0$ on $\Ho^b(\CV)$ for all integrable $2$-representations 
$\CV$ of $\FA$ in $\FL in_\Bk$.

Let $f$ be a $2$-arrow of $\Ho^b(\FA)$.
 If $f$ is an isomorphism on $\Ho^b(\CL(\lambda))$ for all $\lambda\in -X^+$,
then $f$ is an isomorphism on $\Ho^b(\CV)$ for all integrable
$2$-representations 
$\CV$ of $\FA$ in $\FL in_\Bk$.
\end{lemma}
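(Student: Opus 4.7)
The plan is to deduce the lemma from Lemma \ref{le:0onsimple} by invoking Theorem \ref{th:leftadjoint}: although an arbitrary $1$-arrow of $\FA$ need not admit a right dual, every $1$-arrow of $\FA'$ does, and for integrable $2$-representations the distinction between $\FA$- and $\FA'$-actions is immaterial.

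First I would use Theorem \ref{th:leftadjoint} to transfer the setting to $\FA'$: it asserts an equivalence between the $2$-categories of integrable $2$-representations of $\FA$ and of $\FA'$, so the action on any $\CL(\lambda)$ or $\CV$ of a $1$-arrow (or $2$-arrow) $C$ of $\Ho^b(\FA)$ coincides with the action of its image $C'$ in $\Ho^b(\FA')$ under the canonical $2$-functor $\FA\to\FA'$. Thus the hypotheses and the desired conclusions translate verbatim into statements about $C'$.

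Next, inside $\FA'$ the pair $(F_s,E_s)$ is adjoint via $\eta_s^l,\eps_s^l$, so $E_s$ is a right dual of $F_s$, while $F_s$ remains a right dual of $E_s$. Since the property of admitting a right dual is closed under composition and finite direct sums, every $1$-arrow of $\FA'$ admits one, and the termwise right-dual complex with reversed grading provides a right dual for any bounded complex, \ie\ for any $1$-arrow of $\Ho^b(\FA')$. Lemma \ref{le:0onsimple} --- whose proof uses only Lemma \ref{le:invertcusp} and Lemma \ref{le:highestweight}, both of which transfer verbatim to $\FA'$ --- then applies to $C'$ and gives the first assertion. The second assertion follows exactly as at the end of the proof of Lemma \ref{le:0onsimple}, by taking $C=\cone(f)$ and observing that $f$ acts invertibly precisely when $C$ acts by zero.

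No step is a genuine obstacle: once Theorem \ref{th:leftadjoint} is in hand, the lemma is a formal corollary of its predecessor, the only content being the passage from $\FA$ to $\FA'$ to secure right duals.
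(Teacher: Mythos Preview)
Your argument is correct and is exactly the one the paper has in mind: the lemma is stated immediately after Theorem \ref{th:leftadjoint} with the sole justification that it is a consequence of that theorem extending Lemma \ref{le:0onsimple}, and you have spelled out precisely how---pass to $\FA'$ via Theorem \ref{th:leftadjoint}, observe that every $1$-arrow of $\Ho^b(\FA')$ now has a right dual, and apply Lemma \ref{le:0onsimple}.
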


\subsubsection{Braid group action}
We follow the construction of \cite[\S 6]{ChRou}.
Let $s\in I$ and $\lambda\in X$.
Let $l=\langle\alpha_s^\vee,\lambda\rangle$.
We define a complex 
$\Theta_{s,\lambda}\in \Comp(\CHom_{\FA}(\lambda,\lambda-l\alpha_s))$ by
$\Theta_{s,\lambda}^r=F_s^{(l+r)}E_s^{(r)}$ for $r\ge 0$ and
$\Theta_{s,\lambda}^r=0$ for $r<0$. Since 
$b_{n-1}b_n=b_nb_{n-1}=b_n$, it follows that
$F_s^{l+r}\eta_s E_s^{r}:F_s^{l+r}E_s^{r}\to F_s^{l+r+1}E_s^{r+1}$ restricts
to a map
$$d^r:F_s^{(l+r)}E_s^{(r)}\to F_s^{(l+r+1)}E_s^{(r+1)}.$$
Since $b'_2b_2=0$, it follows that $d^{r+1}\circ d^r=0$ and $d$
defines the differential of $\Theta_{s,\lambda}$.

\smallskip
Let $\CV$ be an integrable $2$-representation of $\FA$ in $\FL in_\Bk$.
We define an endofunctor
$\Theta_s$ of $\Comp(\CV)$. 
Given $\lambda\in X$, we define $\Theta_s:\Comp^b(\CV_\lambda)\to
\Comp^b(\CV_{\sigma_s(\lambda)})$ as the total (direct sum)
complex associated with the complex of functors
$\Theta_{s,\lambda}\in \Comp(\CHom_{\FA}(\lambda,\sigma_s(\lambda)))$.

\begin{thm}
\label{th:refl}
The functor $\Theta_s$ induces a self-equivalence of $\Ho^b(\CV)$.
\end{thm}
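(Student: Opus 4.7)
The complex $\Theta_{s,\lambda}$ involves only $E_s$, $F_s$, the unit $\eta_s$ and the Hecke action on $E_s^r$, so the entire question is internal to the $\Gsl_{2,s}$-subaction inside $\FA$. Using the biadjunction $(F_s,E_s)$ afforded by Theorem~\ref{th:leftadjoint}, I would construct a candidate quasi-inverse $\Theta'_s:\CV_{\sigma_s(\lambda)}\to\CV_\lambda$, modelled on terms $E_s^{(-l+r)}F_s^{(r)}$ with differential induced by the dual unit $\eta_s^l$ (for $l=\langle\alpha_s^\vee,\lambda\rangle$), together with chain maps $u:\idun_\lambda\to\Theta'_s\Theta_s$ and $c:\Theta_s\Theta'_s\to\idun_{\sigma_s(\lambda)}$ in $\Ho^b(\FA')$ obtained from the adjunction units and counits. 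It then suffices to show that $u$ and $c$ become isomorphisms in $\Ho^b(\CV)$.

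By Lemma~\ref{le:0onsimple2}, this reduces to showing that $u$ and $c$ induce isomorphisms on $\Ho^b(\CL(\mu))$ for every $\mu\in -X^+$. Restriction of $\CL(\mu)$ along $\FA(\Gsl_{2,s})\hookrightarrow\FA$ is an integrable $\Gsl_2$-$2$-representation whose $\alpha_s$-weights are bounded below. Applying Theorem~\ref{th:JordanHolder} to this restriction produces a filtration whose subquotients are of the form $\tilde\CL(-n)\otimes_{\End(\bar{\idun}_{-n})}\CM$ for various $n\ge 0$. Since $\Theta_s$ respects the filtration and commutes with tensoring by multiplicity spaces, we are reduced to proving the theorem for $\CV=\tilde\CL(-n)$.

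By Proposition~\ref{pr:minsl2}, $\tilde\CL(-n)=\bigoplus_{i=0}^n H_{i,n}\mfree$, with $E,F$ realised as induction and restriction along $H_{i,n}\subset H_{i+1,n}$. Propositions~\ref{pr:isoH} and~\ref{pr:adjmin} describe all required bimodules and (co)unit maps explicitly. In this model, $\Theta_s\Theta'_s$ becomes a bimodule complex admitting a filtration coming from Lemma~\ref{le:basiccommutation} whose associated graded consists of a single copy of the identity functor together with null-homotopic summands; the contracting homotopies arise from the Koszul-type splittings already used in Proposition~\ref{pr:Serre} and the dual-basis formulae of Lemma~\ref{le:basisrelativeP}. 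This final verification is the principal obstacle: it is a combinatorially intricate nil Hecke computation requiring careful tracking of signs and identification of the explicit contracting homotopy, and it is essentially the generalization of the corresponding argument for $\Gsl_2$-categorifications carried out in \cite[\S 6]{ChRou}, to which the present proof ultimately reduces.
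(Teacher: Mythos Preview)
Your overall architecture---reduce to $\Gsl_2$, then to the simple $2$-representations $\CL(-n)$, then invoke the computation of \cite[\S 6]{ChRou}---is the same as the paper's, but your route to $\CL(-n)$ is unnecessarily roundabout and contains a genuine technical hazard. You apply Lemma~\ref{le:0onsimple2} for the full $\FA(\Gg)$, reducing to $\CL(\mu)$, and then want to run Theorem~\ref{th:JordanHolder} on the $\Gsl_{2,s}$-restriction of $\CL(\mu)$. But that theorem requires the $2$-representation to lie in $\FO^{\mathrm{int}}$, i.e.\ the lowest $\Gsl_2$-weights occurring must be bounded below; for a general Kac--Moody $\Gg$ the restriction of $\CL(\mu)$ to $\Gsl_{2,s}$ need not satisfy this (infinitely many $\Gsl_2$-strings of unbounded length can appear). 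The paper sidesteps this entirely: since $\Theta_s$ lives in $\Ho^b(\FA(\Gsl_{2,s}))$, one reduces to $\Gg=\Gsl_2$ \emph{first}, regards $\CV$ as an arbitrary integrable $\Gsl_2$-$2$-representation, and applies Lemma~\ref{le:0onsimple2} directly in the $\Gsl_2$ setting, which already lands on $\CL(-n)$. No filtration argument is needed.

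For the endgame on $\CL(-n)$, you propose to redo the nil Hecke contracting-homotopy computation over $\Bk$. The paper is again more economical: it observes that $\Theta_s$ has left and right adjoints, invokes \cite[Theorem~6.4]{ChRou} for each field specialisation $\CL(-n)\otimes_{P_n^{\GS_n}}k$ (where the categories lie in $\FA b_k^f$ and the cited theorem applies verbatim), deduces the result for $\CL(-n)\otimes_{P_n^{\GS_n}}\BZ$, and then lifts to $\CL(-n)$ by Nakayama's Lemma. This avoids repeating any of the combinatorics of \cite{ChRou}. Your explicit construction of $\Theta'_s$, $u$ and $c$ is not wrong, but it is not needed once one uses adjoints and the field-plus-Nakayama reduction.
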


\begin{proof}
Note that it is enough to consider the case $\Gg=\Gsl_2$. The functor
$\Theta_s$ has left and right adjoints.
The theorem holds when $\CV=\CL(-n)\otimes_{P_n^{\GS_n}}k$ for any field $k$
by \cite[Theorem 6.4]{ChRou}. So, it holds for
$\CL(-n)\otimes_{P_n^{\GS_n}}\BZ$, hence for $\CL(-n)$ by
Nakayama's Lemma.
The conclusion follows now from Lemma \ref{le:0onsimple2}.
\end{proof}

\begin{conj}[Chuang-R.] The functors $\Theta_s$ satisfy braid relations.
\end{conj}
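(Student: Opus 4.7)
The plan is to proceed in direct analogy with the proof of Theorem \ref{th:refl}: reduce to a rank-$2$ base case and then invoke Lemma \ref{le:0onsimple2} to lift from the simple $2$-representations $\CL(\lambda)$ to all integrable $2$-representations $\CV$.

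First, I would reduce to the rank-$2$ case. Given $s \ne t \in I$, the braid relation involves only $E_s, E_t, F_s, F_t$ and the generating $2$-arrows between them, so it suffices to work inside the sub-$2$-category corresponding to the Cartan sub-datum on $\{s,t\}$. By Lemma \ref{le:0onsimple2} applied to the cone of a candidate natural transformation between the two composites $\Theta_s\Theta_t\cdots$ and $\Theta_t\Theta_s\cdots$, it is then enough to exhibit an isomorphism after restriction to $\Ho^b(\CL(\lambda))$ for every lowest-weight $\lambda \in -X^+$ of the rank-$2$ datum.

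The commuting case $a_{st}=0$ is essentially formal: the invertibility of $\sigma_{st}$ in $\FA$, combined with the expression of $\Theta_s$ as a (locally) bounded complex with terms $F_s^{(l+r)}E_s^{(r)}$, lets one commute the two complexes term by term, giving $\Theta_s\Theta_t \simeq \Theta_t\Theta_s$ functorially. For the type $A_2$ case, the simple $2$-representations $\CL(\lambda)$ admit, via the Jordan-H\"older Theorem \ref{th:JordanHolder} applied after restriction to each $\Gsl_2$-sub-datum, filtrations whose subquotients relate to the minimal categorifications of \S \ref{se:sl2}; on each $\CL(\lambda)$ one can then analyze the composites with the $\Gsl_2$-techniques of \cite[\S 6]{ChRou}, mirroring the classical fact that Lusztig's braid operators satisfy the $A_2$ relation on integrable $\Gsl_3$-modules.

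The principal obstacle lies in types $B_2$ and $G_2$, where the relations have length $4$ and $6$: while one can argue at the $K_0$ level via Lusztig's braid group action on integrable $U_v(\Gg)$-modules to see that the two sides agree in $K_0$, lifting this to a categorical isomorphism first requires \emph{constructing} a canonical $2$-arrow between the two composites. No such natural transformation is directly visible from the defining generators and relations of $\FA$, and any explicit construction will likely be specific to each rank-$2$ type and involve long combinatorial verifications on the minimal categorifications $\CL(\lambda)$. A conceptually cleaner route would be to interpret $\Theta_s$, via the nil Hecke action on $E_s^n$ and $F_s^n$, as an $R$-matrix type construction and deduce braid relations from a Yang-Baxter equation on the underlying Hecke algebras $H_n(Q)$; making this precise in the general symmetrizable Kac-Moody setting appears to be the main difficulty, and is presumably why the statement is left as a conjecture here rather than proved.
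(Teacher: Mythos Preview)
The statement is labelled a \emph{conjecture} in the paper and no proof is given there, so there is nothing to compare your proposal against. You seem aware of this yourself in your final paragraph, and your outline is really a discussion of where the difficulties lie rather than a proof.

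That said, a few of your intermediate steps are looser than they may appear. For the commuting case $a_{st}=0$, termwise commutation of $F_s^{(l+r)}E_s^{(r)}$ past $F_t^{(l'+r')}E_t^{(r')}$ via $\sigma_{st}$ is not by itself an isomorphism of \emph{complexes}: you must also check that the differentials, which are built from $\eta_s$ and $\eta_t$, intertwine with these termwise isomorphisms. This is plausible but is a genuine verification, not a formality. For the $A_2$ case, invoking Theorem \ref{th:JordanHolder} ``after restriction to each $\Gsl_2$-sub-datum'' does not obviously help: the Jordan--H\"older filtration is taken with respect to the full rank-$2$ action, and a filtration adapted to one $\Gsl_2$ is not stable under the other, so the minimal categorifications of \S \ref{se:sl2} do not directly control the composite $\Theta_s\Theta_t\Theta_s$. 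Finally, even the reduction step via Lemma \ref{le:0onsimple2} presupposes that you already possess a $2$-arrow in $\Ho^b(\FA)$ between the two composites whose cone you can test on each $\CL(\lambda)$; as you note, producing that map is exactly the missing ingredient, so the lemma cannot be applied until the hard work is done.
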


\subsubsection{$\Gsl_2$-categorifications}
We recall the definition of \cite[\S 5.2.1]{ChRou}.
\label{se:defsl2cat}
Let $k$ be a field.
\begin{defi}
Let $\CV\in\FA b_k^f$.
An $\Gsl_2$-categorification on $\CV$ is the data of
\begin{itemize}
\item an adjoint pair
$(E,F)$ of exact functors $\CV\to\CV$
\item $X\in\End(E)$ and $T\in\End(E^2)$
\end{itemize}
such that
\begin{itemize}
\item the actions of $[E]$ and $[F]$ on $K_0(\CV)$ give a locally finite
representation of $\Gsl_2$
\item classes of simple objects are weight vectors
\item $F$ is isomorphic to a left adjoint of $E$
\item $X$ has a single eigenvalue
\item the action on $E^n$ of $X_i=E^{n-i}XE^{i-1}$ for $1\le i\le n$
and of $T_i=E^{n-i-1}TE^{i-1}$ for $1\le i\le n-1$
induce an action of an affine Hecke algebra with $q\not=1$,
a degenerate affine Hecke
algebra or a nil affine Hecke algebra of $\GL_n$.
\end{itemize}
\end{defi}

Note that the three types of actions (affine Hecke with $q\not=1$,
degenerate affine Hecke and nil affine Hecke) are equivalent by
Theorems \ref{th:equivdegnil} and \ref{th:equivaffnil}. The
endomorphism $T$ needs to be changed, as follows:
$$\xymatrix{
\text{affine} \ar@{<->}[r] & \text{nil} \\
T\ar@{|->}[r] & (qEX-XE)T+q}
\hskip2cm
\xymatrix{
\text{degenerate} \ar@{<->}[r] & \text{nil} \\
T\ar@{|->}[r] & (EX-XE+1)T+1}
$$
Note also that, in the nil case, if $a$ is the eigenvalue of $X$, then
by replacing $X$ by $X-a$ one reaches the case where $0$ is the eigenvalue of
$X$. As a consequence, given an $\Gsl_2$-categorification, one can construct
a new categorification by modifying $X$ and $T$ as above so that the action
of $X$ and $T$ induce an action of the nil affine Hecke algebra
${^0 H}_n$ on $\End(E^n)$ and $X$ is locally nilpotent.

\smallskip
In \cite{ChRou}, the case of nil affine Hecke algebras wasn't
considered. The equivalence of the definitions explained above shows that the
results of \cite{ChRou} generalize to this setting. It can also be seen
directly that all constructions, results and proofs in \cite{ChRou}
involving degenerate affine Hecke algebras carry over to nil affine Hecke
algebras. A key point is the commutation
relation between $T_i$ and a polynomial: that relation is the same
for the degenerate affine Hecke algebra and the nil affine Hecke algebra.
The definition of $c_n^\tau$ \cite[\S 3.1.4]{ChRou}
needs to be modified: we define
$c_n=T_{w[1,n]}$. Note that $T_{w[1,n]}^2=0$ for $n\ge 2$.
Given $M$ a projective $k({^0H}_n^f)$-module, we have
 $c_n M=\{m\in M~|~T_w m=0 \text{ for all }w\in\GS_n-\{1\}\}$.

\begin{rem}
We haven't included the parameters $a$ and $q$ in the definition, as they
are not needed here.
\end{rem}

Let $k$ be a field and $\CV\in\FA b_k^f$
endowed with an $\Gsl_2$-categorification. Let $V=\BC\otimes
K_0(\CV)$. The weight space decomposition $V=\bigoplus_{\lambda\in\BZ}
V_\lambda$ induces a decomposition
$\CV=\bigoplus_\lambda \CV_\lambda$, where
$\CV_\lambda=\{M\in\CV | [M]\in V_\lambda\}$ \cite[Proposition 5.5]{ChRou}.
Let $x=X$ and
$$\tau=\begin{cases}
(qEX-XE)^{-1}(T-q) & \text{ affine case}\\
(EX-XE+1)^{-1}(T-1) & \text{ degenerate affine case}\\
T & \text{ nil affine case.}
\end{cases}$$

\begin{thm}
\label{th:comparisonsl2}
The construction above defines an integrable
 $2$-representation of $\FA(\Gsl_2)$ on $\CV$.

Conversely, a integrable $2$-representation
of $\FA(\Gsl_2)$ on $\CV$ gives rise to
an $\Gsl_2$-categorification on $\CV$.

This provides an equivalence between the $2$-category of
$\Gsl_2$-categorifications and the $2$-category of integrable
$2$-representations of $\FA(\Gsl_2)$ in $\FA b_k^f$.
\end{thm}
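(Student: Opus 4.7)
The plan is to identify the two structures on the nose after the indicated changes of variables, and to reduce the remaining content to two independent points: a weight decomposition coming from the $K_0$-action, and the invertibility of $\rho_{s,\lambda}$, which is a $K_0$-control statement.

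First I would check that the change of variables
$T \leftrightarrow (qEX-XE)T+q$ (affine), $T \leftrightarrow (EX-XE+1)T+1$ (degenerate), $T \leftrightarrow T$ (nil)
sets up a bijection between the three flavors of Hecke data on $E^n$ and the nil affine Hecke data of $H_n(\Gsl_2)$ on $E^n$; this is the content of Theorems \ref{th:equivdegnil} and \ref{th:equivaffnil} applied with $I=\{s\}$. After normalizing so that $x=X$ is locally nilpotent (using that $X$ has a single eigenvalue), an $\Gsl_2$-categorification in the sense of \cite{ChRou} is therefore the same data as: an adjoint pair $(E,F)$ with $F$ also left adjoint to $E$, endomorphisms $x \in \End(E)$ and $\tau \in \End(E^2)$ satisfying the nil affine Hecke relations, such that $[E]$ and $[F]$ induce a locally finite $\Gsl_2$-action on $K_0(\CV)$ with classes of simples as weight vectors.

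Next I would produce the weight decomposition. Since $\CV \in \FA b_k^f$ and classes of simples are weight vectors, grouping simples of weight $\lambda$ yields a decomposition $\CV=\bigoplus_\lambda \CV_\lambda$ by Serre subcategories (this is \cite[Proposition 5.5]{ChRou}). Setting $E_{s,\lambda}=E|_{\CV_\lambda}$ and $F_{s,\lambda}=F|_{\CV_\lambda}$, and using the given unit/counit for $(E,F)$ and the hypothesis that $F$ is also left adjoint to $E$, we obtain all the $1$-arrows and biadjunctions needed to define a $2$-functor $\FA_1 \to \FL in_k$ sending $\lambda \mapsto \CV_\lambda$; integrability corresponds exactly to the local finiteness of the $\Gsl_2$-action on $K_0(\CV)$.

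The key step is to check that $\rho_{s,\lambda}$ is invertible on $\CV_\lambda$ for every $\lambda$, so that the $2$-functor factors through $\FA=\FA(\Gsl_2)$. At the level of $K_0$ the map $\rho_{s,\lambda}$ realizes the relation $[e,f]=h$ in the $\Gsl_2$-action, so its class in $K_0$ is an isomorphism on each weight space. I would then invoke the general principle, announced as the main result of \S \ref{se:controlK0} (``$[e_s,f_s]=h_s$ follows from its $K_0$ version'' in $\FA b_k^f$): given the nil Hecke action, the biadjunction, and the $K_0$-identity, the map $\rho_{s,\lambda}$ (which is a canonical map between functors with identical classes in $K_0$) is automatically invertible. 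The hard part of the theorem is precisely this lifting from $K_0$-isomorphism to functor isomorphism; all of \S 8.2 is geared toward this, and having proved it there, the forward construction is complete.

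For the reverse direction, given an integrable $2$-representation of $\FA(\Gsl_2)$ on $\CV\in\FA b_k^f$, Theorem \ref{th:leftadjoint} gives the biadjunction $(F,E)$ automatically, the nil Hecke action on $E^n$ is built into $\FA$ (cf.\ \S \ref{se:halfKM}), the $\Gsl_2$-action on $K_0(\CV)$ with weight-vector simple classes comes from the decomposition $\CV=\bigoplus_\lambda \CV_\lambda$ and the decategorification morphism $U_1(\Gsl_2)\to\FA^i_{\le 1}$, and local finiteness follows from local nilpotence of $E,F$. Finally, the two constructions are mutually inverse on objects essentially by construction, and they extend to $1$- and $2$-arrows because both sides are defined by the same generating data modulo relations that are equivalent under the dictionary above; this gives the claimed $2$-equivalence of $2$-categories.
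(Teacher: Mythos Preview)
Your outline is structurally sound, but the key step contains a circularity. To establish the invertibility of $\rho_{s,\lambda}$ you invoke ``the main result of \S\ref{se:controlK0}'', i.e.\ Theorem~\ref{th:criteriumK0}. In this paper, however, Theorem~\ref{th:criteriumK0} is deduced \emph{from} Theorem~\ref{th:comparisonsl2} (its proof reads: ``This is a consequence of Theorems~\ref{th:comparisonsl2} and~\ref{th:critsigmast}''). So you cannot use it here without assuming what you are trying to prove. There is also no \S 8.2 in the paper; the section you appear to have in mind does not carry an independent proof of the $K_0$-control principle.

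The paper's actual argument for this step is external: it cites \cite[Theorem~5.27]{ChRou}, which is precisely the $\Gsl_2$-specific statement that, under the hypotheses of an $\Gsl_2$-categorification, the map $\rho_{s,\lambda}$ is invertible. That result is established in \cite{ChRou} by a direct analysis (filtrations on $E^mF^n$ and $F^nE^m$ built from the nil Hecke action), not by a bootstrap from $K_0$. Once you replace your appeal to \S\ref{se:controlK0} by a citation of \cite[Theorem~5.27]{ChRou}, the rest of your outline (the Hecke dictionary via Theorems~\ref{th:equivdegnil}--\ref{th:equivaffnil}, the weight decomposition via \cite[Proposition~5.5]{ChRou}, and the converse via Theorem~\ref{th:leftadjoint} and decategorification) matches the paper's intended logic.
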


\begin{proof}
By \cite[Theorem 5.27]{ChRou}, the maps $\rho_{s,\lambda}$ are
invertible and the result follows.
\end{proof}

In the isotypic case, we have a stronger result:
\begin{thm}
Let $k$ be a field and $\CV\in\FA b_k^f$.
Assume given an $\Gsl_2$-categorification on $\CV$ such that
$\BC\otimes K_0(\CV)$ is a multiple of an irreducible representation
of $\Gsl_2(\BC)$. Then, the construction of Theorem \ref{th:comparisonsl2}
 gives rise
to a $2$-representation of $\bar{\FA}'(\Gsl_2)$ on $\CV$.
\end{thm}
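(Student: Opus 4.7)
The plan is to verify that the $2$-representation of $\FA'(\Gsl_2)$ on $\CV$ coming from Theorems~\ref{th:comparisonsl2} and \ref{th:leftadjoint} satisfies the two further relations $\eta_s^l=\hat{\eta}_s$ and $(f^\vee)^\vee=f$ which define $\bar{\FA}'$. The isotypic hypothesis $\BC\otimes K_0(\CV)\simeq V(-n)^{\oplus d}$ for some $n\ge 0$, $d\ge 1$ localizes the problem: $\CV$ is supported in weights $\{-n,-n+2,\ldots,n\}$, and since $\CV_{-n-2}=0$ we have $\CV_{-n}^{\mathrm{lw}}=\CV_{-n}$.

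Lemma~\ref{le:highestweight} then provides a fully faithful morphism of $2$-representations
$$R\colon \CL(-n)\otimes_{\End(\bar{\idun}_{-n})}\CV_{-n}\longrightarrow \CV,$$
and I would argue that under the isotypic hypothesis $R$ is an equivalence of abelian $2$-representations. Once this is granted, the conclusion is immediate: Proposition~\ref{pr:minsl2} equips $\CL(-n)$ with a $\bar{\FA}'$-action, which passes to the source of $R$ and transports along the equivalence $R$ to yield a $\bar{\FA}'$-action on $\CV$ refining the given $\FA'$-action. To see that $R$ is essentially surjective, observe that its essential image $\CV'$ is $E$- and $F$-stable (hence an $\FA'$-subrepresentation of $\CV$) and contains $\CV_{-n}$; by the crystal-theoretic description of simple objects in an $\Gsl_2$-categorification from \cite{ChRou}, every simple of $\CV_{-n+2j}$ is the unique simple head of $E^{(j)}S_0$ for some simple lowest weight $S_0\in\CV_{-n}$, so all simples of $\CV$ lie in $\CV'$, and the isotypic $K_0$ comparison together with an induction on composition length forces $\CV'=\CV$.

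The hard part will be establishing essential surjectivity of $R$ in the full abelian sense, and in particular promoting ``every simple lies in $\CV'$'' to ``$\CV'=\CV$''. This is an abelian Morita-type statement for isotypic $\Gsl_2$-categorifications, which can be extracted from the analysis of projective and standard objects in \cite{ChRou}. An alternative route, avoiding a full equivalence, is to verify the two relations directly on objects of the form $E^{(j)}M$ for $M$ a simple lowest weight object, where the identities pull back from $\CL(-n)$ through the fully faithful $R_M$ of Lemma~\ref{le:highestweight}, and then to propagate the identity of the relevant natural transformations to all objects of $\CV$ by induction on Loewy length, using exactness of $E,F$ and a five-lemma argument, with the isotypic hypothesis ensuring that this induction terminates and covers every simple in every weight space.
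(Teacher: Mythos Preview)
Your main approach has a real gap. The source $\CL(-n)\otimes_{\End(\bar{\idun}_{-n})}\CV_{-n}$ is only an additive $k$-linear category, not an abelian one, so $R$ is a fully faithful additive functor into the abelian category $\CV$. The essential image of such a functor is closed under direct sums and summands but not under extensions in general, so ``all simples lie in $\CV'$'' together with ``induction on composition length'' does not give $\CV'=\CV$. Making this an honest Morita equivalence would require identifying $\CV$ with modules over an explicit algebra built from $\CL(-n)$ and $\CV_{-n}$, which is considerably more than what is needed.

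Your alternative route is essentially what the paper does, but the paper finds a cleaner mechanism than Loewy-length induction and the five-lemma. The key step is: for any $N\in\CV_{-n+2i}$, the counit $\eps_i\colon E^iF^iN\to N$ is surjective. Indeed its cokernel $N'$ satisfies $F^iN'=0$, so $[N']$ lies in the $\Gsl_2$-subrepresentation of $\BC\otimes K_0(\CV)$ annihilated by $f^i$ in weight $-n+2i$; by the isotypic hypothesis this forces $[N']=0$, hence $N'=0$ in $\FA b_k^f$. Thus every object of $\CV$ is a \emph{quotient} of $E^iM$ for some $M\in\CV_{-n}$ (namely $M=F^iN$, not necessarily simple). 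For such $M$, the fully faithful $R_M$ of Lemma~\ref{le:highestweight} transports the $\bar{\FA}'$-relations from $\CL(-n)$ (Proposition~\ref{pr:minsl2}) to the objects $E^iM$. Finally, an identity $\alpha=\beta$ of natural transformations between exact functors, once verified on $E^iM$, propagates to any quotient $N$ of $E^iM$ because $\alpha(N)\circ G(\pi)=H(\pi)\circ\alpha(E^iM)=H(\pi)\circ\beta(E^iM)=\beta(N)\circ G(\pi)$ and $G(\pi)$ is epi. No five-lemma or simplicity assumption is needed.
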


\begin{proof}
Theorems \ref{th:leftadjoint} and
\ref{th:comparisonsl2} provide an action of 
$\FA'$. Let $\lambda\in X$ be minimum such that $\CV_\lambda\not=0$.
Note that the theorem holds for $\CL(\lambda)$ by
Proposition \ref{pr:minsl2}.

Let $N\in\CV_{\lambda+2i}$ for some $i\ge 0$. Let
$N'$ be the cokernel of $\eps_i(N):E^iF^iN\to N$. We have 
$F^i N'=0$, hence $[N']=0$ in $K_0(\CV)$ since
the only non-zero elements of $\BC\otimes K_0(\CV)$ killed by
$[F]$ are in the $\lambda$-weight space. So, $N'=0$ and we deduce
that $N$ is a quotient of $E^i(F^iN)$.

Let $M\in\CV_{\lambda}$. Proposition
\ref{pr:highestweight} provides a fully faithful morphism of
$2$-representations
$$R:\CL(\lambda)\otimes_{\End(\bar{\idun}_\lambda)}\End(M)\to
\CV$$
with $R(\bar{\idun}_\lambda)\simeq M$. Since the theorem
holds for $\CL(\lambda)$, it follows that the relations defining
$\bar{\FA}'$ hold when applied to $E^iM$, for every $i$. It follows that
they hold for every quotient of $E^iM$. We deduce that the relations
hold on $\CV$.
\end{proof}

\subsubsection{Involution $\iota$}
\label{se:iota}
Let $\CV$ be an integrable $2$-representation of $\FA'$ 
in $\FL in_\Bk$.

Let $(\CV^\iota)_\lambda=\CV_{-\lambda}$, let
$E^\iota_s=F_s$ and $F^\iota_s=E_s$.
Let $x_s^\iota\in\End(E_s^\iota)$ corresponding
to $x_s\in\End(E_s)\iso\End(F_s)^\opp$ and let
$\tau_{st}^\iota\in\Hom(E_s^\iota E_t^\iota,E_t^\iota E_s^\iota)$
corresponding to $-\tau_{st}\in\Hom(E_sE_t,E_tE_s)\iso
\Hom(F_sF_t,F_tF_s)$.

The adjunction $(F_s,E_s)$ gives an
adjoint pair $(E^\iota_s,F^\iota_s)$: $\eta_s^\iota=\eta_s^l$ and
$\eps_s^\iota=\eps_s^l$.

\begin{prop}
\label{pr:iota}
The construction above defines a $2$-representation of
$\FA'$ on $\CV^\iota$.
\end{prop}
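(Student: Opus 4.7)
The plan is to verify directly that the data on $\CV^\iota$ satisfies the axioms defining a $2$-representation of $\FA'$: namely, (i) the four relations of \S\ref{se:halfKM} for $x^\iota_s$ and $\tau^\iota_{st}$ on products $(E^\iota_s)^n$; (ii) the adjunction $(E^\iota_s, F^\iota_s)=(F_s,E_s)$ with unit $\eta^\iota_s$ and counit $\eps^\iota_s$; (iii) invertibility of the maps $\rho^\iota_{s,\lambda}$ and $\sigma^\iota_{st}$ for $s\not=t$; (iv) the extra relation $\eps^{l,\iota}_s = \tilde{\eps}^\iota_s$ defining $\FA'$; and (v) integrability. Point (v) is immediate from the $E \leftrightarrow F$ symmetry, and point (ii) is precisely the content of the second adjunction in the $\FA'$-structure on $\CV$ with data $(\eta^l_s, \eps^l_s)$.

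For (i), I would exploit the canonical isomorphism $\Hom(E_sE_t, E_tE_s) \iso \Hom(F_sF_t, F_tF_s)$ built from the two adjunctions $(E_s,F_s)$ and $(F_s,E_s)$ afforded by $\FA'$; this sends $\tau_{st}$ to an element $\tau^\vee_{st}$, and by definition $\tau^\iota_{st} = -\tau^\vee_{st}$. The hermitian property $Q_{ts}(u,v) = Q_{st}(v,u)$ from Theorem \ref{th:PBWHecke} handles the quadratic relation (1), where the sign change $(-1)^2=1$ is trivial. Relations (3) and (4) each involve a single $\tau$, and the sign flip from $\tau^\iota = -\tau$ exactly reverses the direction of the $TX$--$XT$ commutator, matching the fact that the natural action of the nil affine Hecke algebra on $F^n$ obtained by duality is the opposite action. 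Relation (2) has three $\tau$'s on each side, so the overall sign $(-1)^3=-1$ must be absorbed by a sign in the dualized right-hand side; this sign is visible in the denominator $x^\iota_sE^\iota_tE^\iota_s - E^\iota_sE^\iota_tx^\iota_s$, which changes sign upon dualization since the order of tensor factors reverses.

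For (iii), I would express $\rho^\iota_{s,\lambda}$, defined in $\CV^\iota$ at weight $\lambda$, back in $\CV$ at weight $-\lambda$, using $\langle\alpha^\vee_s,\lambda\rangle_{\CV^\iota} = -\langle\alpha^\vee_s,-\lambda\rangle_\CV$ to swap the two cases in the definition of $\rho_{s,\lambda}$. Using the $\FA'$-axiom $\eps^l_s = \tilde\eps_s$ on $\CV$ (and its dual for $\eta^l_s$) to rewrite the iota-adjunction data in terms of the original adjunction data, one can identify $\rho^\iota_{s,\lambda}$ with (a scalar multiple of) the inverse of $\rho_{s,-\lambda}$, which is invertible by hypothesis since $\CV$ is a $2$-representation of $\FA$. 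The argument for $\sigma^\iota_{st}$ is analogous using Lemma \ref{le:sigmaF}, which gives an alternative expression of $\sigma_{st}$ in terms of $F$-side data.

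For (iv), $\eps^{l,\iota}_s$ is the counit of the pair $(F^\iota_s, E^\iota_s) = (E_s, F_s)$, namely the original $\eps_s$, while $\tilde{\eps}^\iota_s$ is defined from $\rho^\iota$ and $x^\iota$; the identification of step (iii) reduces the desired equality $\eps^{l,\iota}_s = \tilde{\eps}^\iota_s$ to the original $\FA'$-relation $\eps^l_s = \tilde\eps_s$ at weight $-\lambda$, which holds by hypothesis. The main obstacle will be the careful sign bookkeeping in step (i), particularly the transport of relation (2)'s nontrivial right-hand side for $s=u$; the choice $\tau^\iota = -\tau$ in the definition is precisely what makes all four half Kac-Moody relations transport simultaneously under the canonical isomorphism.
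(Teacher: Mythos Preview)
Your plan diverges from the paper's proof at the crucial step (iii), and there is a genuine gap there.

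\textbf{The gap.} You propose to show $\rho^\iota_{s,\lambda}$ is invertible by identifying it with a scalar multiple of $\rho_{s,-\lambda}^{-1}$, using the $\FA'$-axiom $\eps^l_s=\tilde\eps_s$ ``and its dual for $\eta^l_s$''. But $\eta^l_s=\hat\eta_s$ is \emph{not} an axiom of $\FA'$; it is only imposed in $\bar\FA'$ (cf.\ \S\ref{se:otherversions}). In $\FA'$ the unit $\eta^l_s$ is determined by the counit $\eps^l_s=\tilde\eps_s$ via the triangle identities, and there is no reason a priori that it agrees with $\hat\eta_s$. More fundamentally, even if you had both identities, the claim that $\sigma^\iota_{ss}:F_sE_s\to E_sF_s$ (built from $\eta^l,\tau^\vee,\eps^l$) is inverse to $\sigma_{ss}:E_sF_s\to F_sE_s$ (built from $\eta,\tau,\eps$) is a substantive statement, not a formal one; it does not follow from the relations available in $\FA'$. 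The ``analogous'' argument you propose for $\sigma^\iota_{st}$ has the same problem: the map $\sigma^\iota_{st}:F_sE_t\to E_tF_s$ built from the second adjunction is not obviously related to $\sigma_{ts}^{-1}$.

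\textbf{What the paper does instead.} The paper does not attempt a direct axiomatic identification. For the invertibility of $\rho^\iota_{s,\lambda}$ it reduces via Lemma~\ref{le:0onsimple2} to the simple $2$-representations $\CL(-n)$, passes to field coefficients $\CW=\bigoplus_i(H_{i,n}\otimes_{P_n^{\GS_n}}k)\mMod$, observes that $\CW^\iota$ carries an $\Gsl_2$-categorification in the sense of \cite{ChRou}, and invokes Theorem~\ref{th:comparisonsl2} (ultimately \cite[Theorem 5.27]{ChRou}) to get the isomorphism over each field, then lifts by Nakayama as in Proposition~\ref{pr:isoH}. Once $\rho^\iota_{s,\lambda}$ is known to be invertible, the remaining conditions---invertibility of $\sigma^\iota_{st}$ for $s\neq t$ and the $\FA'$-relation (your (iv))---are not checked directly but are supplied by Theorem~\ref{th:critsigmast} (whose proof occupies \S\ref{se:rank2} and is itself nontrivial). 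So steps (iii), (iv), and the $s\neq t$ part of (iii) in your outline are all handled by external machinery rather than direct verification.

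Your treatment of (i), (ii), and (v) is fine and matches the paper's one-line ``the relations are clear''.
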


\begin{proof}
The relations (\ref{en:half1})-(\ref{en:half4}) in \S \ref{se:halfKM} are
clear.
Let us show that the maps $\rho_{s,\lambda}$ on $\CV^\iota$ are isomorphisms.
Thanks to Lemma \ref{le:0onsimple2}, it is enough to do
so for $\CV=\CL(-n)$ for some $n>0$.

Given a field $k$, consider the canonical $2$-representation of
$\FA'$ on $\CW=\bigoplus_i
\left( H_{i,n}\otimes_{P_n^{\GS_n}}k\right)\mMod$. The category
$\CW^\iota$ is endowed with a structure of $\Gsl_2$-categorification.
It follows from 
Theorem \ref{th:comparisonsl2} that the maps $\rho_{s,\lambda}$ are
isomorphisms for $\CW^\iota$.

We conclude now as in the proof of Proposition \ref{pr:isoH} that 
the maps $\rho_{s,\lambda}$ are isomorphisms for $\CL(-n)^\iota$.

\smallskip
We are left with proving the invertibility
of $\sigma_{st}$ for $s\not=t$. This is a consequence of Theorem
\ref{th:critsigmast} below. 
\end{proof}

Note that $\CV\mapsto\CV^\iota$ induces a strict endo $2$-functor of
the $2$-category of integrable $2$-representations of $\FA'$ that is a
$2$-equivalence.

\subsubsection{Relation $[E_s,F_t]=0$ for $s\not=t$}
\label{se:rank2}
Let $\{\CV\}_{\lambda\in X}$ be a family of $\Bk$-linear categories endowed
with the data of
\begin{itemize}
\item functors
$E_s:\CV_{\lambda}\to\CV_{\lambda+\alpha_s}$
and
$F_s:\CV_{\lambda}\to\CV_{\lambda-\alpha_s}$ for $s\in I$
\item $x_s\in\End(E_s)$ and
$\tau_{st}\in\Hom(E_sE_t,E_tE_s)$ for $s,t\in I$
\item an adjunction $(E_s,F_s)$ for $s\in I$
\end{itemize}
such that
\begin{itemize}
\item relations (\ref{en:half1})-(\ref{en:half4}) in \S \ref{se:halfKM} hold
\item the maps $\rho_{s,\lambda}$ are isomorphisms.
\end{itemize}

\begin{thm}
\label{th:critsigmast}
The data above defines a $2$-representation of $\FA'$ on
$\CV=\bigoplus_\lambda\CV_\lambda$.
\end{thm}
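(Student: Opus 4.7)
The plan has two parts: first, construct the adjunction $(F_s,E_s)$ with $\eps_s^l=\tilde\eps_s$; second, use it to deduce the invertibility of $\sigma_{st}$ for $s\ne t$. Both parts are local at a weight $\lambda$ and involve only the generators indexed by one or two elements of $I$, so neither integrability nor global finiteness assumptions are needed.

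For the second adjunction, I would take as candidate unit and counit the morphisms $\hat\eta_{s,\lambda}$ and $\tilde\eps_{s,\lambda}$ defined in the paragraph preceding \S\ref{se:otherversions}; these formulae only require the given data together with the invertibility of $\rho_{s,\lambda}$, which is part of the hypothesis. Verifying the triangle identities
$$(E_s\tilde\eps_s)\circ(\hat\eta_s E_s)=\id_{E_s} \quad\text{and}\quad (\tilde\eps_s F_s)\circ(F_s\hat\eta_s)=\id_{F_s}$$
is a purely local question involving only $E_s,F_s,x_s,\tau_{ss},\eps_s,\eta_s$ and $\rho_{s,\lambda}^{\pm 1}$. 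After expanding $\rho_{s,\lambda}^{-1}$ in the defining formulae for $\hat\eta_s$ and $\tilde\eps_s$, each side becomes an expression in $\End(E_s^n\idun_\lambda)$ (or $\End(F_s^n\idun_\lambda)$) for small $n$, controlled by the nil affine Hecke action of Proposition \ref{pr:MoritanilHecke}. The identities follow from relations (\ref{en:half1})-(\ref{en:half4}) applied with $s=t=u$, together with the very definition of $\rho_{s,\lambda}$. This is the non-integrable counterpart of the argument behind Theorem \ref{th:leftadjoint}; integrability was used there only to reduce to lowest-weight representations via Lemma \ref{le:0onsimple2}, but the present identities are already of a universal, weight-by-weight nature.

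For the invertibility of $\sigma_{st}$ with $s\ne t$, the newly-available adjunction $(F_s,E_s)$ allows the construction of a candidate inverse symmetric to the definition of $\sigma_{st}$, using $\tau_{st}$ in place of $\tau_{ts}$ and the adjunction around $s$ in place of that around $t$:
$$\sigma'_{ts}=(E_sF_t\tilde\eps_s)\circ(E_s\tau_{st}F_t)\circ(\hat\eta_s F_tE_s):F_tE_s\to E_sF_t.$$
It then suffices to show that $\sigma'_{ts}\circ\sigma_{st}$ and $\sigma_{st}\circ\sigma'_{ts}$ are the identity.

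The main obstacle is this last verification, which is a delicate bookkeeping of several compositions of $\tau$'s, units and counits across distinct indices. I would expand both factors according to the definition of $\sigma_{st}$ and $\sigma'_{ts}$, then simplify via the triangle identities for the adjunctions $(E_s,F_s)$, $(F_s,E_s)$, $(E_t,F_t)$ and via relation (\ref{en:half2}) applied in the mixed case. Since $s\ne t$, no $\delta_{st}$ correction arises in relations (\ref{en:half3})-(\ref{en:half4}), so the composition should collapse to the identity term. To make the manipulation manageable, I would first check the identity in the universal situation, where the $2$-arrow $\sigma'_{ts}\circ\sigma_{st}-\id$ lives in a free $\Bk$-module determined by the PBW description of $H_n(C)$ (Theorem \ref{th:PBWHecke}); testing on the rank-$2$ simple $2$-representations $\CL(\mu)$ of \S\ref{se:simple2rep}, where Lemma \ref{le:basiccommutation} and Proposition \ref{pr:Serre} reduce the check to an explicit identity in the nil Hecke algebra, would then suffice to prove the universal identity and complete the upgrade to an $\FA'$-representation.
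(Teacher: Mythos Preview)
Your strategy has the right shape (build the second adjunction from $\tilde\eps_s,\hat\eta_s$, then construct a candidate inverse $\sigma'_{ts}$ to $\sigma_{st}$), and this is indeed what the paper does: $\sigma'_{ts}$ is the paper's $\sigma_{ts}^\iota$ coming from the $\iota$-twist. But the execution has two genuine gaps.

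First, the target identity $\sigma'_{ts}\circ\sigma_{st}=\id$ is false in general. The paper only proves that this composition is an \emph{isomorphism}. Lemma~\ref{le:specialtrace} computes the relevant trace and finds $\psi(X_{r+1}^aT_{w[1,r+1]})=\pm\,q_{m_{ts},0}\,T_{w[1,r]}$ on the anti-diagonal and zero below it; thus the composition, in the basis provided by $L$ and $L'^*$, is anti-triangular with anti-diagonal entries $\pm t_{ts}$, not the identity matrix. Your hope that ``the composition should collapse to the identity term'' via relation~(\ref{en:half2}) cannot succeed: relation~(\ref{en:half1}) already shows $\tau_{st}\circ\tau_{ts}=Q_{st}(E_tx_s,x_tE_s)$, so any computation that unwinds both $\sigma_{st}$ and $\sigma'_{ts}$ will produce $Q$-factors, not $1$. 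The paper therefore argues invertibility, then applies the same reasoning to $\CV^\iota$ to obtain a left inverse to $\sigma_{ts}^\iota$, and concludes.

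Second, your claim that ``neither integrability nor global finiteness assumptions are needed'' is not substantiated, and the paper's argument does use integrability. For the triangle identities you assert that they are ``of a universal, weight-by-weight nature'', but $\tilde\eps_s$ and $\hat\eta_s$ are defined via $\rho_{s,\lambda}^{-1}$, which is not a $2$-arrow of $\FA$; there is no universal identity to check. The paper establishes them by reducing to $\CL(-n)$ (Proposition~\ref{pr:adjmin}) and transporting via Lemma~\ref{le:0onsimple}. Likewise, invertibility of $\sigma_{ts}^\iota\circ\sigma_{st}$ is first checked on $E_t^rN$ for $N$ with $F_tN=0$, then propagated to all objects via Lemma~\ref{le:invertcusp}/\ref{le:0onsimple2}; this step needs local nilpotence of $F_t$. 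Your own fallback---``testing on the rank-$2$ simple $2$-representations $\CL(\mu)$''---is exactly this reduction, so it is inconsistent with the earlier claim to avoid integrability, and moreover the PBW theorem for $H_n(C)$ controls $\Hom$'s between products of $E$'s only, not $\End(E_sF_t)$, so it does not let you conclude a universal identity from its vanishing on $\CL(\mu)$.
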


Theorem \ref{th:leftadjoint} provides maps $\eps^l_s$ and $\eta^l_s$ and
we only have to show the
invertibility of the maps $\sigma_{st}$ for any $s\not=t\in I$.
Note that the construction of \S \ref{se:iota} provide a category
$\CV^\iota$ satisfying the same properties as the category $\CV$.

\smallskip
Let $s\not=t\in I$. 
We write $Q_{ts}(u,v)=\sum_{a,b}q_{ab}u^av^b$ with $q_{a,b}\in \Bk$.
Let $\lambda\in X$ and $r\ge 0$. 
Consider the morphism
$$\psi:{^0H}_{r+1}\to\End(E_sE_t^r\idun_\lambda)$$
$$h\mapsto
(E_sE_t^r\xrightarrow{\eta\bullet}F_tE_tE_sE_t^r
\xrightarrow{F_t\tau_{ts}\bullet} F_tE_sE_t^{r+1}
\xrightarrow{F_t E_sh} F_tE_sE_t^{r+1}
\xrightarrow{F_t\tau_{st}\bullet} F_tE_tE_sE_t^r
\xrightarrow{\eps^l\bullet} E_sE_t^r).$$

\begin{lemma}
\label{le:specialtrace}
Let $a\le -\langle \alpha_t^\vee,\lambda\rangle -r-1$. We have
$$\psi(X_{r+1}^a T_{w[1,r+1]})=
\begin{cases}
T_{w[1,r]}q_{m_{ts},0} (-1)^{\langle\alpha_t^\vee,\lambda\rangle+
m_{ts}+1} & \text{ if }a=-\langle \alpha_t^\vee,\lambda\rangle -r-1 \\
0 & \text{ otherwise.}
\end{cases}
$$
\end{lemma}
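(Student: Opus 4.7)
The plan is to proceed in two stages: first reduce the general $r$ case to a ``one-strand braid'' computation via commutation, and then evaluate the resulting expression using the mixed braid relation, the $Q$-relation for $\tau_{st}\circ\tau_{ts}$, and the defining property of $\tilde\eps_t = \eps_t^l$.

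The starting observation is the factorization $w[1,r+1] = w[1,r]\cdot (s_r s_{r-1}\cdots s_1)$ of the longest element of $\GS_{r+1}$, giving $T_{w[1,r+1]} = T_{w[1,r]}\cdot T_r T_{r-1}\cdots T_1$ in ${^0H}_{r+1}$. Since $T_{w[1,r]}$ involves only $T_1,\ldots,T_{r-1}$, it commutes with $X_{r+1}$, so $X_{r+1}^a T_{w[1,r+1]} = T_{w[1,r]} X_{r+1}^a T_r \cdots T_1$. Now $T_{w[1,r]}$ acts only on positions $1,\ldots,r$ of $E_t^{r+1}$---i.e., on the rightmost $r$ copies of $E_t$ (the ``original'' ones) in $F_tE_sE_t^{r+1}$---so it commutes with each of $F_t\tau_{st}E_t^r$, $F_t\tau_{ts}E_t^r$, $\eta_tE_sE_t^r$ and $\eps_t^lE_sE_t^r$, which all modify only the leftmost portion of the diagram. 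Naturality then lets us extract it: $\psi(X_{r+1}^a T_{w[1,r+1]}) = (E_sT_{w[1,r]})\circ \psi(X_{r+1}^a T_r\cdots T_1)$. It remains to show that the right-hand factor equals $q_{m_{ts},0}(-1)^{\langle\alpha_t^\vee,\lambda\rangle+m_{ts}+1}\cdot \idun_{E_sE_t^r}$ if $a = -\langle\alpha_t^\vee,\lambda\rangle - r - 1$, and vanishes if $a$ is strictly smaller.

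For this, I would first treat the base case $r=0$ directly: after moving $E_sx_t^a$ past $\tau_{st}$ via (\ref{en:half4}) (no correction since $s\neq t$), the middle composition becomes $F_tx_t^aE_s \circ F_t(\tau_{st}\circ\tau_{ts})\circ\eta_tE_s$. Using (\ref{en:half1}) to replace $\tau_{st}\tau_{ts}$ by $Q_{st}(E_tx_s, x_tE_s) = Q_{ts}(x_tE_s, E_tx_s)$ and the naturality of $\eta_t$ to push $x_s^q$ factors through, the expression becomes $\sum_{p,q}q_{p,q}(\eps_t^l\circ F_tx_t^{a+p}\circ \eta_t)E_s\circ x_s^q$. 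The ``bubble'' $\eps_t^l F_t x_t^k\eta_t$ at weight $\mu = \lambda+\alpha_s$ is then computed from the definition of $\tilde\eps_t = \eps_t^l$ via $\rho_{t,\mu}^{-1}$ in \S\ref{se:def2km}: since $\langle\alpha_t^\vee,\mu\rangle = \langle\alpha_t^\vee,\lambda\rangle - m_{ts}$ is $<0$ under our hypothesis, this bubble vanishes for $k < -\langle\alpha_t^\vee,\mu\rangle - 1$ and equals $(-1)^{\langle\alpha_t^\vee,\mu\rangle+1}$ otherwise. Combined with the fact that the coefficient of $u^{m_{ts}}$ in $Q_{ts}(u,v)$ is the constant $q_{m_{ts},0}$ (independent of $v$) by the explicit form of $Q$ from \S\ref{se:deformedHeckeCartan}, only the $p=m_{ts}$, $q=0$ contribution survives, giving exactly the claimed value for $r=0$.

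The general $r$ case then follows by induction, using the mixed braid relation (\ref{en:half2}) in the form $(s,t,u) = (t,s,t)$ (which involves a non-trivial correction term since the outer two indices coincide) to slide each $\tau_{tt}$-crossing in the braid $T_r\cdots T_1$ past the pair $\tau_{ts}, \tau_{st}$. Each such slide produces a ``main term'' that structurally repeats the lower-$r$ pattern, plus correction terms built from $Q_{ts}$ divided differences as in the statement of (\ref{en:half2}); tracking the net degree of dots accumulated on the cap-strand $\gamma_1$ (the originally rightmost $E_t$, which absorbs $X_{r+1}^a$ after the braid) and comparing against the bubble-threshold at the appropriate weight shows that all corrections drop out under the hypothesis $a \le -\langle\alpha_t^\vee,\lambda\rangle - r - 1$, and only the leading $q_{m_{ts},0}$-term survives when equality holds. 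The main obstacle is precisely this bookkeeping: each of the $r$ crossings generates, via $T_iX_{i+1} = X_iT_i + 1$ and the correction in (\ref{en:half2}), a proliferation of lower-order terms that must all be shown to vanish by degree considerations at the level of the bubble. An induction on $r$ set up so that the inductive step peels off one crossing---reducing $\psi(X_{r+1}^a T_r\cdots T_1)$ at weight $\lambda$ to a version of $\psi(X_r^{a'} T_{r-1}\cdots T_1)$ at an appropriately shifted weight, plus error terms killed by the vanishing bubble---should organize this argument cleanly.
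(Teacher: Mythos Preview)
Your $r=0$ argument is correct and matches the paper's: slide $E_sx_t^a$ past $\tau_{st}$ (no correction since $s\neq t$), replace $\tau_{st}\tau_{ts}$ by $Q_{ts}(x_tE_s,E_tx_s)$, and reduce to bubbles $\eps_t^l\circ(F_tx_t^{a+p})\circ\eta_t$ at weight $\mu=\lambda+\alpha_s$; since $\langle\alpha_t^\vee,\mu\rangle=\langle\alpha_t^\vee,\lambda\rangle-m_{ts}$ and $a+p\le -\langle\alpha_t^\vee,\mu\rangle-1$, only the term with $p=m_{ts}$ survives, and its $v$-exponent is forced to be $0$ by the shape of $Q_{ts}$. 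Your extraction of $T_{w[1,r]}$ to the left is also correct, and the paper does the same (it additionally pulls $T_{r-1}\cdots T_1$ to the right, as those crossings also avoid position $r+1$).

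The gap is in your inductive step for $r>0$. After applying the mixed braid relation once to $(\tau_{st}E_t)(E_s\tau_{tt})(\tau_{ts}E_t)$, the main term is \emph{not} a version of $\psi(X_r^{a'}T_{r-1}\cdots T_1)$ at a shifted weight: it is what the paper calls $f_a$, namely $\tau_{ts}\circ(\text{curl})\circ\tau_{st}$, where the curl is $\eps_t^l\circ F_t(X^aE_t\circ T)\circ\eta_t$---a bubble with a $\tau_{tt}$ trapped \emph{inside} it. Peeling off further crossings does not remove this interior $T$, so the recursion you sketch does not close. The paper evaluates this curl by a separate identity, the commutative diagram~(\ref{eq:traceT}) (a consequence of Lemma~\ref{le:sigmamn}), which rewrites $(\eps^lE_t)(F_tT)(\eta E_t)$ in terms of $(\eps^lE_t)(\sigma E_t)(E_t\eta)$; this is the missing ingredient. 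The argument then splits into two cases according to the sign of $\langle\alpha_t^\vee,\lambda\rangle+2r-m_{ts}$: when negative, the curl vanishes via~(\ref{eq:traceT}) and only the expansion $X_2^aT=TX_1^a+\sum_c X_2^cX_1^{a-1-c}$ contributes, with the $g$-correction terms from the braid relation surviving only for $r\le m_{ts}$; when non-negative, the curl is nonzero, equal to $\sum_{i} z_iX^i$ with $z_{\langle\alpha_t^\vee,\mu\rangle}=(-1)^{\langle\alpha_t^\vee,\mu\rangle+1}$, and a different degree count applies while the $g$-terms all vanish. Your proposal anticipates neither the curl identity nor this dichotomy.
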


\begin{proof}
Assume first $r=0$. We have
$$\psi(X^a)=
\sum_{\alpha,\beta\ge 0} q_{\beta,\alpha}
(\eps^l\circ (F_tX^{a+\beta})\circ\eta)X^\alpha$$
If $\eps^l\circ (F_tX^{a+\beta})\circ\eta\not=0$, then
$a+\beta\ge -\langle\alpha_t^\vee,\lambda\rangle+m_{ts}-1$, hence
$\beta=m_{ts}$ and $a=-\langle\alpha_t^\vee,\lambda\rangle-1$, and
$\eps^l\circ (F_tX^{a+\beta})\circ\eta=
(-1)^{\langle\alpha_t^\vee,\lambda\rangle+m_{ts}+1}$. The result follows.

\medskip
We assume now $r>0$. 
We have
\begin{multline*}
(\tau_{st}E_t)\circ (E_s\tau_{tt})\circ (\tau_{ts}E_t)=\\
=(E_t\tau_{ts})\circ (\tau_{tt}E_s)\circ(E_t\tau_{st})+
\sum_{\substack{\beta\ge 0\\ m_{ts}>\alpha_1+\alpha_2\ge 0}}
q_{\alpha_1+\alpha_2+1,\beta}(X^{\alpha_1}E_sE_t)(E_t X^{\beta}E_t)
(E_tE_s X^{\alpha_2}).
\end{multline*}
Let
$$f_a=(E_sE_t^r\xrightarrow{\tau_{st}\bullet}E_tE_sE_t^{r-1}
\xrightarrow{\eta\bullet} F_tE_t^2E_sE_t^{r-1}
\xrightarrow{F_t (X^aE_t\circ T)\bullet} F_tE_t^2E_sE_t^{r-1}
\xrightarrow{\eps^l\bullet} E_tE_sE_t^{r-1}
\xrightarrow{\tau_{ts}\bullet} E_sE_t^r)$$
and
$$g_b=E_sE_t^r
\xrightarrow{\eta\bullet} F_tE_tE_sE_t^r
\xrightarrow{F_t X_{r+1}^b\bullet} F_tE_tE_sE_t^r
\xrightarrow{\eps^l\bullet} E_sE_t^r.$$

We have $X_{r+1}^a T_{w[1,r+1]}=T_{w[1,r]}X_{r+1}^a T_r\cdots T_1$, hence
$$\psi(X_{r+1}^a T_{w[1,r+1]})=T_{w[1,r]}f_a T_{r-1}\cdots T_1+
\sum_{\substack{\beta\ge 0\\ m_{ts}>\alpha_1+\alpha_2\ge 0}}
q_{\alpha_1+\alpha_2+1,\beta}T_{w[1,r]} g_{a+\alpha_1}
\partial_{s_1\cdots s_{r-1}}(X_r^{\alpha_2})
(X^\beta E_t^r).$$

We have a commutative diagram (Lemma \ref{le:sigmamn} and Chevalley
duality)
\begin{equation}
\label{eq:traceT}
\xymatrix{
E_t\ar[r]^-{\eta E_t}\ar[d]_{E_t\eta}  & F_tE_t^2 \ar[d]^{F_tT} \\
E_tF_tE_t \ar[r]_-{\sigma E_t} & F_t E_t^2 \ar[r]^-{\eps^l E_t} & E_t
}
\end{equation}

\medskip
$\bullet\ $Assume first $\langle\alpha_t^\vee,\lambda\rangle+2r-m_{ts}<0$.
The diagram (\ref{eq:traceT}) shows that the composition
$$E_tE_sE_t^{r-1} \xrightarrow{\eta\bullet}
F_tE_t^2E_sE_t^{r-1}\xrightarrow{F_tT\bullet}
F_tE_t^2E_sE_t^{r-1}\xrightarrow{\eps^l\bullet} E_tE_sE_t^{r-1}$$
vanishes.
Since $X_2^a T=TX_1^a+\sum_{c=0}^{a-1}X_2^c X_1^{a-1-c}$, it follows that
$$E_tE_sE_t^{r-1} \xrightarrow{\eta\bullet}
F_tE_t^2E_sE_t^{r-1}\xrightarrow{F_t(X^aE_t\circ T)\bullet}
F_tE_t^2E_sE_t^{r-1}\xrightarrow{\eps^l\bullet} E_tE_sE_t^{r-1}$$
equals
$$\sum_{c=0}^{a-1}(\eps^l\circ (F_tX^c)\circ\eta)
X^{a-1-c}E_sE_t^{r-1},$$
hence
$$T_{w[1,r]}f_a T_{r-1}\cdots T_1
=\sum_{\substack{0\le c\le a-1\\ 0\le\alpha\le m_{ts}\\
0\le\beta\le m_{st}}}
T_{w[1,r]} q_{\alpha,\beta} X_{E_s}^\beta
(\eps^l\circ (F_tX^c)\circ\eta)
\partial_{s_1\cdots s_{r-1}}(X_r^{a-1-c+\alpha}) 
$$

If $\eps^l\circ (F_tX^c)\circ\eta\not=0$, then
$c\ge -\langle\alpha_t^\vee,\lambda\rangle-2r+m_{ts}-1$. If
$\partial_{s_1\cdots s_{r-1}} (X_r^{a-1-c+\alpha})\not=0$, then
$a-1-c+\alpha\ge r-1$. If both of those terms are non zero, then
$a\ge -\langle\alpha_t^\vee,\lambda\rangle-r+(m_{ts}-\alpha)-1$, hence
$a=-\langle\alpha_t^\vee,\lambda\rangle-r-1$,
$\alpha=m_{ts}$ and $a-1-c=r-1-m_{ts}$. In particular, we have
$r>m_{ts}$. So, we have 
$$T_{w[1,r]}f_a T_{r-1}\cdots T_1=\begin{cases}
T_{w[1,r]}q_{m_{ts},0} (-1)^{\langle\alpha_t^\vee,\lambda\rangle+
m_{ts}+1} & \text{  if } a=-\langle\alpha_t^\vee,\lambda\rangle-r-1
\text{ and } r>m_{ts} \\
0 & \text{ otherwise.}
\end{cases}$$

If $g_{a+\alpha_1} \partial_{s_1\cdots s_{r-1}}(X_r^{\alpha_2})\not=0$, then
$a+\alpha_1\ge -\langle\alpha_t^\vee,\lambda\rangle-2r+m_{ts}-1$ and
$\alpha_2\ge r-1$, hence 
$a\ge -\langle\alpha_t^\vee,\lambda\rangle -r-2+(m_{ts}-\alpha_1-\alpha_2)
\ge -\langle\alpha_t^\vee,\lambda\rangle -r-1$. We obtain
$a=-\langle\alpha_t^\vee,\lambda\rangle-r-1$,
$\alpha_2=r-1$ and $\alpha_1+\alpha_2=m_{ts}-1$.
In particular, $r\le m_{ts}$. So, we have
\begin{multline*}
\sum_{\substack{\beta\ge 0\\ \alpha_1+\alpha_2\ge 0}}
q_{\alpha_1+\alpha_2+1,\beta}T_{w[1,r]} g_{a+\alpha_1}
\partial_{s_1\cdots s_{r-1}}(X_r^{\alpha_2})
(X^\beta E_t^r)
=\\
=\begin{cases}
T_{w[1,r]}q_{m_{ts},0} (-1)^{\langle\alpha_t^\vee,\lambda\rangle+
m_{ts}+1} & \text{  if } a=-\langle\alpha_t^\vee,\lambda\rangle-r-1
\text{ and } r\le m_{ts} \\
0 & \text{ otherwise.}
\end{cases}
\end{multline*}
So, we have shown that
$$\psi(X_{r+1}^a T_{w[1,r+1]})
\begin{cases}
T_{w[1,r]}q_{m_{ts},0} (-1)^{\langle\alpha_t^\vee,\lambda\rangle+
m_{ts}+1} & \text{ if }a=-\langle \alpha_t^\vee,\lambda\rangle -r-1 \\
0 & \text{ otherwise.}
\end{cases}
$$

\medskip
$\bullet\ $Assume now $\langle\alpha_t^\vee,\lambda\rangle+2r-m_{ts}\ge 0$.
We can assume that $\langle\alpha_t^\vee,\lambda\rangle+r<0$, for
otherwise the lemma is empty. So, we have $r>m_{ts}$.

If $\partial_{s_1\cdots s_{r-1}}(X_r^{\alpha_2})\not=0$, then
$\alpha_2\ge r-1$, hence $m_{ts}\ge r$, which is impossible. So,
$$\sum_{\substack{\beta\ge 0\\ \alpha_1+\alpha_2\ge 0}}
q_{\alpha_1+\alpha_2+1,\beta}T_{w[1,r]} g_{a+\alpha_1}
\partial_{s_1\cdots s_{r-1}}(X_r^{\alpha_2})
(X^\beta E_t^r)=0.$$

Let $\mu=\lambda+r\alpha_t+\alpha_s$.
The diagram (\ref{eq:traceT}) shows that
there are elements $z_i\in Z(\CV_\mu)$ with
$z_{\langle \alpha_t^\vee,\mu\rangle}=(-1)^{\langle \alpha_t^\vee,\mu\rangle+1}$
such that
$$(E_t\idun_{\mu-\alpha_t}
\xrightarrow{\eta\bullet} F_tE_t^2\idun_{\mu-\alpha_t}
\xrightarrow{F_t T\bullet} F_tE_t^2\idun_{\mu-\alpha_t}
\xrightarrow{\eps^l\bullet} E_t\idun_{\mu-\alpha_t})=
\sum_{i=0}^{\langle \alpha_t^\vee,\mu\rangle}z_iX^i.$$
So,
\begin{multline*}
T_{w[1,r]}f_a T_{r-1}\cdots T_1
=\sum_{\substack{0\le c\le a-1\\ 0\le\alpha\le m_{ts}\\
0\le\beta\le m_{st}}}
T_{w[1,r]} q_{\alpha,\beta} X_{E_s}^\beta
(\eps^l\circ (F_tX^c)\circ\eta)
\partial_{s_1\cdots s_{r-1}}(X_r^{a-1-c+\alpha})+ \\
+
\sum_{\substack{0\le\alpha\le m_{ts}\\ 0\le\beta\le m_{st}}}
T_{w[1,r]} q_{\alpha,\beta} X_{E_s}^\beta
\sum_{i=0}^{\langle \alpha_t^\vee,\mu\rangle} z_i
\partial_{s_1\cdots s_{r-1}}(X_r^{a+i+\alpha}).
\end{multline*}
We have
$$a-1+m_{ts}\le -\langle\alpha_t^\vee,\lambda\rangle-r-2+m_{ts}\le r-2$$
hence $\partial_{s_1\cdots s_{r-1}}(X_r^{a-1-c+\alpha})=0$
for all $a$, $c\ge 0$ and $\alpha\le m_{ts}$.

We have
$a+\langle\alpha_t^\vee,\mu\rangle+m_{ts}\le r-1$.
If $\partial_{s_1\cdots s_{r-1}}(X_r^{a+i+\alpha})\not=0$, then
$a=-\langle \alpha_t^\vee,\lambda\rangle-r-1$, 
$i=\langle\alpha_t^\vee,\mu\rangle$ and $\alpha=m_{ts}$.

We have shown that 
$$T_{w[1,r]}f_a T_{r-1}\cdots T_1=
\begin{cases}
T_{w[1,r]}q_{m_{ts},0} (-1)^{\langle\alpha_t^\vee,\lambda\rangle+
m_{ts}+1} & \text{  if } a=-\langle\alpha_t^\vee,\lambda\rangle-r-1
\text{ and } r>m_{ts} \\
0 & \text{ otherwise.}
\end{cases}$$
The lemma follows.
\end{proof}

\begin{proof}[Proof of Theorem \ref{th:critsigmast}]

Let $N\in\CV_\lambda$ such that $F_tN=0$.
Define
$$L=\bigoplus_{\substack{w\in\GS_{r+1}^r\\ i<-\langle\alpha_t^\vee,\lambda
\rangle-r}} T_wX_{r+1}^i\BZ \text{ and }
L'=\bigoplus_{\substack{w\in{^r\GS_{r+1}}\\ i<-\langle\alpha_t^\vee,\lambda
\rangle-r}} X_{r+1}^iT_w\BZ.$$
We have $L\simeq L(r+1,1,1,\lambda)$ (cf \S \ref{se:relationsl2}).
We have an isomorphism (Lemma \ref{le:basiccommutation})
$$\mathrm{act}\circ (\id\otimes \eta E_t^r):
L\otimes_{\BZ} E^r_tN\iso F_tE_t^{r+1}N.$$
Similarly, applying Lemma \ref{le:basiccommutation} to $\CV^\iota$,
we obtain an isomorphism
$$(\id\otimes \eps^l E_t^r)\circ \mathrm{act}^*:
F_tE_t^{r+1}N\iso
L^{\prime *}\otimes_\BZ E_t^rN.$$

We have a commutative diagram
$$\xymatrix{
L\otimes E_sE_t^r\ar[r]_-{\mathrm{act}\circ (\id\otimes E_s\eta\bullet)}
 \ar[d]_{\id\otimes\eta\bullet}&
E_sF_tE_t^{r+1}\ar@/^2pc/[rrr]^-{\sigma_{st}\bullet} \ar[r]_-{\eta\bullet} &
F_tE_tE_sF_tE_t^{r+1} \ar[r]_-{F_t\tau_{ts}\bullet} &
F_tE_sE_tF_tE_t^{r+1} \ar[r]_-{F_tE_s\eps\bullet} &
F_tE_sE_t^{r+1} \\
L\otimes F_tE_tE_sE_t^r\ar[r]^-{F_t\tau_{ts}\bullet} &
L\otimes F_tE_sE_t^{r+1} \ar[rr]^-{F_tE_sE_t\eta\bullet}
 \ar@/_2pc/[rrr]_-{\id} &&
L\otimes F_tE_sE_tF_tE_t^{r+1} \ar[r]^-{F_tE_s\eps\bullet} &
L\otimes F_tE_sE_t^{r+1} \ar[u]_{\mathrm{act}}
}$$
and a commutative diagram
$$\xymatrix{
F_tE_sE_t^{r+1}\ar[r]_-{F_tE_s\eta^l\bullet} \ar[d]_{\mathrm{act}^*}
 \ar@/^2pc/[rrr]^-{\sigma_{ts}^\iota\bullet} &
F_tE_sE_tF_tE_t^{r+1}\ar[r]_-{F_t\tau_{st}\bullet} &
F_tE_tE_sF_tE_t^{r+1}\ar[r]_-{\eps^l\bullet} &
E_sF_tE_t^{r+1}\ar[d]^-{(\id\otimes E_s\eps^l\bullet)\circ \mathrm{act}^*}\\
L^{\prime *}\otimes F_tE_sE_t^{r+1}\ar[d]_-{F_tE_s\eta^l\bullet}
 \ar[r]^-{\id}&
L^{\prime *}\otimes F_tE_sE_t^{r+1}\ar[r]^-{F_t\tau_{st}\bullet} &
L^{\prime *}\otimes F_tE_tE_sE_t^r\ar[r]^-{\eps^l\bullet} &
L^{\prime *}\otimes E_s E_t^r \\
L^{\prime *}\otimes F_tE_sE_tF_tE_t^{r+1}
\ar[ur]_-{F_tE_sE_t\eps^l\bullet} 
}$$

We have a commutative diagram
$$\xymatrix{
L\otimes E_sE_t^r\ar[rr]^-{\mathrm{act}\circ (\id\otimes E_s\eta\bullet)}
 \ar[d]_{\eta\bullet}&&
E_sF_tE_t^{r+1}\ar[r]^-{\sigma_{st}\bullet} & 
F_tE_sE_t^{r+1}\ar[r]^-{\sigma_{ts}^\iota\bullet} & E_sF_tE_t^{r+1}
\hskip 0.6cm
 \ar[r]^-{(\id\otimes E_s\eps^l\bullet)\circ \mathrm{act}^*} &
\hskip 0.3cm
L^{\prime *}\otimes E_sE_t^r \\
L\otimes F_tE_tE_sE_t^r \ar[rr]_-{F_t\tau_{ts}\bullet} &&
L\otimes F_tE_sE_t^{r+1} \ar[r]_-{\mathrm{act}} & 
F_tE_sE_t^{r+1} \ar[r]_-{\mathrm{act}^*} & 
L^{\prime *}\otimes F_tE_sE_t^{r+1} \ar[r]_-{F_t\tau_{st}\bullet} & 
L^{\prime *}\otimes E_sE_t^r \ar[u]_-{\eps^l\bullet} 
}$$

We will show that the top horizontal composition in the diagram
above is an isomorphism when applied to $N$:

$$f:L\otimes E_sE_t^rN\iso L^{\prime *}\otimes E_sE_t^rN.$$
It is enough to show
that the map $\gamma$ obtained from $f$
by left multiplication by $T_{w[1,r+1]}$ is invertible, as in the
proof of Lemma \ref{le:basiccommutation}. Lemma
\ref{le:specialtrace} shows that the map
$$ E_sE_t^{(r)}\xrightarrow{X^a\otimes \id}
T_{w[1,r+1]} (L\otimes E_sE_t^r)\xrightarrow{\gamma} 
T_{w[1,r+1]} (L^{\prime *}\otimes E_sE_t^r)
\xrightarrow{\langle X^{a'},-\rangle} E_sE_t^{(r)}$$
is $0$ for $a+a'<-\langle\alpha_t^\vee,\lambda\rangle-r-1$ and it is
an isomorphism for $a+a'=-\langle\alpha_t^\vee,\lambda\rangle-r-1$.
So, $\gamma$ is an isomorphism and $f$ as well.
Consequently, the composition
$\sigma_{ts}^\iota\circ\sigma_{st}$ is an isomorphism when applied
to $E_t^r N$. We conclude from Lemma \ref{le:0onsimple2}
that it is an isomorphism on all objects of $\CV$.

\smallskip
We apply now the result above to $\CV^\iota$: it shows that
$\sigma_{ts}^\iota$ has a left inverse. So,
$\sigma_{ts}^\iota$ is invertible, hence $\sigma_{st}$ is invertible
as well.
\end{proof}

\subsubsection{Control from $K_0$}
\label{se:controlK0}

\begin{thm}
\label{th:criteriumK0}
Consider a root datum with associated Kac-Moody algebra
$\Gg$ and associated ring $\Bk$.

Let $k$ be a field that is a $\Bk$-algebra and $\CV\in\FA b_k^f$.

Assume given
\begin{itemize}
\item
an adjoint pair $(E_s,F_s)$ of exact functors
$\CV\to\CV$ for every $s\in I$
\item
$x_s\in\End(E_s)$ and $\tau_{st}\in\Hom(E_sE_t,E_tE_s)$ for every $s,t\in I$.
\item a decomposition $\CV=\bigoplus_{\lambda\in X} \CV_\lambda$.
\end{itemize}

We assume that
\begin{itemize}
\item $F_s$ is isomorphic to a left adjoint of $E_s$
\item $E_s(\CV_\lambda)\subset \CV_{\lambda+\alpha_s}$ and
$F_s(\CV_\lambda)\subset \CV_{\lambda-\alpha_s}$
\item $\{[E_s],[F_s]\}_{s\in I}$ induce an integrable representation of
$\Gg$ on $V=K_0(\CV)$
\item relations (\ref{en:half1})-(\ref{en:half4})
in \S \ref{se:halfKM} hold
\end{itemize}

Then, the data above
defines an integrable action of $\FA(\Gg)$ on $\CV$.
\end{thm}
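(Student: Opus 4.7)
The plan is to verify the axioms of an integrable $2$-representation of $\FA(\Gg)$ beyond those already given in the hypotheses: invertibility of the maps $\rho_{s,\lambda}$, invertibility of the maps $\sigma_{st}$ for $s\neq t$, and integrability of the resulting action. Integrability is immediate: since $\CV\in\FA b_k^f$, a finite-length object $N$ vanishes iff $[N]=0$ in $K_0(\CV)$, so integrability of the $\Gg$-action on $K_0(\CV)$ upgrades to local nilpotency of $E_s$ and $F_s$ on $\CV$. Once invertibility of the $\rho_{s,\lambda}$ is known, the invertibility of $\sigma_{st}$ for $s\neq t$ is a direct application of Theorem \ref{th:critsigmast}, whose hypotheses are precisely the half-Kac-Moody relations, the adjunctions $(E_s,F_s)$, and the invertibility of $\rho_{s,\lambda}$.

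To establish invertibility of each $\rho_{s,\lambda}$, I will reduce to the $\Gsl_2$-categorification framework and invoke Theorem \ref{th:comparisonsl2}. Fix $s\in I$ and consider the restricted datum $(E_s,F_s,x_s,\tau_{ss})$ with the regraded decomposition $\CV=\bigoplus_{n\in\BZ}\CV^{(n)}$, $\CV^{(n)}=\bigoplus_{\langle\alpha_s^\vee,\lambda\rangle=n}\CV_\lambda$. All axioms of an $\Gsl_2$-categorification in the sense of \S\ref{se:defsl2cat} are direct from the hypotheses except for the single-eigenvalue condition on $x_s$: biadjointness of $(E_s,F_s)$ is assumed, the $s=t$ instance of the half-Kac-Moody relations provides the nil affine Hecke action on $\End(E_s^n)$, classes of simples are weight vectors via the decomposition $\CV=\bigoplus_\lambda\CV_\lambda$, and restricting the integrable $\Gg$-action on $K_0(\CV)$ to the triple $(e_s,f_s,h_s)$ gives the needed locally finite integrable $\Gsl_2$-action.

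The single-eigenvalue condition requires decomposing $\CV$ along the generalized eigenvalues of $x_s$. Since objects of $\CV$ have finite length, $x_s$ induces a decomposition $E_s=\bigoplus_a(E_s)_a$ into generalized-eigenspace functors, and similarly for $E_s^n$. I will use the nil Hecke relations $\tau_{ss}^2=0$ and $\tau_{ss}(x_sE_s)-(E_sx_s)\tau_{ss}=1$ to show that the joint generalized eigenspaces of $(x_sE_s,E_sx_s)$ on $E_s^2$ are forced to be diagonal: an off-diagonal joint eigenspace at $(a,b)$ with $a\neq b$ would make $\tau_{ss}$ act invertibly modulo nilpotents on that eigenspace (via the divided-difference identity $\tau_{ss}\sim 1/(a-b)$ implied by the anticommutation of $\tau_{ss}$ with $x_sE_s-E_sx_s$), contradicting $\tau_{ss}^2=0$. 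This refines to a block decomposition $\CV=\bigoplus_a\CV^{[a]}_s$ preserved by $E_s$, $F_s$, $\tau_{ss}$ and by the weight grading. On each $\CV^{[a]}_s$ the translation $x_s\leadsto x_s-a$ preserves the nil Hecke relations and produces an $\Gsl_2$-categorification with $x_s$ locally nilpotent, so Theorem \ref{th:comparisonsl2} yields invertibility of $\rho_{s,\lambda}$ on $\CV^{[a]}_s$ and hence globally.

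The main obstacle is establishing the block decomposition in a manner compatible with all remaining structure -- biadjointness and, crucially, the $\tau_{st}$ for $t\neq s$, which must preserve blocks defined via $x_s$ alone. Verifying these compatibilities is the only substantive technical step; once it is handled, the result is a direct consequence of the previously established Theorems \ref{th:comparisonsl2} and \ref{th:critsigmast}.
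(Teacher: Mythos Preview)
Your overall strategy --- Theorem~\ref{th:comparisonsl2} for the invertibility of each $\rho_{s,\lambda}$, then Theorem~\ref{th:critsigmast} for $\sigma_{st}$ with $s\neq t$ --- is exactly the paper's proof, and your integrability argument is correct. Two of your elaborations miss the mark.

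Your closing worry is unfounded: any eigenvalue decomposition used to check that $\rho_{s,\lambda}$ is invertible for a fixed $s$ concerns only $(E_s,F_s,x_s,\tau_{ss})$. Once invertibility holds for every $s$, Theorem~\ref{th:critsigmast} is applied to the undecomposed $\CV$, so compatibility of any $s$-blocks with $\tau_{st}$ for $t\neq s$ is irrelevant.

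Your argument that off-diagonal joint eigenspaces of $(x_sE_s,E_sx_s)$ on $E_s^2$ vanish is wrong. The element $s_1=(X_1-X_2)\tau_{ss}+1\in{}^0H_2$ satisfies $s_1^2=1$ and swaps $X_1\leftrightarrow X_2$ (\S\ref{se:nilaffineHecke}), hence interchanges the $(a,b)$- and $(b,a)$-eigenspaces; so $\tau_{ss}=(X_1-X_2)^{-1}(s_1-1)$ off the diagonal has both a diagonal block (invertible, as you say) and an off-diagonal one, a shape entirely consistent with $\tau_{ss}^2=0$. Concretely, $\CV=\bigoplus_i (H_{i,n}\otimes_{P_n^{\GS_n}}k)\mMod$ at a closed point of $\Spec P_n^{\GS_n}$ with at least two distinct coordinates (and $k$ algebraically closed) satisfies all hypotheses of the theorem, has $x_s$ with several eigenvalues and nonzero off-diagonal eigenspaces on $E_s^2$, yet $\rho_{s,\lambda}$ is invertible there by Proposition~\ref{pr:isoH}. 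So no block decomposition of $\CV$ need exist, nor is one needed: the single-eigenvalue clause in \S\ref{se:defsl2cat} is a normalization for passing between the affine, degenerate and nil presentations, and the proof of \cite[Theorem~5.27]{ChRou} --- resting on a Mackey-type bimodule isomorphism as in Proposition~\ref{pr:isoH} together with a lowest-weight filtration --- does not use it. Drop the eigenvalue discussion and invoke the two theorems directly.
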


\begin{proof}
This is a consequence of Theorems \ref{th:comparisonsl2} and
\ref{th:critsigmast}.
\end{proof}

\subsubsection{Type $A$}
\label{se:typeA}
Let $k$ be a field.
Let $q\in k^\times$ and let $I$ be a subset of $k$. Assume $0\not\in I$ if 
$q\not=1$ and consider the corresponding Lie algebra
$\Gsl_{I_q}$ as in \S \ref{se:typeAgraphs}.

Let $\CV$ be a $k$-linear category.
Consider 
\begin{itemize}
\item an adjoint pair $(E,F)$ of endofunctors of $\CV$
\item $X\in\End(E)$ and $T\in\End(E^2)$.
\end{itemize}
Assume there are decompositions
$E=\bigoplus_{i\in I} E_i$ and
$F=\bigoplus_{i\in I} F_i$,
where $X-i$ is locally nilpotent on $E_i$ and $F_i$.

\smallskip
When $q=1$, we put
$x_i=X-i$ (acting on $E_i$) and
$$\tau_{ij}= \begin{cases}
(E_iX-XE_j+1)^{-1}(T-1) & \text{ if }i=j \\
(E_iX-XE_j)T+1 & \text{ if }i=j+1 \\
\frac{E_iX-XE_j}{E_iX-XE_j+1}(T-1)+1 & \text{ otherwise}
\end{cases}$$
(restricted to $E_iE_j$).

When $q\not=1$, we put
$x_i=i^{-1}X$ (acting on $E_i$) and
$$\tau_{ij}= \begin{cases}
i(qE_iX-XE_j)^{-1}(T-q) & \text{ if }i=j \\
q^{-1}i^{-1}(E_iX-XE_j)T+i^{-1}(1-q^{-1})XE_j & \text{ if }i=qj \\
\frac{E_iX-XE_j}{qE_iX-XE_j}(T-q)+1 & \text{ otherwise}
\end{cases}$$
(restricted to $E_iE_j$).

\smallskip

Assume that
there is a decomposition $\CV=\bigoplus_{\lambda\in X}\CV_\lambda$
such that
\begin{itemize}
\item $E_i(\CV_\lambda)\subset \CV_{\lambda+\alpha_i}$ and
$F_i(\CV_\lambda)\subset \CV_{\lambda-\alpha_i}$
\item $E_i$ and $F_i$ are locally nilpotent
\item  when $\langle\alpha_s^\vee,\lambda\rangle\ge 0$, the map
$\sigma_{ss}+\sum_{i=0}^{\langle\alpha_s^\vee,\lambda\rangle-1}\eps_s\circ
(x_s^i F_s):
E_s F_s(M)\to F_s E_s(M)\oplus M^{\langle\alpha_s^\vee,\lambda\rangle}$ is
invertible for $M\in\CV_\lambda$
\item  when $\langle\alpha_s^\vee,\lambda\rangle\le 0$, the map
$\sigma_{ss}+\sum_{i=0}^{-1-\langle\alpha_s^\vee,\lambda\rangle}
(F_s x_s^i)\circ \eta_s:
E_s F_s(M)\oplus
M^{-\langle\alpha_s^\vee,\lambda\rangle}
\to F_s E_s(M)$ is invertible for $M\in\CV_\lambda$.
\end{itemize}

\begin{thm}
\label{th:slcatlinear}
The data above defines an action of $\FA_\BZ(\Gsl_{I_q})\otimes k$ on $\CV$.
\end{thm}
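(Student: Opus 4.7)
\medskip
\noindent\textbf{Proof proposal.} The plan is to view Theorem \ref{th:slcatlinear} as the type-$A$ analogue of Theorem \ref{th:criteriumK0}, in which the $K_0$-hypothesis of \S \ref{se:controlK0} is replaced by the explicit (degenerate) affine Hecke structure encoded in $(X, T)$ together with the generalized eigenspace decomposition $E = \bigoplus_{i\in I} E_i$. The conversion from that Hecke structure to the nil Hecke structure required by $\FA_\BZ(\Gsl_{I_q})$ is exactly the content of the isomorphisms of Propositions \ref{pr:isowithgraph1} and \ref{pr:isowithgraphq}: the formulas in the statement for $x_i$ and $\tau_{ij}$ are precisely the inverse images of the nil Hecke generators $x_{i,\nu}, \tau_{i,\nu}$ under those isomorphisms, read on each summand $E_{\nu_n}\cdots E_{\nu_1}$.

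The argument proceeds in three steps. First, I would refine the adjunction $(E, F)$ to adjunctions $(E_s, F_s)$ for each $s \in I$. The local nilpotence of $X - s$ on $E_s$ and $F_s$ forces $\Hom(E_s M, E_t N) = 0 = \Hom(F_s M, F_t N)$ for $s \neq t$, so the adjunction isomorphism $\End(E)^{\opp} \iso \End(F)$ identifies the given decompositions of $E$ and $F$, and the unit and counit restrict to each pair $(E_s, F_s)$. Second, using Propositions \ref{pr:isowithgraph1} and \ref{pr:isowithgraphq}, I would transport the (degenerate) affine Hecke relations satisfied by $X_j = E^{n-j} X E^{j-1}$ and $T_j = E^{n-j-1} T E^{j-1}$ on $E^n$ into the nil Hecke relations of $H_n(I_q)$, i.e.\ relations (\ref{en:half1})--(\ref{en:half4}) of \S \ref{se:halfKM}, now viewed on $\bigoplus_\nu E_{\nu_n}\cdots E_{\nu_1}$. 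Third, with these relations and adjunctions established, and with the invertibility of $\rho_{s,\lambda}$ granted by assumption, Theorem \ref{th:critsigmast} yields the invertibility of $\sigma_{st}$ for $s \neq t$, which completes the verification of the defining data of a $2$-representation of $\FA_\BZ(\Gsl_{I_q}) \otimes k$. Integrability is then immediate from the assumed local nilpotence of $E_s$ and $F_s$.

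The main obstacle is the second step. The isomorphisms of Propositions \ref{pr:isowithgraph1} and \ref{pr:isowithgraphq} are formulated after inverting certain polynomials in the $x_{i,\nu}$ (such as $x_i - x_{i+1} + (\nu_i - \nu_{i+1})$ for $\nu_i \neq \nu_{i+1}$, or its $q$-analogue), so one must first verify that the local nilpotence of $X - \nu_i$ on $E_{\nu_n}\cdots E_{\nu_1}$ makes those denominators act invertibly, so that the theorem's formulas really do define endomorphisms of the individual summands. One must then check, case by case over $\nu_i = \nu_{i+1}$, $\nu_{i+1} = \nu_i + 1$ (or $q\nu_i$), and the generic case, that the mixed braid relation (\ref{en:half2}) involving the quiver-dependent polynomial $Q_{ij}(u, v)$ follows from the braid relation for $T$, and similarly for the quadratic and polynomial commutation relations. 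These verifications are parallel to the computations in the proof of Proposition \ref{trivialHnA}: they are largely formal once the dictionary is in place but require careful bookkeeping to keep track of how the three-case definition of $\tau_{ij}$ interacts with the Hecke-algebra braid relation on each ordered factor $E_{\nu_j}E_{\nu_{j+1}}E_{\nu_{j+2}}$.
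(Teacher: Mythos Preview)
Your proposal is correct and follows essentially the same route as the paper: relations (\ref{en:half1})--(\ref{en:half4}) via Propositions \ref{pr:isowithgraph1} and \ref{pr:isowithgraphq}, then invertibility of $\sigma_{st}$ for $s\neq t$ via Theorem \ref{th:critsigmast}. One remark on your ``main obstacle'': since those Propositions are \emph{algebra} isomorphisms over the relevant localization, once local nilpotence of $X-\nu_i$ makes the denominators invertible on each $E_{\nu_n}\cdots E_{\nu_1}$, the nil Hecke relations transport wholesale from $H_n(I_q)$ --- you do not need to redo the case-by-case braid computation of Proposition \ref{trivialHnA}.
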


\begin{proof}
The $x_i$'s and $\tau_{ij}$'s satisfy the relations
(\ref{en:half1})-(\ref{en:half4}) in \S \ref{se:halfKM} thanks to
Propositions \ref{pr:isowithgraph1} and \ref{pr:isowithgraphq}.
The invertibility of $\sigma_{st}$ for $s\not=t$ follows from 
Theorem \ref{th:critsigmast}.
\end{proof}

\subsubsection{$\Gsl$-categorifications}
\label{se:slcat}
Let $k$ be a field.
Let $q\in k^\times$ and let $I$ be a subset of $k$. Assume $0\not\in I$ if 
$q\not=1$ and consider the corresponding Lie algebra
$\Gsl_{I_q}$ as in \S \ref{se:typeAgraphs}.

Let $\CV\in\FA b_k^f$.

\begin{defi}[Chuang-Rouquier]
An $\Gsl_{I_q}$-categorification on $\CV$
is the data of
\begin{itemize}
\item an adjoint pair
$(E,F)$ of exact functors $\CV\to\CV$
\item $X\in\End(E)$ and $T\in\End(E^2)$
\item a decomposition $\CV=\bigoplus_{\lambda\in X} \CV_\lambda$.
\end{itemize}
Given $i\in k$, let $E_i$ (resp. $F_i$)
be the generalized $i$-eigenspace of $X$ acting on $E$ (resp. $F$).
We assume that
\begin{itemize}
\item $E=\bigoplus_{i\in I}E_i$
\item the action of $\{[E_i],[F_i]\}_{i\in I}$ on $K_0(\CV)$ gives an
integrable representation of $\Gsl_{I_q}'$
\item $E_i(\CV_\lambda)\subset \CV_{\lambda+\alpha_i}$ and
$F_i(\CV_\lambda)\subset \CV_{\lambda-\alpha_i}$
\item $F$ is isomorphic to a left adjoint of $E$
\item the action on $E^n$ of $X_i=E^{n-i}XE^{i-1}$ for $1\le i\le n$
and of $T_i=E^{n-i-1}TE^{i-1}$ for $1\le i\le n-1$
induce an action of
\begin{itemize}
\item an affine Hecke algebra if $q\not=1$
\item a degenerate affine Hecke if $q=1$.
\end{itemize}
\end{itemize}
\end{defi}

Consider an $\Gsl_{I_q}$-categorification on $\CV$.

\begin{thm}
\label{th:slcat2rep}
Assume given an $\Gsl_{I_q}$-categorification on $\CV$. 
The construction of \S \ref{se:typeA}
gives rise to an action of $\FA_\BZ(\Gsl_{I_q})\otimes k$ on $\CV$.

Conversely, an integrable action of $\FA_\BZ(\Gsl_{I_q})\otimes k$
 on $\CV$ gives rise to
an $\Gsl_{I_q}$-categorification on $\CV$.
\end{thm}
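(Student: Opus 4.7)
The plan is to treat the two directions separately, in each case reducing everything to the single-color (rank one) case, where Theorem \ref{th:comparisonsl2} and the isomorphisms of \S \ref{se:idempotents} do the work.

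For the forward direction, start with an $\Gsl_{I_q}$-categorification $(E,F,X,T,\bigoplus_\lambda\CV_\lambda)$ and define $x_i,\tau_{ij}$ by the formulas of \S \ref{se:typeA}. By Theorem \ref{th:slcatlinear}, the only thing to check is that all maps $\rho_{s,\lambda}$ are invertible. Fix $s\in I$; the data $(E_s,F_s,x_s,\tau_{ss})$ together with the decomposition $\CV=\bigoplus_\lambda\CV_\lambda$ regrouped by the single integer $\langle\alpha_s^\vee,\lambda\rangle$ gives, via the change of variables of \S \ref{se:defsl2cat} (the passage between degenerate or affine Hecke and nil affine Hecke), an $\Gsl_2$-categorification in the sense of \cite{ChRou}: the nil Hecke relations for the $\tau_{ss}$ and $x_s$ follow from Propositions \ref{pr:isowithgraph1} or \ref{pr:isowithgraphq} applied to $\nu=(s,s,\ldots)$, the biadjunction and integrability assumptions are inherited, and the weight space decomposition is compatible because $E_s,F_s$ shift $\lambda$ by $\pm\alpha_s$. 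Theorem \ref{th:comparisonsl2} then yields a $2$-representation of $\FA(\Gsl_2)$ on $\CV$, whose invertible maps $\rho_{s,\lambda}$ coincide with those built from our $x_s,\tau_{ss}$. This gives the needed invertibility and Theorem \ref{th:slcatlinear} applies.

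For the reverse direction, suppose we are given a $2$-representation of $\FA_\BZ(\Gsl_{I_q})\otimes k$. Put $E=\bigoplus_{s\in I}E_s$ and $F=\bigoplus_{s\in I}F_s$ with adjunction $(E,F)$ obtained by summing the $(E_s,F_s)$, and let $F$ be left adjoint to $E$ by summing the second adjunctions guaranteed by Theorem \ref{th:leftadjoint}. Define $X\in\End(E)$ and $T\in\End(E^2)$ by reversing the formulas of \S \ref{se:typeA}, equivalently by the inverse of the isomorphisms in Theorems \ref{th:equivdegnil} and \ref{th:equivaffnil}: on $E_s$ set $X=x_s+s$ (or $X=s\cdot(x_s+1)$ when $q\neq 1$), and on $E_sE_t$ declare $T$ to be the formula produced from $\tau_{st}$ by the explicit expression in those theorems (the three cases $\nu_i=\nu_{i+1}$, $\nu_{i+1}=\nu_i+1$ or $\nu_{i+1}=q\nu_i$, and otherwise). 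Then $E_s$ is by construction the generalized $s$-eigenspace of $X$, so $E=\bigoplus_s E_s$ as required; the generalized eigenspace decomposition of $F$ follows from the adjunction. By Corollaries \ref{co:isodeg} and \ref{co:isoaff}, the operators $X_i,T_i$ on $E^n$ assemble into an action of the (degenerate or non-degenerate) affine Hecke algebra of $\GL_n$. The weight decomposition comes directly from the $2$-representation, and the integrability of the induced $\Gsl_{I_q}'$-action on $K_0(\CV)$ follows from the decategorification functor of \S 4.1.5 (existence of the monoidal functor $U_1(\Gg)\to\FA^i_{\le 1}$) together with the local nilpotence of $E_s,F_s$.

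The main obstacle is essentially book-keeping: one must check that the change of variables relating $(x_s,\tau_{ss})$ and $(X,T)$ is genuinely an inverse system, in particular that the formulas of Propositions \ref{pr:isowithgraph1} and \ref{pr:isowithgraphq} match on $E_sE_t$ in all three cases ($s=t$, $t=qs$ or $t=s+1$, and generic), and that the resulting $T$ is a well-defined endomorphism of $E^2=\bigoplus_{s,t}E_sE_t$ obeying the Hecke braid relation; both amount to reading off the isomorphisms of Theorems \ref{th:equivdegnil} and \ref{th:equivaffnil} on each block. Once this is in place, the equivalence of the two $2$-categorical structures (functoriality in morphisms of $\Gsl_{I_q}$-categorifications and in morphisms of $2$-representations) is immediate since on each side the morphisms are determined by compatibility with $E_s,F_s,x_s,\tau_{st}$, and these are related by invertible formulas.
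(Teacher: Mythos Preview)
Your proposal is correct and, for the forward direction, takes essentially the same route as the paper: the paper's proof is the one-line ``$\rho_{s,\lambda}$ is invertible by \cite[Theorem 5.27]{ChRou}, then apply Theorem \ref{th:slcatlinear}'', and your argument is the unpacking of that line (restrict to a single $s$, recognise an $\Gsl_2$-categorification, and invoke Theorem \ref{th:comparisonsl2}, whose proof is precisely the citation of \cite[Theorem 5.27]{ChRou}). For the converse, the paper gives no argument at all; your reconstruction of $(E,F,X,T)$ via the inverse change of variables from Theorems \ref{th:equivdegnil} and \ref{th:equivaffnil}, together with Theorem \ref{th:leftadjoint} for biadjointness, is the intended mechanism. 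One small correction: the Hecke relations for $X,T$ on $E^n$ come from Theorems \ref{th:equivdegnil} and \ref{th:equivaffnil} themselves (the equivalences of module categories), not from Corollaries \ref{co:isodeg} and \ref{co:isoaff}, which concern only the cyclotomic quotients.
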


\begin{proof}
 The morphisms $\rho_{s,\lambda}$ are invertible by
\cite[Theorem 5.27]{ChRou}. The
theorem follows now from Theorem \ref{th:slcatlinear}.
\end{proof}

\begin{rem}
A setting for categorifications of $\Gsl_2$ \cite{Lau} and
$\Gsl_n$ \cite{KhoLau3} has been proposed recently. While they do not check
its compatibility with the older definition above, its $2$-representations
should give a full $2$-subcategory of those above, related to $\bar{\FA}'$.
\end{rem}

Note that it is straightforward to define a notion of
$\Gsl_{I_q}'$-categorifications:

\begin{defi}
An $\Gsl_{I_q}'$-categorification on $\CV$
is the data of
\begin{itemize}
\item an adjoint pair
$(E,F)$ of exact functors $\CV\to\CV$
\item $X\in\End(E)$ and $T\in\End(E^2)$.
\end{itemize}
Given $i\in k$, let $E_i$ (resp. $F_i$)
be the generalized $i$-eigenspace of $X$ acting on $E$ (resp. $F$).
We assume that
\begin{itemize}
\item $E=\bigoplus_{i\in I}E_i$
\item the action of $\{[E_i],[F_i]\}_{i\in I}$ on $K_0(\CV)$ gives an
integrable representation of $\Gsl_{I_q}'$
\item classes of simple objects are weight vectors
\item $F$ is isomorphic to a left adjoint of $E$
\item the action on $E^n$ of $X_i=E^{n-i}XE^{i-1}$ for $1\le i\le n$
and of $T_i=E^{n-i-1}TE^{i-1}$ for $1\le i\le n-1$
induce an action of
\begin{itemize}
\item an affine Hecke algebra if $q\not=1$
\item a degenerate affine Hecke if $q=1$.
\end{itemize}
\end{itemize}
\end{defi}

When $I_q$ has no component of type $\tilde{A}_n$, then the notion of
$\Gsl_{I_q}'$-categorification coincides with that of
$\Gsl_{I_q}$-categorification
(put $\CV_\lambda=\{M\in\CV | [M]\in V_\lambda\}$).

\end{document}